\definecolor{Red}{rgb}{1,0,0}
\definecolor{Blue}{rgb}{0,0,1}
\definecolor{Green}{rgb}{0,1,0}
\definecolor{magenta}{rgb}{1,0,.6}
\definecolor{gold}{rgb}{.6,.5,0}
\definecolor{orange}{rgb}{1,0.4,0}
\definecolor{darkgreen1}{rgb}{0, .35, 0}
\definecolor{darkgreen}{rgb}{0, .6, 0}
\definecolor{darkred}{rgb}{.75,0,0}
\DeclarePairedDelimiter{\floor}{\lfloor}{\rfloor}
\DeclarePairedDelimiter{\roof}{\lceil}{\rceil}
\theoremstyle{plain}
\newtheorem{main}{Theorem}
\newtheorem{theorem}{Theorem}[section]
\newtheorem{lemma}[theorem]{Lemma}
\newtheorem{proposition}[theorem]{Proposition}
\theoremstyle{remark}
\newtheorem{remark}[theorem]{Remark}
\newtheorem{definition}{Definition}
\newtheorem{conjecture}[theorem]{Conjecture}
\newcommand\numberthis{\addtocounter{equation}{1}\tag{\theequation}}
\newcommand{\Sing}{\operatorname{Sing}}
\newcommand{\diam}{\operatorname{diam}}
\newcommand{\cgs}{\mathcal G_{\mbox{spec}}}
           \def\ea{\end{array}}
          \def\ec{\end{center}}
     \def\ed{\end{description}}
        \def\ee{\end{equation}}
       \def\eea{\end{eqnarray}}
     \def\eeaa{\end{eqnarray*}}
 \def\et{\end{thebibliography}}
\def\bib{\bibitem}
\def\bM{{\bf{M}}}
\def\Orb{{\rm Orb}}
\def\Cl{{\rm Cl}}
\def\Reg{{\rm Reg}}
\def\Sing{{\rm Sing}}
\def\CP{{\rm CP}}
\newcommand{\interior}[1]{%
	{\kern0pt#1}^{\mathrm{o}}%
}
\def\supp{\operatorname{supp}}
\def\cG{{\mathcal G}}
\def\cA{{\mathcal A}}
\def\cD{{\mathcal D}}
\def\cC{{\mathcal C}}
\def\cO{{\mathcal O}}
\def\cU{{\mathcal U}}
\def\cR{{\mathcal R}}
\def\cB{{\mathcal B}}
\def\cF{{\mathcal F}}
\def\cM{{\mathcal M}}
\def\cN{{\mathcal N}}
\def\cP{{\mathcal P}}
\def\cR{{\mathcal R}}
\def\cS{{\mathcal S}}
\def\id{\operatorname{id}}
\def\length{\operatorname{length}}
\def\vep{\varepsilon}
\def\TT{{\mathbb T}}
\def\RR{{\mathbb R}}
\def\ZZ{{\mathbb Z}}
\def\NN{{\mathbb N}}
\def\Var{\operatorname{Var}}
\def\Exp{\operatorname{Exp}}
\def\dxt{\delta_{(x,t)}}
\title[Equilibrium states for sectional-hyperbolic attractors]{Equilibrium states for the classical Lorenz attractor and sectional-hyperbolic attractors in higher dimensions}
\date{\today}
\author{Maria Jose Pacifico, Fan Yang and Jiagang Yang}
\address{Instituto de Matem\'atica, Universidade Federal do Rio de Janeiro, C. P. 68.530, CEP 21.945-970,  Rio de Janeiro, RJ, Brazil.}
 \email{pacifico@im.ufrj.br }
\address{Department of Mathematics, Wake Forest University, Winston-Salem, NC, USA.}
\email{yangf@wfu.edu}
\address{Departamento de Geometria, Instituto de Matem\'atica e Estat\'\i stica, Universidade Federal Fluminense, Niter\'oi, Brazil}
\email{yangjg\@@impa.br}
\thanks{MJP and JY were partially supported by CAPES-Finance Code 001 and CNPq-Brazil No. 307776/2019-0 and 302975/2019-5. respectively. MJP was partially supported by FAPERJ Grant- CNE E-26/202.850/2018(239069). FY was partially supported by NSF Award Number 2418590.}
\begin{document}

\begin{abstract}
	It has long been conjectured that the classical Lorenz attractor supports a unique measure of maximal entropy. In this article, we give a positive answer to this conjecture and its higher-dimensional counterpart by considering the uniqueness of equilibrium states for H\"older continuous functions on a sectional-hyperbolic attractor $\Lambda$. We prove that in a $C^1$-open and dense family of vector fields (including the classical Lorenz attractor), if the point masses at singularities are not equilibrium states, then there exists a unique equilibrium state supported on $\Lambda$. In particular, there exists a unique measure of maximal entropy for the flow $X|_\Lambda$.  
\end{abstract}

\maketitle

\tableofcontents

	\section{Introduction}
	The discovery and study of the Lorenz equation
	\begin{equation}\label{e.lorenz}
		(\dot{x}, \dot{y}, \dot{z})= (10(y-x), 28x-y-xz, xy-8/3z) 
	\end{equation}	
	by Lorenz in his celebrated paper~\cite{Lo63} was one of the most significant milestones in the study of chaotic dynamics and natural science as a whole. It in fact predates Smale's famous horseshoe~\cite{Smale}. Similar to the horseshoe which captures the nature of discrete-time, differentiable chaotic dynamical systems, the Lorenz attractor indeed captures the very essence of chaotic vector fields with {\em singularities}, i.e., points where the vector field becomes degenerate.\footnote{In many books, singularities are also called {\em equilibria}. However, we will not use this terminology in this paper to avoid confusion with equilibrium states.} 
	
	Numerical simulations carried out by Lorenz for an open neighborhood of the chosen parameters suggested that almost all points in the phase space tend to a zero volume chaotic attractor. Based on this fact, Lorenz conjectured that such an attractor indeed exists for the flow generated by equations (\ref{e.lorenz}).
	
	
	About a decade later, the difficulty of proving the Lorenz conjecture led
	Afraimovich, Bykov, Shil'nikov and Guckenheimer, Williams to introduce the geometric Lorenz attractor independently, which reproduces the behavior predicted by Lorenz, \cite{ABS, Gu76,GW79}. In particular, in \cite{GW79} the authors prove the main topological properties of the geometric Lorenz attractor. 
	%
	
	While the discovery of the Lorenz attractor leveraged fundamental developments in dynamical systems, the Lorenz equations themselves continued to resist all attempts to prove that they exhibit a sensitive attractor. 
	
	
	The quest to rigorously prove the Lorenz conjecture continued. The main difficulty is the unavoidable fact that solutions slow down
	as they pass near the singularity at the origin, which means unbounded return times and so unbounded integration errors. 
	
	In the 1980s, Bunimovich and Sinai \cite{Bu79} indicated a program that could prove that the Lorenz equations have a transitive attractor. 
	Finally, Tucker proved the existence of a non-trivial attractor in~\cite{Tu00, Tucker} around the turn of the century using a computer-assisted proof.


	Arguably the statistical point of view on dynamical systems is one of the most useful tools available for the study of the asymptotic behavior of transformations or flows, and one of the most important concepts in this theory is the notion of the {\em equilibrium states}. 
	Here an equilibrium state is an invariant probability measure that achieves the supremum in the variational principle:
	$$
	P(\phi) = \sup\left\{h_\mu(X) + \int \phi \,d\mu: \mu\in \cM_X(\bM)\right\}.
	$$
	These terminologies will be explained in detail in Section~\ref{s.p2}.  Among equilibrium states are the physical measure (also known as the Sinai-Ruelle-Bowen measures~\cite{B75_2, BR, S72}, where $\phi=-\log \det\|Df|_{E^u}\|$) and the measure of maximal entropy (for $\phi\equiv 0$).

	Of particular interest is the measure of maximal entropy (abbrev. MME), a measure that captures the most complex behavior of the system. It has a deep connection with statistical physics and stochastic processes (such as Markov chains) and is closely related to various geometric properties such as the growth rate of closed orbit, as observed by Margulis in his pioneer work~\cite{Margulis}.
	Establishing the existence and, more importantly, the uniqueness of MME has been successful for hyperbolic diffeomorphisms and vector fields without any singularity. Among such works, let us mention: Anosov flows by Bowen~\cite{B75_2} and Margulis~\cite{Margulis}, Rank one geodesic flows by Knieper~\cite{Knieper}, and more recently, dispersing billiards by Baladi and Demers~\cite{BD}.

However, significant difficulty was encountered when people tried to work on the MME and, more generally, equilibrium states other than the SRB measure (see~\cite{LY} for the uniqueness of the SRB measure for the Lorenz attractor) for the classical Lorenz attractor. This is because the Lorenz attractor is not hyperbolic, and the geometric model was proven to be insufficient
when working with equilibrium states other than the SRB measure.\footnote{Take the measure of maximal entropy ($\phi\equiv 0$) for example; a measure of maximal entropy for a continuous flow $X$ is likely not obtained from a measure of maximal entropy for the Poincar\'e return map to a cross-section.}

Therefore a global theory for singular vector fields (i.e., vector fields that contain at least one singularity accumulated by regular orbits) parallel to Smale's hyperbolicity and independent of the geometric model is needed. 
The search for such a theory took another two decades and was finally successful around the year 1999, 
where {\em singular-hyperbolicity} was introduced in~\cite{MPP99}. 
As opposed to uniform hyperbolicity which requires the tangent bundle to split into $E^s\oplus \langle X\rangle \oplus  E^u$ (and inherently requires the vector fields to be non-degenerate), the authors of~\cite{MPP99} ask for a splitting of the form $E^{ss}\oplus F^{cu}$ where the second bundle contains the flow direction of non-singular points and is volume expanding. It is proven that robustly transitive attractors are singular-hyperbolic. 
Shortly after, Tucker proved in~\cite{Tucker} that the classical Lorenz attractor is indeed singular hyperbolic and supports a unique SRB measure. It was soon realized that singular-hyperbolicity in fact captures the typical behavior of singular vector fields: it is proven~\cite{CY1} that among three-dimensional singular vector fields that are $C^1$ away from homoclinic tangencies, singular-hyperbolicity is open and dense.\footnote{In comparison, for surface diffeomorphisms~\cite{PS} and three-dimensional non-singular vector fields~\cite{AR}, uniform hyperbolicity is $C^1$ open and dense when away from homoclinic tangencies.}

After this seminal work, significant advances in this theory were achieved through the work of many authors, who gradually turned to the statistical and stochastic point of view of the Lorenz attractor; see, for instance, \cite{AMV15, APPV09, LMP, AM16}.

However, the existence and uniqueness of general equilibrium states, including the MME, for the classical Lorenz attractor~\eqref{e.lorenz} remained open. 
In this paper we will finally give a positive answer to this question and its higher-dimensional counterparts known as the {\em sectional-hyperbolic attractors} (for such an example, see~\cite{BPV}). In particular, we will prove the following theorems.

\begin{main}\label{m.Lorenz}
The Lorenz attractor given by~\eqref{e.lorenz} with the classical parameter values supports a unique measure of maximal entropy. More generally, for every H\"older continuous function  $\phi: \RR^3\to \RR$ that satisfies 
$$
\phi(0)< P(\phi),
$$
there exists a unique equilibrium state.
\end{main}

\begin{main}\label{m.A}
Let  $\mathfrak{X}^1(\bM)$ be the space of $C^1$ vector fields on a compact Riemannian manifold $\bM$ with dimension at least three. There exists an open and dense subset $\cR\subset\mathfrak{X}^1(\bM)$, such that for every $X\in\cR$, every sectional-hyperbolic attractor $\Lambda$ of $X$, and every  H\"older continuous function $\phi:\bM\to\RR$ satisfying 
\begin{equation*}
	\phi(\sigma) < P(\phi, X|_\Lambda), \forall \sigma\in\Sing(X)\cap \Lambda,
\end{equation*}
$X|_\Lambda$ has a unique equilibrium state $\mu$ supported on $\Lambda$.
\end{main}
For the precise definition of sectional hyperbolicity, see the next subsection.

Both theorems are direct consequences of a more technical theorem (Theorem~\ref{m.B}) which provides the uniqueness of equilibrium states under three verifiable topological assumptions. The precise statement, as well as the definition of various terms, will be introduced in Section~\ref{ss.1.2} and Section~\ref{s.p1}.

\subsection{Statement of the main result}\label{ss.1.2}
Let  $\mathfrak{X}^1(\bM)$ be the space of $C^1$ vector fields on a compact Riemannian manifold $\bM$ with dimension at least three. For $X\in\mathfrak{X}^1(\bM)$, we denote by $f=(f_t)_{t\in\RR}$ the flow generated by $X$. A singularity is a point where $X(x)=0$, and the collection of singularities is denoted by $\Sing(X)$. Throughout this article we will use the notation that for $x\in\bM$ and $t\in \RR$, 
$$
x_t := f_t(x).
$$
In comparison, when referring to a sequence of points in $\bM$, we will always use superscripts such as $x^1,x^2,\ldots$.

For a compact $f$-invariant set $\Lambda$, we say that $\Lambda$ has a {\em  dominated splitting under the tangent flow $(Df_t)_{t\in\RR}$} if there exist a continuous invariant splitting $T_\Lambda\bM =  E\oplus F$, together with two constants $C_0>0$ and $\lambda_0>1$, such that for all $x\in\Lambda$ and $t>0$ one has 
\begin{equation}\label{e.dom}
\|Df_t\mid_{E(x)}\| \|Df_{-t}\mid_{F(x_t)}\| \le C\lambda_0^{-t}.
\end{equation}
In particular, every dominated splitting for the flow $f$ is a dominated splitting for its time-$t$ map $f_t$, $\forall t\in\RR{\setminus \{0\}}$.

In order to capture the hyperbolicity of the Lorenz attractor, {singular-hyperbolicity} was first introduced for three-dimensional flows in~\cite{MPP99}. This definition is later generalized to higher dimensional vector fields as {\em sectional-hyperbolicity} in~\cite{MM} and~\cite{LGW}. 

\begin{definition}\label{d.sectionalhyperbolic}
A compact invariant set $\Lambda$ of a flow $X$ is called {\em sectional-hyperbolic}, if it admits a dominated splitting $E^{ss}\oplus F^{cu}$, such that $Df_t\mid_{E^{ss}}$ is uniformly contracting, and $Df_t\mid_{F^{cu}}$ is {\em sectional-expanding:}  there are constants $C_1>0,\lambda>1$ such that for every $x\in\Lambda$ and any subspace $V_x\subset F^{cu}_x$ with $\dim V_x = 2$, we have 
\begin{equation}\label{e.sect.hyp}
	|\det Df_t(x)|_{V_x}| \ge C_1 \lambda ^t \mbox{ for all } t>0.
\end{equation}
\end{definition}

Now we are ready to state the main theorem of this paper.

\begin{main}\label{m.B}
Let $X$ be a $C^1$ vector field on a compact, boundaryless manifold $\bM$ with dimension at least three. Let $\Lambda$ be a sectional-hyperbolic attractor of $X$, and $\phi:\bM\to \RR$ be a H\"older continuous function. Assume that:
\begin{enumerate}
	\item[(A)] all singularities in $\Lambda$ are hyperbolic  and non-degenerate;
	\item[(B)] there exists a hyperbolic periodic orbit $\gamma\subset \Lambda$ such that 
	\begin{enumerate}
		\item[(B1)] the unstable manifold of $\gamma$ is a {quasi $u$-section}: for every regular point $x\in \Lambda$, one has 
		$$
		\cF^{s}(x)\pitchfork W^u(\gamma)\ne\emptyset,
		$$
		where $\cF^{s}(x)$ is the stable manifold of $x$ tangent to $E^{ss}$ bundle; and
		\item[(B2)] the stable manifold of $\gamma$ is dense in $\Lambda$;
	\end{enumerate}
	\item[(C)] the potential function $\phi$ satisfies 
	\begin{equation}\label{e.pgap}
		\phi(\sigma) < P(\phi, X|_\Lambda), \forall \sigma\in\Sing(X)\cap \Lambda.
	\end{equation}
\end{enumerate}
Then there exists a unique equilibrium state $\mu_\phi$ for $X|_\Lambda$ supported on $\Lambda$. 
\end{main}

Later we will see that Assumptions (A) and (B) are $C^1$ open dense (Theorem~\ref{m.A}), and are satisfied by the classical Lorenz equation. Now let us make a few observations concerning Assumption (C) of Theorem~\ref{m.B}.

\begin{remark}\label{r.mainthm}
	By the variational principle for flows, if $\phi$ is a continuous function on $\bM$, then for every $\sigma\in\Sing(X)\cap\Lambda$ we have 
	$$
	P(\phi, X|_\Lambda)\ge h_{\delta_\sigma}(f_1) + \int \phi(x)\,d\delta_\sigma(x) = \phi(\sigma),
	$$
	where $\delta_\sigma$ is the point mass at $\sigma$. Therefore Assumption~(C) holds if and only if none of the measures $\delta_\sigma$ is an equilibrium state of $\phi$. In particular, since $h_{top}(X|_\Lambda)>0$ (\cite[Theorem C]{PYY}), Assumption (C) is satisfied by the constant potential $\phi\equiv 0$ whose equilibrium states are the measures of maximal entropy. 
	
	Clearly Assumption (C) is optimal; otherwise, one may have as many as $\#(\Sing(X)\cap\Lambda)$ ergodic equilibrium states, each of which is a point mass on a singularity. In this case, it is unknown whether there exists a unique, non-atomic equilibrium state on $\Lambda$.
\end{remark}

This observation leads to the following conjecture concerning the number of equilibrium states when Assumption (C) fails.

\begin{conjecture}
	Suppose that Assumptions (A) and (B) hold. Then there exists at most one equilibrium state that is ergodic and non-atomic. In particular, there are at most $\#(\Sing(X)\cap\Lambda)+1$ many ergodic equilibrium states. 
\end{conjecture}

The following proposition states that Assumption (C) is satisfied by the ``majority'' of H\"older continuous functions.
\begin{proposition}\label{p.opendense}
	Assumption (C) is a $C^0$ open condition among continuous functions. Furthermore, there exists a residual set $\cR\subset \mathfrak{X}^1(\bM)$ such that for every $X\in\cR$ and every sectional-hyperbolic attractor of $X$, Assumption (C) holds $C^0$ densely in the space of H\"older continuous functions.
\end{proposition}

\begin{proof}
	The topological pressure $P(\phi, X|_\Lambda)$ is a continuous function of $\phi$ under $C^0$ topology (see~\cite[Theorem 9.7]{Wal}). On the other hand, $\phi(\sigma)$ also varies continuously in $C^0$ topology for every $\sigma\in\Sing(X)$. Therefore Assumption (C) holds on an $C^0$ open subset of $C^0(\Lambda)$. 
	
	Next we consider the $C^0$ denseness. It is proven in~\cite[Theorem A]{YZ} (first proven for three-dimensional flows in~\cite[Corollary 1]{MSV}) that there exists a residual set $\cR\subset \mathfrak{X}^1(\bM)$ such that for every $X\in\cR$ and every sectional-hyperbolic attractor of $X$, there exists a dense (in fact, residual) subset $\mathfrak D\subset C^0(\Lambda)$ such that for every $\phi\in \mathfrak D$, the maximal value of the function $g_\phi(\mu):= \mu(\phi)$ is achieved by a unique, ergodic measure $\bar\mu_\phi$ with $\supp\bar\mu_\phi=\Lambda$. The uniqueness of this measure shows that for every singularity $\sigma$, 
	$$
	\phi(\sigma) = \delta_\sigma(\phi) < \bar\mu_\phi(\phi)\le  h_{\bar\mu_\phi}(X|_\Lambda)+\bar\mu_\phi(\phi) \le P(\phi,X|_\Lambda).
	$$
	This shows that Assumption (C) holds $C^0$ open and densely in  $C^0(\Lambda)$, and therefore in the space of H\"older continuous functions. 
\end{proof}

The rest of this paper is devoted to the proof of Theorem~\ref{m.B}. The proof of Theorem~\ref{m.Lorenz} and Theorem~\ref{m.A} are contained in Section~\ref{ss.proofAB}.

\subsection{Structure of the paper and the sketch of the proof of Theorem~\ref{m.B}}
Before starting the proof, we would like to briefly explain the idea and highlight the structure of this article. The main tools used in this paper are:
\begin{itemize}
	\item an improved Climenhaga-Thompson criterion for the uniqueness of equilibrium states (Theorem~\ref{t.improvedCL}) developed by the authors of this paper in a previous work~\cite{PYY21};
	\item Liao's scaled tubular neighborhood theorem (see Section~\ref{ss.liao}); 
	\item the existence of a coordinate system on the normal bundle over regular points given by fake foliations (see~\cite{BW}), and the transition to another fake foliation coordinate system near each singularity; see Section~\ref{s.sing};
	\item a careful study on the Pliss times of the hyperbolicity of the $F^{cu}_N$ bundle and on the recurrence property to a small neighborhood of $\Sing(X)$ (Section~\ref{s.pliss}), which proves that if an orbit segment $(x,t)$ is ``good'', then the orbit of every point in the $(t,\vep)$-Bowen ball of $x$ is contained in Liao's scaled tubular neighborhood.
\end{itemize}

Let us be more precise. Climenhaga and Thompson provide in their remarkable work~\cite{CT16} an easy-to-verify topological criterion (abbr.\ CT criterion) on the uniqueness of equilibrium states for flows and homeomorphisms. The criterion consists of the following ingredients:
\begin{enumerate}[label={(\Roman*)}]
	\item[(0)] there exists a large collection of orbit segments $\cD$ with a `decomposition' $(\cP,\cG,\cS)$; thinking of $\cG$ as the `good core', for any positive number $M$ we define $\cG^M$ by adding a `prefix' from $\cP$ and a `suffix' from $\cS$  with length at most $M$ to orbit segments in $\cG$. Then it is easy to see that
	$$
	\cG\subset \cG^1\subset\cG^2\subset\cdots\subset \cD
	$$
	which eventually exhausts the set $\cD$;
	\item the system has the specification property on $\cG^M$ at scale $\delta>0$, for every $M\in\NN$;
	\item the potential function $\phi$ has bounded distortion (the Bowen property) on $\cG$ at a given scale {$\vep> 40\delta$};
	\item the `bad' parts of the system, consisting of $\cP,\cS,\cD^c$ and those points where the system is not expansive, must have smaller pressure compared to $\cG$.
\end{enumerate}
Under these assumptions, they prove that there exists a unique equilibrium state with the upper and lower Gibbs property.

It is worth noting that all the assumptions listed above are made at certain fixed scales, namely $\delta$ and $\vep$. This is particularly useful for certain applications such as the Bonatti-Viana diffeomorphism on $\TT^4$~\cite{CFT18}, Ma\~n\'e's derived from Anosov diffeomorphism on $\TT^3$~\cite{CFT19}, and geodesic flows on surfaces without conjugate points~\cite{CKW}.

Unfortunately, it turns out that their criterion is not applicable to our situations due to Assumption (I), the specification on every $\cG^M, M\ge 0$. To overcome this difficulty, we prove, in a previous work~\cite{PYY21}, that Assumption (I) can be replaced by the following, much weaker assumption
\begin{center}
	(I''). $\cG$ has tail (W)-specification at scale $\delta$.
\end{center}
We will provide more explanation later when we discuss the specification property of singular flows (see the beginning of Section~\ref{s.spec}). More details regarding the CT criterion and our improved version can be found in Section~\ref{s.p2}.

In Section~\ref{s.pliss} we explain one of the main ingredients of this work: the Pliss lemma (Theorem~\ref{t.pliss}; see also~\cite{Pliss}). We will use the Pliss Lemma in two ways:
\begin{enumerate}
	\item first we will apply the Pliss lemma on the scaled linear Poincar\'e flow and obtain $cu$-hyperbolic times which have a positive density (Lemma~\ref{l.hyptime});
	\item then we will use the Pliss lemma on the recurrence to a small neighborhood $W$ of singularities in $\Lambda$. Roughly speaking, we prove in Lemma~\ref{l.rec} that, for ``typical''\footnote{Here ``typical'' means that orbits which do not have this property must have smaller topological pressure.} orbit segments $(x,t)$, there exist times $t_i\in [0,t]$ such that every time the orbit sub-segment $(x,t_i)$ spends some time inside $W$, it then must spend a much longer amount of time outside $W$. This estimate applies to every time (not just the overall frequency) that the orbit visits $W$. More importantly, we will prove that the density of such $t_i$'s can be made arbitrarily close to one. 
\end{enumerate}
Combining these two cases together, we prove that for a ``typical'' orbit segment $(x,t)$, there exists a time $s\in[0,t]$ that is a $cu$-hyperbolic time {\bf and} has the good recurrence property to $W$ mentioned above. Such times will play an important role in the construction of the good core $\cG$ (Lemma~\ref{l.bihyptime}). 

Then we prove Theorem~\ref{m.B} in Section~\ref{s.mAproof}. In view of the (improved) CT criterion, the good core $\cG$ is defined as those orbit segments $(x,t)$ such that $x$ is away from a neighborhood of $\Sing(X)$ and away from a compact subset of $\cup_\sigma W^u(\sigma)$ (we will explain this later), and $x_t$ is both a $cu$-hyperbolic time and has the good recurrence property to $W$. Then we will state two theorems (Theorem~\ref{t.Bowen} and~\ref{t.spec}) concerning the Bowen property and specification on $\cG$.  The proof of these theorems occupies the rest of this paper.

In Section~\ref{s.sing} and~\ref{s.bowen} we prove Theorem~\ref{t.Bowen} on the Bowen property of $\cG$. Let us explain the primary difficulties in the proof and how to overcome them.
\begin{itemize}
	\item The dominated splitting $T_\Lambda\bM = E^{ss}\oplus F^{cu}$ induces a dominated splitting $E^s_N\oplus F^{cu}_N$ on the normal bundle of regular points that is invariant under the scaled linear Poincar\'e flow (Lemma~\ref{l.basic}).
	\item  It is easy to see that vectors in $E^s_N$ are uniformly contracted by the (scaled) linear Poincar\'e flow, so there exist certain stable manifolds on $\cN(x)$ (here $\cN(x)$ is the image of the normal bundle $N(x) = \langle X\rangle^\perp$ under the exponential map at $x$) for $x\in \Reg(X)\cap\Lambda$. However, the $F^{cu}_N$ bundle is non-uniformly hyperbolic. In particular, as orbit segments move away from a singularity, certain direction in $F^{cu}_N$ (e.g., the direction that is almost parallel to $E^c(\sigma)$) is contracted by the (scaled) linear Poincar\'e flow. As a result, we do not have any uniform estimates on the hyperbolicity in $F^{cu}_N$. We solve this issue by considering orbit segments $(x,t)$ such that $x_t$ is a  $cu$-hyperbolic times for $(\psi_t^*)$. See Section~\ref{s.pliss}.
	\item There is no local product structure on $\cN$. We have to use fake foliations $\cF^{s/cu}_{x,\cN}$ (originally developed in~\cite{BW} for partially hyperbolic diffeomorphisms), which exist only at the uniformly relative scale $\cN_{\rho_0|X(x)|}(x)$. 
	Since the scale of the fake foliation is not uniform, one loses control when an orbit segment enters a small neighborhood of some singularity. To solve this problem, we construct another family of fake foliations $\cF^*_\sigma, *=s,c,u,cs,cu$ near each singularity. These foliations are defined at a uniform scale. Since $\cF^*_{x,N}$ and $\cF_\sigma^*$ are not compatible (since the former is defined using the linear Poincar\'e flow and the latter using the tangent flow), every time a flow orbit enters and leaves a small neighborhood of a singularity, we must consider a change of coordinates.  During this process, only the larger coordinate is preserved. The estimate on the change of coordinates is carried out under a general setting in Section~\ref{ss.cone}. Then, in Section~\ref{ss.nearsing} we state two key lemmas {Lemma~\ref{l.Elarge} and~\ref{l.Flarge}} that deal with two cases (large $E$-length and large $F$-length) separately. These two lemmas show that when an orbit segment enters and leaves a small neighborhood of a singularity, the large coordinate gains contraction/expansion. 
	\item Since the local product structure formed by $\cF_{x,N}^{s/cu}$ only exists in Liao's scaled tubular neighborhood, to prove the Bowen property one must show that every $y$ in the $(t,\vep)$-Bowen ball of $x$ is in fact $\rho$-scaled shadowed by the orbit of $x$ up to time $t$. This is the Main Proposition (Proposition~\ref{p.key}). For this purpose, we introduce a new Pliss time which captures the recurrence property to a neighborhood $W$ of singularities. It requires that for a good orbit segment $(x,t)$ and every $s\in[0,t]$, the orbit segment $(x_s,t-s)$ can only spend a small portion of its time inside $W$. In particular, if during its last visit to $W$, the orbit segment $(x,t)$ spends time $T$ in $W$, it must then spend $cT,c\gg1$, amount of time outside $W$ (see Definition~\ref{d.RecPliss} for the precise formulation). This allows the hyperbolicity accumulated outside $W$ (under the scaled linear Poincar\'e flow) to overcome the lack of estimate inside it (recall that $\psi_t^*$ has a bounded norm  for every fixed $t$; see Proposition \ref{p.tubular3}). Furthermore, we prove the simultaneous existence of the recurrence Pliss time and the $cu$-hyperbolic time mentioned earlier. These simultaneous Pliss times play a central role in the construction of the good core $\cG$. 
	The proof of the Main Proposition, together with the Bowen property on $\cG$ occupies Section~\ref{s.bowen}.
\end{itemize}

The proof of the specification property on $\cG$ can be found in Section~\ref{s.spec}. In particular, we prove that the  specification property at scale $\delta$ holds on the collection of orbit segments with the following properties:
\begin{itemize}
	\item $x$ is away from some compact neighborhoods $\CP_\delta$ contained in the unstable manifold of every singularity $\sigma\in \Sing(X)\cap\Lambda$; 
	\item $x_t$ is away from singularities; and
	\item $x_t$ is a $cu$-hyperbolic time.
\end{itemize}  
In particular, the choice of $\cG$ depends on the scale $\delta$, and therefore we need an improved $CT$-criterion which replaces Assumption (I) with ``$\cG$ has specification at the fixed scale $\delta$''.

Finally, in the appendix we include the proof of several technical lemmas. In Appendix~\ref{s.parameter} we provide the readers with a short list of the parameters and notations that are frequently used. For convenience, we also provide the places where they first appear, as well as the dependence between certain parameters.

\section{Preliminary I: Singular flows}\label{s.p1}
In this section we collect some useful properties for singular vector fields. 

Throughout this article, $X$ is a $C^1$ vector field, and $\Lambda$ is a sectional-hyperbolic attractor.  Here an attractor is a compact invariant set $\Lambda$ that is topological transitive, such that for some neighborhood $U$ of $\Lambda$ it holds that $\Lambda = \bigcap_{t>0} f_t(U)$. We will write $\Sing(X)$ for the set of singularities of $X$, that is, points where $X(x) = 0$. Every point that is not a singularity is called a regular point, and the set of regular points is denoted by $\Reg(X)$.  We will also assume that all singularities in $\Lambda$ are hyperbolic and non-degenerate, which is a $C^1$ open and dense property.  This assumption guarantees that there are only finitely singularities.

From now on, we will consider every element of the set $\bM\times \RR^+$ as an orbit segment of the flow $(f_t)_t$ by identifying the pair $(x,t)$ with the orbit segment $\{f_s(x): s\in [0,t)\}$. This notation is consistent with~\cite{CT16} and~\cite{PYY21} on the (improved) CT criterion.

\subsection{The normal bundle and the scaled linear Poincar\'e flow}
Let $x$ be a regular point of $X$. We write 
$$
N(x)  = \langle X(x) \rangle^\perp\subset T_x\bM
$$
for the subspace of $T_x\bM$ consisting of vectors that are orthogonal to $X(x)$.  $N(x)$ is called the {\em normal plane} at the point $x$. Given $\rho>0$, we use $N_\rho(x)$ to denote the ball of radius $\rho$ about the origin, that is,
$$
N_\rho(x) = \{v\in N(x): |v|<\rho\}.
$$
The set $N = \bigsqcup_{x\in \Reg(X)} N(x)$ is called the {\em normal bundle}. It is worth noting that the normal bundle is only defined over the open set $\Reg(X)$.  Similarly, given a compact invariant set $\Lambda$, we defined the normal bundle over $\Lambda$ to be $N_\Lambda = \bigsqcup_{x\in \Lambda \cap \Reg(X)} N(x)$.

Writing $\pi_{N,x}: T_x\bM\to N(x)$ for the orthogonal projection to the normal plane $N(x)$, we define the {\em linear Poincar\'e flow} $\psi_t: N\to N$ to be the projection of the tangent flow to the normal bundle. To be more precise, given $v\in N(x)$, we let
\begin{equation}\label{e.Poin}
	\psi_t(v) = \pi_{N,x_t}\circ Df_t(v) = Df_t(v) -\frac{< Df_t(v), X(x_t) >}{\|X(x_t)\|^2}X(x_t),
\end{equation}
where $< .,. >$ is the inner product on $T_xM$ given by the Riemannian metric. The {\em scaled linear Poincar\'e flow} $\psi^*_t$ is the linear Poincar\'e flow scaled by the flow speed, that is,
\begin{equation}
	\psi_t^*(v) = \frac{\psi_t(v)}{\|Df_t\mid_{\langle X(x) \rangle}\|} = \frac{|X(x)|}{|X(x_t)|}\psi_t(v).
\end{equation}
It is easy to see that both $(\psi_t)_{t\in\RR}$ and $(\psi^*_t)_{t\in\RR}$ have the cocycle property that $\psi_{t+s} = \psi_t\,\circ\, \psi_s$. Despite the fact that $(\psi_t^*)_{t\in\RR}$ is only defined over the set of regular points which is not compact, it is still bounded in the following sense: 
\begin{lemma}\label{l.scaledflow}
	For any $\tau>0$, there exists $C_\tau>0$ such that for any $t\in[-\tau,\tau]$,
	$$
	\|\psi^*_t\|\le C_\tau.
	$$
\end{lemma}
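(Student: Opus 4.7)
The plan is to bound $\|\psi_t^*\|$ as a product of two quantities: the norm of the unscaled linear Poincar\'e flow $\psi_t$, and the scaling ratio $|X(x)|/|X(x_t)|$. Both will be controlled by the uniform Lipschitz constant of $X$.

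First I would fix $K := \sup_{y\in\bM}\|DX(y)\|<\infty$, which is finite because $X$ is $C^1$ on the compact manifold $\bM$. Note that if $x\in\Reg(X)$, then the entire orbit $\{x_s:s\in\RR\}$ lies in $\Reg(X)$, since uniqueness of solutions of $\dot y = X(y)$ forbids a regular orbit from reaching a zero of $X$ in finite time. Therefore, for any $x\in\Reg(X)$ and $t\in[-\tau,\tau]$, all quantities below are well-defined.

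Next, I would bound the unscaled linear Poincar\'e flow. Since $\pi_{N,x_t}$ is an orthogonal projection, $\|\pi_{N,x_t}\|=1$, so
\[
\|\psi_t(v)\| \;\le\; \|Df_t(v)\| \;\le\; \|Df_t\|\cdot\|v\|.
\]
A standard Gronwall argument applied to the variational equation $\frac{d}{dt}Df_t = DX(x_t)\circ Df_t$ gives $\|Df_t\|\le e^{K|t|}$, hence $\|\psi_t\|\le e^{K|t|}$.

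Then I would bound the scaling ratio. For $x\in\Reg(X)$, set $\alpha(s):=\log|X(x_s)|$; since $x_s$ is regular, $\alpha$ is $C^1$, and differentiating gives
\[
\alpha'(s) \;=\; \frac{\bigl\langle DX(x_s)\,X(x_s),\,X(x_s)\bigr\rangle}{|X(x_s)|^{2}},
\]
so by Cauchy--Schwarz $|\alpha'(s)|\le \|DX(x_s)\|\le K$. Integrating on $[0,t]$ (or $[t,0]$ if $t<0$) yields
\[
\bigl|\log(|X(x_t)|/|X(x)|)\bigr| \;\le\; K|t|,
\qquad\text{so}\qquad
\frac{|X(x)|}{|X(x_t)|}\;\le\; e^{K|t|}.
\]
Combining the two estimates, $\|\psi_t^*\|\le e^{2K|t|}\le e^{2K\tau}=:C_\tau$, uniformly in $x\in\Reg(X)$ and $t\in[-\tau,\tau]$, which is the desired bound.

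The argument is essentially routine; the only subtle point to flag is the forward/backward invariance of $\Reg(X)$ that guarantees the scaling ratio never becomes a $0/0$ indeterminacy along a regular orbit. There is no genuine obstacle here, and one should expect the estimate to be sharp only up to the constant in the exponent.
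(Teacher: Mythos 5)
Your proof is correct and follows essentially the same route as the paper: bound the unscaled flow $\|\psi_t\|$ from above and the rescaling factor $\|Df_t\mid_{\langle X(x)\rangle}\|=|X(x_t)|/|X(x)|$ from below, uniformly for $t\in[-\tau,\tau]$. The paper simply asserts these two uniform bounds, while you supply the (routine) Gronwall-type estimates making them explicit; no substantive difference.
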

\begin{proof}
	For each $\tau>0$ and $t\in [-\tau,\tau]$, $\|\psi_t\|$ is bounded from above and $\|Df_t\mid_{\langle X(x) \rangle}\|$ is bounded away from zero. So the upper bound of $\psi_t^*$ follows. 
\end{proof}

\subsection{Dominated splitting for $(Df_t)$, $(\psi_t)$, and $(\psi_t^*)$}\label{ss.DS}
Let $T_\Lambda\bM = E\oplus F$ be a dominated splitting on a compact invariant set $\Lambda$ under the tangent flow $(Df_t)_t$.
By replacing the vector field $X$ with $cX$ for some $c>1$ sufficiently large,\footnote{Note that speeding up or slowing down the flow (and changing the potential function $\phi$ to $|c\phi|$) does not affect the space of equilibrium states; for more details on this fact and its proof, see~\cite[Lemma 4.1]{PYY21}.} we may assume that 
\begin{equation}\label{e.onestepDS}
	\|Df_1\mid_{E(x)}\| \|Df_{-1}\mid_{F(x_t)}\| \le \frac12,
\end{equation}
that is, $E\oplus F$ is a {\em one-step} dominated splitting for the  time-one map $f_1$. Using the same argument, if $Df_t|_{F}$  is sectional-expanding in the sense of~\eqref{e.dom}, then one can again speed up the flow to achieve the following {\em one-step sectional-expansion}: for some $\overline\lambda>1$ one has 
\begin{equation}\label{e.onestepexp}
	|\det Df_1(x)|_{V_x}| \ge \overline\lambda\mbox{ for all two-dimensional subspace } V_x\subset F. 
\end{equation}
Also note that the angle between $E$ and $F$ is bounded away from zero. Therefore, using a continuous change of the Riemannian metric, we may assume that $E$ and $F$ are orthogonal at every $x\in\Lambda$.

Given  $\alpha > 0$ and $x \in \Lambda$, the {\em $(\alpha, F)$-cone} on the tangent space $T_xM$ is defined as
$$
C_\alpha(F(x)) = \{v\in T_x\bM:  v = v_E + v_F, v_E \in E, v_F\in F \mbox{ and } |v_E|\le \alpha|v_F|\}.
$$
For every $\alpha>0$, ~\eqref{e.onestepDS} implies that the cone field $C_\alpha(F(x))$, $x \in \Lambda$, is forward invariant by $Df_1$,
i.e., for any $x \in  \Lambda$, $Df_1(C_\alpha(F(x))) \subset  C_{\frac12 \alpha}(F(x_1))$. Similarly, we can define the $(\alpha, E)$-cone field $C_\alpha(E(x))$ for $x\in\Lambda$, which is invariant under $Df_{-1}$. 
By continuity, the cone fields $C_\alpha(F) $ and $C_\alpha(E) $ can be extended to invariant cone fields in a small neighborhood of $\Lambda$.  For more details and properties on dominated splittings, see for instance~\cite[Appendix B]{BDV} and~\cite[Section 2.6]{ArPa10}.

For an embedded disk $D$ centered at a point $x$ with $\dim D = \dim F$, we say that {\em $D$ is tangent to the $(\alpha,F)$-cone with diameter $\zeta>0$}, if  $\exp_{x}^{-1}(D)$ \footnote{In particular, we require that the diameter of $D$ is less than the injectivity radius of $\bM$.} is the graph of a $C^1$ function $g: F(x)\cap B_\zeta(0_x)\to E(x)$ with $\|Dg\|_{C^0}<\alpha$. Here $B_\zeta(0_x)$ is the ball in $T_x\bM$ with radius $\zeta$ centered at the origin.
Note that being tangent to the $(\alpha, F)$-cone implies that the tangent space at every point of $\exp_{x}^{-1}(D)$ is contained in the $(\alpha, F)$-cone at $x$. However, the converse is not true: there exists a disk $D$ with $\dim D = \dim F$ whose tangent subspace at every point is contained in the $(\alpha,F)$-cone, but $\exp_{x}^{-1}(D)$ is not the graph of any $C^1$ function (for example, take $D$ to be a spiral staircase).

Now we turn our attention to the (scaled) linear Poincar\'e flow. A {\em dominated splitting over an invariant set $\Lambda$ under the linear Poincar\'e flow} is a continuous splitting of the normal bundle $N_\Lambda = \Delta_1\oplus \Delta_2$ that is invariant under the linear Poincar\'e flow, together with two constants $C>0$ and $\lambda>1$, such that for every $x\in \Reg(X){\cap\Lambda}$ and $t>0$, we have 
$$
\|\psi_t\mid_{\Delta_1(x)}\| \|\psi_{-t}\mid_{\Delta_2(x_t)}\| < C\lambda^{-t}.
$$
{\em Dominated splittings under the scaled linear Poincar\'e flow} are defined similarly. Since for any $x\in\Reg(X)$ and $u,v\in N(x)$ one has 
$$
\frac{|\psi_t^*(u)|}{|\psi_t^*(v)|} = \frac{\frac{|\psi_t(u)|}{\|Df_t\mid_{\langle X(x)\rangle}\|}}{\frac{|\psi_t(v)|}{\|Df_t\mid_{\langle X(x)\rangle}\|}}=\frac{|\psi_t(u)|}{|\psi_t(v)|},
$$
we see that every dominated splitting for $(\psi_t)_t$ is a dominated splitting for $(\psi_t^*)_t$, and vice versa.

One way to obtain a dominated splitting for $\psi_t^*$  is by projecting a dominated splitting $E\oplus F$ for the tangent flow to the normal bundle. To be more precise, let $E\oplus F$ be a dominated splitting under the tangent flow, such that $X\subset F$ at every regular point (or $X\subset E$, in which case just consider $-X$). Let $E_N$, $F_N$ be the orthogonal projection of $E$ and $F$ to the normal bundle $N$, respectively. Then it is easy to see that the splitting $N = E_N\oplus F_N$ is invariant under the linear Poincar\'e flow and therefore invariant under its scaling $(\psi_t^*)_t$. 

The following lemma can be found in~\cite[Lemma 2.3]{BGY}; see also~\cite[Proposition 3.1]{CDYZ}.

\begin{lemma}\label{l.splitting}
	Let $T_\Lambda\bM = E\oplus F$ be a dominated splitting under the tangent flow with $X(x)\in F(x),\forall x\in\Lambda$. Then  the splitting $N_\Lambda = E_N\oplus F_N$ is a dominated splitting for the linear Poincar\'e flow, therefore a dominated splitting for the scaled Linear Poincar\'e flow.
\end{lemma}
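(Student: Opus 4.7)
The plan is to verify two things: first, that the splitting $N_\Lambda = E_N\oplus F_N$ is invariant under the linear Poincar\'e flow $(\psi_t)$; second, that the domination inequality for $(Df_t)$ on $E\oplus F$ descends to $(\psi_t)$ on $E_N\oplus F_N$ with essentially the same constants. The conclusion for $(\psi_t^*)$ is then free, from the observation already recorded above Lemma~\ref{l.splitting} that dominated splittings for $\psi_t$ and $\psi_t^*$ coincide.

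For the invariance step, I would first exploit the orthogonality assumption $E\perp F$ (in force throughout Section~\ref{ss.DS}) together with the hypothesis $X(x)\in F(x)$: this forces $X(x)\perp E(x)$, so $E(x)\subset N(x)$ and hence $E_N(x)=E(x)$; it also gives the orthogonal decomposition $F(x)=\langle X(x)\rangle\oplus F_N(x)$ inside $T_x\bM$. Invariance of $E_N$ under $\psi_t$ is then transparent: for $v\in E(x)$ one has $Df_t(v)\in E(x_t)\subset N(x_t)$, so $\psi_t(v)=Df_t(v)$, and in particular $\|\psi_t|_{E_N(x)}\|=\|Df_t|_{E(x)}\|$. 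For $F_N$, the inclusion $F_N(x)\subset F(x)$ together with the $Df_t$-invariance of $F$ gives $Df_t(F_N(x))\subset F(x_t)$; projecting onto $N(x_t)$ and using the orthogonal decomposition $F(x_t)=\langle X(x_t)\rangle\oplus F_N(x_t)$ shows $\psi_t(F_N(x))\subset F_N(x_t)$.

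For the domination step, the key estimate is
\begin{equation*}
\|\psi_{-t}|_{F_N(x_t)}\|\le \|Df_{-t}|_{F(x_t)}\|,
\end{equation*}
which follows because $\psi_{-t}=\pi_{N,x}\circ Df_{-t}$ on $F_N(x_t)$ and the orthogonal projection $\pi_{N,x}$ is norm non-increasing. Combined with the equality on the $E$-side already noted, this yields
\begin{equation*}
\|\psi_t|_{E_N(x)}\|\cdot\|\psi_{-t}|_{F_N(x_t)}\|\le \|Df_t|_{E(x)}\|\cdot\|Df_{-t}|_{F(x_t)}\|\le C_0\lambda_0^{-t},
\end{equation*}
which is the sought domination for $(\psi_t)$, with the same constants $(C_0,\lambda_0)$ as in~\eqref{e.dom}.

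There is no genuine obstacle here: the whole argument is elementary linear algebra. The only delicate point is that the identification $E_N=E$ and the clean orthogonal split $F=\langle X\rangle\oplus F_N$ both rely on the metric normalization making $E\perp F$, and I would state explicitly where this normalization is being used. Nothing additional is needed to transfer the conclusion to $(\psi_t^*)$.
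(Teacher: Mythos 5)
Your argument is correct, but it is not the route the paper takes: you prove the lemma only under the metric normalization $E\perp F$, whereas the paper's proof works for an arbitrary metric. In the general case $E_N\ne E$, and the paper instead lifts a unit vector $u\in E_N(x)$ to $u'=\pi_{N,x}^{-1}(u)\in E(x)$ and uses that the angle between $E$ and $X$ is bounded away from zero (a consequence of domination together with $X\subset F$) to get a uniform bound $|u'|\le c_1$; it then compares $|\psi_t(u)|\le|Df_t(u')|$ with $|Df_t(v')|=|\psi_t(v)|$ for $v'=Df_{-t}\circ\psi_t(v)\in F(x)$, obtaining domination for $\psi_t$ with constant $Cc_1$. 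Your proof is essentially what the paper records afterwards in Remark~\ref{r}: under orthogonality one has $E_N=E$, $F=\langle X\rangle\oplus F_N$ orthogonally, $\|\psi_t|_{E_N}\|=\|Df_t|_E\|$ and $\|\psi_{-t}|_{F_N(x_t)}\|\le\|Df_{-t}|_{F(x_t)}\|$, so the constants $(C_0,\lambda_0)$ are preserved --- which is exactly what is needed later for the one-step domination~\eqref{e.onestepDS1}. What the paper's version buys is metric-independence, matching the generality in which the lemma is stated and cited from~\cite{BGY, CDYZ}; what yours buys is a shorter argument with sharp constants, plus an explicit verification of the invariance of $E_N\oplus F_N$, which the paper only asserts as ``easy to see.'' One caution if you keep your route: the normal bundle and the projections $E_N,F_N$ themselves depend on the metric, so invoking the change of metric that makes $E\perp F$ requires at least a remark that domination for the linear Poincar\'e flow transfers between uniformly equivalent metrics (via the uniformly bounded projections along the flow direction between the two normal bundles); as written, your proof establishes the lemma only for the normalized metric.
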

\begin{proof}
	Despite our general assumption that $E$ and $F$ are orthogonal, here we shall provide a general proof without this assumption. 
	Take any $x\in\Lambda\cap \Reg(X)$ and unit vectors $u\in E_N(x)$, $v\in F_N(x)$. Since $X(y)\in F(y)$ for every $y$, we have $\psi_t(v)\in F_N(x_t)\subset  F(x_t)$. Let $u'=\pi_{N,x}^{-1}(u)\in E(x)$ and $v'=Df_{-t}\circ\psi_t(v)\in F(x)$. Since $E\oplus F$ is dominated under  $(Df_t)_t$, for $t>0$ we must have 
	\begin{equation}\label{e.scale0}
		\frac{|Df_t(u')|}{|Df_t(v')|} < C \lambda^{-t}\frac{|u'|}{|v'|},
	\end{equation}
	for some $C>0$ and $\lambda>1$. 
	
	Since $E\oplus F$ is dominated and  $X\subset F$, the angle between $E$ and $X$ must be away from zero. On the other hand, since  $N(x)$ is the orthogonal complement of $\langle X(x)\rangle$, the angle between  $E(x)$ and $N(x)$ must be away from $\pi/2$. This shows that there exists some constant $c_1\ge 1$ independent of $x$, such that for all unit vectors $u\in E_N(x)$, $|u'|=|\pi_{N,x}^{-1}(u)|<c_1$. Note also that $Df_t(v') = \psi_t(v)$. It then follows from~\eqref{e.scale0} that 
	\begin{equation}\label{e.scale1}
		\frac{|Df_t(u')|}{|\psi_t(v)|} < Cc_1 \lambda^{-t}\frac{1}{|v'|}.
	\end{equation}

	From the definition of $\psi_t$ and $\pi_{N,x}$, we have 
	$$
	\psi_t(u) = \pi_{N,x_t}\circ Df_t(u) = \pi_{N,x_t}\circ Df_t(u') \,\mbox{ and }\pi_{N,x}(v')=v,
	$$
	thus
	\begin{equation}\label{e.scale2}
		|\psi_t(u)|\le |Df_t(u')| \mbox{ and } |v'|\ge |v|=1,
	\end{equation}
	Combining (\ref{e.scale1}) and (\ref{e.scale2}) we get
	$$
	\frac{|\psi^*_t(u)|}{|\psi^*_t(v)|} = \frac{|\psi_t(u)|}{|\psi_t(v)|}< Cc_1 \lambda^{-t}.
	$$
	This shows that $E_N\oplus F_N$ is a dominated splitting under the linear Poincar\'e flow and under the scaled linear Poincar\'e flow. 
\end{proof}
\begin{remark}\label{r}
	It is worth noting that, if $E$ and $F$ are orthogonal with $X\subset F$, then $E_N=E$ and $F_N\subset F$. In this case one can take $u'=u$ in the previous proof, and consequently $c_1=1$. This shows that the domination constants $C$ and $\lambda$ under $(Df_t)_t$ and under $(\psi^*_t)_t$ are the same. In this case,~\eqref{e.onestepDS} implies the following one-step domination for the scaled linear Poincar\'e flow:
	\begin{equation}\label{e.onestepDS1}
		\|\psi^*_1\mid_{E_N(x)}\| \|\psi^*_{-1}\mid_{F_N(x_t)}\| \le \frac12.
	\end{equation}
\end{remark}

We conclude this section by pointing out that given $\alpha>0$, one can define the $(\alpha,F_N)$-cone field on the normal bundle by
$$
C^N_\alpha(F_N(x)) = \{v = v_{E_N} + v_{F_N}\in N(x): |v_{E_N}|\le\alpha|v_{F_N}| \}.
$$ 
Then~\eqref{e.onestepDS1} implies the forward invariance of the $(\alpha,F_N)$-cone field  under $\psi^*_1$. A similar statement holds for the $(\alpha,E_N)$-cone field. 

\subsection{Liao's scaled tubular neighborhood}\label{ss.liao}
In this section we introduce Liao's work~\cite{Liao96} on the scaled tubular neighborhood. More details can be found in~\cite{GY} and~\cite{WW}. {From now on, we always assume that all singularities are hyperbolic and non-degenerate.}

Let $X$ be a $C^1$ vector field on $\bM$, and $\mathfrak{d}_0$ be the injectivity radius of $\bM$. We define, for every $x\in\Reg(X)$,
$$
\cN(x) = \exp_x(N_{\mathfrak{d}_0}(x)).
$$
$\cN(x)$ will be referred to as the {\em normal plane at $x$ on the manifold}. It is clear that $\cN(x)$ is transverse to the flow direction at the point $x$. However, this is not true for other points on $\cN(x)$; in particular, if $x$ is sufficiently close to $\Sing(X)$, it is possible that  $\cN(x)\cap\Sing(X)\ne\emptyset$; as a result, $\cN(x)$ should not be used as a cross-section for the flow. To solve this issue, we define
$$
\cN_{\rho}(x) =\{y\in \cN(x): d_{\cN(x)}(x,y)<\rho\},
$$
where $d_{\cN(x)}(\cdot,\cdot)$ is the distance within the submanifold $\cN(x)$. Below we will prove that for $\rho_0>0$ sufficiently small, $\cN_{\rho_0|X(x)|}(x)$ is transversal to the flow direction at every point, and consequently can be used as a cross-section for the flow. The scale $\rho_0|X(x)|$ should be interpreted as a ``uniformly relative'' scale,  that is, a scale proportional to the flow speed at every regular point.

To this end, for every $x\in\Reg(X)$ and $\rho>0$, denote by 
$$
U_{\rho|X(x)|}(x)   = \left\{v+tX(x)\in T_x\bM: v\in N(x), |v|\le \rho |X(x)|, |t|<\rho\right\}
$$
the flow box in the tangent space of $x$ of relative size $\rho$. Consider the $C^1$ map:
\begin{equation}\label{e.Fx}
	F_x: U_{\rho|X(x)|}(x)\to\bM,\,\, F_x(v+tX(x)) = f_t(\exp_{x}(v)),
\end{equation} 
and note that $F_x(0)=x$. For a linear operator $A:\RR^n\to\RR^n$, define the {\em mininorm} of $A$ to be
$$
m(A) = \inf \{|A(v)|: v\in\RR^n, |v|=1\}.
$$
The next proposition states that $m(DF_x)$ and $\|DF_x\|$ are uniformly bounded.

\begin{proposition}\label{p.Fx}\cite[Proposition 2.2]{WW}
	For any $C^1$ vector field $X$ on $\bM$, there exists $\overline\rho_0>0$ such that for any regular point $x\in\Reg(X)$, the map $F_x: U_{\overline\rho_0|X(x)|}(x)\to \bM$ is an embedding whose image contains no singularity of $X$, and satisfies $m(D_pF_x)\ge\frac13$ and $\|D_pF_x\|\le 3$ for every $p\in  U_{\overline\rho_0|X(x)|}(x)$.
\end{proposition}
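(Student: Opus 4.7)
The plan is to show that, uniformly in the regular point $x$, the map $F_x$ is a small $C^1$-perturbation of the canonical embedding $U_{\overline\rho_0|X(x)|}(x)\hookrightarrow T_x\bM\xrightarrow{\exp_x}\bM$; the only reason uniform estimates are possible is that both the transverse and the flow directions in $U_{\rho|X(x)|}(x)$ have \emph{physical} radius $O(\rho|X(x)|)$, so the $C^1$ size of $X$ alone governs the error.

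First I would compute $D_0F_x$ directly. Writing a tangent vector to $U$ at the origin as $w+sX(x)$ with $w\in N(x)$ and $s\in\RR$, one has $D_0F_x(w+sX(x)) = w + sX(x)$, so $D_0F_x$ is the identity under the canonical identification. At a general $p = v+tX(x)\in U_{\overline\rho_0|X(x)|}(x)$, setting $y = \exp_x(v)$, the chain rule gives
$$
D_pF_x(w+sX(x)) = Df_t|_y \cdot D\exp_x(v)(w) + s\,X(f_t(y)).
$$
Let $L$ be a Lipschitz constant for $X$ on $\bM$. By choosing $\overline\rho_0$ so that $\overline\rho_0\sup|X|$ is below the injectivity radius of $\bM$ and $\overline\rho_0\le 1/(10L)$, the following hold uniformly in the regular point $x$ and in $p$ inside the tube: (i) the flow speed $|X(f_t(y))|$ stays in $[0.9|X(x)|,\,1.1|X(x)|]$ along the entire orbit segment $\{f_s(y):|s|\le\overline\rho_0\}$; (ii) $|X(f_t(y))-X(x)|\le C_1 L\overline\rho_0|X(x)|$; (iii) $\|Df_t|_y-\mathrm{Id}\|\le e^{L\overline\rho_0}-1$; and (iv) $\|D\exp_x(v)-\mathrm{Id}\|$ is small whenever $|v|\le\overline\rho_0\sup|X|$, with smallness uniform in $x$ by compactness of $\bM$. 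Measuring $w+sX(x)$ in the product norm $(|w|^2+s^2|X(x)|^2)^{1/2}$ and combining (i)--(iv) with the displayed formula yields
$$
\|D_pF_x - D_0F_x\|\le C'\overline\rho_0
$$
for a constant $C'$ depending only on $X$ and the background geometry.

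Choosing $\overline\rho_0$ so small that $C'\overline\rho_0\le 2/3$ then gives the two pointwise bounds $\|D_pF_x\|\le 5/3\le 3$ and $m(D_pF_x)\ge 1/3$ at once. Estimate (i) simultaneously shows that $F_x(U_{\overline\rho_0|X(x)|}(x))$ sits in a ball of radius $\le 2\overline\rho_0|X(x)|$ about $x$ on which $|X|\ge 0.9|X(x)|>0$, so the image avoids $\Sing(X)$. For injectivity I would pass to the chart $\tilde F_x = \exp_x^{-1}\circ F_x$, which is well defined because the image stays strictly inside the injectivity radius; the same estimates yield $\|D_p\tilde F_x-\mathrm{Id}\|\le 2/3$ on the convex tube $U_{\overline\rho_0|X(x)|}(x)\subset T_x\bM$, and the standard mean-value inequality $\|\tilde F_x(p)-\tilde F_x(q)-(p-q)\|\le (2/3)\|p-q\|$ forces $\|\tilde F_x(p)-\tilde F_x(q)\|\ge (1/3)\|p-q\|$, so $\tilde F_x$, and hence $F_x$, is an embedding.

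The main obstacle is exactly the uniformity in $x\in\Reg(X)$: as $x$ approaches $\Sing(X)$ the absolute scale $\overline\rho_0|X(x)|$ collapses, so a naive $C^1$-continuity argument at a single chart would degenerate. What resolves the issue is the rescaling by $|X(x)|$ hidden in the definition of $U_{\rho|X(x)|}(x)$: every estimate above reduces to a dimensionless perturbation of size $\overline\rho_0\cdot\|X\|_{C^1}$, insensitive to how close $x$ is to the singular set. This same feature is why Liao's scaled tubular neighborhoods, rather than classical tubular neighborhoods, are the right object for singular flows.
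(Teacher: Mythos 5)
Your argument is correct, and it is worth noting that the paper itself does not prove this statement at all: it is quoted verbatim from Wen--Wen \cite[Proposition 2.2]{WW}, so there is no internal proof to compare against. Your proposal reconstructs what is essentially the standard argument behind such rescaled estimates: compute $D_pF_x(w+sX(x)) = Df_t|_y\, D_v\exp_x(w) + s\,X(f_t(y))$, observe that $D_0F_x$ is the identity, and check that the deviation is uniformly small because every error term in the flow direction is proportional to $|X(x)|$ while the corresponding component of the input vector $w+sX(x)$ also carries a factor $|X(x)|$; this is exactly the mechanism that makes the estimate insensitive to $x$ approaching $\Sing(X)$, and you identified it correctly. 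The derivative bounds $m(D_pF_x)\ge\tfrac13$, $\|D_pF_x\|\le 3$, the absence of singularities in the image (flow speed stays comparable to $|X(x)|$ on a ball of radius $O(\overline\rho_0|X(x)|)$), and injectivity via the chart map $\tilde F_x=\exp_x^{-1}\circ F_x$ together with the mean-value inequality on the convex domain all go through as you wrote them.

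A few points are asserted rather than justified, but all are routine and none creates a gap: (a) comparing $D_pF_x$ with $D_0F_x$ requires an identification of $T_{F_x(p)}\bM$ with $T_x\bM$ (e.g.\ parallel transport along a path of length $O(\overline\rho_0|X(x)|)$), which is an isometry, so the bounds $m\ge 1-\tfrac23$ and $\|\cdot\|\le 1+\tfrac23$ follow as claimed; (b) item (i), the two-sided comparability of the flow speed along the whole orbit segment, needs a short bootstrap/continuity argument (a priori the orbit could leave the region of comparable speed before time $\overline\rho_0$), which works with a slightly larger constant in the Lipschitz budget; (c) the exponential-map error $\|D_v\exp_x-\mathrm{Id}\|$ is only $o(1)$ uniformly in $x$ (in fact $O(|v|^2)$ with curvature bounds), not literally $C'\overline\rho_0$, but any uniform modulus tending to zero suffices for the final choice of $\overline\rho_0$. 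With these caveats made explicit, your proof is a complete and self-contained substitute for the citation.
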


In particular, $F_x\left(U_{\overline\rho_0|X(x)|}(x)\right)$ contains a ball at $x$ of radius $\frac{\overline\rho_0}{3}|X(x)|$ for every $x\in\Reg(X)$.  Now let $\rho_0=\frac{\overline\rho_0}{3}$. For any $y\in B_{\rho_0|X(x)|}(x)$, there exists $v\in N_{\overline \rho_0|X(x)|}(x), t\in(-\overline\rho_0,\overline\rho_0)$ such that 
$$
y = F_x(v+tX(x)).
$$
Then we define the map 
\begin{equation}\label{e.Px1}
	\cP_x:  B_{\rho_0|X(x)|}(x)\to \cN_{\rho_0|X(x)|}(x),\,\, \cP_x(y) = F_x(v) = \exp_x(v). 
\end{equation}
In other words, $\cP_x$ projects every point $y\in B_{\rho_0|X(x)|}(x)$ to the normal plane $\cN_{\rho_0|X(x)|}(x)$ along the flow line through the point $y$. In particular, there exists a function 
$$\tau_x(y): B_{\rho_0|X(x)|}(x)\to [-\overline\rho_0,\overline\rho_0]$$
such that 
\begin{equation}\label{e.Px2}
	\cP_x(y) = f_{\tau_x(y)}(y). 
\end{equation}

Let $t\in\RR$ be fixed. Below we will consider the {\em sectional Poincar\'e map} $\cP_{t,x}(\cdot)$, which is the holonomy map from $\cN(x)$ to 
$\cN(x_t)$ (recall that $x_t=f_t(x)$) along flow lines. Note that the previous proposition implies that $\cN_{{\rho_0}|X(x)|}(x)$ is transverse to the flow direction at every point. By the continuous dependence of the flow on its initial condition, we see that for every $x\in\Reg(X)$ and $t\in \RR$ there exists $\rho_{x,t}>0$ such that the holonomy along flows lines is well-defined from $\cN_{\rho_{x,t}}(x)$ to $\cN(x_t)$. The following proposition, taken from~\cite{GY} and originally obtained by Liao~\cite{Liao96}, gives a quantitative estimate on $\rho_{x,t}$: it turns out that for fixed $t$, $\rho_{x,t}$ is also ``uniformly relative'' to the flow speed at $x$.

\begin{proposition}\label{p.tubular}\cite[Lemma 2.2]{GY}
	There exist $\rho_0>0$ and $K_0>1$, such that for every $\rho\le\rho_0$ and every regular point $x$ of $X$, the holonomy map 
	$$
	\cP_{1,x}: \cN_{\rho K_0^{-1} |X(x)|}(x) \to \cN_{\rho|X(x_1)|}(x_1)
	$$ 
	is well-defined, differentiable, and injective. 
\end{proposition}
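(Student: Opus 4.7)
The plan is to realize $\cP_{1,x}$ as the composition $\cP_{x_1}\circ f_1$ restricted to $\cN_{\rho K_0^{-1}|X(x)|}(x)$, and to choose a single multiplicative constant $K_0$, depending only on $\|DX\|_{C^0}$ and the universal bounds from Proposition~\ref{p.Fx}, that makes the composition well-defined, differentiable, and injective with image contained in $\cN_{\rho|X(x_1)|}(x_1)$.

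Set $L=e^{\|DX\|_{C^0}}$. The identity $Df_1(X(x))=X(x_1)$ together with the bounds $m(Df_1)\ge L^{-1}$ and $\|Df_1\|_{C^0}\le L$ yields the scale comparison $L^{-1}|X(x_1)|\le |X(x)|\le L|X(x_1)|$; this is the key point, since $|X(x)|$ itself may be arbitrarily small near $\Sing(X)$, but the ratio of flow speeds across one unit of time is uniformly bounded. For $y\in\cN_{\rho K_0^{-1}|X(x)|}(x)$ the mean value theorem gives
$$d(f_1(y),x_1)\le L\,d(y,x)\le L\rho K_0^{-1}|X(x)|\le L^2\rho K_0^{-1}|X(x_1)|.$$
If $K_0\ge L^2$ and $\rho\le \rho_0$, the right hand side is at most $\rho_0|X(x_1)|$, so by Proposition~\ref{p.Fx} the point $f_1(y)$ lies in $B_{\rho_0|X(x_1)|}(x_1)\subset F_{x_1}(U_{\overline\rho_0|X(x_1)|}(x_1))$ and the projection $\cP_{x_1}$ from~\eqref{e.Px1} is defined at $f_1(y)$. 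This produces a $C^1$ map $\cP_{1,x}(y):=\cP_{x_1}(f_1(y))=f_{1+\tau_{x_1}(f_1(y))}(y)\in\cN(x_1)$; differentiability is the implicit function theorem applied to the transverse intersection of the orbit of $y$ with the cross-section $\cN(x_1)$.

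To obtain the refined containment in $\cN_{\rho|X(x_1)|}(x_1)$ I would combine Remark~\ref{r.bdd} with the uniform derivative bounds for $F_{x_1}$ from Proposition~\ref{p.Fx} to conclude that $\cP_{x_1}$ is Lipschitz with a constant $C$ independent of $x$, in the intrinsic metric on $\cN(x_1)$; this yields
$$d_{\cN(x_1)}(\cP_{1,x}(y),x_1)\le C\,d(f_1(y),x_1)\le CL^2\rho K_0^{-1}|X(x_1)|\le \rho|X(x_1)|$$
whenever $K_0\ge CL^2$. Injectivity then reduces to transversality: if $\cP_{1,x}(y_1)=\cP_{1,x}(y_2)$ then $y_1=f_\tau(y_2)$ for some $|\tau|\le 2\overline\rho_0$, but $y_1,y_2\in\cN(x)$ which is transverse to the flow by Proposition~\ref{p.Fx}, forcing $\tau=0$ and $y_1=y_2$. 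The main subtlety, and the reason the statement is not immediate, is that both $|X(x)|$ and the flow-box geometry near $\Sing(X)$ degenerate simultaneously; this is circumvented precisely because Proposition~\ref{p.Fx} packages the non-uniformity into $|X(x)|$-scaled coordinates, after which a single flow-speed-independent $K_0$ works.
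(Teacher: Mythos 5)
First, a note on comparison: the paper does not prove this proposition at all — it is imported verbatim from \cite{GY} (Lemma 2.2 there, going back to Liao) — so your argument is a self-contained reconstruction rather than an alternative to a proof in the text. Your overall scheme is sound and in the spirit of Liao's scaled estimates: the Gronwall comparison $L^{-1}|X(x_1)|\le |X(x)|\le L|X(x_1)|$, the use of Proposition~\ref{p.Fx} to guarantee $f_1(y)\in B_{\rho_0|X(x_1)|}(x_1)$ so that $\cP_{x_1}$ from~\eqref{e.Px1} applies, and the uniform Lipschitz bound for $\cP_{x_1}$ (from $m(DF_{x_1})\ge\tfrac13$, $\|DF_{x_1}\|\le 3$ and Remark~\ref{r.bdd}) to land in $\cN_{\rho|X(x_1)|}(x_1)$ after enlarging $K_0$, all go through, with the minor caveat that passing from ambient to intrinsic distance on $\cN(x_1)$ uses geodesic convexity of the small ball, which is harmless.

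The one step that does not hold as written is injectivity. From $\cP_{1,x}(y_1)=\cP_{1,x}(y_2)$ you only get $y_1=f_\tau(y_2)$ with $|\tau|\le 2\overline\rho_0$, and transversality of $\cN(x)$ to the flow does not force $\tau=0$: transversality (equivalently, injectivity of the flow-box chart $F_x$) rules out returns only inside $U_{\overline\rho_0|X(x)|}(x)$, whose time-width is $\overline\rho_0$, whereas with $|\tau|$ up to $2\overline\rho_0$ the orbit could a priori leave the box and re-enter $\cN(x)$ elsewhere. The repair stays inside your framework: bound the projection times. Since $f_1(y_i)\in B_{L^2\rho K_0^{-1}|X(x_1)|}(x_1)$, pulling the minimal geodesic from $x_1$ to $f_1(y_i)$ back by $F_{x_1}^{-1}$ (derivative norm $\le 3$) and using $N(x_1)\perp X(x_1)$ gives $|\tau_{x_1}(f_1(y_i))|\le 3L^2\rho K_0^{-1}$; hence after enlarging $K_0$ (say $K_0\ge \max\{CL^2,12L^2\}$) one has $|\tau|=|\tau_2-\tau_1|<\overline\rho_0$. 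Then $y_1=F_x(v_2+\tau X(x))$ with $v_2+\tau X(x)\in U_{\overline\rho_0|X(x)|}(x)$, while also $y_1=F_x(v_1)$ with $v_1\in N(x)$, and injectivity of $F_x$ (Proposition~\ref{p.Fx}) plus $v_1,v_2\perp X(x)$ force $\tau=0$ and $v_1=v_2$. With this fix your proof is complete.
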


Applying Proposition~\ref{p.tubular} recursively, we obtain
\begin{proposition}\label{p.tubular1}
	For every $\rho\le\rho_0$, every regular point $x$ of $X$ and $T\in\NN$, the holonomy map 
	$$
	\cP_{T,x}: \cN_{\rho K_0^{-T} |X(x)|}(x) \to \cN_{\rho|X(x_T)|}(x_T)
	$$ 
	is well-defined, differentiable, and injective.
\end{proposition}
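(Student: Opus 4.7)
The plan is to proceed by induction on $T\in\NN$, with the base case $T=1$ being precisely Proposition~\ref{p.tubular}. For the inductive step, assume the conclusion for $T$ and any $\rho\le\rho_0$; I want to upgrade it to $T+1$. The essential ingredient is the \emph{cocycle property} for holonomy along flow lines: whenever both sides are defined on a common domain, $\cP_{T+1,x} = \cP_{1,x_T}\circ \cP_{T,x}$, because both sides describe the holonomy from $\cN(x)$ to $\cN(x_{T+1})$ along the flow (the interpretation of the range in the proposition as $\cN_{\rho|X(x_T)|}(x_T)$ — an apparent typographical slip writes $x_1$ — will be used throughout). Once the domains are matched, differentiability and injectivity of the composition follow automatically from the corresponding properties of each factor.

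The only real work is to track the relative scales so that the image of $\cP_{T,x}$ actually lands inside the domain of $\cP_{1,x_T}$. To arrange this, I first apply Proposition~\ref{p.tubular} at the regular point $x_T$ with parameter $\rho$, which gives a well-defined, differentiable and injective holonomy
$$
\cP_{1,x_T}:\cN_{\rho K_0^{-1}|X(x_T)|}(x_T)\to \cN_{\rho|X(x_{T+1})|}(x_{T+1}).
$$
Then I apply the inductive hypothesis at $x$ with the reduced parameter $\rho' := \rho K_0^{-1}\le\rho_0$, obtaining
$$
\cP_{T,x}:\cN_{\rho' K_0^{-T}|X(x)|}(x)\to \cN_{\rho'|X(x_T)|}(x_T) = \cN_{\rho K_0^{-1}|X(x_T)|}(x_T),
$$
whose domain radius is exactly $\rho K_0^{-(T+1)}|X(x)|$. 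The image of $\cP_{T,x}$ is now contained in the domain of $\cP_{1,x_T}$ by construction, so the composition $\cP_{1,x_T}\circ \cP_{T,x} = \cP_{T+1,x}$ is a well-defined, differentiable, injective map on $\cN_{\rho K_0^{-(T+1)}|X(x)|}(x)$, closing the induction.

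I do not anticipate any substantial obstacle: the argument is essentially bookkeeping of relative scales, where each application of Proposition~\ref{p.tubular} costs a factor of $K_0^{-1}$, and these accumulate multiplicatively into the advertised factor $K_0^{-T}$. The only point requiring care is to recognize that one must feed the \emph{smaller} parameter $\rho K_0^{-1}$ into the inductive hypothesis (rather than $\rho$ itself), so that the output of the first $T$ steps of the holonomy still fits into the tubular neighborhood on which the final step $\cP_{1,x_T}$ is controlled.
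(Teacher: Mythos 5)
Your induction is exactly the argument the paper intends when it says the proposition is obtained by "applying Proposition~\ref{p.tubular} recursively": each step feeds the reduced parameter $\rho K_0^{-1}$ into the previous stage so the image lands in the domain of the next one-step holonomy, and well-definedness, differentiability and injectivity pass to the composition. Your reading of the target as $\cN_{\rho|X(x_T)|}(x_T)$ (the $x_1$ in the statement being a typographical slip) is also the correct one.
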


Henceforth we will let $\rho_0$ be small enough such that both Proposition~\ref{p.Fx} and~\ref{p.tubular} are satisfied. 
Let us give an alternate definition for $\cP_{t,x}$ which is also useful. Fix $K>0$ sufficiently large (say, larger than $100\sup\{|X(x)|:x\in\bM\}$). It is easy to see (\cite[p.3191]{WW}) that
\begin{equation}\label{e.ballinclusion}
	f_t\left(B_{e^{-K|t|}\rho_0|X(x)|}(x)\right)\subset B_{\rho_0|X(x_t)|}(x_t). 
\end{equation}
Then one can define the sectional Poincar\'e map $\cP_{t,x}$ as
\begin{equation}\label{e.Ptx1}
	\cP_{t,x}: \cN_{e^{-K|t|}\rho_0  |X(x)|}(x) \to \cN_{\rho_0|X(x_t)|}(x_t),\,\, \cP_{t,x} = \cP_{x_t}\circ f_t,
\end{equation}
where $\cP_{x_t}$ is the projection to the normal plane $\cN_{\rho_0|X(x_t)|}(x_t)$ given by~\eqref{e.Px1} and~\eqref{e.Px2}. It is straightforward to check that this definition of $	\cP_{t,x}$ is consistent with the previous one, and $K_0$ in Proposition~\ref{p.tubular} is $e^{K}.$ 

We remark that $\cP_{t,x}(x) = x_t$; however, $\cP_{t,x}(y)$ is generally different from $y_t$ for $y\in  \cN_{\rho_0K_0^{-1}  |X(x)|}(x)$. To deal with this issue, for $x\in\Reg(X), y\in \cN_{\rho_0K_0^{-1}  |X(x)|}(x)$ and $t\in [0,1]$, we define
\begin{equation}\label{e.tau}
	\tau_{x,y}(t) = t+\tau_{x_t}(y_t)
\end{equation}
where $\tau_{x_t}(y_t)$ is given by~\eqref{e.Px2}. $\tau_{x,y}(t)$ is well-defined due to~\eqref{e.ballinclusion}. Then \eqref{e.Ptx1} implies that
\begin{equation}\label{e.tau1}
	f_{\tau_{x,y}(t)}(y)=\cP_{t,x}(y)\in \cN_{\rho|X(x_t)|}(x_t), \forall t\in [0,1].
\end{equation}
Moreover, using the implicit function theorem and Proposition~\ref{p.Fx}, we obtain the following upper bound for $|X|\cdot \left\|D_y \tau_{x,y}(1)\right\|$ uniformly in $y$ and $x$.
\begin{lemma}\label{l.tau}
	For every $x\in\Reg(X)$ and $t\in [0,1]$, $\tau_{x,y}(t)$ is differentiable as a function of $y\in \cN_{\rho_0K_0^{-1}|X(x)|}(x)$; furthermore, there exits $K_\tau>0$ such that for all $x\in \Reg(X)$ it holds
	\begin{equation}\label{e.Ktau}
		|X(x)|\cdot \sup\left\{\left\|D_y \tau_{x,y}(1)\right\| : y\in\cN_{\rho_0K_0^{-1}|X(x)|}(x) \right\}\le K_\tau.
	\end{equation}
\end{lemma}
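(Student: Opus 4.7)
My plan is to obtain the lemma by composing the chain rule with Remark~\ref{r.bdd} and the compactness of $\bM$. Writing $\tau_{x,y}(1) = 1+\tau_{x_1}(f_1(y))$ already expresses $\tau_{x,y}(1)$ as a composition of two already-studied maps: the $C^1$ flow map $y\mapsto f_1(y)$, whose derivative is bounded uniformly on $\bM$ by some $K_2>0$ since $X\in\mathfrak{X}^1(\bM)$ and $\bM$ is compact; and the projection time $\tau_{x_1}$, which by the implicit function theorem applied to the diffeomorphism $F_{x_1}$ from Proposition~\ref{p.Fx} is $C^1$ on $B_{\rho_0|X(x_1)|}(x_1)$, with a uniform (in $x_1$) derivative bound $K_1>0$ furnished by Remark~\ref{r.bdd}.

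Before invoking the chain rule I would verify that for every $y\in \cN_{\rho_0K_0^{-1}|X(x)|}(x)$ the image $y_1=f_1(y)$ actually lies in the domain $B_{\rho_0|X(x_1)|}(x_1)$ of $\tau_{x_1}$. This is precisely the content of the inclusion~\eqref{e.ballinclusion} with $t=1$, after recording that $K_0$ was chosen in Proposition~\ref{p.tubular} so that $K_0^{-1}\le e^{-K}$. With the composition thus well-defined, differentiability of $\tau_{x,y}(1)$ in $y$ is an immediate consequence of the chain rule applied to two $C^1$ maps.

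For the quantitative estimate, the chain rule yields
$$
\frac{d}{dy}\tau_{x,y}(1) \;=\; \bigl(D\tau_{x_1}\bigr)(y_1)\;\circ\; \bigl(Df_1\bigr)(y),
$$
and the bounds $|D\tau_{x_1}(y_1)|\le K_1$ and $\|Df_1(y)\|\le K_2$ combine to give $K_\tau := K_1K_2$ as the required uniform constant. I do not anticipate any serious obstacle: the whole argument is bookkeeping on top of Proposition~\ref{p.Fx}, Remark~\ref{r.bdd}, and compactness of $\bM$. The single point requiring care is that every estimate used is already uniform in $x\in\Reg(X)$, a feature built into the uniform mininorm and norm bounds $m(D_pF_x)\ge 1/3$, $\|D_pF_x\|\le 3$ of Proposition~\ref{p.Fx}.
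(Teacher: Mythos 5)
Your argument is correct and matches the paper's intended route: the paper itself only cites the implicit function theorem, Proposition~\ref{p.Fx} and Remark~\ref{r.bdd}, and your decomposition $\tau_{x,y}(1)=1+\tau_{x_1}(f_1(y))$ together with the chain rule and the domain check via~\eqref{e.ballinclusion} is exactly that bookkeeping made explicit. The only cosmetic remark is that the same composition with $f_t$ in place of $f_1$ (using~\eqref{e.ballinclusion} for $|t|\le 1$) gives the differentiability claimed for every $t\in[0,1]$, not just $t=1$.
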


\begin{proof} 
	The differentiability is due to the implicit function theorem. Below we shall prove~\eqref{e.Ktau}.
	
	By~\eqref{e.tau} we have 
	$$
	D_y \tau_{x,y}(1) = D_{y_1}\tau_{x_1} D_yf_1.
	$$
	By~\eqref{e.Px1} and~\eqref{e.Px2}, we can write 
	$$
	\tau_x(y) = \frac{1}{|X(x)|}|\pi_X\circ  F_x^{-1}(y)|
	$$
	where $F_x$ is the map defined by~\eqref{e.Fx}, and $\pi_X$ is the orthogonal projection to $\langle X\rangle $. Consequently,
	\begin{align*}
		|X(x)|\cdot\left\|D_y \tau_{x,y}(1)\right\| &\le  |X(x)| \frac{1}{|X(x_1)|} |D_{y_1}(\pi_{X}\circ F_{x_1}^{-1} (y_1))|\cdot \|D_yf_1\|\\
		&\le 3 \|Df_1\|_{C^0}\cdot \|Df_{-1}\|_{C^0} : = K_\tau,
	\end{align*}
	where the coefficient $3$ is due to Proposition~\ref{p.Fx}. Note that the bound is uniform in both $y\in \cN_{\rho_0K_0^{-1}|X(x)|}(x) $ and $x\in \Reg(X)$. The proof is complete. 
\end{proof}

We turn our attention to the differentiability of $\cP_{t,x}$. 
The map $\cP$ can be lifted to a map on the normal bundle using the exponential map in a natural way:\footnote{Regarding our notation: as a general rule, we use the normal math italic font $N, P$, etc. on the tangent bundle and the calligraphic font $\cN,\cP$, etc. for objects on the manifold.}
\begin{equation}\label{e.p}
	P_{t,x}: N_{\overline\rho_0K_0^{-1}|X(x)|}(x)\to N_{\overline\rho_0|X(x)|}(x),\,\,P_{t,x} = \exp_{x_t}^{-1}\circ\cP_{t,x}\circ\exp_x.
\end{equation}
We then defined the {\em scaled sectional Poincar\'e map on the normal bundle}:
\begin{equation}\label{e.p*}
	P_{t,x}^*: N_{\overline\rho_0K_0^{-1}}(x)\to N_{\overline\rho_0}(x),\,\,P_{t,x}^*(y) :=\frac{P_{t,x}(y|X(x)|)}{|X(x_t)|}.
\end{equation}

Then we have $D_x\cP_{t,x} = D_0P_{t,x} = \psi_t(x)$, and $D_0P_{t,x}^* = \psi^*_t(x)$. It should also be noted that the domain of  $P^*_{1,x}$ has a uniform size independent of $x$. The following propositions present some uniform continuity property for $DP_{t,x}$ and $DP_{t,x}^*$.
\begin{proposition}\cite[Lemma 2.3 and 2.4]{GY}\label{p.tubular3}
	The following results holds at every regular point $x\in \Reg(X)$ with all constants $a,\rho,\rho_1,K_1,K_1'$ uniformly in $x$:
	\begin{enumerate}
		\item 	$DP_{1,x}$ is uniformly continuous at a uniformly relative scale in the following sense: for every $a>0$ and $\rho\in [0,\rho_0]$, there exists $0<\rho_1<\rho$ such that if $y,z\in \cN_{\rho K_0^{-1}|X(x)|}(x)$ with $d(y,y')<\rho_1 K_0^{-1} |X(x)|$, then we have
		$$
		\|D_y\cP_{1,x} - D_{z}\cP_{1,x}\|<a.
		$$
		\item $DP^*_{1,x}$ is uniformly continuous at a uniform scale (not just relative!) in the following sense: for every $a>0$ there exists $\overline\rho_1>0$ such that for $y,z\in N(x)$, if $d(y,z)< \overline \rho_1$ then 
		$$
		\|D_yP^*_{1,x} - D_{z}P^*_{1,x}\|<a.
		$$
		\item  there exists $K_1>0$ such that 
		$$
		\|D\cP_{1,x}\|\le K_1,\,\, \mbox{ and }\,\,	\|D(\cP_{1,x})^{-1}\|\le K_1.
		$$
		\item there exists $K_1'\ge K_1$ such that  $\|DP^*_{1,x}\|\le K'_1$.
	\end{enumerate}
\end{proposition}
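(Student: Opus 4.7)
\smallskip\noindent\textbf{Proof proposal.}
My plan is to derive all four assertions from two ingredients: the uniform boundedness/continuity of $Df_1$ (from compactness of $\bM$ and $X\in\mathfrak X^1(\bM)$) and the uniform control on the projection $\cP_{x_1}$ along flow lines provided by Proposition~\ref{p.Fx} and Remark~\ref{r.bdd}. The factorization
\[
\cP_{1,x}=\cP_{x_1}\circ f_1
\]
and the identity $P^*_{1,x}(y)=P_{1,x}(y|X(x)|)/|X(x_1)|$ will be used throughout.

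\smallskip\noindent\emph{Step 1 (part (3)).} Using $\cP_{x_1}(y)=f_{\tau_{x_1}(y)}(y)$ from~\eqref{e.Px1}--\eqref{e.Px2}, the chain rule gives
\[
D_y\cP_{x_1}=Df_{\tau_{x_1}(y)}(y)+X(\cP_{x_1}(y))\otimes d\tau_{x_1}(y).
\]
Both summands are uniformly bounded: $Df_t$ for $|t|\le\overline\rho_0$ is uniformly bounded on $\bM$, $|X|$ is bounded, and $\|d\tau_{x_1}\|$ is uniformly bounded by Remark~\ref{r.bdd}. Composing with $Df_1$ (which is uniformly bounded with uniformly bounded inverse $Df_{-1}$) yields the bound $\|D\cP_{1,x}\|\le K_1$. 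The same argument applied to $(\cP_{1,x})^{-1}=f_{-1}\circ(\cP_{x_1})^{-1}$ produces the inverse bound.

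\smallskip\noindent\emph{Step 2 (part (4)).} Differentiating $P^*_{1,x}(y)=P_{1,x}(y|X(x)|)/|X(x_1)|$ yields
\[
D_y P^*_{1,x}=\frac{|X(x)|}{|X(x_1)|}\,D_{y|X(x)|}P_{1,x}.
\]
Since $X(x_1)=Df_1\,X(x)$, the ratio $|X(x)|/|X(x_1)|$ equals $1/\|Df_1|_{\langle X(x)\rangle}\|$, which is uniformly bounded above because $Df_{-1}$ is uniformly bounded on $\bM$. Combined with step~1 (after conjugating by $\exp$ of uniformly bounded differential near the origin of a ball of radius $\overline\rho_0$), this gives $\|DP^*_{1,x}\|\le K_1'$.

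\smallskip\noindent\emph{Step 3 (part (1)).} I would decompose $D_y\cP_{1,x}=D_{f_1(y)}\cP_{x_1}\circ D_yf_1$. Since $\bM$ is compact, $Df_1$ is uniformly continuous, so it suffices to show $D\cP_{x_1}$ is uniformly continuous on the flow box $F_{x_1}(U_{\overline\rho_0|X(x_1)|}(x_1))$ at a relative scale. For this, the implicit function theorem applied to the defining equation $f_{\tau_{x_1}(y)}(y)\in\cN(x_1)$ (together with the uniform lower bound $m(DF_{x_1})\ge 1/3$ from Proposition~\ref{p.Fx}) gives uniform control of the modulus of continuity of $d\tau_{x_1}$ in terms of the modulus of continuity of $Df$, which exists by compactness. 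Plugging back into the formula for $D\cP_{x_1}$ from step~1 and using the uniform continuity of $Df_t$, $|t|\le\overline\rho_0$, produces the required $\rho_1$.

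\smallskip\noindent\emph{Step 4 (part (2)) and main obstacle.} Here lies the only delicate point: although $DP_{1,x}$ is only continuous at the \emph{relative} scale $\rho|X(x)|$, after rescaling to $DP^*_{1,x}$ one must obtain continuity at a \emph{uniform} scale $\overline\rho_1$, a bound that does not degenerate as $x\to\Sing(X)$. Writing
\[
D_yP^*_{1,x}-D_zP^*_{1,x}=\frac{|X(x)|}{|X(x_1)|}\bigl(D_{y|X(x)|}P_{1,x}-D_{z|X(x)|}P_{1,x}\bigr),
\]
the leading prefactor is bounded by step~2. Moreover $d(y|X(x)|,z|X(x)|)=|X(x)|\,d(y,z)$, which is \emph{smaller} than the relative scale used in part~(1). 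The point is that the modulus of continuity from step~3 scales \emph{linearly} with $|X(x)|$ (because the formula for $D\cP_{x_1}$ depends analytically on the flow box coordinates of relative size $\overline\rho_0|X(x_1)|$, and rescaling the flow box absorbs the $|X(x)|$-factor). Thus the $|X(x)|$'s cancel and one obtains a uniform modulus $\overline\rho_1$ independent of $x$. Verifying this cancellation carefully in the formula for $D\cP_{x_1}$ from step~1 (combined with $|X(x)|/|X(x_1)|$ bounded away from $0$ and $\infty$) is the main calculation, and once done it yields (2).
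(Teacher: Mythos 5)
First, a point of comparison: the paper does not actually prove this proposition --- it is quoted from Gan--Yang \cite{GY} (their Lemmas 2.3 and 2.4), so there is no in-paper argument to measure yours against, and I assess your proposal on its own terms. Your overall route (factor $\cP_{1,x}=\cP_{x_1}\circ f_1$, differentiate the projection via $\cP_{x_1}(y)=f_{\tau_{x_1}(y)}(y)$, and control everything through the flow-box map $F_x$ of Proposition~\ref{p.Fx}) is the right one and is in the spirit of Liao's rescaling arguments; parts (3) and (4) do come out this way, except that for the inverse bound you should use $(\cP_{1,x})^{-1}=\cP_x\circ f_{-1}$ on the image rather than ``inverting'' the non-injective projection $\cP_{x_1}$.

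The genuine weak point is the scaling bookkeeping in Steps 3--4, which is exactly the content of the proposition. In flow-box coordinates one has $\tau_{x_1}\bigl(F_{x_1}(v+tX(x_1))\bigr)=-t$, hence $d\tau_{x_1}(y)(\xi)=-\langle DF_{x_1}^{-1}(y)\xi,\,X(x_1)\rangle/|X(x_1)|^2$; consequently both $\|d\tau_{x_1}\|$ and its modulus of continuity are of order $1/|X(x_1)|$ and are \emph{not} uniform in $x_1$ (already in the linear model the time function has gradient of norm $1/|X(x_1)|$). So your Step 3 claim that the implicit function theorem yields a uniform modulus of continuity for $d\tau_{x_1}$ is false as stated, and Remark~\ref{r.bdd}, on which Step 1 leans, must be read with the same caveat. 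What saves the argument is that $d\tau_{x_1}$ enters $D_y\cP_{x_1}$ only through the rank-one term $X(\cP_{x_1}(y))\otimes d\tau_{x_1}(y)$, and $|X(\cP_{x_1}(y))|\le(1+\mathrm{Lip}(X)\rho_0)\,|X(x_1)|$ on the relative-size flow box, so the $1/|X(x_1)|$ cancels: for $d(y,z)\le\rho_1|X(x_1)|$ the oscillation of this term is controlled by the oscillation of $DF_{x_1}$ (equivalently of $Df_t$, $DX$, $D\exp$) at that relative scale, which is uniform by compactness of $\bM$; this cancellation needs to be run explicitly in part (1). Once (1) is in hand, part (2) is immediate rather than the ``main obstacle'' of your Step 4: if $y,z\in N(x)$ with $d(y,z)<\overline\rho_1$, the unscaled points $y|X(x)|,z|X(x)|$ are at distance $|X(x)|\,d(y,z)$, i.e.\ within the relative scale of (1), and $D_yP^*_{1,x}-D_zP^*_{1,x}=\frac{|X(x)|}{|X(x_1)|}\bigl(D_{y|X(x)|}P_{1,x}-D_{z|X(x)|}P_{1,x}\bigr)$ with prefactor bounded by $\sup\|Df_{-1}\|$, so no further cancellation argument is required there --- the delicate rescaling work all sits in part (1), not in part (2).
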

As a direct corollary of the previous proposition, we see that $\cP_{1,x}$  has a (uniformly relatively) large image.
\begin{lemma}\label{p.tubular4}
	For every $\rho\in(0,\rho_0 K_0^{-1}]$, there exists $\rho'\in(0,\rho]$ such that for every  regular point $x$, we have 
	$$
	\cP_{1,x}\left(\cN_{\rho |X(x)|}(x)\right)\supset \cN_{\rho'|X(x_1)|}(x_1).
	$$
\end{lemma}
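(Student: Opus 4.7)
The plan is to transfer the problem to the scaled lift $P^*_{1,x}:N_{\overline\rho_0 K_0^{-1}}(x)\to N_{\overline\rho_0}(x_1)$, where Proposition~\ref{p.tubular3} furnishes estimates at a \emph{uniform} (not merely relative) scale, and then translate the conclusion back through the scaling identity $P_{1,x}(v)=|X(x_1)|\,P^*_{1,x}(v/|X(x)|)$ and the chart $\exp_{x_1}$.

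First I would collect uniform two-sided bounds on the linearization $D_0 P^*_{1,x}=\psi^*_1(x)$. Lemma~\ref{l.scaledflow} with $\tau=1$ gives $\|\psi^*_1(x)\|\le C_1$ for every $x\in\Reg(X)$, and the same lemma applied to $\psi^*_{-1}$ at $x_1$, combined with the cocycle identity $\psi^*_{-1}(x_1)\circ\psi^*_1(x)=\mathrm{id}_{N(x)}$, yields the mininorm bound $m(\psi^*_1(x))\ge 1/C_1$ uniformly in $x$. Feeding $a=1/(4C_1)$ into Proposition~\ref{p.tubular3}(2) produces a uniform radius $\overline\rho_1>0$, independent of $x$, such that $\|D_y P^*_{1,x}-\psi^*_1(x)\|<1/(4C_1)$ whenever $|y|<\overline\rho_1$. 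On $N_{\overline\rho_1}(x)$ the derivative of $P^*_{1,x}$ is therefore uniformly invertible with mininorm at least $1/(2C_1)$.

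Next I would apply the quantitative inverse function theorem to $P^*_{1,x}$ on $N_{\overline r}(x)$, where $\overline r:=\min(\rho,\overline\rho_1)>0$. Since $P^*_{1,x}(0)=0$, the standard contraction-mapping argument applied to $w\mapsto w-\psi^*_1(x)^{-1}\bigl(P^*_{1,x}(w)-y\bigr)$ becomes a uniform $1/4$-contraction on $N_{\overline r}(x)$ and, starting the iteration at $w=0$, shows that every $y\in N(x_1)$ with $|y|\le \overline r/(2C_1)$ admits a preimage inside $N_{\overline r}(x)$. Setting $\rho'=\overline r/(2C_1)$ (possibly shrunk further by the uniform bounded distortion between the intrinsic metric on $\cN(x)$ and the Euclidean metric on $N(x)$ via $\exp_x$, which is controlled by Proposition~\ref{p.Fx}), the scaling relation then yields $P_{1,x}(N_{\rho|X(x)|}(x))\supset N_{\rho'|X(x_1)|}(x_1)$, and applying $\exp_{x_1}$ delivers the stated inclusion $\cP_{1,x}(\cN_{\rho|X(x)|}(x))\supset\cN_{\rho'|X(x_1)|}(x_1)$.

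The main potential obstacle is uniformity in $x$ as the base point approaches $\Sing(X)$ and the flow speed $|X(x)|$ collapses. This is precisely the issue that Liao's scaled framework is built to resolve: every quantity appearing above is either $x$-independent (by Lemma~\ref{l.scaledflow} and the uniform-scale clause of Proposition~\ref{p.tubular3}(2)) or is of the ``uniformly relative'' type that matches the form of the conclusion, so no additional work near singularities is needed. This is also why the statement must be phrased relative to $|X(x_1)|$ rather than at an absolute scale.
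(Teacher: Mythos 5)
Your argument is correct: the paper states this lemma without proof, as a direct corollary of Proposition~\ref{p.tubular3}, and your quantitative inverse-function (contraction) argument for the scaled lift $P^*_{1,x}$ — uniform bounds on $\psi^*_{\pm1}$ from Lemma~\ref{l.scaledflow} and the cocycle identity, plus the uniform-scale continuity of $DP^*_{1,x}$ from Proposition~\ref{p.tubular3}(2) — is exactly the intended route made explicit. The only bookkeeping point is that the exponential-chart distortion controlled by Proposition~\ref{p.Fx} must also be applied on the domain side, shrinking $\overline r$ so that $\exp_x\bigl(N_{\overline r|X(x)|}(x)\bigr)\subset\cN_{\rho|X(x)|}(x)$, and not only to $\rho'$; this affects nothing beyond a uniform constant factor.
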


Now we define Liao's scaled tubular neighborhood of an orbit segment. 
\begin{definition}\label{d.liao}
	For a regular point $x$, $t\in (0,\infty]$ and $\rho>0$, we define
	\begin{equation}\label{e.tubular1}
		B^*_{\rho} (x,t) = \bigcup_{s\in[0,t]} \cN_{\rho |X(x_s)|}(x_s)
	\end{equation}
	to be the {\em $\rho$-scaled tubular neighborhood} of the orbit segment $(x,t)$.\footnote{As usual, the star on the upper-right corner indicates that $	B^*_{\rho} (x,t)$ is defined with the scaled flow in mind.}
	We will refer to $t$ as the {\em length}  and $\rho$ the {\em size} of this scaled tubular neighborhood.
\end{definition}

By continuity, $B^*_{\rho} (x,t)$ contains an open neighborhood of the orbit segment $(x_{\varepsilon},t-2\varepsilon)$, for every $\vep>0$. $B^*_{\rho} (x,T)$ should be interpreted as a (local) tubular neighborhood of the one-dimension embedded submanifold $(x,t)$.


\begin{figure}[h!]
	\centering
	\def\svgwidth{\columnwidth}
	\includegraphics[scale=0.7]{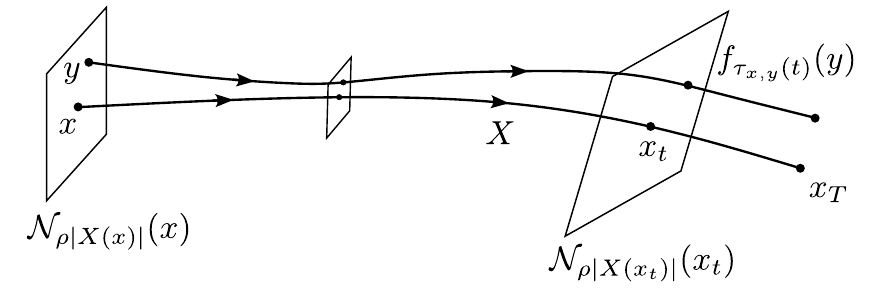}
	\caption{$\rho$-scaled shadowing. Different sizes of the normal planes $\cN_{\rho|X(x_t)|}(x_t)$ reflex the flow speed at each point $x_t$.}
	\label{f.scaled}
\end{figure}

Next, we introduce the concept of {\em $\rho$-scaled shadowing} which tracks a point $y$ as it moves inside $B_\rho^*(x,t)$; this definition is motivated by~\eqref{e.tau1}.
\begin{definition}\label{d.shadow1}
	For $0<\rho\le \rho_0$, we say that the orbit of $y$ is {\em $\rho$-scaled shadowed} by the orbit of $x$ up to  time $T\in(0,+\infty)$, if there exists a strictly increasing, continuous function
	$$
	\tau_{x,y}(t): [0,T]\to [0,\infty) 
	$$ 
	with $\tau_{x,y}(0) = 0$, such that for every $t\in[0,T]$, it holds
	$$
	f_{\tau_{x,y}(t)}(y)\in \cN_{\rho|X(x_t)|}(x_t). 
	$$
\end{definition}		
See Figure~\ref{f.scaled}. Note that the definition above requires $y\in \cN_{\rho|X(x)|}(x)$. Also, we have 
$$
f_{\tau_{x,y}(t)}(y) = \cP_{t,x}(y).
$$

\begin{remark}\label{r.shadowing}
	By Proposition~\ref{p.tubular} and Proposition~\ref{p.tubular1} and the discussion following it,  for $T>0$, if $y\in \cN_{\rho K_0^{-T}|X(x)|}(x)$, then the orbit of $y$ is $\rho$-scaled shadowed by the orbit of $x$ up to  time $T$.
\end{remark}

The following lemma collects a few useful properties for scaled shadowing. The proof directly follows from the definition and is therefore omitted.

\begin{lemma}\label{l.shadowing}
	Let $\rho\in (0,\rho_0]$, and assume that the orbit of $y$ is $\rho$-scaled shadowed by the orbit of $x$ up to  time $t$. Then the following statements hold:
	\begin{enumerate}
		
		\item For every $0\le s < s' \le t$, the orbit of $\cP_{s,x}(y)$ is $\rho$-scaled shadowed by the orbit of $x_{s}$ up to   time $s'-s$. 
		\item If, in addition, that the orbit of $\cP_{t,x}(y)$ is $\rho$-scaled shadowed by the orbit of $x_t$ up to  time $t'$, then the orbit of $y$ is $\rho$-scaled shadowed by the orbit of $x$ up to  time $t+t'$.
	\end{enumerate}
\end{lemma}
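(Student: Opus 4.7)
The plan is to construct the required reparametrization functions explicitly in each case by concatenating and translating the reparametrizations given by hypothesis, and then to verify the three defining properties of $\rho$-scaled shadowing (initial value zero, strict monotonicity, and the orbit-containment condition) directly from Definition~\ref{d.shadow1}.

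For part (1), let $\tau_{x,y}:[0,t]\to[0,\infty)$ be the reparametrization provided by the shadowing hypothesis. Given $0\le s<s'\le t$, I would define
$$
\widetilde{\tau}(u) := \tau_{x,y}(s+u)-\tau_{x,y}(s), \qquad u\in[0,s'-s].
$$
Clearly $\widetilde{\tau}(0)=0$, and $\widetilde{\tau}$ is strictly increasing since $\tau_{x,y}$ is. Using the flow group property and the fact that $\cP_{s,x}(y)=f_{\tau_{x,y}(s)}(y)$ (which follows from~\eqref{e.Ptx1} and~\eqref{e.tau1}), one computes
$$
f_{\widetilde{\tau}(u)}\bigl(\cP_{s,x}(y)\bigr) \;=\; f_{\tau_{x,y}(s+u)}(y)\;\in\;\cN_{\rho|X(x_{s+u})|}(x_{s+u})\;=\;\cN_{\rho|X((x_s)_u)|}((x_s)_u),
$$
which is exactly the required containment.

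For part (2), set $z:=\cP_{t,x}(y)=f_{\tau_{x,y}(t)}(y)$, and let $\tau_{x_t,z}:[0,t']\to[0,\infty)$ be the reparametrization given by the second shadowing hypothesis. The natural candidate is the concatenation
$$
\widehat{\tau}(u):=\begin{cases} \tau_{x,y}(u), & u\in[0,t],\\[2pt] \tau_{x,y}(t)+\tau_{x_t,z}(u-t), & u\in[t,t+t'].\end{cases}
$$
The two pieces agree at $u=t$ because $\tau_{x_t,z}(0)=0$, so $\widehat{\tau}$ is continuous and strictly increasing (both pieces are, and the right piece starts where the left one ends), and $\widehat{\tau}(0)=0$. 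On $[0,t]$ the containment is immediate. On $[t,t+t']$, the flow property gives
$$
f_{\widehat{\tau}(u)}(y)\;=\;f_{\tau_{x_t,z}(u-t)}\bigl(f_{\tau_{x,y}(t)}(y)\bigr)\;=\;f_{\tau_{x_t,z}(u-t)}(z)\;\in\;\cN_{\rho|X((x_t)_{u-t})|}((x_t)_{u-t})\;=\;\cN_{\rho|X(x_u)|}(x_u),
$$
which completes the verification.

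There is essentially no obstacle here: everything reduces to the cocycle identity $f_{a+b}=f_a\circ f_b$ and the definition of $\cP_{t,x}$ via the projection~\eqref{e.Ptx1}. The only point that requires minimal care is that one must use the explicit identification $\cP_{s,x}(y)=f_{\tau_{x,y}(s)}(y)$ coming from~\eqref{e.tau1}, so that translating and concatenating the reparametrizations produces values of the flow at the correct base points.
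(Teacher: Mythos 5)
Your proof is correct and is exactly the argument the paper has in mind: the paper omits the proof as "directly following from the definition," and your explicit translation/concatenation of the reparametrizations, using the identification $\cP_{s,x}(y)=f_{\tau_{x,y}(s)}(y)$ and the flow group property, is that direct verification. Nothing further is needed.
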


We also provide an easy way to verify the scaled shadowing property:

\begin{lemma}\label{l.shadowing2}
	Let $\rho\in (0,\rho_0]$ and $x\in\Reg(X)$. Assume that the point $y\in \cN_{\rho K_0^{-1}|X(x)| }(x)$ satisfies the following property:  for every $k\in [1,\floor{t}]\cap\NN$,\footnote{Here $\floor{t}$ denotes the integer part of $t$.} the point 
	$$
	y^{k,*} = \cP_{1,x_{k-1}}\circ\cdots\circ\cP_{1,x}(y)
	$$
	exists and is contained in $\cN_{\rho K_0^{-1}|X(x_k)|}(x_k)$.  Then the orbit of $y$ is $\rho$-scaled shadowed by the orbit of $x$ up to  time $\floor{t}+1$.
\end{lemma}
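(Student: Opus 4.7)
The plan is to build the scaled shadowing on $[0,\floor{t}+1]$ piece by piece on each unit interval $[k,k+1]$, applying Proposition~\ref{p.tubular} (via Remark~\ref{r.shadowing}) on each piece and then gluing them together with Lemma~\ref{l.shadowing}(2). Concretely, I would prove by induction on $k\in\{0,1,\dots,\floor{t}\}$ the statement
$$
S(k):\ \text{the orbit of } y \text{ is $\rho$-scaled shadowed by the orbit of } x \text{ up to time } k+1,
$$
so that $S(\floor{t})$ yields the conclusion.

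The base case $S(0)$ is immediate: by hypothesis $y\in\cN_{\rho K_0^{-1}|X(x)|}(x)$, so Remark~\ref{r.shadowing} with $T=1$ gives the required shadowing on $[0,1]$. For the inductive step, assume $S(k-1)$ with $1\le k\le\floor{t}$. The key preliminary observation is the identity
$$
\cP_{k,x}(y)=y^{k,*}=\cP_{1,x_{k-1}}\circ\cdots\circ\cP_{1,x}(y),
$$
which follows directly from the equivalent definition $\cP_{s,x}=\cP_{x_s}\circ f_s$ given in~\eqref{e.Ptx1}: since each intermediate projection $\cP_{x_j}$ only moves a point along its flow line and the subsequent projection $\cP_{x_{j+1}}$ depends only on the flow orbit, the compositions telescope to $\cP_{x_k}\circ f_k=\cP_{k,x}$. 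Here I am using the hypothesis of the lemma to guarantee that every intermediate iterate lies in the domain of the next $\cP_{1,\cdot}$.

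Once this identification is in place, the hypothesis gives $\cP_{k,x}(y)=y^{k,*}\in\cN_{\rho K_0^{-1}|X(x_k)|}(x_k)$, so Remark~\ref{r.shadowing} applied at the point $x_k$ (with $T=1$) shows that the orbit of $\cP_{k,x}(y)$ is $\rho$-scaled shadowed by the orbit of $x_k$ up to time $1$. Combining this with $S(k-1)$ via Lemma~\ref{l.shadowing}(2) yields $S(k)$, closing the induction. The only (minor) obstacle is the verification of the identity $\cP_{k,x}(y)=y^{k,*}$; after that, the proof is essentially a clean concatenation argument using the tools already developed in this section.
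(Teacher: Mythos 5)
Your argument is correct and is essentially the paper's own proof: the paper simply declares the lemma a direct corollary of Remark~\ref{r.shadowing} and Lemma~\ref{l.shadowing}(2), which is exactly the unit-interval shadowing plus concatenation that you carry out by induction. Spelling out the identification $y^{k,*}=\cP_{x_k}\circ f_k(y)$ (projections along flow lines telescope) is a reasonable way to make the "direct corollary" explicit, and no step is missing.
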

\begin{proof}
	This is a direct corollary of Remark~\ref{r.shadowing} and Lemma~\ref{l.shadowing} (2). 
\end{proof}

Finally, we remark that under the assumptions of the Lemma~\ref{l.shadowing2}, for every $s\in [0,\floor{t}+1]$, the function $\tau_{x,y}(s)$ is differential in $y$; the upper bound of  $\|D_y\tau_{x,y}(1)\|$ is of the order $\frac{1}{|X(x)|}$ due to Lemma~\ref{l.tau}.

\subsection{Sectional-hyperbolic attractors}
In this section we collect some preliminary results on sectional-hyperbolic attractors. Of particular interest are the almost expansivity and the existence of a hyperbolic periodic orbit. But before that, let us provide some characterization of singularities contained in such sets. Many of these results can be found (albeit in slightly different forms) in classical literature such as~\cite{MPP},~\cite{LGW},~\cite{GY},~\cite{GSW},~\cite{CY} and~\cite{CDYZ}. 

Let $X$ be a $C^1$ vector field and $\Lambda$ a sectional-hyperbolic attractor of $X$ with splitting $E^{ss}\oplus F^{cu}$. In particular, $\Lambda$ is transitive, and is the maximal invariant set of $X$ in a small neighborhood $U\supset \Lambda.$

We start with a basic observation regarding the flow direction at regular points, see~\cite{MPP} and~\cite{LGW}.
\begin{lemma}\label{l.basic}
	Let $\Lambda$ be a sectional-hyperbolic attractor. Then for every $x\in\Reg(X)\cap\Lambda$ one has $X(x)\in F^{cu}(x)$. Consequently, there exists a dominated splitting $E_N^s\oplus F^{cu}_N$ under the scaled linear Poincar\'e flow.  Furthermore, vectors in $E^s_N$ are exponentially contracted by the scaled linear Poincar\'e flow at a rate that is independent of $x\in\Reg(X)$.
\end{lemma}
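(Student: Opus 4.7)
The plan is to handle the three assertions in order, with the first (that $X(x)\in F^{cu}(x)$) providing the main substance and the other two following from bookkeeping.

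For the first claim, fix $x\in\Reg(X)\cap\Lambda$ and decompose $X(x)=X^{s}(x)+X^{cu}(x)$ with $X^{s}(x)\in E^{s}(x)$ and $X^{cu}(x)\in F^{cu}(x)$. Because $Df_t$ preserves the invariant splitting, uniqueness of the decomposition gives $X^{s}(x_t)=Df_t(X^{s}(x))$ for every $t\in\RR$. I will invoke backward invariance: since $\Lambda$ is compact and $f$-invariant, $x_{-t}\in\Lambda$ for every $t>0$, so $|X(x_{-t})|$ is bounded by $M:=\sup_{\bM}|X|$. After the metric change recalled in Section~\ref{ss.DS}, $E$ and $F$ are orthogonal, so $|X^{s}(x_{-t})|\le|X(x_{-t})|\le M$. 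On the other hand, uniform contraction of $E^{s}$ under $Df_t$ gives $|X^{s}(x_{-t})|=|Df_{-t}X^{s}(x)|\ge C^{-1}e^{\lambda t}|X^{s}(x)|$ for some $\lambda>0$. Letting $t\to\infty$ forces $X^{s}(x)=0$, so $X(x)=X^{cu}(x)\in F^{cu}(x)$.

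With $X(x)\in F^{cu}(x)$ established, the second claim is immediate from Lemma~\ref{l.splitting}: $E^{s}_N\oplus F^{cu}_N$ is a dominated splitting for both $(\psi_t)_t$ and $(\psi_t^{*})_t$. Moreover, since $E^{s}\perp F^{cu}$ (orthogonal metric) and $X\in F^{cu}$, we have $E^{s}\subset N$, hence $E^{s}_N=E^{s}$ at every regular point of $\Lambda$; this is the observation of Remark~\ref{r}.

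For the third claim (uniform exponential contraction on $E^{s}_N$) I would use domination with the flow direction $X(x)\in F^{cu}(x)$ as the comparison vector. For any unit $v\in E^{s}_N(x)=E^{s}(x)$, domination of $E^{s}\oplus F^{cu}$ under $(Df_t)_t$ gives constants $C>0$, $\lambda>1$ (independent of $x$) with
\[
\frac{|Df_t(v)|}{|Df_t(X(x))|}\;\le\;C\lambda^{-t}\,\frac{|v|}{|X(x)|}.
\]
Since $Df_t(X(x))=X(x_t)$, this rearranges to
\[
\frac{|X(x)|}{|X(x_t)|}\,|Df_t(v)|\;\le\;C\lambda^{-t}|v|.
\]
Because $Df_t(v)\in E^{s}(x_t)\subset N(x_t)$, the projection $\pi_{N,x_t}$ is the identity on it, so $\psi_t(v)=Df_t(v)$, and by definition of the scaled flow $|\psi_t^{*}(v)|=\frac{|X(x)|}{|X(x_t)|}|\psi_t(v)|$. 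Combining these identities with the displayed inequality yields $|\psi_t^{*}(v)|\le C\lambda^{-t}|v|$ with $C,\lambda$ independent of $x\in\Reg(X)\cap\Lambda$, which is the uniform contraction asserted.

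The only step with any subtlety is the first one, and even there the argument is essentially: contraction along $E^{s}$ forces the $E^{s}$-component of the flow direction to blow up in backward time, contradicting boundedness of $X$; the crucial input is that $\Lambda$ is invariant (so the backward orbit stays in a compact set on which $X$ is bounded), and this is where orthogonality of $E\oplus F$ (arranged in Section~\ref{ss.DS}) is genuinely used to control $|X^{s}|$ by $|X|$. The remaining assertions are purely computational once one writes everything in the orthogonal metric and applies the domination inequality with $X$ as comparator.
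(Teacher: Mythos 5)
Your proof is correct, and assertions (2) and (3) are handled essentially as in the paper: the dominated splitting on the normal bundle comes from Lemma~\ref{l.splitting}, and the uniform contraction on $E^s_N$ is exactly the paper's computation — compare $Df_t(v)$ against the flow vector $X(x)\in F^{cu}(x)$ via the domination inequality~\eqref{e.dom}, note that in the orthogonal metric $E^s=E^s_N\subset N$ so the projection does not increase (in your case, does not change) the norm, and identify the resulting ratio $\frac{|X(x)|}{|X(x_t)|}|\psi_t(v)|$ with $|\psi_t^*(v)|$. The only genuine divergence is in the first assertion: the paper does not argue directly but quotes \cite[Lemma 2.6]{PYY}, which gives the dichotomy $X|_\Lambda\subset E^s$ or $X|_\Lambda\subset F^{cu}$, and then discards the first alternative using the contraction of $E^s$; you instead give a self-contained pointwise argument — decompose $X(x)=X^s(x)+X^{cu}(x)$, use $Df_t$-invariance of the splitting and $Df_tX(x)=X(x_t)$ to propagate the $E^s$-component, and observe that backward expansion along $E^s$ would force $|X^s(x_{-t})|\to\infty$ while it is bounded by $\sup_\bM|X|$ on the compact invariant set $\Lambda$ (orthogonality, or just the uniformly bounded angle between the bundles, controls $|X^s|$ by $|X|$). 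Your route is more elementary and avoids the external dichotomy lemma, at the cost of redoing an argument the paper gets for free by citation; both are valid, and your version makes explicit where backward invariance of $\Lambda$ and the metric normalization of Section~\ref{ss.DS} enter.
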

\begin{proof}
	By~\cite[Lemma 2.6]{PYY} we have $X|_\Lambda\subset E^{ss}$ or $X|_\Lambda\subset  F^{cu}$. The first case is not possible since vectors in $E^{ss}$ are exponentially expanded (and therefore become unbounded) by the tangent flow $Df_{-t}$. The existence of the dominated splitting follows from Lemma~\ref{l.splitting}.
	
	We are left with the uniform contraction of $\psi^*_t|_{E^s_N}$. Recall our assumption that $E^{ss}$ and $F^{cu}$ are orthogonal, and consequently, $E^{ss}= E^s_N$. Take a vector $v\in E^s_N(x)$ at a regular point $x$. Then we write (and note that $\langle X(x)\rangle$ is one-dimensional, so for any linear operator $A$ preserving the bundle $\langle X\rangle$, one has $m(A|_{\langle X(x)\rangle}) = \left\|A|_{\langle X(x)\rangle}\right\|$)
	\begin{align*}
		|\psi_t^*(v)|&=\frac{|\psi_t(v)|}{\|Df_t|_{\langle X(x)\rangle }\|}\\
		&\le \frac{|Df_t(v)|}{\|Df_t|_{\langle X(x)\rangle }\|}\\
		&\le \frac{\|Df_t|_{E^{ss}}\|}{\|Df_t|_{\langle X(x)\rangle }\|} |v|.
	\end{align*}
	Since $X(x)\in F^{cu}$ and $E^{ss}\oplus F^{cu}$ is a dominated splitting for the tangent flow, we obtain from~\eqref{e.dom} that 
	$$
	|\psi_t^*(v)|\le C\lambda_0^{-t}|v|,
	$$
	as desired.
	
\end{proof}

Next, we provide some information on the hyperbolicity of singularities contained in $\Lambda$. Recall that we assume in Theorem~\ref{m.B} that all singularities are hyperbolic, which is a $C^1$ open and dense property in $\mathfrak X^1(\bM)$.

\begin{definition}\label{d.lorenzlike}
	Let $\sigma\in\Sing(X)$ with hyperbolic splitting $E_{\sigma}^s\oplus E^u_\sigma$. 
	We say that $\sigma$ is a {\em Lorenz-like singularity}, if $E^{s}$ can be further split into the direct sum of two subspaces  $E^{ss}_\sigma\oplus E^c_\sigma$ with $\dim E^c_\sigma=1$, such that the splitting $T_\sigma\bM = E_\sigma^{ss}\oplus \left(E^c_\sigma\oplus E^u_\sigma\right)$ is a dominated splitting under the tangent flow. 
\end{definition}
The following result is well-known for sectional-hyperbolic attractors. See for instance~\cite{MPP} for 3-flows, ~\cite[Lemma 4.3]{LGW} and~\cite[Proposition 2.4]{CY} for their higher-dimension counterparts.

\begin{proposition}\label{p.sh}
	Let $\Lambda$ be a sectional-hyperbolic attractor with splitting $E^{ss}\oplus F^{cu}$. Then every hyperbolic singularity in $\Lambda$ is Lorenz-like, with $E_\sigma^{ss} = E^{ss}(\sigma)$ and $E^c_\sigma\oplus E^u_\sigma = F^{cu}(\sigma)$. Furthermore, we have 
	$$
	\left(W^{ss}(\sigma)\cap\Lambda\right)\setminus\{\sigma\}=\emptyset,
	$$
	where $W^{ss}(\sigma)$ is the strong stable manifold of $\sigma$ tangent to $E_\sigma^{ss}$.
\end{proposition}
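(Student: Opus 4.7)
The plan is to split the argument in two: first, identify the Lorenz-like structure at each hyperbolic singularity $\sigma\in\Sing(X)\cap\Lambda$; second, use that structure together with Lemma~\ref{l.basic} to rule out any nontrivial intersection of $W^{ss}(\sigma)$ with $\Lambda$. Writing the hyperbolic splitting at $\sigma$ as $E^s_\sigma\oplus E^u_\sigma$, I would first verify the two inclusions $E^s(\sigma)\subset E^s_\sigma$ and $E^u_\sigma\subset F^{cu}(\sigma)$. The first is immediate from uniform contraction of $Df_t|_{E^s}$. For the second, I would decompose any $v\in E^u_\sigma$ as $v=v_1+v_2$ with $v_1\in E^s(\sigma)$ and $v_2\in F^{cu}(\sigma)$; invariance of both subbundles at $\sigma$ gives $Df_{-t}v=Df_{-t}v_1+Df_{-t}v_2$ with the summands in complementary subspaces. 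Since $v\in E^u_\sigma$ forces $Df_{-t}v\to 0$, hence each component separately to zero, while uniform contraction of $E^s$ forward makes $|Df_{-t}v_1|$ blow up unless $v_1=0$, we obtain $v\in F^{cu}(\sigma)$. Setting $E^c_\sigma:=E^s_\sigma\cap F^{cu}(\sigma)$, a routine decomposition argument then produces the direct-sum decompositions $E^s_\sigma=E^s(\sigma)\oplus E^c_\sigma$ and $F^{cu}(\sigma)=E^c_\sigma\oplus E^u_\sigma$, which identifies $E^{ss}_\sigma$ with $E^s(\sigma)$.

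To finish the Lorenz-like statement I need $\dim E^c_\sigma=1$. The upper bound $\dim E^c_\sigma\le 1$ is immediate from sectional expansion: any $2$-plane $V\subset E^c_\sigma\subset E^s_\sigma$ would satisfy $|\det Df_t|_V|\to 0$, contradicting $|\det Df_t|_V|\ge C_1\lambda^t$ on $V\subset F^{cu}$. The lower bound $\dim E^c_\sigma\ge 1$ is the main obstacle of the proof and follows the classical analyses in~\cite{MPP} and~\cite{LGW, CY}. The strategy uses that transitivity of $\Lambda$, together with $\sigma\in\Lambda$ being non-isolated, supplies regular points $y_n\in\Lambda$ whose forward orbits stay close to $\sigma$ for increasing times $T_n\to\infty$; local linearization at the hyperbolic singularity forces the $E^u_\sigma$-component of $y_n-\sigma$ to decay like $e^{-\lambda^u T_n}$, so $(y_n-\sigma)/|y_n-\sigma|$ accumulates on some unit vector $w\in E^s_\sigma$. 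The expansion $X(y_n)=DX(\sigma)(y_n-\sigma)+o(|y_n-\sigma|)$ then places a subsequential limit of $X(y_n)/|X(y_n)|$ in $DX(\sigma)(E^s_\sigma)\subset E^s_\sigma$. Combining with $X(y_n)\in F^{cu}(y_n)$ from Lemma~\ref{l.basic} and continuity of the splitting on $\Lambda$, this limit also lies in $F^{cu}(\sigma)$, producing a nonzero vector in $E^s_\sigma\cap F^{cu}(\sigma)=E^c_\sigma$.

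For the strong stable manifold statement I would argue by contradiction: suppose $x\in(W^{ss}(\sigma)\cap\Lambda)\setminus\{\sigma\}$. Since $W^{ss}_{\mathrm{loc}}(\sigma)$ meets $\Sing(X)$ only at $\sigma$, the point $x$ is regular. By definition of $W^{ss}(\sigma)$, the orbit $x_t$ tends to $\sigma$ tangent to $E^{ss}_\sigma=E^s(\sigma)$, so linearizing $X$ at $\sigma$ shows the unit vectors $X(x_t)/|X(x_t)|$ can only accumulate on directions inside $E^s(\sigma)$. Simultaneously, $x_t\in\Lambda\cap\Reg(X)$ combined with Lemma~\ref{l.basic} gives $X(x_t)\in F^{cu}(x_t)$, and continuity of the sectional-hyperbolic splitting at $\sigma\in\Lambda$ places every accumulation direction inside $F^{cu}(\sigma)$. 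Since $E^s(\sigma)\cap F^{cu}(\sigma)=\{0\}$, any such limit must be zero, contradicting that each $X(x_t)/|X(x_t)|$ is a unit vector. This completes the proof.
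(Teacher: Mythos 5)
Your last paragraph (the statement $\left(W^{ss}(\sigma)\cap\Lambda\right)\setminus\{\sigma\}=\emptyset$) is essentially the paper's own proof: flow the hypothetical point forward, observe that near $\sigma$ the flow direction is trapped in a thin cone about $E^s$, and contradict Lemma~\ref{l.basic}. The only cosmetic issue is your appeal to ``linearizing $X$ at $\sigma$'': a $C^1$ field need not be linearizable, and the clean justification is simply that $X$ is tangent to the flow-invariant manifold $W^{ss}(\sigma)$, whose tangent spaces converge to $E^{ss}_\sigma=E^s(\sigma)$ as one approaches $\sigma$ (this is what the paper's cone formulation encodes). Note, however, that the paper does not prove the Lorenz-like part at all: it quotes \cite[Lemma 3.10]{PYY}, so your first two paragraphs attempt a result the paper outsources. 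Within that attempt, the soft steps are fine: $E^s(\sigma)\subset E^s_\sigma$, $E^u_\sigma\subset F^{cu}(\sigma)$, the two direct-sum decompositions, and $\dim E^c_\sigma\le 1$ via sectional expansion all work; the domination required in Definition~\ref{d.lorenzlike} should also be remarked on, but it is automatic since $E^{ss}_\sigma\oplus(E^c_\sigma\oplus E^u_\sigma)$ coincides with $E^s(\sigma)\oplus F^{cu}(\sigma)$.

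The genuine gap is in the lower bound $\dim E^c_\sigma\ge 1$. From ``$y_n\to\sigma$ regular in $\Lambda$ with forward sojourn time $T_n\to\infty$ in $B_{r_0}(\sigma)$'' one cannot conclude that $(y_n-\sigma)/|y_n-\sigma|$ accumulates in $E^s_\sigma$: the sojourn time only bounds the unstable component by roughly $r_0e^{-\lambda^u T_n}$ in absolute terms, and this need not be small relative to $|y_n-\sigma|$. Concretely, since $\Lambda$ is an attractor we have $W^u(\sigma)\subset\Lambda$, so $\Lambda$ contains regular points $y_n\in W^u_{\loc}(\sigma)$ accumulating on $\sigma$; for these, $T_n$ is of order $\log\left(r_0/|y_n-\sigma|\right)\to\infty$, yet the chord direction converges into $E^u_\sigma$, not $E^s_\sigma$. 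So your selection of the $y_n$ is insufficient and the step fails as stated. The standard repair (essentially \cite[Lemma 4.3]{LGW}, \cite[Lemma 3.10]{PYY}) is to work at a fixed scale rather than with $y_n\to\sigma$: take points of a dense orbit at their entry to $\partial B_{r_0}(\sigma)$ whose forward sojourn times tend to infinity (re-entering points cannot lie on $W^u_{\loc}(\sigma)$, since their backward orbit leaves the ball), and pass to a limit point $y\in\partial B_{r_0}(\sigma)\cap\Lambda$ whose entire forward orbit stays in the ball, hence $y\in W^s_{\loc}(\sigma)\setminus\{\sigma\}$. Then either note that $E^s_\sigma=E^s(\sigma)$ would force $W^s(\sigma)=W^{ss}(\sigma)$ and contradict your strong-stable statement (whose cone argument uses only tangency to $E^s(\sigma)$, so there is no circularity), or look at the directions $X(y_t)$, which lie in $F^{cu}(y_t)$ by Lemma~\ref{l.basic} and in $T_{y_t}W^s_{\loc}(\sigma)\to E^s_\sigma$, producing the desired nonzero vector in $E^s_\sigma\cap F^{cu}(\sigma)$. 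With that replacement your argument closes.
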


Below we will prove that when a regular orbit in $\Lambda$ approaches some $\sigma \in\Lambda$, it can only do so along the $E^c_\sigma$ direction. To rigorously formulate this phenomenon, we introduce the {\em extended linear Poincar\'e flow} which was first developed in~\cite{LGW}. Roughly speaking, it is the lift of the linear Poincar\'e flow to the Grassmannian manifold $G^1$, and allows one to capture the limiting direction of the flow when an orbit approaches a singularity. We remark that this concept will not be used beyond this subsection, so uninterested readers can safely skip it.

Denote by 
$$
G^1 = \{L_x: L_x \mbox{ is a 1-dimensional subspace of }T_x\bM, x\in \bM\}
$$
the Grassmannian manifold of $\bM$. Given a $C^1$ flow $(f_t)_t$, the tangent flow $(Df_t)_t$ acts naturally on $G^1$ by mapping each $L_x$ to $Df_t(L_x)$. 

Write $\beta:G^1\to \bM$ and $\xi: T\bM\to \bM$ the bundle projection. The pullback bundle of $T\bM$:
$$
\beta^*(T\bM) = \{(L_x,v)\in G^1\times T\bM: \beta(L)= \xi(v)=x\} 
$$
is a vector bundle over $G^1$ with dimension $\dim \bM$. The tangent flow $(Df_t)_t$ lifts naturally to $\beta^*(TM)$:
$$
Df_t(L_x,v) =(Df_t(L_x),Df_t(v)).
$$

Recall that the linear Poincar\'e flow $\psi_t$  is the projection of the tangent flow to the normal bundle $N$. The key observation is that this projection can be defined not only w.r.t the normal bundle but to the orthogonal complement of any section $\{L_x:x\in \bM\}\subset G^1$.

To be more precise, given $L = \{L_x:x\in \bM\}$ we write 
$$
N^L = \{(L_x,v)\in \beta^*(TM): v \perp L_x\}.
$$
Then $N^L$, consisting of vectors perpendicular to $L$, is a sub-bundle of $\beta^*(T\bM)$ over $G^1$ with dimension $\dim \bM-1$. The {\em extended linear Poincar\'e flow} is then defined as 
$$
\psi_t = \psi_{t}^L: N^L\to N^L,\,\,
\psi_t(L_x,v) = \pi(Df_t(L_x,v)), 
$$
where $\pi$ is the orthogonal projection from fibres of $\beta^*(TM)$ to the corresponding fibres of $N^L$ along $L$.

If we define the map 
$$
\zeta: \Reg(X)\to G^1,\,\, \zeta(x) = \langle X(x)\rangle,
$$
i.e., $\zeta$ maps every regular point $x$ to the unique $L_x\in G^1$ with  $\beta(L_x)=x$ such that $L_x$ is generated by the flow direction at $x$, then the extended linear Poincar\'e on $N^{\zeta(\Reg(X))}$ can be naturally identified with the linear Poincar\'e flow defined earlier. On the other hand, given any invariant set $\Lambda$ of the flow, consider the set: 
$$
\mathfrak B(\Lambda) = \overline{\zeta(\Lambda\cap \Reg(X))}.
$$ 
In other words, $\mathfrak B(\Lambda) $ consists of those directions in $G^1$ that can be approximated by the flow direction of regular points in $\Lambda$. 
If $\Lambda$ contains no singularity, then $\mathfrak B(\Lambda) $ can be seen as a natural copy of $\Lambda$ in $G^1$ equipped with the direction of the flow on $\Lambda$. If $\sigma\in\Lambda$ is a singularity, then $ \mathfrak B(\Lambda) $ contains all the directions in $\beta^{-1}(\sigma)$ that can be approximated by the flow direction as the orbit of regular points in $\Lambda$ approach $\sigma$. 
This motivates us to define, for $\sigma\in\Lambda\cap\Sing(X)$,
$$
\mathfrak B_\sigma(\Lambda) = \{L\in \mathfrak B(\Lambda): \beta(L)=\sigma\}.
$$

The following lemma is obtained from the proof of~\cite[Lemma 4.4]{LGW}. The proof only uses the fact that every $\sigma\in\Lambda\cap\Sing(X)$ is Lorenz-like.

\begin{lemma}\label{l.lgw}
	Let $\Lambda$ be a sectional-hyperbolic attractor for a $C^1$ vector field $X$. Then for every $\sigma\in\Lambda\cap\Sing(X)$ and every $L\in\mathfrak B_\sigma(\Lambda)$, one has
	$$
	L\subset E^c_\sigma\oplus E^u_\sigma=F^{cu}(\sigma).
	$$
\end{lemma}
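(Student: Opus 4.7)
The strategy is to pass the property that flow directions at regular points of $\Lambda$ lie in the center-unstable bundle (Lemma~\ref{l.basic}) to the limit, using continuity of the dominated splitting $E^s\oplus F^{cu}$ on $\Lambda$.

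First, I would unpack the definition of $B_\sigma(\Lambda)$: any $L\in B_\sigma(\Lambda)$ arises as a limit in $G^1$ of a sequence $\zeta(x^n) = \langle X(x^n)\rangle$ with $x^n\in\Lambda\cap \Reg(X)$, and since $\beta$ is continuous with $\beta(L)=\sigma$, the base points necessarily satisfy $x^n\to\sigma$. By Lemma~\ref{l.basic}, for each $n$ one has $X(x^n)\in F^{cu}(x^n)$, and therefore the one-dimensional subspace $\langle X(x^n)\rangle$ is contained in $F^{cu}(x^n)$.

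Next, I would invoke the continuity of the dominated splitting $E^s\oplus F^{cu}$ on $\Lambda$. Since $x^n\to \sigma$, the subspaces $F^{cu}(x^n)$ converge in the appropriate Grassmannian to $F^{cu}(\sigma) = E^c_\sigma\oplus E^u_\sigma$ (this identification is the content of Proposition~\ref{p.sh}). Choose unit vectors $u^n\in \langle X(x^n)\rangle$; after passing to a subsequence, $u^n$ converges to a unit vector $u$ that generates $L$. Because $u^n\in F^{cu}(x^n)$ for every $n$ and $F^{cu}$ is a continuous subbundle over $\Lambda$, the limit $u$ lies in $F^{cu}(\sigma)$. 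Hence $L=\langle u\rangle\subset E^c_\sigma\oplus E^u_\sigma$, as claimed.

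There is no real obstacle here: the entire argument is a continuity/closedness statement, and the only nontrivial ingredient is Lemma~\ref{l.basic} (together with the identification $F^{cu}(\sigma)=E^c_\sigma\oplus E^u_\sigma$ from Proposition~\ref{p.sh}), both of which are already available. The proof should take only a few lines.
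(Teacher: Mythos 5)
Your proof is correct, but it takes a genuinely different and more elementary route than the paper's. You observe that once Lemma~\ref{l.basic} gives $X(x^n)\in F^{cu}(x^n)$ for the regular points $x^n\in\Lambda$ with $\langle X(x^n)\rangle\to L$ (and $x^n\to\sigma$ by continuity of $\beta$), the conclusion is just closedness of the continuous bundle $F^{cu}$ over the compact set $\Lambda$: limits of unit vectors in $F^{cu}(x^n)$ lie in $F^{cu}(\sigma)=E^c_\sigma\oplus E^u_\sigma$, the last equality being part of Proposition~\ref{p.sh}. This works precisely because in the sectional-hyperbolic setting the splitting $E^s\oplus F^{cu}$ is defined and continuous on all of $\Lambda$, singularity included, and because Lemma~\ref{l.basic} is already available; no contradiction argument is needed, and there is no circularity since neither Lemma~\ref{l.basic} nor Proposition~\ref{p.sh} relies on Lemma~\ref{l.lgw}. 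The paper instead argues by contradiction in the style of Li--Gan--Wen: it reduces (via domination and invariance of $B_\sigma(\Lambda)$) to excluding $L\subset E^{ss}_\sigma$, follows the backward orbits of the $x^n$ to their exit points from a neighborhood $U(\sigma)$, extracts a limit point $y\in W^s(\sigma)\setminus W^{ss}(\sigma)$ using $\left(W^{ss}(\sigma)\cap\Lambda\right)\setminus\{\sigma\}=\emptyset$ from Proposition~\ref{p.sh}, and then propagates the $(\alpha,F^{cu})$-cone forward along the orbit to contradict $\langle X(x^n)\rangle\to L\subset E^{ss}_\sigma$. What the longer argument buys is not this lemma itself but the machinery---exit times, the limit point on $\partial U(\sigma)$, and the cone-invariance estimates---which the paper explicitly reuses to prove Lemma~\ref{l.nearEc} (orbits entering $B_r(\sigma)$ must cross $\partial B_{r_0}(\sigma)$ through the center cone), a statement your continuity argument does not yield; it is also the argument that survives in settings where the flow direction at regular points is not known a priori to lie in $F^{cu}$, whereas your shortcut leans entirely on Lemma~\ref{l.basic}. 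As a proof of Lemma~\ref{l.lgw} alone, your argument is complete and correct.
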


\begin{proof}

	Due to the dominated splitting $E^{ss}_\sigma\oplus \left(E^c_\sigma\oplus E^u_\sigma\right)$, if there exists $L\in B_\sigma(\Lambda)$ that is outside  $E^c_\sigma\oplus E^u_\sigma$, then the backward iteration of $L$ under the extended linear Poincar\'e flow will accumulate on $E^{ss}_\sigma$. Since $\mathfrak B_\sigma(\Lambda)$ is closed, one can find some $\overline L\in \mathfrak B_\sigma(\Lambda)$ that is contained in $E^{ss}_\sigma$.
	Therefore we only need to show that there is no such line $L\in \mathfrak B_\sigma(\Lambda)$ that is in $E^{ss}_\sigma$. 
	
	Suppose that this is not the case, that is, one can find a sequence of points $x^n\in \Lambda$ with $x^n\to\sigma$, such that $\langle X(x^n)\rangle\to L$ with $L\subset E^{ss}_\sigma$.\footnote{Here we use superscripts to denote a sequence of points, and $x^n_t$ should be interpreted as $(x^n)_t = f_t(x^n)$. } 
	Recall that the cone $C_\alpha(F^{cu}(\sigma))$ is forward invariant by $Df_1$, that is,
	$$
	Df_1\left(C_\alpha(F^{cu}(\sigma))\right)\subset C_{\frac12\alpha}(F^{cu}(\sigma)).
	$$
	The cone can be extended continuously to a cone field in a small ball $U(\sigma)$ (which depends on $\alpha$) centered at $\sigma$, such that $$
	Df_1\left(C_\alpha(F^{cu}(x))\right)\subset C_{\frac23\alpha}(F^{cu}(x_1))
	$$
	for every $x$ such that the orbit segment $(x,1)$ is in $U(\sigma)$.
	
	Now we define 
	$$
	t_n = \sup\{t>0: (x^n_{-t}, 0)\subset U(\sigma)\}.
	$$
	Then $t_n\to\infty$, and $y^n:=x^n_{-t_n}\in\partial U(\sigma)\cap\Lambda$. Taking subsequence if necessary, we have $y^n\to y\in \partial U(\sigma)\cap\Lambda$ which is a regular point if we take $U(\sigma)$ to be small enough such that $\overline{U(\sigma)}$ contains no other singularity. By continuity, we have $y_t\in U(\sigma),\forall t>0$. This implies that $y\in W^s(\sigma)$. Then by Proposition~\ref{p.sh}, we have 
	$$
	y\in W^s(\sigma)\setminus W^{ss}(\sigma).
	$$
	
	By the dominated splitting $E^{ss}_\sigma\oplus \left(E^c_\sigma\oplus E^u_\sigma\right)$ we see that 
	$$
	\langle X(y_t)\rangle\to E^c_\sigma, \mbox{ as }t\to\infty.
	$$
	In particular, for every $\alpha>0$, one can find $T>0$ such that 
	$$
	X(y_T)\in C_\alpha(F^{cu}(y_T)).
	$$
	This shows that for all $n$ sufficiently large, one has 
	$$
	X(y^n_T)\in C_\alpha(F^{cu}(y^n_T)),
	$$
	and, by the invariance of $C_\alpha(F^{cu}(y^n_T))$ under $Df_1$,
	$$
	X(x^n) = X(y^n_{t_n})\in C_{\alpha\left(\frac23\right)^{t_n-T}}(F^{cu}(x^n)).
	$$
	This contradicts with the assumption that $\langle X(x^n)\rangle\to L\subset E^{ss}_\sigma$.
	
\end{proof}

We remark that a similar result hold on the forward orbit of $y\in\Lambda$ as opposed to the flow direction $X(y_t)$. We define 
$$
C_\alpha(E^c(\sigma)) = \{v\in T_\sigma\bM, v=v^{ss}+v^c+v^u, \max\left\{|v^{ss}, v^u|\right\}\le\alpha |v^c| \}
$$
(note that this cone is not invariant under the tangent flow unless one further  requires  that $v^u=0$)
and consider 
$$
\cC_{\alpha}(E^c(\sigma)) = \exp_{\sigma}\left(C_\alpha(E^c(\sigma))\right)
$$
its image under the exponential map. $\cC_{\alpha}(E^c(\sigma))$ can be considered as a {\em cone on the manifold} around the center direction of $\sigma$ despite that $\sigma$ may have no center manifold. The domination between $E^{ss}_\sigma$ and $E^c_\sigma$ implies that this cone has certain invariance property under $f_1$, in the sense that 
$$
f_1\left(\cC_{\alpha}(E^c(\sigma))\cap W^s(\sigma)\right)\subset \cC_{\theta\alpha}(E^c(\sigma)\cap W^s(\sigma)),\mbox{ for some }\theta\in(0,1). 
$$
Then a similar argument as in the previous lemma shows that for some $T>0$ and every $t>T$, $y_t\in	\cC_{\alpha}(E^c(\sigma))\cap W^s(\sigma)$. This observation leads to the following lemma. 

\begin{lemma}\label{l.nearEc}
	For every $\alpha>0$, there exist $r_0>0$ and $\overline r\in (0,r_0)$, such that for every $r\in(0,\overline r]$ and every $x\in B_{r}(\sigma)\cap\Lambda$, letting $t_x= \sup\{t>0: (x_{-t}, 0)\subset B_{r_0}(\sigma)\},$ then one has
	$$
	x_{-t_x} \in \partial B_{r_0}(x)\cap \cC_{\alpha}(E^c(\sigma)).
	$$

\end{lemma}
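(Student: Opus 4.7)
My plan is to argue by contradiction, closely mirroring the proof of Lemma~\ref{l.lgw} but carrying out the compactness argument on the manifold rather than in the projectivized tangent bundle. The key input is the remark preceding the lemma: for every $y\in W^s(\sigma)\setminus W^{ss}(\sigma)$, the forward orbit satisfies $y_t\in\cC_\alpha(E^c(\sigma))\cap W^s(\sigma)$ for all $t$ larger than some $T=T(y)$, which follows from the forward invariance and contraction of $\cC_\alpha(E^c(\sigma))\cap W^s(\sigma)$ under $f_1$ due to the dominated splitting $E^{ss}_\sigma\oplus(E^c_\sigma\oplus E^u_\sigma)$.

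First I fix $r_0>0$ small enough that $\overline{B_{r_0}(\sigma)}\cap\Sing(X)=\{\sigma\}$, that the above cone invariance is valid inside $B_{r_0}(\sigma)$, and that a quantitative form of Lemma~\ref{l.lgw} applies: for every $x\in\Lambda\cap B_{r_0}(\sigma)\cap\Reg(X)$, the direction $\langle X(x)\rangle$ lies in a narrow cone $C_{\alpha_0}(F^{cu}(\sigma))$, with $\alpha_0$ as small as desired upon shrinking $r_0$. Assuming the conclusion fails, I extract sequences $r^n\downarrow 0$ and $x^n\in B_{r^n}(\sigma)\cap\Lambda$ (discarding the vacuous case $x^n\in W^u_{\loc}(\sigma)$ with $t_{x^n}=+\infty$) such that $y^n:=x^n_{-t_{x^n}}\in\partial B_{r_0}(\sigma)$ but $y^n\notin\cC_\alpha(E^c(\sigma))$. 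Smooth dependence on initial conditions forces $t_{x^n}\to\infty$. Passing to a subsequence $y^n\to y\in\partial B_{r_0}(\sigma)\cap\Lambda$ with $y$ regular, the inclusions $y^n_t=x^n_{-t_{x^n}+t}\in B_{r_0}(\sigma)$ for $t\in[0,t_{x^n}]$ pass to the limit, giving $y_t\in\overline{B_{r_0}(\sigma)}$ for every $t\ge 0$, so $y\in W^s(\sigma)$; Proposition~\ref{p.sh} then rules out $y\in W^{ss}(\sigma)$.

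The remark above now provides $T>0$ with $y_t\in\cC_{\alpha/2}(E^c(\sigma))$ for every $t\ge T$, and by continuity $y^n_T\in\cC_\alpha(E^c(\sigma))$ for all large $n$. The main obstacle, and the technical heart of the argument, is to deduce from this that $y^n$ itself lies in $\cC_\alpha(E^c(\sigma))$. Backward iteration under $f_1|_{W^s}$ expands $E^{ss}_\sigma$ more strongly than $E^c_\sigma$, so $\cC_\alpha(E^c(\sigma))\cap W^s(\sigma)$ is not backward-invariant and naive propagation fails. My plan is instead to exploit $y^n\in\Lambda\cap B_{r_0}(\sigma)$: the quantitative Lemma~\ref{l.lgw} provides $X(y^n)\in C_{\alpha_0}(F^{cu}(\sigma))$ with $\alpha_0$ arbitrarily small. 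In a linearizing chart near $\sigma$ adapted to the splitting $E^{ss}_\sigma\oplus E^c_\sigma\oplus E^u_\sigma$, writing $y^n$ in components along these bundles and using that the $E^u$-coordinate of $y^n$ tends to zero (since $y\in W^s$) while the $E^c$-coordinate stays bounded away from zero (since $y\notin W^{ss}$), this $\alpha_0$-cone bound on $X(y^n)$ translates, through the eigenvalue relations in the linearization, into a bound on the ratio of the $E^{ss}$-coordinate to the $E^c$-coordinate of $y^n$ that is at most a constant multiple of $\alpha_0$. Shrinking $r_0$ further if needed to drive this ratio below $\alpha$ places $y^n$ in $\cC_\alpha(E^c(\sigma))$, yielding the required contradiction.
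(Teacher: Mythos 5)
Your strategy is genuinely different from the paper's. The paper never estimates positions near $\sigma$ directly: it works with the larger ball $U(\sigma)$ from Lemma~\ref{l.lgw}, uses Proposition~\ref{p.sh} to find a neighborhood $V$ of $W^{ss}(\sigma)\cap\partial U(\sigma)$ that backward exit points of $\Lambda$-orbits avoid, extracts by compactness of $\left(\partial U(\sigma)\cap W^s(\sigma)\cap\Lambda\right)\setminus V$ a \emph{uniform} time $T$ after which such orbits lie in $\cC_{\alpha/2}(E^c(\sigma))$, and only then chooses $r_0$ so small that traveling from $\partial U(\sigma)$ to $B_{r_0}(\sigma)$ takes longer than $T$; the cone condition at the exit point $x_{-t_x}$ then follows by continuity along the limiting orbit, with all parameters fixed before the contradiction sequence is produced. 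You instead try to read the positional cone condition at the exit points $y^n\in\partial B_{r_0}(\sigma)$ directly from a quantitative form of Lemma~\ref{l.lgw} (flow direction in a thin cone around $F^{cu}(\sigma)$) together with the fact that the limit $y$ lies in $W^s(\sigma)\setminus W^{ss}(\sigma)$. That mechanism can be made to work, and it correctly bypasses the backward non-invariance of the cone that you identified (indeed your intermediate step $y^n_T\in\cC_\alpha(E^c(\sigma))$ ends up unused), but as written the closing step has a genuine gap.

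Two concrete problems. First, a ``linearizing chart'' with ``eigenvalue relations'' is not available here: $X$ is only $C^1$, and Hartman--Grobman provides a topological conjugacy, which preserves neither cones nor coordinate ratios. You must argue instead with the expansion $X(\exp_\sigma\hat p)=DX(\sigma)\hat p+o(|\hat p|)$ (valid since $X$ is $C^1$ and $X(\sigma)=0$, with $DX(\sigma)$ block diagonal and invertible on $E^{ss}_\sigma$) and with $W^s_{\loc}(\sigma)$ being a $C^1$ graph tangent to $E^{ss}_\sigma\oplus E^c_\sigma$, so the $E^u$-component of $y^n$ is only $o(r_0)$, not zero; this is repairable but changes the bookkeeping. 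Second, and more seriously, your quantifiers do not close. The bound on the ratio of the $E^{ss}$- to the $E^c$-component of $y^n$ is ``a constant multiple of $\alpha_0$'' where the constant involves the lower bound on the $E^c$-component, which you obtain only from ``$y\notin W^{ss}(\sigma)$'' --- a qualitative bound depending on the particular limit point, hence on the contradiction sequence, hence on $r_0$. You then propose to ``shrink $r_0$ further'' at the end; but $r_0$ was fixed before the sequence $x^n$, the exit points $y^n$ and the limit $y$ were produced, and shrinking it restarts the argument with a new sequence and a new limit whose $E^c$-component may be smaller still, so the argument never terminates. To repair it, fix $\alpha_0$ and then $r_0$ in advance, with every smallness requirement depending only on $\alpha$ and $DX(\sigma)$, and prove a lower bound for the $E^c$-component of $y^n$ of the form $c\,r_0$ with $c$ uniform; this is in fact available from the same flow-direction estimate (once the $E^u$-component and the Taylor error are small compared with $r_0$, the control of the $E^{ss}$-component forces the $E^c$-component to carry almost all of $|\exp_\sigma^{-1}(y^n)|$, which is comparable to $r_0$), but it is exactly the step your sketch omits.
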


In other words, this lemma states that if $x$ is in $B_{r_0}(\sigma)\cap\Lambda$ such that its orbit enters the smaller ball $B_r(\sigma)$, then the orbit of $x$ can only cross the boundary of the larger ball $B_{r_0}(\sigma)$ through the center cone $\cC_{\alpha}(E^c(\sigma))$. 

\begin{proof}The proof is a modified version of the previous proof. Fix $\alpha>0$ and define $U(\sigma)$ as before. For every $r_0$ with $B_{r_0}(\sigma)\in U(\sigma)$ and every $x\in B_{r_0}(\sigma)\cap\Lambda$, define
	$\tilde t_x = \sup\{t>0: (x_{-t}, 0)\subset U(\sigma)\}.$ Then by  Proposition~\ref{p.sh} there exists an open (under the relative topology of $\partial U(\sigma)$) neighborhood $V$ with 
	$$
	W^{ss}(\sigma)\cap \partial U(\sigma)\subset V\subset \partial U(\sigma),
	$$
	such that $x_{-\tilde t_x}\in \partial U(\sigma)\setminus V$, for all $x\in B_{r_0}(\sigma)\cap\Lambda$.
	
	Having fixed $V$ and $\alpha$,  we take $T>0$ large enough with the following property: if $y\in \left(\partial U(\sigma)\cap W^s(\sigma) \cap\Lambda\right)\setminus V$, then for every $t\ge T$ it holds that 
	$$
	y_t\in W^s(\sigma)\cap \cC_{\frac12\alpha}(E^c(\sigma)).
	$$Such a $T$ exists due to the domination between $E^{ss}_\sigma$ and $E^c_\sigma$ and the compactness of $\partial U(\sigma)\setminus V$.
	Then we pick $r_0>0$ small enough such that every point in $\partial U(\sigma)$ must spend at least time $T$ before entering $B_{r_0}(\sigma)$.  
	
	Now, assume for contradiction's sake that for such a $r_0>0$ there exists a sequence of points $x^n\to\sigma$ such that for all $n$,
	\begin{equation}\label{e.t1}
		x^n_{-t_{x^n}} \in \partial B_{r_0}(x)\setminus \cC_{\alpha}(E^c(\sigma)).
	\end{equation}
	Note that $0<t_{x^n}<\tilde t_{x^n}$. Let $y$ be a limit point of the sequence $x^n_{-\tilde t_{x^n}}$, then $y\in \left(\partial U(\sigma)\cap W^s(\sigma) \cap\Lambda\right)\setminus V$. Let $s=\max \{t>0: y_t\in \partial B_{r_0}(\sigma)\}$ to be the last time that the orbit of $y$ crosses $\partial B_{r_0}(\sigma)$; such an $s$ exists because $y\in W^s(\sigma)$. By the choice of $r_0$, we have $s> T$.  This implies that 
	$$
	y_s\in \partial B_{r_0}(\sigma)\cap \cC_{\frac12\alpha}(E^c(\sigma));
	$$
	by continuity, we must have 
	$$
	x^n_{-t_{x^n}}\in \partial B_{r_0}(\sigma)\cap \cC_{\alpha}(E^c(\sigma))
	$$
	for all $n$ large enough. This contradicts the choice of $x^n$. 
\end{proof}

Next, we state some results on the topological structure of sectional-hyperbolic attractors. We start with the following definition.

\begin{definition}
	Given a vector field $X$, $x\in\bM$ and $\vep>0$, we define the {\em bi-infinite Bowen ball} to be 
	$$
	\Gamma_\vep{ (x) }= \{y\in\bM: d(x_t,y_t)\le \vep, \forall t\in\RR\}. 
	$$
	We say that $X$ is almost expansive at scale $\vep>0$, if the set 
	$$
	\Exp(\vep): = \{x\in\bM: \Gamma_\vep(x)\subset (x_{-s},2s) \mbox{ for some } s=s(x)>0\} 
	$$
	satisfies $\mu(\Exp(\vep))=1$ for every $X$-invariant ergodic  probability measure $\mu$.
	
\end{definition}

\begin{theorem}\label{t.PYY}\cite[Proposition 3.8 and 3.9]{PYY}
	Let $\Lambda$ be a sectional-hyperbolic chain recurrence class for a $C^1$ vector field $X$  with all singularities hyperbolic and non-degenerate. Then there exists $\vep>0$ such that $X$ is almost expansive at scale $\vep$ on the maximal invariant set in a small neighborhood $U$ of $\Lambda$.
\end{theorem}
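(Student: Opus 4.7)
The plan is to apply the ergodic decomposition theorem and verify the almost-expansivity condition on each ergodic component. An ergodic invariant measure supported on $\Lambda$ is either a point mass at some singularity $\sigma\in\Sing(X)\cap\Lambda$, or gives full mass to $\Reg(X)\cap\Lambda$, so it suffices to handle these two cases separately.

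First I would fix $\vep>0$ small enough that the balls $B_{2\vep}(\sigma)$ are pairwise disjoint for $\sigma\in\Sing(X)\cap\Lambda$, each singularity admits local stable and unstable manifolds of size at least $\vep$ whose intersection is $\{\sigma\}$, and Liao's scaled tubular neighborhood of Section~\ref{ss.liao} is available at a scale comparable to $\vep$. For $\mu=\delta_\sigma$, the hyperbolicity of $\sigma$ (which is Lorenz-like by Proposition~\ref{p.sh}) forces $\Gamma_\vep(\sigma)=W^s_\vep(\sigma)\cap W^u_\vep(\sigma)=\{\sigma\}\subset(\sigma_{-s},2s)$ trivially, so the expansivity condition holds at $\sigma$ and $\delta_\sigma$ gives zero mass to the non-expansive set.

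For an ergodic $\mu$ with $\mu(\Reg(X))=1$, I would show that $\mu$-a.e.\ $x$ satisfies $\Gamma_\vep(x)\subset(x_{-s},2s)$ for some $s=s(x)>0$. Pick a small neighborhood $W$ of $\Sing(X)\cap\Lambda$ with $\mu(W)<1/2$; by Birkhoff's theorem, the orbit of $\mu$-a.e.\ $x$ admits sequences $t_n^\pm\to\pm\infty$ with $x_{t_n^\pm}\in\Lambda\setminus W$, hence uniformly away from $\Sing(X)$. On $\Lambda\setminus W$ the flow speed is bounded below, the scaled sectional Poincar\'e map $\cP_{1,\cdot}$ is uniformly defined, and the scaled linear Poincar\'e flow enjoys uniform contraction on $E^s_N$ by Lemma~\ref{l.basic}, combined with sectional expansion on $F^{cu}_N$. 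Lifting any $y\in\Gamma_\vep(x)$ into the normal plane $\cN(x)$ via the sectional Poincar\'e map and iterating backward and forward through the recurrence times $t_n^\pm$, the uniform contraction of $E^s_N$ in forward time and the area-expansion of $F^{cu}_N$ in backward time squeeze the image of $y$ onto the origin of $\cN(x)$. By the scaled-shadowing characterization (Definition~\ref{d.shadow1} and Lemma~\ref{l.shadowing}), this forces $y$ to lie on the orbit of $x$ up to a bounded time shift, yielding the desired containment in $(x_{-s},2s)$.

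The main obstacle is the degeneration of the scaled linear Poincar\'e flow as orbits approach singularities: the flow speed vanishes and the size $\rho_0|X(x)|$ of Liao's tubular neighborhood collapses, so the uniform estimates used above fail during excursions into a small neighborhood of $\Sing(X)$. To control such excursions, I would invoke Lemma~\ref{l.nearEc}: any orbit entering and exiting $B_{r_0}(\sigma)$ must cross $\partial B_{r_0}(\sigma)$ through the center cone $\cC_\alpha(E^c(\sigma))$. This forces the transverse component of any $y\in\Gamma_\vep(x)$ to be controlled by $W^s_{loc}(\sigma)$ during the forward excursion and by $W^u_{loc}(\sigma)$ during the backward one, so the hyperbolicity accumulated while $x_t\in\Lambda\setminus W$ is preserved across excursions. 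Quantifying this uniformly over the recurrence pattern to $\Sing(X)$, and piecing together the resulting estimates into a single contraction on $\cN(x)$, is the technical heart of the argument.
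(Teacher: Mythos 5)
First, a point of comparison: this paper does not prove Theorem~\ref{t.PYY} at all --- it is imported verbatim from \cite[Propositions 3.8 and 3.9]{PYY} --- so there is no in-paper proof to measure your argument against; I can only judge the proposal on its own. Its overall architecture is sensible and in the spirit of the machinery the paper later builds: reduce by ergodic decomposition, dispose of the point masses $\delta_\sigma$ by observing that for a hyperbolic singularity $\Gamma_\vep(\sigma)$ is the maximal invariant set in $B_\vep(\sigma)$, hence $\{\sigma\}$, and for an ergodic measure giving full weight to $\Reg(X)$ work at Birkhoff recurrence times away from a small neighborhood $W$ of $\Sing(X)$, where the scaled linear Poincar\'e flow contracts $E^s_N$ uniformly (Lemma~\ref{l.basic}) and contracts $F^{cu}_N$ backwards in the averaged sense of Lemma~\ref{l.cuhyp}.

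The genuine gap is exactly the step you label ``the technical heart'' and then do not carry out: controlling a point $y\in\Gamma_\vep(x)$ during excursions of the orbit of $x$ into a neighborhood of $\Sing(X)$. The bi-infinite Bowen ball only gives $d(x_t,y_t)\le\vep$ at the \emph{fixed} scale $\vep$, while the region where the sectional Poincar\'e maps, the lift to $\cN(x_t)$, and the linear estimates for $\psi^*_t$ are valid has the collapsing scale $\rho_0|X(x_t)|\ll\vep$ near a singularity; so the squeezing argument (``contract $E^s_N$ forward, expand $F^{cu}_N$ backward'') cannot be iterated across such a passage without first proving that $y$ is scaled-shadowed by the orbit of $x$ there --- which is precisely the content of the Main Proposition~\ref{p.key} for finite segments, and it requires the simultaneous Pliss-time selection and the coordinate-change Lemmas~\ref{l.Elarge} and~\ref{l.Flarge} near singularities. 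Your appeal to Lemma~\ref{l.nearEc} does not fill this hole: that lemma constrains how the \emph{reference} orbit of $x$ crosses $\partial B_{r_0}(\sigma)$ (through the center cone), and says nothing about where the comparison point $y$ sits relative to $W^s_{\mathrm{loc}}(\sigma)$ or $W^u_{\mathrm{loc}}(\sigma)$; the claim that the transverse component of $y$ is ``controlled by $W^s_{\mathrm{loc}}(\sigma)$ during the forward excursion and by $W^u_{\mathrm{loc}}(\sigma)$ during the backward one'' is an assertion of the conclusion, not an argument. A second, related overstatement: the backward expansion on $F^{cu}_N$ is not available at arbitrary times --- Lemma~\ref{l.cuhyp} only gives it for segments whose endpoints lie outside $W$, and pointwise estimates require hyperbolic (Pliss) times --- so even away from singularities the ``squeezing'' needs a Pliss-type selection (with positive density, so that it applies $\mu$-a.e.), which the sketch does not set up. As it stands, the proposal reproduces the easy parts of the known proof and leaves the actual difficulty unproved.
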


By~\cite[Proposition 2.4]{LVY}, $X|_\Lambda$ is entropy expansive at the same scale. Due to the classical work of Bowen~\cite{B72}, the metric entropy, as a function of invariant probability measures, varies upper semi-continuously under the weak* topology. 
This further implies that every continuous function has at least one equilibrium state. Here we will not give the precise definition of entropy expansivity, since later we will state the CT criterion which provides both the existence and the uniqueness of such an equilibrium state for H\"older continuous functions.  

The following theorem is taken from~\cite[Theorem C]{PYY}.
\begin{theorem}\label{t.positiveentropy}
	Let $\Lambda$ be a sectional-hyperbolic attractor for a $C^1$ vector field $X$. Then $X|_\Lambda$ has positive topological entropy.
\end{theorem}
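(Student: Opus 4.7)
The plan is to construct a hyperbolic horseshoe inside $\Lambda$; since horseshoes have positive topological entropy, this will give $h_{top}(X|_\Lambda)>0$. Concretely, I would aim to produce a hyperbolic periodic orbit $\gamma\subset\Lambda$, verify that $W^u(\gamma)$ and $W^s(\gamma)$ meet transversely at a nontrivial point, and then invoke Smale--Birkhoff.

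First I would dispose of the singularity-free case: if $\Lambda\cap\Sing(X)=\emptyset$ then $|X|$ is bounded away from $0$ on $\Lambda$, sectional-hyperbolicity reduces to uniform hyperbolicity, and a nontrivial transitive hyperbolic attractor is classically known to carry positive topological entropy. Otherwise, pick $\sigma\in\Lambda\cap\Sing(X)$; by Proposition~\ref{p.sh} we have $E^u_\sigma\neq 0$, and the attractor property places the nontrivial manifold $W^u(\sigma)\subset\Lambda$, which supplies an abundance of regular orbits. Fix a regular point $p\in\Lambda$ with forward orbit dense in $\Lambda$ (which exists by transitivity of the attractor).

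By Lemma~\ref{l.basic} the splitting $E^s_N\oplus F^{cu}_N$ is dominated for the scaled linear Poincar\'e flow $(\psi_t^*)_t$ and $E^s_N$ is uniformly contracted; the sectional expansion~\eqref{e.sect.hyp} of $F^{cu}$ translates into a positive time-averaged area expansion of $(\psi_t^*)_t$ on $F^{cu}_N$ along any orbit segment that stays a definite distance from $\Sing(X)$. Applying the Pliss lemma (Theorem~\ref{t.pliss}, as in Section~\ref{s.pliss}) along the orbit of $p$ I would extract an unbounded sequence of $cu$-hyperbolic times $t_n$ that also satisfy a recurrence Pliss condition, so that the orbit segment $(p,t_n)$ sits inside Liao's scaled tubular neighborhood (Section~\ref{ss.liao}) at the relevant uniformly-relative scale. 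A standard shadowing/closing argument at these simultaneous Pliss times, carried out inside the scaled tubular neighborhood, then produces a hyperbolic periodic orbit $\gamma\subset\Lambda$ whose hyperbolic splitting matches $E^s_N\oplus F^{cu}_N$. Density of the forward orbit of $p$ in $\Lambda$ combined with the attractor property (which places $W^u(\gamma)\subset\Lambda$) then forces $W^u(\gamma)$ to return transversely to $W^s(\gamma)$, giving the desired homoclinic point.

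The main obstacle I expect is the interplay between the non-uniform hyperbolicity of $F^{cu}_N$ and sojourns of the orbit near $\Sing(X)$, where $(\psi_t^*)_t$ is bounded (Lemma~\ref{l.scaledflow}) but loses any a priori expansion. This is precisely the difficulty addressed in Section~\ref{s.pliss}: one must synchronize the $cu$-hyperbolic Pliss condition with the recurrence Pliss condition so that the closing/shadowing step is not broken by passages near singularities. Once such a hyperbolic periodic orbit is in hand, the horseshoe construction via Smale--Birkhoff and the lower bound on $h_{top}$ are standard. An alternative route would use exponential volume growth of $2$-disks tangent to the $F^{cu}$-cone and a Newhouse--Yomdin-type bound $h_{top}\geq \limsup t^{-1}\log\operatorname{vol}_2(f_t(D))$, but that approach needs higher regularity than $C^1$, so the horseshoe argument above seems the more robust choice.
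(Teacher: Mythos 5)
You should first be aware that the paper contains no proof of Theorem~\ref{t.positiveentropy} to compare against: it is quoted verbatim from \cite{PYY} (Theorem C), so your sketch has to stand entirely on its own. Its first half is defensible. The nonsingular case does reduce to a nontrivial uniformly hyperbolic attractor, and the production of a hyperbolic periodic orbit $\gamma\subset\Lambda$ can be made to work: uniform contraction of $E^s_N$ (Lemma~\ref{l.basic}), the expansion estimate of Lemma~\ref{l.cuhyp}, the Pliss lemma giving positive density of $cu$-hyperbolic times (Lemma~\ref{l.hyptime}), and Liao's shadowing lemma for quasi-hyperbolic orbit arcs applied to a segment between two nearby $cu$-hyperbolic-time returns outside a fixed neighborhood of $\Sing(X)$ do yield a hyperbolic saddle orbit, which lies in $\Lambda$ because $\Lambda$ is the maximal invariant set of a neighborhood. (This is more than ``a standard shadowing/closing argument'' --- it is Liao's rescaled shadowing machinery --- and for this step the recurrence Pliss times are not even needed.)

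The genuine gap is the sentence claiming that density of the forward orbit of $p$ together with $W^u(\gamma)\subset\Lambda$ ``forces $W^u(\gamma)$ to return transversely to $W^s(\gamma)$.'' Nothing in what you have established forces this. In the sectional-hyperbolic setting there is no local product structure and no uniform-size unstable disk at arbitrary points of $\Lambda$ (only at hyperbolic times), so you get neither that $W^u(\gamma)$ accumulates on $W^s(\gamma)$, nor that $W^s(\gamma)$ is large enough to catch the accumulating pieces, nor --- even granting an intersection --- transversality; and you cannot perturb $X$ to create it, since the theorem concerns every $C^1$ field. Producing exactly this homoclinic structure is the content of Theorem~\ref{t.top}, imported from \cite{CY}, and it is only known $C^1$-open-and-densely; this is precisely why Theorem~\ref{m.A} carries a genericity hypothesis while Theorem~\ref{t.positiveentropy} does not. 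As written, your argument would prove that every $C^1$ sectional-hyperbolic attractor contains a transverse homoclinic orbit, hence a horseshoe --- a statement stronger than anything the paper invokes without genericity --- on the strength of a one-line soft claim. To cover all $C^1$ fields one must either prove that step (a substantial result in itself) or, as the cited reference does, establish positive entropy by a route that avoids constructing a homoclinic point altogether.
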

Combined with the previous theorem, we have the existence of a measure of maximal entropy for $X|_ \Lambda$. 

We conclude this section with the following theorem concerning the topological structure of $\Lambda$. This result is the reason why our main result is stated for $C^1$ open and dense vector fields. Note that since the bundle $E^{ss}$ is uniformly contracted by the tangent flow, there exists a stable foliation $\cF^s$. For $K>0$, write $\cF^{s}_K(x)$ for the disk with radius $K$ inside the stable leaf $\cF^{s}(x)$ centered at $x$.\footnote{Note that $\dim W^s(\sigma) = \dim \cF^{s}(x) + 1 = \dim E^{ss}+1$ for every regular point $x$ and singularity $\sigma\in\Lambda\cap\Sing(X)$. We also have $W^{ss}(\sigma) = \cF^s(\sigma)$. }

\begin{theorem}\cite[Theorem A and B]{CY}\label{t.top}
	There exists an open and dense set $\cU\subset \mathfrak{X}^1(\bM)$ such that for any $X\in\cU$, any sectional-hyperbolic Lyapunov stable chain-recurrence class $\Lambda$ of $X$ (not reduced to a singularity) satisfies:
	\begin{enumerate}
		\item $\Lambda$ is a robustly transitive attractor;
		\item $\Lambda$ is a homoclinic class of a hyperbolic periodic orbit $\gamma$; in particular, the stable manifold of $\gamma$ is dense in a small neighborhood of $\Lambda$;
		\item for every $x\in \Reg(X)\cap \Lambda$, there exists $K^s(x)>0$ such that 
		$$
		\cF^{s}_{K^s(x)}(x)\pitchfork W^u(\gamma)\ne\emptyset.
		$$
	\end{enumerate} 
\end{theorem}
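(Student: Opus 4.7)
The plan is to combine standard $C^1$-generic techniques (Kupka--Smale, Bonatti--Crovisier's connecting lemma for pseudo-orbits, and the genericity of ``chain-recurrence class $=$ homoclinic class'') with the rigidity coming from the sectional-hyperbolic splitting $E^s\oplus F^{cu}$. First, let $\cU_0\subset\mathfrak{X}^1(\bM)$ be the residual subset on which (i) every periodic orbit is hyperbolic, (ii) every Lyapunov stable chain-recurrence class $\Lambda$ that contains a hyperbolic periodic orbit $\gamma$ coincides with its homoclinic class $H(\gamma)$, and (iii) the map $X\mapsto \Lambda(X)$ depends lower-semicontinuously (as attractors do) in a neighborhood $U\supset\Lambda$. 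The first step is to produce a hyperbolic periodic orbit inside $\Lambda$: since $\Lambda$ contains a regular recurrent point $x\in\Reg(X)$, one applies Hayashi's connecting lemma along the orbit of $x$ to produce a periodic orbit $\gamma$ with arbitrarily small perturbation of $X$; because $\gamma\subset \Lambda'$ for the continuation $\Lambda'$ of the attractor, and $\Lambda'$ still carries the sectional-hyperbolic splitting, $\gamma$ is automatically hyperbolic of stable index $\dim E^s$. Openness of the property ``$\Lambda$ contains a hyperbolic periodic orbit whose continuation stays in the continuation of $\Lambda$'' then produces an open and dense set $\cU\subset\mathfrak{X}^1(\bM)$ of vector fields for which $\Lambda\supset\gamma$.

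Once $\gamma\subset\Lambda$ is available, part (2) follows from genericity: on $\cU\cap\{$residual$\}$ one has $\Lambda=H(\gamma)$, and density of $W^s(\gamma)$ in a neighborhood of $\Lambda$ is a consequence of Lyapunov stability together with the classical fact that, for sectional-hyperbolic attractors, every orbit in the basin intersects $W^s(\gamma)$ after small perturbation (again via the connecting lemma for pseudo-orbits, applied to the chain of connections inside $H(\gamma)$). Robust transitivity, i.e. part (1), is then obtained as follows: the homoclinic class $H(\gamma)$ varies lower-semicontinuously with $X$, while the maximal invariant set in $U$ varies upper-semicontinuously; sectional hyperbolicity is a $C^1$-open condition, so the continuation of $\Lambda$ remains sectional-hyperbolic, hence transitive (by the first paragraph applied to the perturbed field), and equal to its homoclinic class. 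Shrinking $\cU$ if necessary to preserve these properties in a $C^1$-neighborhood gives the robust conclusion.

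The main obstacle is part (3), because the size $K^s(x)$ is not uniform: near the singularities the stable disks in the classical sense can shrink. The strategy is to work with the scaled stable manifolds given by the sectional-hyperbolic splitting $E^s_N\oplus F^{cu}_N$ (Lemma~\ref{l.basic}) together with Liao's scaled tubular neighborhood (Section~\ref{ss.liao}). For $x\in\Reg(X)\cap\Lambda$, the uniform contraction of $\psi_t^\ast|_{E^s_N}$ yields a local stable disk $W^s_{\mathrm{loc}}(x)$ whose ``scaled size'' is uniform in $x$; pulling back by the flow and using the saturation $W^s(x)=\bigcup_{t\ge 0} f_{-t}(W^s_{\mathrm{loc}}(x_t))$ shows that $W^s(x)$ is an immersed submanifold tangent to $E^s$, and any compact piece has a positive radius $K^s(x)$. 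By the density of $W^s(\gamma)$ in a neighborhood of $\Lambda$ established in the previous paragraph, one can find $y\in W^s(\gamma)$ arbitrarily close to $x$; the $\lambda$-lemma (inclination lemma) applied to the $u$-dimensional manifold $W^u(\gamma)$ accumulating on $\Lambda$ then pushes a disk of $W^u(\gamma)$ onto a disk $C^1$-close to a local $F^{cu}$-plate through $x$. Transversality of this plate to $W^s_{\mathrm{loc}}(x)$ is guaranteed by the domination $E^s_N\oplus F^{cu}_N$ and the fact that $F^{cu}$ contains the flow direction (Lemma~\ref{l.basic}), so choosing $K^s(x)$ large enough to reach this plate produces the required transverse intersection. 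The delicate point is choosing $K^s(x)$ so that the piece of $W^s$ of that size is still embedded and tangent to the relevant cone field; this is handled using Pesin-theoretic arguments for the non-uniform part of the picture combined with the uniform scaled estimates near singularities provided by Lemma~\ref{l.nearEc}.
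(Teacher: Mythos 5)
This statement is not proved in the paper at all: it is quoted verbatim from Crovisier--Yang \cite{CY} (Theorems A and B there), so there is no internal argument to compare yours against, and your sketch has to stand on its own as a proof of a genuinely deep result. It does not. The central gap is the passage from residual ($C^1$-generic) properties to the open and dense set $\cU$ and to the \emph{robust} conclusions. Hayashi's connecting lemma, Bonatti--Crovisier's pseudo-orbit connecting lemma, and the identity ``chain-recurrence class $=$ homoclinic class'' are all residual statements; they cannot be invoked for an arbitrary vector field $C^1$-close to $X\in\cU$. Your argument for item (1) is circular on exactly this point: you claim the continuation of $\Lambda$ for a perturbed field is transitive ``by the first paragraph applied to the perturbed field,'' but that paragraph produces a periodic orbit only after a further perturbation and uses generic properties the perturbed field need not have. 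Likewise, for item (2) you justify density of $W^s(\gamma)$ near $\Lambda$ by the possibility of perturbing orbits into $W^s(\gamma)$, which says nothing about the fixed vector field $X$. Upgrading these generic facts to open-and-dense, and proving robust transitivity and the robust identity $\Lambda=H(\gamma)$, is precisely the substantive content of \cite{CY}; it requires mechanisms that persist under perturbation (control of how unstable manifolds of periodic orbits and stable manifolds of singularities behave robustly for sectional-hyperbolic classes), not semicontinuity of $H(\gamma)$ plus openness of sectional hyperbolicity.

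Item (3) is also not handled. Since $E^s$ is uniformly contracted by the tangent flow, stable manifolds of uniform size exist along $\Lambda$ and no Pesin theory is involved; the real difficulty is the opposite one: points of $W^u(\gamma)$ near $x$ may carry only very small local unstable disks (their backward orbits may spend long times near $\Sing(X)$), so density of $W^u(\gamma)$ in $\Lambda$ plus the inclination lemma does not by itself produce a $cu$-disk of uniform size and controlled tangent directions through which $W^s_{K^s(x)}(x)$ must pass. The sharpness remark following the theorem ($\cF^s(\sigma)\cap W^u(\gamma)=\emptyset$ for singularities) shows the statement is delicate and that a cone-plus-density argument must be set up carefully; your sketch leaves exactly this step to ``Pesin-theoretic arguments,'' which is where the proof would actually have to be done. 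Given that the paper simply cites \cite{CY}, the appropriate course is to do the same rather than attempt a two-page reproof.
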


\begin{proof}
	The existence of a hyperbolic periodic orbit is given by~\cite[Theorem B]{PYY}; this result holds for all sectional-hyperbolic invariant sets with positive topological entropy (not necessarily an attractor), and in particular, applies to sectional-hyperbolic Lyapunov stable chain recurrence classes due to~\cite[Theorem C]{PYY}. 
	
	Item (1) and (2) are the main results of~\cite{CY}. We only need to prove (3). For this purpose, let $x\in \Reg(X)\cap\Lambda$. Since having transverse intersection is an open property, we only need to prove that there exists a point $y\in \omega(x)\subset \Lambda$, where $\omega(x)$ is the $\omega$-limit set of $x$, such that $\cF^{s}(y)\pitchfork W^u(\gamma)\ne\emptyset$. There are two cases:
	
	\noindent {Case 1.} $\omega(x)\cap \Sing(X)=\emptyset$. Since $\omega(x)$ is compact and invariant, by~\cite[Lemma 3.4]{CY}, the stable manifold of every point in $\omega(x)$ has a transverse intersection point with $W^u(\gamma)$.
	
	\noindent {Case 2.} $\omega(x)\cap \Sing(X)\ne \emptyset$. Take a singularity $\sigma \in \omega(x)$, and a sequence of times $t_n\to\infty$ such that $f_{t_n}(x)\to \sigma$. Now fix $r>0$ small enough and take, for all $n$ large enough, 
	$$
	t_n' = \min \{0<s<t_n: (f_{s}(x), t_n-s) \in B_r(\sigma)\}. 
	$$ 
	Let $z$ be a limit point of the sequence $\{f_{t_n'}(x)\}$, then we have $z\in\omega(x)$ and $z\in  \Lambda\cap W^s(\sigma)\setminus\{\sigma\}$ since $t_n-t_n'\to\infty$. Now~\cite[Lemma 3.2]{CY} shows that $\cF^{s}(z)$ has a transverse intersection point with $W^u(\gamma)$, as desired. 
	
\end{proof}

We remark that the third item is sharp in the sense that if $\sigma$ is a singularity in $\Lambda$, then $\cF^s(\sigma) = W^{ss}(\sigma)$; also note that since $\Lambda$ is an attractor, we must have $W^u(\Orb(\gamma))\subset \Lambda$.  Then Proposition~\ref{p.sh} shows that 
$$
(\cF^s(\sigma)\setminus \{\sigma\})\cap W^u(\Orb(\gamma))=\emptyset.
$$

\section{Preliminary II: An improved Climenhaga-Thompson criterion}\label{s.p2}
In this section we provide the improved Climenhaga-Thompson Criterion. For the convenience of those who are familiar with~\cite{CT16} or~\cite{PYY21}, the notations here are the same. So it is safe to skip most of this Section and move on to Theorem~\ref{t.improvedCL}.

\subsection{Pressure on a collection of orbit segments}\label{s.2.1}
Throughout this article, we will assume that $\bM$ is a compact metric space and $f_t:\bM\to \bM$ a continuous flow generated. Denote by $\cM(\bM)$ the set of Borel probability measures on $\bM$ that are invariant.
For $t>0$, the {\em Bowen metric} is defined as 
$$
d_t(x,y) = \sup\{d(f_s(x),f_s(y)): 0\le s\le t\}.
$$
For $\delta>0$, the $(t,\delta)$-Bowen ball at $x$ is the $\delta$-ball under the Bowen metric $d_t$
$$
B_{t,\delta}(x) = \{y\in \bM: d_t(x,y)<\delta\}, 
$$
and its closure is given by
$$
\overline{B}_{t,\delta}(x)  = \{y\in \bM: d_t(x,y)\le\delta\}.
$$

Given $t>0$ and $\delta>0$, a set $E\subset \bM$ is call {\em $(t,\delta)$-separated}, if for every distinct $x,y\in E$, one has $d_t(x,y)>\delta$.

A core concept in the work of Climenhaga and Thompson is the pressure on a collection of orbit segments. Writing $\RR^+ = [0,\infty)$, we regard $\bM\times \RR^+$ as the collection of finite orbit segments. Given $\cC\subset \bM\times \RR^+$ and $t\ge 0$ we write $\cC_t = \{x\in \bM: (x,t)\in \cC\}$.

Given a continuous function $\phi:\bM\to\mathbb{R}$ which will be called a {\em potential function} and a scale $\vep>0$, we write 
\begin{equation}\label{e.Phi}
	\Phi_\vep(x,t) = \sup_{y\in B_{t,\vep}(x) }\int_0^t\phi(f_s(y))\,ds,
\end{equation}
with $\vep = 0$ being the standard Birkhoff integral
$$
\Phi_0(x,t) =\int_0^t\phi(f_s(y))\,ds.
$$
Putting $\Var(\phi,\vep) = \sup\{|\phi(x)-\phi(y)|: d(x,y)<\vep\}$, we obtain the trivial bound
$$
|\Phi_\vep(x,t) - \Phi_0(x,t)|\le t\Var(\phi,\vep).
$$

The {\em (two-scale) partition function} $\Lambda(\cC,\phi,\delta,\vep,t)$ for $\cC\in \bM\times \RR^+, \delta>0,\vep>0, t>0$ is defined as 
\begin{equation}\label{e.Lambda}
	\Lambda(\cC,\phi,\delta,\vep,t)=\sup\left\{\sum_{x\in E}e^{\Phi_\vep(x,t)}:E\subset \cC_t \mbox{ is $(t,\delta)$-separated}\right\}.
\end{equation}
Henceforth, we will suppress the potential function $\phi$ and write $\Lambda(\cC,\delta,\vep,t)$  when no confusion is caused.
When $\cC = \bM\times \RR^+$ we will also write $\Lambda(\bM,\delta,\vep,t)$. A $(t,\delta)$-separated set $E\in\cC_t$ achieving the supremum in~\eqref{e.Lambda} is called {\em maximizing} for $\Lambda(\cC,\delta,\vep,t)$. Note that the existence of such set is only guaranteed when $\cC_t$ is compact. Also note that $\Lambda$ is monotonic in both $\delta$ and $\vep$, albeit in different directions: if $\delta_1<\delta_2$, $\vep_1<\vep_2$ then
\begin{equation}\label{e.monotone}
	\begin{split}
		\Lambda(\cC,\delta_1,\vep,t)\ge\Lambda(\cC,\delta_2,\vep,t),\\
		\Lambda(\cC,\delta,\vep_1,t)\le\Lambda(\cC,\delta,\vep_2,t).
	\end{split}
\end{equation}

The pressure of $\phi$ on $\cC$ with scales $\delta,\vep$ is defined as
\begin{equation}\label{e.P1}
	P(\cC,\phi,\delta,\vep) = \limsup_{t\to+\infty}\frac1t \log\Lambda(\cC,\phi,\delta,\vep,t).
\end{equation}
The monotonicity of $\Lambda$ can be naturally translated to $P$. Note that when $\vep=0$, $\Lambda(\cC,\phi,\delta,0,t)$ and $P(\cC,\phi,\delta,0)$ agree with the classical definition. In this case we will often write $P(\cC,\phi,\delta)$, and let
\begin{equation}\label{e.P2}
	P(\cC,\phi) = \lim_{\delta\to0} P(\cC,\phi,\delta). 
\end{equation}
When $\cC = \bM\times \RR^+$, this coincides with the standard definition of the topological pressure $P(\phi)$.

The variational principle for flows~\cite{BR} states that 
$$
P(\phi) = \sup_{\mu\in\cM(\bM)}\left\{h_\mu(f_1)+\int\phi\,d\mu\right\},
$$
where $h_\mu(f_1)$ is the metric entropy of the time-one map $f_1$. A measure achieving the supremum, when it exists, is called an equilibrium state for the potential $\phi$. When $\phi\equiv0 $ we have $P(\phi)=h_{top}(f_1)$~\cite{B71, BR}, and the corresponding equilibrium states are called the measures of maximal entropy.

\subsection{Decomposition of orbit segments}\label{s.2.3}
The main observation in~\cite{CT16} is that the uniqueness of equilibrium states can be obtained if the collection of ``bad orbit segments'' has small topological pressure compared to the rest of the system.  For this purpose, they define:
\begin{definition}\cite[Definition 2.3]{CT16}
	A decomposition  $(\cP,\cG,\cS)$ for $\cD\subset \bM\times \RR^+$ consists of three collections $\cP,\cG,\cS\subset \bM\times \RR^+$ and three functions $p,g,s:\cD\to \RR^+$ such that for every $(x,t)\in \cD$, the values $p=p(x,t), g=g(x,t)$ and $s=s(x,t)$ satisfy $t=p+g+s$, and
	\begin{equation}\label{e.decomp}
		(x,p)\in\cP,\hspace{0.5cm} (f_px,g)\in\cG,\hspace{0.5cm} (f_{p+g}x,s)\in\cS.
	\end{equation}
	Given a decomposition $(\cP,\cG,\cS)$ and real number $M\ge0$, we write $\cG^M$ for the set of orbit segments $(x,t)\in \cD$ with $p< M$ and $s< M$.
\end{definition}
We assume that $\bM\times \{0\}$ (whose elements are identified with empty sets) belongs to $\cP\cap\cG\cap\cS$. This allows us to decompose orbit segments in trivial ways. Following~\cite[(2.9)]{CT16}, for $\cC\in\bM\times\RR^+$ we define the slightly larger collection $[\cC]\supset \cC$ to be
\begin{equation}\label{e.[C]}
	[\cC]:= \{(x,n)\in \bM\times \mathbb{N}:(f_{-s}x,n+s+t) \in\cC \mbox{ for some } s,t\in[0,1)\}.
\end{equation}
This allows us to pass from continuous time to discrete time.

\subsection{Bowen property}\label{s.2.4}
The Bowen property, also known as the bounded distortion property, was first introduced by Bowen in~\cite{B75} for maps and by Franco~\cite{Franco} for flows.
\begin{definition}\label{d.Bowen}
	Given $\cC\subset  \bM\times \RR^+$, a potential $\phi$ is said to have the Bowen property on $\cC$ at scale $\vep>0$, if there exists $K>0$ such that
	\begin{equation}\label{e.Bowen}
		\sup\left\{|\Phi_0(x,t) - \Phi_0(y,t)|:(x,t)\in\cC, y\in B_{t,\vep}(x) \right\}\le K.
	\end{equation}
\end{definition}
The constant $K$ is sometimes called the {\em distortion constant}. Note that H\"older potentials for uniformly hyperbolic systems (in the case of flows, this precludes the existence of singularities) have Bowen property on $\bM\times \RR^+$. Also note that if $\phi$ has the Bowen property on $\cG$ at scale $\vep$ with distortion constant $K$, then $\phi$ has the Bowen property on $\cG^M$ at the same scale for every $M>0$, with distortion constant $K(M) = K+2M\Var(\phi,\vep)$.

\subsection{Specification}\label{s.2.5}
The specification property plays a central role in the work of Bowen~\cite{B75} and Climenhaga-Thompson~\cite{CT16}. Roughly speaking, it states that `good' orbit segments (on which there is large pressure and the Bowen property) can be shadowed by regular orbits, with bounded transition time from one segment to the next.
\begin{figure}[h!]
	\centering
	\def\svgwidth{\columnwidth}
	\includegraphics[scale=0.7]{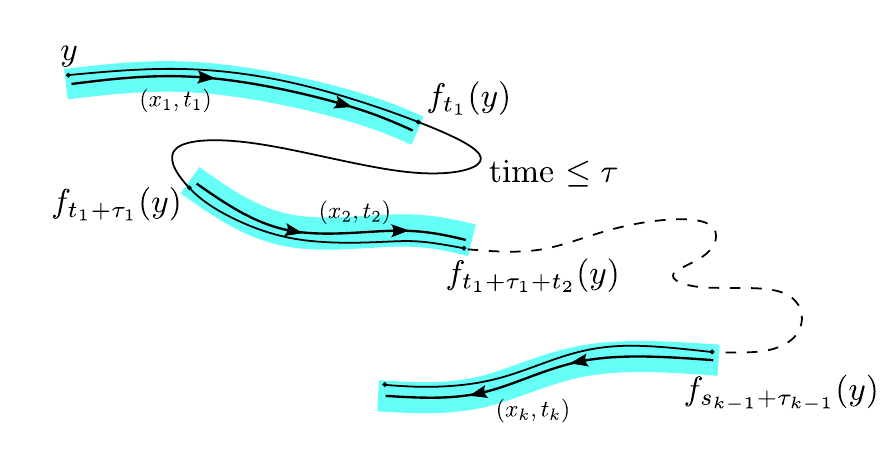}
	\caption{The specification property. The highlighted regions are the $\delta$-neighborhoods of orbit segments $(x_i,t_i)$. Note that they are not the {\bf scaled} tubular neighborhoods.}
	\label{f.specification}
\end{figure}
\begin{definition}
	We say that $\cG\subset\bM\times\RR^+$ has weak specification at scale $\delta$ if there exists $\tau>0$ such that for every finite orbit collection $\{(x_i,t_i)\}_{i=1}^k\subset \cG$, there exist a point $y$ and a sequence of ``gluing times'' $\tau_1,\ldots,\tau_{k-1}$ with $\tau_i\le \tau$ such that for $s_j = \sum_{i=1}^{j}t_i+\sum_{i=1}^{j-1}\tau_i$ and $s_0=\tau_0=0$, we have 
	\begin{equation}\label{e.spec}
		d_{t_j}(f_{s_{j-1}+\tau_{j-1}}(y), x_j)<\delta \mbox{ for every } 1\le j\le k.
	\end{equation}
	The constant $\tau = \tau(\delta)$ is referred to as the {\em maximum gap size}.  See Figure~\ref{f.specification}.
\end{definition}
As in~\cite{CT16} we will sometimes say that $\cG$ has (W)-specification, or simply specification. This version of the specification is weak in the sense that the ``transition times'' $\{\tau_i\}$ are only assumed to be bounded by, rather than equal to, $\tau$. 

It is clear that the construction above can be applied to a compact, invariant subset $\Lambda$ of $\bM$. Since we are exclusively working on the sectional-hyperbolic attractor $\Lambda$ which is isolated, we shall use the same notation as before without highlighting the dependence on $\Lambda$. In particular, we shall write $P(\phi)$ for $P(\phi,f_1|_\Lambda)$,

\begin{definition}\label{d.tailspec}
	We say that $\cG$ has {\em tail (W)-specification}  at scale $\delta$ if there exists $T_0>0$ such that $\cG\cap(\bM\times [T_0,\infty))$ has (W)-specification at scale $\delta$. We may also say that $\cG$ has (W)-specification at scale $\delta$ for $t>T_0$ if we need to declare the choice of $T_0$.
\end{definition}

\subsection{The main result of~\cite{CT16}}
The main theorem of~\cite{CT16} gives the existence and uniqueness of equilibrium states for systems with Bowen property, specification and a ``pressure gap'' on certain ``good orbits''.
\begin{theorem}\cite[Theorem 2.9]{CT16}\label{t.CT}
	Let $(f_t)_t$ be a continuous flow on a compact metric space $\bM$, and $\phi:\bM\to\RR$ a continuous potential function. Suppose that there are $\vep>0,\delta>0$ with $\vep>40\delta$ such that $(f_t)_t$ is almost expansive at scale $\vep$,\footnote{It is worth noting that the version we cited here is slightly weak than\cite[Theorem 2.9]{CT16} since we replaced the condition  $P^\perp_{\exp}(\phi,\vep)<P(\phi)$ by the assumption that $(f_t)_t$ is almost expansive at scale $\vep$.} and there exists $\cD\subset\bM\times\RR^+$ which admits a decomposition $(\cP,\cG,\cS)$ with the following properties:
	\begin{enumerate}[label={(\Roman*)}]
		\item[(I)] For every $M>0$, $\cG^M$ has tail (W)-specification at scale $\delta$;
		\item[(II)]  $\phi$ has the Bowen property at scale $\vep$ on $\cG$;
		\item[(III)] $P(\cD^c\cup [\cP]\cup[\cS], \phi,\delta,\vep)<P(\phi)$.
	\end{enumerate}
	Then there exists a unique equilibrium state for the potential $\phi$.
\end{theorem}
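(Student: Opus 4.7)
My plan is to follow the Bowen-style uniqueness-via-specification scheme, carried out on the ``good core'' $\cG^M$ for $M$ sufficiently large, and then use the pressure gap (III) to transfer the conclusion to arbitrary equilibrium states. Existence of an equilibrium state is the easier half: almost expansivity at scale $\vep$ upgrades to entropy expansivity (by~\cite[Proposition 2.4]{LVY}), which gives upper semi-continuity of $\mu\mapsto h_\mu(f_1)$ on $\cM_X(\bM)$, so combined with the continuity of $\phi$ and the weak-$*$ compactness of $\cM_X(\bM)$ at least one maximizer of $\mu\mapsto h_\mu(f_1)+\int\phi\,d\mu$ exists. The real work is uniqueness.

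For uniqueness, I would first choose $M$ large enough that (III) continues to hold after absorbing into $[\cP]\cup[\cS]$ every orbit segment whose decomposition has $p\ge M$ or $s\ge M$; this is possible because $\cG^M\nearrow\cD$ as $M\to\infty$, so $\cG^M$ carries all of the pressure. The idea is then to produce a reference invariant measure $\mu$ as a flow-average of weak-$*$ limits of
\[
\nu_n \;=\; \frac{1}{\Lambda(\cG^M,\phi,\delta,\vep,n)} \sum_{x\in E_n} e^{\Phi_\vep(x,n)}\,\delta_x,
\]
where $E_n\subset \cG^M_n$ is nearly maximizing for the partition function at scales $(\delta,\vep)$. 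The specification hypothesis (I) for $\cG^M$ at scale $\delta$ with bounded gap $\tau(M)$ gives a lower Gibbs bound: gluing any $(x,t)\in\cG^M$ to many other segments of $\cG^M$ yields many distinct orbits $\delta$-shadowing $(x,t)$ in the Bowen metric, which leads to
\[
\mu(B_t(x,\delta)) \;\ge\; C(M)\, e^{-tP(\phi)+\Phi_0(x,t)}, \qquad (x,t)\in\cG^M.
\]
The Bowen property (II) delivers the dual upper bound: bounded distortion of $\Phi_0$ on $\vep$-Bowen balls, combined with (III) to dismiss the contribution of $[\cP]\cup[\cS]\cup\cD^c$, lets me compare $\mu(B_t(x,\vep))$ with the partition function restricted to such a ball and conclude $\mu(B_t(x,\vep))\le C'\,e^{-tP(\phi)+\Phi_0(x,t)}$.

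With two-sided Gibbs estimates in hand, uniqueness is a standard endgame: any ergodic equilibrium state $\mu'$ must, by (III), give full measure to points whose orbit segments lie in $\cG^M$ with positive asymptotic density, so both Gibbs bounds apply to $\mu'$ as well; comparing $\mu'$ to $\mu$ on Bowen balls then shows $d\mu'/d\mu$ is bounded above and below on a set of positive measure, and ergodicity forces $\mu'=\mu$. The main technical obstacle, and the reason for the constant $40$ in $\vep>40\delta$, is coordinating the two scales: specification is cheap only at the coarse scale $\delta$, Bowen distortion is controllable only at the finer scale $\vep$, and one must repeatedly pass between near-maximizing $(n,\delta)$-separated and $(n,\vep)$-separated subsets using the monotonicity~\eqref{e.monotone} without losing exponential weight --- a gap of roughly a factor of $40$ leaves room to absorb the multiplicative constants arising from the specification gluings and the distortion constants $K(M)$. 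Almost expansivity at scale $\vep$ is what ensures that the ``non-expansive'' part contributes negligibly to both the partition function and to any invariant measure, which is what permits the Gibbs comparison to extend from a full-measure subset back to the whole space, closing the loop.
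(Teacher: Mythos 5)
You should note first that the paper does not prove this statement at all: Theorem~\ref{t.CT} is quoted verbatim from~\cite{CT16} and used as a black box (and later superseded by the improved criterion, Theorem~\ref{t.improvedCL}). So the only meaningful comparison is between your sketch and the actual Climenhaga--Thompson argument, and while your overall scheme (reference measure from weighted separated sets, lower Gibbs via specification, upper bounds via the Bowen property, pressure gap to dismiss the bad collection, ergodicity to finish) is indeed the right skeleton, your very first reduction contains a genuine gap. You propose to ``choose $M$ large enough that (III) continues to hold after absorbing into $[\cP]\cup[\cS]$ every orbit segment whose decomposition has $p\ge M$ or $s\ge M$.'' This fails: the collection $\{(x,t)\in\cD: p(x,t)\ge M \mbox{ or } s(x,t)\ge M\}$ typically has pressure exactly $P(\phi)$ for every fixed $M$. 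Indeed, its partition function is bounded below by terms of the form $\Lambda(\cP,M)\cdot\Lambda(\cG,t-M-s)\cdot\Lambda(\cS,s)$, and since the core $\cG$ carries full pressure, the exponential growth rate in $t$ is still $P(\phi)$; only the multiplicative constant (roughly $e^{-Ma}$, where $a$ is the gap in (III)) decays in $M$. Hence you cannot fix a single $M$ and rerun hypothesis (III) with the enlarged bad set; the growth-rate hypothesis simply does not improve with $M$. This is precisely why (I) is stated for \emph{every} $M$: the genuine proof keeps the $M$-dependence in the Gibbs constants $C(M)$, and in the uniqueness step chooses $M$ only after fixing an ergodic equilibrium state (via Birkhoff applied to the decomposition functions, a positive-measure set of points has returns to $\cG^M$ with positive density for \emph{some} $M$ depending on the measure), rather than collapsing everything to one $M$ at the level of pressure. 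If your reduction worked, the quantification over all $M$ in (I) would be superfluous, and the whole point of this paper's improved criterion (I'') would largely evaporate.

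A secondary, smaller issue: your upper Gibbs bound is asserted to follow from the Bowen property ``combined with (III),'' but in~\cite{CT16} this step is where most of the technical work lives --- one needs the two-scale counting with adapted partitions, the passage between $(t,\delta)$- and $(t,\vep)$-separated sets, and the control of non-expansive points (here trivialized by almost expansivity), and the factor $40$ in $\vep>40\delta$ is calibrated to exactly these manipulations rather than being a generic safety margin. As a blind sketch your proposal identifies the correct ingredients and their roles, but as written the argument would not close because of the $M$-reduction above.
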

Note that Assumption (I) can be replaced by the following assumption:
\begin{lemma}\label{l.tailspec}\cite[Lemma 2.10]{CT16}
	Assume that \begin{center}
		(I').\,\,\,\, $\cG$ has specification at all scales.
	\end{center}		Then Assumption (I) holds. 
	
\end{lemma}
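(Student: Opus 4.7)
The plan is to reduce $\cG^M$-specification at scale $\delta$ to $\cG$-specification at a finer scale $\eta$, chosen so that $\eta$-closeness propagates to $\delta$-closeness over time windows of length at most $M$ on either side. Since $\bM$ is compact and $(f_t)_t$ is continuous, the family $\{f_t:|t|\le M\}$ is uniformly equicontinuous, so such an $\eta\in(0,\delta)$ exists. By hypothesis (I'), $\cG$ has (W)-specification at scale $\eta$, with some maximum gap $\tau_0=\tau_0(\eta)$. Set the tail threshold $T_0:=2M$; every $(x,t)\in\cG^M$ with $t\ge T_0$ then admits a decomposition $t=p+g+s$ with $p,s<M$, core $(f_p x, g)\in\cG$, and $g=t-p-s\ge 0$.

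Given a finite collection $\{(x_j,t_j)\}_{j=1}^k\subset\cG^M$ with $t_j\ge T_0$, apply $\cG$-specification at scale $\eta$ to the cores $\{(z_j,g_j)\}_{j=1}^k$, where $z_j:=f_{p_j}(x_j)$, to obtain a shadowing point $w$ and gluing times $\gamma_j\in[0,\tau_0]$. Denote by $R_j:=\sum_{i\le j} g_i+\sum_{i<j}\gamma_i$ the $\cG$-spec timestamps, so that $d_{g_j}(f_{R_{j-1}+\gamma_{j-1}}(w),z_j)<\eta$. Take the candidate $\cG^M$-shadowing point to be $y:=f_{-p_1}(w)$. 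Since $f_s(y)=f_{s-p_1}(w)$ and $f_s(x_j)=f_{s-p_j}(z_j)$, the requirement that $f_{T_{j-1}+\tau_{j-1}}(y)$ lie near $x_j=f_{-p_j}(z_j)$ and be driven by the $\cG$-spec starting point $f_{R_{j-1}+\gamma_{j-1}}(w)$ forces $\tau_j=\gamma_j-p_{j+1}-s_j\in[-2M,\tau_0]$; setting $\tau(\delta):=\tau_0+2M$ bounds $|\tau_j|$ by $\tau(\delta)$.

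The Bowen-ball estimate $d_{t_j}(f_{T_{j-1}+\tau_{j-1}}(y),x_j)<\delta$ is then verified by splitting $s\in[0,t_j]$ into three regimes. In the core regime $s\in[p_j,p_j+g_j]$, the $\cG$-spec at scale $\eta$ directly yields $\eta$-closeness between $f_s(y)$ and $f_s(x_j)$. In the prefix regime $s\in[0,p_j]$, the relevant time shift $s-p_j$ has length $\le p_j<M$, so $d(w,z_j)<\eta$ upgrades to $\delta$-closeness by uniform equicontinuity. The suffix regime $s\in[p_j+g_j,t_j]$ is analogous, starting from the endpoint estimate $d(f_{g_j}(w),f_{g_j}(z_j))<\eta$ and a forward shift of length $\le s_j<M$.

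The main technical point I expect to navigate is the sign of the derived $\tau_j$: the formula allows potentially small negative values, reflecting a slight overlap of consecutive shadowing windows. This is consistent with the (W)-specification convention used in~\cite{CT16}, under which gluing times are bounded in absolute value by $\tau$ rather than required to be non-negative. All the time shifts arising in the verification are uniformly bounded by $M$, so the structure of the argument is otherwise insensitive to this convention.
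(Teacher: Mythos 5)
Your argument is correct and is essentially the proof of the cited result \cite[Lemma 2.10]{CT16} (the paper itself gives no proof of this lemma, only the citation): pass to a finer scale $\eta$ via uniform equicontinuity of $\{f_t:|t|\le M\}$ on the compact manifold, apply $\cG$-specification at scale $\eta$ to the cores $(f_{p_j}x_j,g_j)$, and absorb the prefixes and suffixes (of length $<M$) into the Bowen-ball estimates, which forces the gluing times $\tau_j=\gamma_j-p_{j+1}-s_j$ and a maximum gap of $\tau_0+2M$; your three-regime verification and the formula for $\tau_j$ are exactly right, and the threshold $T_0=2M$ is harmless though not actually needed. The one point to state carefully is the sign convention you flag: the definition in this paper only imposes the upper bound $\tau_i\le\tau$ (not a bound on $|\tau_i|$ as you assert for \cite{CT16}), so the possibly negative gluing times (overlapping shadowing windows, bounded below by $-2M$) are admissible under the stated definition, and the uniform bound $|\tau_j|\le\tau_0+2M$ is all the Climenhaga--Thompson counting machinery uses; if one insisted on $\tau_i\in[0,\tau]$, your construction (and the cited one) would require this relaxation to be made explicit.
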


\subsection{An improved CT criterion}
As we explain in the introduction, it turns out that Theorem~\ref{t.CT} is not applicable in our case due to its strong hypothesis on the specification (I) (or (I')). To solve this issue, we developed an improved version of the CT criterion in a previous article~\cite{PYY21}. 

Let 
\begin{equation}\label{e.lip}
	Lip = \inf\{L>0: d(f_s(x),f_s(y))\le Ld(x,y), \forall s\in [-1,1]\}.
\end{equation}

\begin{theorem}\label{t.improvedCL}
	Let $(f_t)_{t\in\RR}$ be a Lipschitz continuous flow on a compact metric space $\bM$, and $\phi:\bM\to\RR$ a continuous potential function. Suppose that there exist $\vep>0,\delta>0$ with $\vep\ge1000 Lip\cdot\delta$ such that $(f_t)_t$ is almost expansive at scale $\vep$, and $\cD\subset \bM\times\RR^+$ which admits a decomposition $(\cP,\cG,\cS)$ with the following properties:
	\begin{enumerate}[label={(\Roman*)}]
		\item[(I'')] $\cG$ has tail (W)-specification at scale $\delta$;
		\item[(II)]  $\phi$ has the Bowen property at scale $\vep$ on $\cG$;
		\item[(III)] $P(\cD^c\cup [\cP]\cup[\cS], \phi,\delta,\vep)<P(\phi)$.
	\end{enumerate}
	Then there exists a unique equilibrium state for the potential $\phi$.
\end{theorem}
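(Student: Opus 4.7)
The plan is to follow the Climenhaga--Thompson framework~\cite{CT16}, but to reorganize the argument so that specification is invoked only on $\cG$ itself (at the single scale $\delta$) rather than on every $\cG^M$. Hypothesis (I'') replaces the earlier (I) at the cost of a larger separation between $\delta$ and $\vep$; we will pay for this by working with $\vep\ge 2000\delta$ instead of $\vep>40\delta$. The proof will split into (i) construction of an equilibrium state carrying a lower Gibbs bound, (ii) the matching upper Gibbs bound from almost expansivity, and (iii) uniqueness via cross-shadowing.

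For construction, first fix $t>T_0$ and pick a $(t,\delta)$-separated subset $E_t\subset\cG_t$ achieving (up to a fixed factor) the supremum in $\Lambda(\cG,\phi,\delta,\vep,t)$, then form $\nu_t:=\Lambda(\cG,\phi,\delta,\vep,t)^{-1}\sum_{x\in E_t}e^{\Phi_\vep(x,t)}\delta_x$ and its time average $\mu_t$. Using (III), I would first verify $P(\cG,\phi,\delta,\vep)=P(\phi)$: every orbit segment lies in $\cG$ or in $\cD^c\cup[\cP]\cup[\cS]$, and the latter has strictly smaller pressure. Any weak-$*$ limit $\mu$ of $\{\mu_t\}$ is then an equilibrium state. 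Tail (W)-specification on $\cG$ at scale $\delta$ will deliver a lower Gibbs-type inequality $\mu(B_t(x,\delta))\gtrsim e^{\Phi_0(x,t)-tP(\phi)}$ for $(x,t)\in\cG$: any finite family of segments in $\cG$ can be $\delta$-shadowed by a single orbit with gluing times bounded by $\tau(\delta)$, and the Bowen property (II) at scale $\vep$ lets one compare the resulting $\phi$-integrals to those of the original segments with uniformly bounded error.

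Almost expansivity at scale $\vep$ combined with a Brin--Katok argument will yield the matching upper Gibbs bound $\mu(B_t(x,\vep))\lesssim e^{\Phi_\vep(x,t)-tP(\phi)}$ on a full-measure set. For uniqueness I would then run the standard cross-shadowing argument: given two ergodic equilibrium states $\mu_1,\mu_2$, the pressure gap forces both to concentrate on points whose long orbits contain a positive-density collection of $\cG$-segments. Using specification on $\cG$ one stitches together $\mu_1$- and $\mu_2$-generic pieces into a single orbit; the Bowen property at scale $\vep$ together with the two Gibbs bounds then force $d\mu_1/d\mu_2\asymp 1$, whence $\mu_1=\mu_2$.

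The hard part will be establishing the lower Gibbs bound without specification on $\cG^M$. In~\cite{CT16} that hypothesis was used to shadow $\cG^M$-segments directly; replacing (I) by (I'') means the shadowing orbits are guaranteed to track only the pure $\cG$-cores, so the bounded-length prefixes in $\cP$ and suffixes in $\cS$ must be absorbed into the gluing-time budget and discarded using (III). Keeping this accounting under control while chaining many $\delta$-shadowings with transition segments of length $\tau(\delta)$ is precisely what forces the enlarged ratio $\vep/\delta\ge 2000$: each gluing introduces deviations that must still fit inside the $\vep$-Bowen tube where the Bowen property is valid, and enough slack must be reserved so that chained shadowings never escape it.
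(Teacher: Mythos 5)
The paper does not actually prove Theorem~\ref{t.improvedCL} here: it is quoted from the authors' earlier work~\cite{PYY21}, so there is no in-paper argument to compare against line by line. Judged on its own terms, your proposal is a restatement of the Climenhaga--Thompson roadmap rather than a proof, and the one place where the improved criterion genuinely differs from \cite{CT16} is exactly the place you leave open. You acknowledge that in \cite{CT16} the lower Gibbs bound (and the pressure estimates feeding into uniqueness) are obtained by shadowing $\cG^M$-segments, and that under (I'') only pure $\cG$-cores can be shadowed; but your resolution --- ``the prefixes and suffixes must be absorbed into the gluing-time budget and discarded using (III)'' --- is a description of the difficulty, not a mechanism. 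The content of the theorem is precisely the re-engineered counting: one must show that partition sums over $\cD$ (and over $X\times\RR^+$) can be compared with partition sums over $\cG$ alone, with separation scales degrading in a controlled way at each of the finitely many reductions, and that the Gibbs-type estimates for the constructed measure survive when the shadowed segments are only the cores while the discarded pieces are controlled purely through the pressure gap (III). This is where the ratio $\vep\ge 2000\delta$ is actually consumed, scale by scale; asserting that ``enough slack must be reserved'' does not establish it.

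Two subsidiary steps are also shaky as written. First, your justification of $P(\cG,\phi,\delta,\vep)=P(\phi)$ --- ``every orbit segment lies in $\cG$ or in $\cD^c\cup[\cP]\cup[\cS]$'' --- is false: a segment in $\cD$ is generally in none of these collections; it merely decomposes as prefix/core/suffix. The correct route is the convolution-type estimate $\Lambda(\cD,t)\le\sum_{p+g+s=t}\Lambda([\cP],p)\,\Lambda(\cG,g)\,\Lambda([\cS],s)$ at suitably adjusted scales, which yields $P(\cD,\cdot)\le\max\{P([\cP]),P(\cG),P([\cS])\}$ and then, with (III), the desired lower bound on the pressure of $\cG$; this is standard but must be done with the scale bookkeeping made explicit. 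Second, the uniqueness step as you phrase it (``force $d\mu_1/d\mu_2\asymp 1$'') cannot work: distinct ergodic equilibrium states are mutually singular, so comparability of densities is not the right target. The standard argument shows instead that any ergodic equilibrium state other than the constructed Gibbs-type measure would, via the adapted partition/Katok-type estimate and the lower Gibbs bound on $\cG$-cores, have measure-theoretic pressure strictly below $P(\phi)$, a contradiction. Until the core-only Gibbs estimate and these two repairs are carried out in detail, the proposal does not yet constitute a proof of the theorem.
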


\begin{remark}The version cited here is slight different from~\cite[Theorem A]{PYY21}. In particular, in~\cite{PYY21} it was assumed that $(f_t)$  is continuous (as opposed to Lipschitz continuous) and $\vep \ge 2000 \delta$; in other words, $Lip = 2$. However, this was achieved by slowing down the flow, see~\cite[Section 4]{PYY21}. In this article, we cannot afford such a treatment, as we need to speed up the flow to obtain a domination constant of $\frac12$ (see~\eqref{e.onestepDS}) and a one-step sectional expansion (see~\eqref{e.onestepexp}). Note that the slowdown was only needed to obtain~\cite[Lemma 4.2]{PYY21}, namely the specification on $\cG^1$ at scale $2\delta$. However, it is straightforward to verify that this lemma holds if $(f_t)$ is Lipschitz; in this case, the scale for the specification property on $\cG^1$ is $Lip\cdot\delta$ provided that $\cG$ has the specification at scale $\delta$.    
\end{remark}

In comparison to Theorem~\ref{t.CT}, we only require the specification to hold on the ``good core'' $\cG$ at the fixed scale $\delta$. Later we will prove (Section~\ref{s.spec}) that such an assumption is verifiable if one chooses $\cG$ carefully. Indeed, we shall prove in Theorem~\ref{t.spec} that for every $\delta>0$ there exists an orbit segments collection $\cG_{spec}(\delta)$ with the tail specification property at scale $\delta$. Here the fact that $\cG_{spec}(\delta)$ depends on $\delta$ explains the reason why we cannot apply the original CT criterion.

\section{The Pliss lemma and simultaneous Pliss times}\label{s.pliss}
In this section we introduce the Pliss lemma~\cite{Pliss}. It has been proven to be a useful tool when studying systems with non-uniform behavior. Later we shall provide two applications of the Pliss lemma: on the existence of $cu$-hyperbolic times (Section~\ref{ss.cuhyptime}) and on the existence of times with ``good recurrence property'' to a small neighborhood of singularities in $\Lambda$ (Section~\ref{ss.recpliss}). The goal of this section is to prove that, under proper choices of certain parameters, these two types of Pliss times will co-exist.

\begin{theorem}[The Pliss Lemma,~\cite{Pliss}]\label{t.pliss}
	Given $A\ge b_2 > b_1 >0$, let 
	$$
	\theta_0 = \frac{b_2-b_1}{A-b_1}.
	$$
	Then, given any real numbers $a_1, a_2, \ldots, a_N$ such that 
	$$
	\sum_{j=1}^N a_j\ge b_2N, \mbox{ and } a_j\le A \mbox{ for every } 1\le j \le N,
	$$
	there exist $\ell >\theta_0 N$ and $1\le n_1 <\cdots n_\ell\le N$ so that 
	$$
	\sum_{j=n+1}^{n_i}a_j\ge b_1(n_i-n) \mbox{ for every } 0\le n < n_i \mbox{ and } i=1,\ldots,\ell.
	$$
\end{theorem}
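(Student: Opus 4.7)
The plan is to reduce the statement to a telescoping estimate on the centered partial sums. Define
\[
S_0 = 0, \qquad S_k = \sum_{j=1}^{k}(a_j - b_1), \quad k=1,\dots,N.
\]
The hypothesis $\sum_j a_j \ge b_2 N$ translates to $S_N \ge (b_2-b_1)N$, while the bound $a_j\le A$ says each increment $S_k-S_{k-1}=a_k-b_1$ lies in $(-\infty,A-b_1]$. The condition required of a Pliss time $n_i$, namely $\sum_{j=n+1}^{n_i} a_j\ge b_1(n_i-n)$ for all $0\le n<n_i$, is exactly $S_{n_i}\ge S_n$ for $0\le n<n_i$. Thus we must show that the set
\[
\cL := \{\, n\in\{1,\dots,N\}:\, S_n\ge S_m \text{ for every } 0\le m<n\,\}
\]
of running maxima of $(S_k)$ has cardinality strictly greater than $\theta_0 N$.

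The key step will be the upper bound
\begin{equation}\label{e.pliss.key}
S_{n_i}-S_{n_{i-1}}\le A-b_1 \qquad \text{for every } i=1,\dots,\ell,
\end{equation}
with the convention $n_0=0$, where $n_1<\cdots<n_\ell$ enumerate $\cL$. To establish \eqref{e.pliss.key}, I will argue that for any $k$ with $n_{i-1}\le k\le n_i-1$ one has $S_k\le S_{n_{i-1}}$: indeed the maximum of $S_0,\dots,S_{n_i-1}$ is attained at some element of $\cL\cup\{0\}$, and by the maximality of $n_{i-1}$ as a Pliss time below $n_i$, this element must be $n_{i-1}$ (or $0$, in which case $S_{n_{i-1}}\ge 0$ still bounds the maximum from above). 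In particular $S_{n_i-1}\le S_{n_{i-1}}$, and adding the last increment $a_{n_i}-b_1\le A-b_1$ yields \eqref{e.pliss.key}.

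Telescoping \eqref{e.pliss.key} gives $S_{n_\ell}\le (A-b_1)\,\ell$. The same running-maximum argument applied to the tail $n_\ell< k\le N$ shows $S_N\le S_{n_\ell}$ (either $N=n_\ell$, or $N\notin\cL$ and some earlier partial sum — necessarily $S_{n_\ell}$ — dominates $S_N$). Combining with the hypothesis,
\[
(b_2-b_1)N \le S_N \le S_{n_\ell} \le (A-b_1)\,\ell,
\]
so $\ell\ge \theta_0 N$. Since $\ell$ is an integer, if $\theta_0 N\notin\ZZ$ this already gives $\ell>\theta_0 N$; in the degenerate case $\theta_0 N\in\ZZ$, equality in the chain above forces $a_{n_i}=A$ and equality in every intermediate estimate, and by a short separate argument (or by perturbing $b_1$ slightly downward and relabeling) one obtains strict inequality. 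The main conceptual obstacle is the monotonicity claim $S_k\le S_{n_{i-1}}$ for $k$ between consecutive Pliss times, but this is almost tautological once one formulates Pliss times as running maxima; the rest is bookkeeping.
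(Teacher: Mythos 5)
Your reduction of Pliss times to weak running maxima (records) of the centered sums $S_k=\sum_{j=1}^k(a_j-b_1)$ is the classical proof of this lemma; the paper itself gives no proof and simply cites~\cite{Pliss}, so the only comparison to make is that your route is the standard one and that its core is correct. One phrasing slip: the maximizer of $S_0,\dots,S_{n_i-1}$ need not be $n_{i-1}$ or $0$; it may be an earlier record $n_j$ with $j<i-1$. This costs nothing, because $n_{i-1}$ weakly dominates every earlier index, so the maximal value is still at most $S_{n_{i-1}}$ and your key bound $S_{n_i}-S_{n_{i-1}}\le A-b_1$ stands; just state it that way. With that, the chain $(b_2-b_1)N\le S_N\le S_{n_\ell}\le (A-b_1)\ell$, hence $\ell\ge\theta_0 N$, is complete (and it shows $\ell\ge 1$ automatically, since $\ell=0$ would give $S_N\le 0$).

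The one genuine gap is the strict inequality $\ell>\theta_0 N$ in the case $\theta_0 N\in\NN$, and the fix you gesture at goes the wrong way: lowering $b_1$ to some $b_1'<b_1$ produces $b_1'$-Pliss times, which are \emph{weaker} than $b_1$-Pliss times, so it gives no lower bound on $\ell$. The correct ``short separate argument'' is the equality analysis: if $\ell=\theta_0 N$, every inequality in the chain is an equality, so for each $i$ one gets $S_{n_i-1}=S_{n_{i-1}}$ and $a_{n_i}=A$; since $S_{n_{i-1}}$ is the running maximum up to time $n_i-1$, the index $n_i-1$ is then itself a weak record, which forces $n_i=n_{i-1}+1$, hence $n_i=i$ for all $i$; likewise $S_N=S_{n_\ell}$ makes $N$ a record, so $N=n_\ell=\ell$, and $\ell=\theta_0 N$ then yields $\theta_0=1$, i.e.\ $A=b_2$. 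Thus strictness holds whenever $A>b_2$. In the boundary case $A=b_2$ (permitted by the hypothesis $A\ge b_2$) the strict conclusion is genuinely false: take $a_j=A$ for every $j$, so that every index is a Pliss time and $\ell=N=\theta_0 N$. That defect belongs to the statement as quoted, not to your argument, and it is harmless for the paper, whose applications (Lemma~\ref{l.hyptime} and Lemma~\ref{l.rec}) only use the positive-density conclusion, for which $\ell\ge\theta_0 N$ suffices (and in Lemma~\ref{l.rec} one has $A=1>b_2=1-\beta_1$, so strictness is available there anyway).
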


Given a sequence $\{a_i\}_{i=1}^n$. We say that $n$ is a $b_1$-Pliss time for the sequence $\{a_i\}_{i=1}^n$, if for every $0\le j<n$ it holds 
\begin{equation}\label{e.pliss}
	\sum_{i=j+1}^{n}a_i\ge b_1(n-j). 
\end{equation}

Then the previous lemma states that the density of Pliss times is at least $\theta_0$.

The next lemma is straightforward.
\begin{lemma}\label{l.gluepliss} Assume that 
	$n$ is a $b_1$-Pliss time for the sequence $\{a_i\}_{i=1}^n$, and $m$ is a $b_1$-Pliss time for the sequence $\{a'_i\}_{i=1}^m$. Define the sequence $\{c_j\}_{j=1}^{n+m}$ as
	$$
	c_j = \begin{cases}
		a_j,& j\le n\\
		a'_{j-n},& j>n.
	\end{cases}
	$$
	Then $n+m$ is a $b_1$-Pliss time for the sequence $\{c_j\}_{j=1}^{n+m}$. 
\end{lemma}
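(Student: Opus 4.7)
The plan is to verify directly that the concatenated sequence $\{c_j\}_{j=1}^{n+m}$ satisfies the Pliss inequality~\eqref{e.pliss} at index $n+m$. That is, I need to show
$$
\sum_{j=k+1}^{n+m} c_j \ge b_1(n+m-k) \qquad \text{for every } 0 \le k < n+m,
$$
and I would handle this by splitting according to whether the index $k$ falls in the $\{a_i\}$ block or the $\{a'_i\}$ block.

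If $n \le k < n+m$, then every term in the sum comes from the second block, so by definition of $c_j$ and a change of index,
$$
\sum_{j=k+1}^{n+m} c_j \;=\; \sum_{i=k-n+1}^{m} a'_i.
$$
Applying the hypothesis that $m$ is a $b_1$-Pliss time for $\{a'_i\}_{i=1}^m$ with the intermediate index $k-n \in [0,m)$ yields exactly $b_1(m-(k-n)) = b_1(n+m-k)$.

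If instead $0 \le k < n$, then I would split the sum at $j = n$:
$$
\sum_{j=k+1}^{n+m} c_j \;=\; \sum_{j=k+1}^{n} a_j \;+\; \sum_{i=1}^{m} a'_i.
$$
The first sum is at least $b_1(n-k)$ because $n$ is a $b_1$-Pliss time for $\{a_i\}_{i=1}^n$ (applied with intermediate index $k$), and the second sum is at least $b_1 m$ because $m$ is a $b_1$-Pliss time for $\{a'_i\}_{i=1}^m$ (applied with intermediate index $0$). Adding gives $b_1(n+m-k)$, as required.

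There is no genuine obstacle here: the statement is essentially the observation that if two blocks individually satisfy the Pliss inequality terminating at their final index, then the inequality terminating at the end of the concatenation either reduces to one of the two sub-inequalities (when the cut-off lies inside the second block) or to the sum of both (when the cut-off lies inside the first block). The only mild care needed is in bookkeeping the index shift $i = j - n$ when reindexing the second block.
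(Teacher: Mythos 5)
Your proof is correct and follows essentially the same route as the paper: the case where the cut-off index lies in the second block reduces directly to the Pliss property of $\{a'_i\}$, and the case where it lies in the first block is handled by splitting the sum at $n$ and adding the two Pliss inequalities, exactly as in the paper's (more terse) argument. The index bookkeeping you carry out is the only content, and it is done correctly.
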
 
\begin{proof}
	It is clear that~\eqref{e.pliss} holds for the sequence $\{c_i\}$ if  $j\ge n$. If $j< n$, we write
	$$
	\sum_{i=j+1}^{n+m}c_i = 	\sum_{i=j+1}^{n}a_i + 	\sum_{i=1}^{m}a_i' \ge b_1(n-j) + b_1m = b_1(m+n-j),   
	$$ 
	and the lemma follows.
\end{proof}

\subsection{$cu$-hyperbolic times for the scaled linear Poincar\'e flow}\label{ss.cuhyptime}
First we applied the Pliss lemma on the scaled linear Poincar\'e flow. For this purpose, we use the notation $\psi^*_{t,x}$ to highlight the base point of the scaled linear Poincar\'e flow. In other words, $\psi^*_{t,x}$ is the restriction of $\psi_t^*$ to the normal space $N(x)$.
\begin{definition}
	Let $\lambda>1$. For $x\in\Reg(X){\cap\Lambda}$ and $t>1$, we say that $x_t$ is a $(\lambda,cu)$-hyperbolic time for the scaled linear Poincar\'e flow, if for every $j=1,\ldots,\floor{t}$ we have 
	\begin{equation}\label{e.cupliss}
		\prod_{i=0}^{j-1}	 \|\psi^*_{-1,x_{t-i}}\mid_{F_N^{cu}}\| \le \lambda^{-j}.
	\end{equation}
\end{definition}

The following two lemmas provide the existence of $(\lambda,cu)$-hyperbolic times for orbits that start and end outside a small neighborhood of singularities. 

\begin{lemma}\label{l.cuhyp}
	There exists $\tilde \lambda>1$ such that for any open neighborhood $W$ of $\Sing(X)\cap\Lambda$, there exists $N_0>0$ such that for every real number $t>N_0$ and any orbit segment $(x,t)\in{\Lambda}\times\RR^+$, if $x\notin W$ and $x_t\notin W$, then we have 
	$$
	\prod_{i=0}^{\floor{t}-1}	 \|\psi^*_{-1,x_{t-i}}|_{F^{cu}_N(x_{t-i})}\| \le \tilde \lambda^{-\floor{t}}.
	$$
\end{lemma}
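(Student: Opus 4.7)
The strategy is to convert the one-step sectional expansion of $Df_1$ on $F^{cu}$ (given by~\eqref{e.onestepexp}) into a one-step mininorm estimate for $\psi_1^*$ on $F^{cu}_N$, and then iterate. The key trick is that for any unit $v\in F^{cu}_N(x)$, the two-dimensional subspace $V_x:=\operatorname{span}(X(x),v)\subset F^{cu}(x)$ is available as a test plane for sectional expansion, because (using the orthogonality reduction from Section~\ref{ss.DS}) $F^{cu}_N\subset F^{cu}$ and $v\perp X(x)$.

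The core computation is an area identity: taking $\{v,X(x)/|X(x)|\}$ as an orthonormal basis of $V_x$, the image basis is $\{Df_1(v),X(x_1)/|X(x)|\}$, and the area of the image parallelogram equals the base $|X(x_1)|/|X(x)|$ times the height $|\pi_{N,x_1}Df_1(v)|=|\psi_1(v)|$. Comparing to $|\det Df_1|_{V_x}|\geq\overline\lambda$ and using $\psi_1^*(v)=\tfrac{|X(x)|}{|X(x_1)|}\psi_1(v)$ yields
$$
|\psi_1^*(v)|\;\geq\;\overline\lambda\left(\frac{|X(x)|}{|X(x_1)|}\right)^{2}.
$$
Passing to the minimum over unit $v\in F^{cu}_N(x)$ and dualizing gives the one-step bound
$$
\bigl\|\psi^*_{-1,y}|_{F^{cu}_N(y)}\bigr\|\;\leq\;\overline\lambda^{-1}\left(\frac{|X(y)|}{|X(y_{-1})|}\right)^{2}
$$
for every regular $y$.

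Applied along the orbit and multiplied together, the ratio factors telescope:
$$
\prod_{i=0}^{\lfloor t\rfloor-1}\bigl\|\psi^*_{-1,x_{t-i}}|_{F^{cu}_N}\bigr\|\;\leq\;\overline\lambda^{-\lfloor t\rfloor}\left(\frac{|X(x_t)|}{|X(x_{\{t\}})|}\right)^{2},
$$
where $\{t\}=t-\lfloor t\rfloor\in[0,1)$. To control the remaining ratio we use the hypothesis $x,x_t\notin W$: then $|X(x_t)|\leq\sup|X|$, and since $x_{\{t\}}$ lies in the flow image $f_{[0,1)}(\bM\setminus W)$, a compact set containing no singularity, we have $|X(x_{\{t\}})|\geq c_W>0$ for some constant depending only on $W$. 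Hence the telescoped ratio is bounded by some $M=M(W)$.

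To finish, fix any $\tilde\lambda\in(1,\overline\lambda)$ (independent of $W$) and choose $N_0=N_0(W)$ so that $\lfloor t\rfloor\geq 2\log M/\log(\overline\lambda/\tilde\lambda)$ whenever $t>N_0$; then $\overline\lambda^{-\lfloor t\rfloor}M^{2}\leq\tilde\lambda^{-\lfloor t\rfloor}$, as required. The only genuinely delicate point is the area identity together with the observation that the telescoping argument localizes the dependence on the orbit's itinerary to its two endpoints; behavior of the orbit near singularities in the interior never enters, which is what allows $N_0$ to depend on $W$ only through the ratio bound at the endpoints.
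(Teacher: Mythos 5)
Your proposal is correct and is essentially the paper's own argument: the same sectional-expansion/area identity gives the one-step bound $\|\psi^*_{-1,y}|_{F^{cu}_N(y)}\|\le\overline\lambda^{-1}\bigl(|X(y)|/|X(y_{-1})|\bigr)^{2}$, the product telescopes identically, and the leftover endpoint ratio is handled by compactness away from the singularities (the paper bounds it by $c^{2}\sup_{s\in[0,1]}\|Df_s|_{\langle X\rangle}\|$ with $c$ the flow-speed ratio bound on $W^{c}$, which is equivalent to your $M(W)$). The only cosmetic caveat is that the uniform lower bound on $|X(x_{\{t\}})|$ should be stated on the compact set $f_{[0,1]}(\Lambda\setminus W)$, since $F^{cu}_N$ is only defined over $\Lambda$ and $W$ is only a neighborhood of $\Sing(X)\cap\Lambda$.
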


\begin{proof}
	Let $\overline \lambda>1$ be given by~\eqref{e.onestepexp}. Take any $\tilde\lambda\in(1,\overline\lambda)$, we will prove that $\tilde\lambda$ has the desired property. 
	
	Let $W$ be any open neighborhood of $\Sing(X)\cap\Lambda$. Then we take $c>0$ large enough such that for every $x,y\in W^c\cap\Lambda$ it holds 
	\begin{equation}\label{e.bddflowspeed}
		c^{-1}<\frac{|X(x)|}{|X(y)|}<c.
	\end{equation}

	Let $x\in \Reg(X)\cap\Lambda$. Take any $v\in F^{cu}_N(x)$ and consider the rectangle $R$ in the tangent space $T_x\bM$ formed by the vectors $v$ and $X(x)$. Note that the image of $R$ under the tangent flow $Df_1$ is a parallelogram in $T_{x_t}\bM$ formed by the vectors $Df_1(v)$ and $Df_1(X(x)) = X(x_1)$. The sectional-hyperbolicity of $X|_ \Lambda$ (recalled that we speed up the flow to achieve the one-step sectional expansion on $F^{cu}$, see~\eqref{e.onestepexp}) requires that the area of $Df_1(R)$ to be larger than the area of $R$. Let $A(\cdot)$ denote the area, then we have ($\pi_{N,x_1}$ is the orthogonal projection from $T_{x_1}\bM$ to $N(x_1)$)
	\begin{align*}
		|v|\cdot |X(x)|&=A(R)\\
		&\le \overline\lambda^{-1} A(Df_1(R)) \\
		&=\overline\lambda^{-1}|\pi_{N,x_1}\left(Df_1(v)\right)|\cdot |X(x_1)|\\
		&=\overline\lambda^{-1}|\psi_{1,x}(v)|\cdot |X(x_1)|\\
		&=\overline\lambda^{-1}\frac{|\psi_{1,x}(v)| |X(x)|}{|X(x_1)|}\cdot \frac{|X(x_1)|^2}{|X(x)|}\\
		&=\overline\lambda^{-1}|\psi^*_{1,x}(v)|\frac{|X(x_1)|^2}{|X(x)|}
	\end{align*}
	Let $y=x_1$, then we have the following one-step backward contraction:
	\begin{equation}\label{e.5.3}
		\left\|\psi^*_{-1,y}|_{F^{cu}(y)}\right\|\le \overline\lambda^{-1}\frac{|X(y)|^2}{|X(y_{-1})|^2}.
	\end{equation}	
	Now let $(x,t)$ be an orbit segment starting at a regular point $x$, such that $x\notin W$ and $x_t\notin W$. Then we apply~\eqref{e.5.3} at every $x_{t-i}$, $i=0,\ldots, \floor{t}-1$ to obtain
	\begin{align*}
		&\prod_{i=0}^{\floor{t}-1}	 \|\psi^*_{-1,x_{t-i}}|_{F^{cu}_N(x_{t-i})}\| \\
		&\le\overline \lambda^{-\floor{t}}\prod_{i=0}^{\floor{t}-1} \frac{|X(x_{t-i})|^2}{|X(x_{t-i-1})|^2}\\
		&= \overline\lambda^{-\floor{t}}\frac{|X(x_{t})|^2}{|X(x_{t-\floor{t}})|^2}\\
		&\le\overline\lambda^{-\floor{t}} c^2\sup_{s\in[0,1]} \|Df_s|_{\langle X\rangle}\|,
	\end{align*} 
	where the last line is due to~\eqref{e.bddflowspeed}. 
	
	Now we see that for fixed $\tilde\lambda\in(1,\overline\lambda)$, there exists $N_0>0$ such that whenever $t>N_0$, we have 
	$$
	c^2\sup_{s\in[0,1]} \|Df_s|_{\langle X\rangle}\|\cdot \overline\lambda^{-\floor{t}}<\tilde \lambda^{-\floor{t}},
	$$
	and the lemma follows.
\end{proof}

\begin{lemma}\label{l.hyptime}
	There exist $\lambda_0>1$ and $\theta_0\in(0,1)$ such that for any open neighborhood $W$ of $\Sing(X)\cap\Lambda$, there exists $N_0>0$ such that for every real number $t>N_0$ and any orbit segment $(x,t)\in\bM\times\RR^+$, if $x\notin W$ and $x_t\notin W$ then there exist $\theta_0 \floor{t}$ many natural numbers $n_i$  such that each $x_{t-n_i}$ is a $(\lambda_0,cu)$-hyperbolic time for the orbit segment $(x,t-n_i)$ for the scaled linear Poincar\'e flow $(\psi_t^*)$. 
\end{lemma}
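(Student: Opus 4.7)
The plan is to apply the Pliss lemma (Theorem~\ref{t.pliss}) to the sequence of one-step backward contraction rates of the scaled linear Poincar\'e flow along $F^{cu}_N$, using Lemma~\ref{l.cuhyp} to control the total sum. First I would fix $\tilde\lambda > 1$ as in Lemma~\ref{l.cuhyp}, choose any $\lambda_0 \in (1, \tilde\lambda)$, and set $A := \log C_1$, where $C_1$ is the uniform upper bound on $\|\psi^*_1\|$ provided by Lemma~\ref{l.scaledflow} with $\tau=1$. Since $\psi^*_1\circ\psi^*_{-1}$ is the identity on the $\psi^*$-invariant bundle $F^{cu}_N$, this yields $\|\psi^*_{-1,y}|_{F^{cu}_N}\| \ge 1/C_1$ at every regular point $y$, and hence $-\log\|\psi^*_{-1,y}|_{F^{cu}_N}\| \le A$ uniformly. (If $A < \log\tilde\lambda$, the flow is already uniformly expanding on $F^{cu}_N$ and every time is a hyperbolic time; otherwise I would proceed as below.)

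Given an orbit segment $(x,t)$ with $x,x_t\notin W$ and $t>N_0$ (where $N_0$ is from Lemma~\ref{l.cuhyp}), I would set $T=\floor{t}$ and
$$
c_k \;:=\; -\log\bigl\|\psi^*_{-1,\,x_{t-T+k}}|_{F^{cu}_N}\bigr\|, \qquad k=1,\ldots,T.
$$
After the reindexing $j=T-k$, Lemma~\ref{l.cuhyp} reads $\sum_{k=1}^T c_k \ge T\log\tilde\lambda$, while the previous paragraph gives $c_k\le A$. The Pliss lemma then produces indices $1\le n_1<\cdots<n_\ell\le T$ with $\ell > \theta_0 T$, where
$$
\theta_0 \;:=\; \frac{\log\tilde\lambda - \log\lambda_0}{A - \log\lambda_0} \;>\; 0,
$$
such that for every $i$ and every $0\le n<n_i$ one has $\sum_{k=n+1}^{n_i} c_k \ge (n_i-n)\log\lambda_0$.

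Finally, I would set $m_i := T-n_i \in \{0,\ldots,T-1\}$. The substitution $k=n_i-i'$ rewrites the Pliss inequality as
$$
\prod_{i'=0}^{j-1}\bigl\|\psi^*_{-1,\,x_{t-m_i-i'}}|_{F^{cu}_N}\bigr\| \;\le\; \lambda_0^{-j}, \qquad j=1,\ldots,n_i=\floor{t-m_i},
$$
which is precisely the assertion that $x_{t-m_i}$ is a $(\lambda_0,cu)$-hyperbolic time on the orbit segment $(x,t-m_i)$. Relabeling the $m_i$'s as the $n_i$'s of the lemma statement then produces the required $\theta_0\floor{t}$ hyperbolic times (possibly after enlarging $N_0$ once more so that $\theta_0 T \ge 1$, and discarding the finitely many $n_i$'s that are too close to $T$ for the definition $t-m_i>1$). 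The main obstacle I anticipate is purely bookkeeping — tracking the two index reversals, one from running the cocycle backward and one from translating Pliss-lemma indices into hyperbolic-time indices — with no new analytic content required beyond Lemmas~\ref{l.cuhyp} and~\ref{l.scaledflow} and the Pliss lemma itself.
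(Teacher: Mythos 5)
Your proposal is correct and follows essentially the same route as the paper: convert Lemma~\ref{l.cuhyp} into a lower bound on the sum of $-\log\|\psi^*_{-1,\cdot}|_{F^{cu}_N}\|$ along integer times, bound each term above by a uniform constant (the paper uses $\log K_1$ from Proposition~\ref{p.tubular3}, you use the bound from Lemma~\ref{l.scaledflow}, which is equally valid), and apply the Pliss lemma, reindexing the Pliss times as the claimed $(\lambda_0,cu)$-hyperbolic times. Your remarks about enlarging $N_0$ and discarding indices too close to $\floor{t}$ are harmless bookkeeping refinements of the same argument.
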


\begin{proof}
	Let $\lambda_{0}\in(0,\tilde \lambda)$. Let $(x,t)$ be an orbit segment satisfying the assumptions of Lemma~\ref{l.hyptime}. Then, by Lemma~\ref{l.cuhyp}, we have
	\begin{equation}\label{e.5.5}
		\prod_{i=0}^{\floor{t}-1}	 \|\psi^*_{-1,x_{t-i}}|_{F^{cu}_N(x_{t-i})}\| \le \tilde \lambda^{-\floor{t}}.
	\end{equation}

	Consider the sequence
	$$
	a_n = -\log\left\|\psi^*_{-1,x_{t-\floor{t}+n}}|_{F^{cu}_N(x_{t-\floor{t}+n})}\right\|, n=1,\ldots,\floor{t}.
	$$
	Setting $A=\log K_1$ with $K_1$ given by~Proposition~\ref{p.tubular3}, $b_2 =\log\tilde\lambda$ and $b_1=\log\lambda_0$, then we have $a_j\le A, j=1,\ldots, \floor{t}$; furthermore, \eqref{e.5.5} implies that
	$$
	\sum_{i=1}^{\floor{t}}a_i\ge \floor{t}b_2.
	$$ 
	Then, Theorem~\ref{t.pliss} gives $\theta_0>0$, and times $m_1,\ldots,m_\ell$ with $\ell>\theta_0\floor{t}$ such that for each $m_j$, one has
	$$
	\sum_{i=n+1}^{m_j}a_i\ge \floor{t}b_1, \forall n=0,\ldots, m_j-1.
	$$
	This shows that, with $n_j = \floor{t}-m_j$,
	$$
	\prod_{i=0}^{k-1}	 \|\psi^*_{-1,x_{t-n_j-i}}\| \le  \lambda_0^{-k},\,\, \forall j=1,\ldots,\floor{t_j},
	$$
	that is, each $x_{t-n_j}$ is a $(\lambda_0,cu)$-hyperbolic time for the orbit segment $(x,t-n_j)$. 
\end{proof}

\subsection{Recurrence Pliss times}\label{ss.recpliss}
Next, we shall explicitly define points with ``good recurrence property'' to a small neighborhood of $\Sing(X)\cap\Lambda$.

\begin{definition}\label{d.RecPliss}
	Given $\beta\in(0,1)$, a set $W\subset \bM$ and an orbit segment $(x,t)$ with $t\ge 1$, we say that $x_t$ is a {\em $(\beta, W)$-recurrence Pliss time for the orbit segment $(x,t)$}, if for every $s=0,1,\ldots, \floor{t}-1$, it holds 
	\begin{equation}\label{e.RecPliss}
		\frac{\# \{\tau\in[0,s]\cap\NN: x_{t-\tau}\in W\}}{s+1}\le \beta. 
	\end{equation}
\end{definition}
In other words, $x_t$ is a $(\beta, W)$-recurrence Pliss time for the orbit segment $(x,t)$, if for every $s\in [0,\floor{t}-1]\cap\NN$, the number of points $\{x_{t-s},\ldots, x_t\}$ that are contained  in $W$ has a portion that is less than $\beta$.

\begin{remark}\label{r.RecPliss}
	It should be noted that the definition above implies that $x_t\notin W$; in fact, let $s\in[0,\floor{t}-1]\cap \NN$ be the smallest integer such that $x_{t-s}\in W$, then it follows that $\frac{1}{s+1}\le \beta$, i.e., $s\ge \beta^{-1}-1$. In other words, the orbit segment $\{ x_t,x_{t-1}, \ldots,x_0\}$ (as an orbit segment for the time-$(-1)$ map $f_{-1}$) must spend at least $\beta^{-1}$ many iterates outside $W$ before entering it for the first time. 
\end{remark}

The next lemma gives the existence of $(\beta, W)$-recurrence times with density arbitrarily close to one, provided that the overall time { ratio} that the orbit segment spends in $W$ is small enough.

\begin{lemma}\label{l.rec}
	For any $\beta_0\in(0,1)$ and $\kappa\in(0,1)$, there exists $\beta_1 \in(0,\beta_0)$ such that for any set  $W\subset\bM$ and any orbit segment $(x,t)\in\bM\times\RR^+$, if 
	\begin{equation}\label{e.pliss1}
		\frac{\#\{s=0,\ldots, \floor{t}-1: x_{t-s}\in W\}}{\floor{t}} \le \beta_1,
	\end{equation}
	then there exist at least $\kappa (\floor{t}+1)$ many $n_i$'s, such that each $x_{t-n_i}$ is a $(\beta_0,W)$-recurrence Pliss time for the orbit segment $(x,t-n_i)$ and the set $W$.
\end{lemma}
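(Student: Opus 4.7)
The plan is to recast the conclusion as an application of the Pliss lemma (Theorem~\ref{t.pliss}) after reversing the indexing, so that the ``prefix'' inequalities demanded by Definition~\ref{d.RecPliss} become the ``suffix'' inequalities that the Pliss lemma naturally produces.

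Set $N:=\lfloor t\rfloor$ and $\chi_k:=\mathbf{1}_W(x_{t-k})$ for $k=0,1,\ldots,N$. One checks directly from~\eqref{e.RecPliss} that $x_{t-n}$ being a $(\beta_0,W)$-recurrence Pliss time for $(x,t-n)$ is equivalent to the forward partial sum inequalities
\[
\sum_{k=n}^{n+s}(\beta_0-\chi_k)\ge 0 \qquad \text{for every } 0\le s\le \lfloor t-n\rfloor.
\]
Put $a_k:=\beta_0-\chi_k\in[\beta_0-1,\beta_0]$, and reverse the indexing by $\tilde a_j:=a_{N-j}$ for $j=1,\ldots,N$. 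With $j_i:=N-n_i$, the forward inequality at $n_i$ becomes the standard suffix Pliss inequality $\sum_{j=m+1}^{j_i}\tilde a_j\ge 0$ for all $0\le m<j_i$. The hypothesis~\eqref{e.pliss1} gives $\sum_{j=1}^{N}\tilde a_j\ge (\beta_0-\beta_1)N$, and $\tilde a_j\le \beta_0$ termwise.

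Now I apply Theorem~\ref{t.pliss} to $(\tilde a_j)_{j=1}^N$ with upper bound $A=\beta_0$, average threshold $b_2=\beta_0-\beta_1$, and a small auxiliary parameter $b_1\in(0,b_2)$. The resulting density of Pliss times is
\[
\theta_0=\frac{\beta_0-\beta_1-b_1}{\beta_0-b_1},
\]
which can be driven arbitrarily close to $1$ by first fixing $b_1$ small (depending only on $\beta_0$ and $\kappa$) and then choosing $\beta_1\in(0,\beta_0)$ sufficiently small. Hence I can arrange $\theta_0\ge\kappa$ with a definite buffer. Translating each Pliss time $j_i$ back via $n_i:=N-j_i$, the Pliss conclusion $\sum_{j=m+1}^{j_i}\tilde a_j\ge b_1(j_i-m)$ unwinds, with $s=j_i-m-1$, into the stronger bound $\sum_{k=n_i}^{n_i+s}\chi_k\le(\beta_0-b_1)(s+1)\le\beta_0(s+1)$ for $0\le s<j_i$, which is the desired recurrence Pliss condition at $x_{t-n_i}$.

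The only conceptual subtlety is the directional mismatch between the suffix statement of Theorem~\ref{t.pliss} and the prefix form of Definition~\ref{d.RecPliss}, which the reversal resolves cleanly. Minor book-keeping issues --- that the hypothesis does not directly control the single endpoint term $\chi_N$ (which enters only at $s=\lfloor t-n_i\rfloor$), and that the Pliss density $\theta_0 N$ must exceed $\kappa(\lfloor t\rfloor+1)$ rather than $\kappa N$ --- are both absorbed by the buffer chosen above, after discarding an $O(1)$ number of Pliss times closest to the endpoint (where $j_i\lesssim (1-\beta_0)/b_1$). I do not anticipate any deeper obstacle.
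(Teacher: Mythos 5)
Your proposal is correct and follows essentially the same route as the paper: apply the Pliss lemma (Theorem~\ref{t.pliss}) to the visit-indicator sequence of $W$ read backwards from $x_t$, with the thresholds tuned (the paper takes $A=1$, $b_2=1-\beta_1$, $b_1=1-\beta_0$ and $\beta_1<\beta_0(1-\kappa)$, which is your argument after an affine shift with slack $b_1=0^+$) so that the Pliss density exceeds $\kappa$. Your extra slack parameter $b_1>0$ is in fact a refinement: it is what lets you absorb the uncontrolled endpoint term $\chi_N$ by discarding the $O(1)$ Pliss times with $j_i\lesssim(1-\beta_0)/b_1$, a boundary issue (relevant only when the window reaches back to $x_{t-\floor{t}}$, and harmless in the paper's application since Lemma~\ref{l.bihyptime} controls that point) which the paper's own proof passes over silently.
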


\begin{proof}{
		
		For simplicity we shall assume that $t\in \NN$; otherwise consider the orbit segment $(x_{t-\floor{t}},\floor{t})$. 
		
		We take $\beta_1\in(0,1)$ small enough such that 
		\begin{equation}\label{e.beta1}
			\beta_1<\beta_0(1-\kappa)<\beta_0.
		\end{equation}
		The reason behind this choice of $\beta_1$ will be apparent soon.
		
		We then consider the $\{0,1\}$-value sequence $a_j = \id_{W^c}\circ f_j(x), k=0,\ldots, t,$ where $\id_{W^c}$ is the indicator function of $W^c$. In other words, $\sum_{j=0}^{t}a_j$ counts the number of entries of the orbit segment $(x,t)$  to $W^c$ under the time-one map $f_1$.
		From~\eqref{e.pliss1} we have 
		$$
		\sum_{j=1}^{t} a_j > (1-\beta_1 )t. 
		$$	
		Now we apply Theorem~\ref{t.pliss} with $A=1, b_2 = 1- \beta_1$, and $b_1 = 1-\beta_0$. Note that~\eqref{e.beta1} shows that $0<b_1<b_2$. Furthermore, by~\eqref{e.beta1} we have 
		$$
		\frac{b_2-b_1}{A-b_1} = \frac{(1-\beta_1) - (1-\beta_0)}{1-(1-\beta_0)} = \frac{\beta_0-\beta_1}{\beta_0} > \frac{\beta_0-\beta_0(1-\kappa)}{\beta_0} = \kappa.
		$$	
		By Theorem~\ref{t.pliss}, there exist at least $\kappa (t+1)$ many $n_i$'s, each of which satisfies 
		$$
		\sum_{j=n}^{n_i}a_j\ge (1-\beta_0)(n_i-n+1), \forall 0\le n < n_i.
		$$
		This shows that for each $n_i$ and $0\le n \le  n_i$, one has 
		$$
		\frac{\#\{\tau\in [n,n_i]: x_\tau\in W\}}{n_i-n+1} \le \beta_0,
		$$
		i.e., each $n_i$ is a $(\beta_0, W)$-recurrence Pliss time for the orbit segment $(x,n_i)$.
		
	}
\end{proof}

\subsection{Existence of simultaneous Pliss times}
The following definition will play a central role in the next section when we construct the good orbit segments collection $\cG$.
\begin{definition}\label{l.bi-Pliss}
	Given $\lambda_0>1$, $\beta_0\in(0,1)$ and a set $W\subset \bM$, we say that $x_t$ is a $(\lambda_0,cu, \beta_0,W)$-simultaneous Pliss time for the orbit segment $(x,t)$, if $x_t$ is a $(\lambda_0,cu)$-hyperbolic time for the scaled linear Poincar\'e flow, as well as a $(\beta_0, W)$-recurrence Pliss time for the orbit segment $(x,t)$.
\end{definition}

The existence of such times is given by the following lemma. For this purpose, let $\lambda_0>1$ be given by Lemma~\ref{l.hyptime}.

\begin{lemma}\label{l.bihyptime}
	For every $\beta_0\in(0,1)$ there exists $\beta_1\in (0,\beta_0)$ such that for every open neighborhood $W$ of $\Sing(X)\cap\Lambda$, there exists $N_1>0$ such that for every real number $t>N_1$ and every orbit segment $(x,t)\in\Lambda\times\RR^+$, suppose that 
	\begin{itemize}
		\item $x\notin W$, and $x_t\notin W$;
		\item it holds $$\frac{\#\{s=0,\ldots, \floor{t}: x_{t-s}\in W\}}{\floor{t}+1} \le \beta_1.$$
	\end{itemize} 
	Then there exists $s\in[0,t]\cap\NN$ such that $x_s$ is a $(\lambda_0,cu, \beta_0,W)$-simultaneous Pliss time for the orbit segment $(x,s)$ and the set $W$.
\end{lemma}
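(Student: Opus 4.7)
The plan is to combine Lemma~\ref{l.hyptime}, which guarantees $(\lambda_0,cu)$-hyperbolic times of a fixed positive lower density $\theta_0$, with Lemma~\ref{l.rec}, which guarantees $(\beta_0,W)$-recurrence Pliss times whose density can be made as close to $1$ as desired by shrinking the mean-recurrence threshold. As soon as the density supplied by Lemma~\ref{l.rec} exceeds $1-\theta_0$, a pigeonhole argument will force the two sets of Pliss times to intersect inside $\{0,1,\dots,\floor{t}\}$, and any common element will be the desired simultaneous Pliss time.

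Concretely, I would first fix $\theta_0$ and $\lambda_0$ from Lemma~\ref{l.hyptime} and then, for the given $\beta_0$, set $\kappa:=1-\theta_0/2$, which lies strictly between $1-\theta_0$ and $1$. Feeding $\beta_0$ and this $\kappa$ into Lemma~\ref{l.rec} produces a threshold $\beta_1'\in(0,\beta_0)$. I would then take $\beta_1:=\beta_1'/2$, the factor $1/2$ being slack to absorb the minor discrepancy between the normalization by $\floor{t}+1$ in the present hypothesis and the normalization by $\floor{t}$ in the hypothesis~\eqref{e.pliss1} of Lemma~\ref{l.rec}. For an arbitrary open neighborhood $W$ of $\Sing(X)\cap\Lambda$, I would take $N_1$ larger than the threshold $N_0$ from Lemma~\ref{l.hyptime} and also large enough that the counting estimate below is strictly positive.

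Given an orbit segment $(x,t)$ with $t>N_1$ satisfying the hypotheses, Lemma~\ref{l.hyptime} supplies a set
$$A\subseteq\{0,1,\dots,\floor{t}\}\quad\text{with}\quad|A|\ge\theta_0\floor{t}$$
consisting of times $s$ for which $x_s$ is $(\lambda_0,cu)$-hyperbolic, and Lemma~\ref{l.rec} supplies a set
$$B\subseteq\{0,1,\dots,\floor{t}\}\quad\text{with}\quad|B|\ge\kappa(\floor{t}+1)$$
of times $s$ for which $x_s$ is $(\beta_0,W)$-recurrence Pliss. Since both $A$ and $B$ sit inside a set of cardinality $\floor{t}+1$, inclusion--exclusion gives
$$|A\cap B|\ge\theta_0\floor{t}+\kappa(\floor{t}+1)-(\floor{t}+1)=\bigl(\theta_0-(1-\kappa)\bigr)\floor{t}-(1-\kappa),$$
which is strictly positive once $\floor{t}$ is large enough, because $\kappa>1-\theta_0$. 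Any $s\in A\cap B$ then yields the required simultaneous Pliss time.

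I don't expect any serious obstacle. The only delicate point is the bookkeeping around the slightly different index ranges and normalizations used in Lemma~\ref{l.hyptime}, Lemma~\ref{l.rec}, and Definition~\ref{d.RecPliss}; it is precisely to handle this that $\beta_1$ is chosen a factor of $2$ below $\beta_1'$ and $N_1$ is enlarged. Beyond this, the argument is pure counting.
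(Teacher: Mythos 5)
Your proposal is correct and follows essentially the same route as the paper: fix $\theta_0$ from Lemma~\ref{l.hyptime}, choose $\kappa$ with $\kappa+\theta_0>1$, feed $\beta_0,\kappa$ into Lemma~\ref{l.rec} to get $\beta_1$, and conclude by a pigeonhole count inside $\{0,\dots,\floor{t}\}$ for $t$ large. Your explicit handling of the $\floor{t}$ versus $\floor{t}+1$ normalization (the factor $1/2$ in $\beta_1$) and the quantitative inclusion--exclusion bound are slightly more careful than the paper's one-line conclusion, but the argument is the same.
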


\begin{proof}
	Let $\theta_0\in(0,1)$ be given by Lemma~\ref{l.hyptime}. Then, we take $\kappa<1$ large enough such that 
	$$
	\kappa+\theta_0>1,
	$$
	and apply Lemma~\ref{l.rec} to obtain $\beta_1$. For every orbit segment satisfying both assumptions of Lemma~\ref{l.bihyptime}, Lemma~\ref{l.rec} provides $\kappa\floor{t}$ many $n_i\in \NN$ such that each $x_{t-n_i}$ is a $(\beta_0, W)$-recurrence Pliss time for the orbit segment $(x,t-n_i)$. Moreover, by Lemma~\ref{l.hyptime} we have $\theta_0(\floor{t}+1)$  many natural numbers  $k_i$ such that each $x_{t-k_j}$ is a $(\lambda_0,cu)$-hyperbolic time for the orbit segment $(x,t-k_j)$. Since $\kappa+\theta_0>1$, one of the $n_i$ must coincide with one of the $k_j$. Such a point is a $(\lambda_0,cu, \beta_0,W)$-simultaneous Pliss time.

\end{proof}

We remark that the $(\lambda_0,cu, \beta_0,W)$-simultaneous Pliss times given by the previous lemma indeed have a positive density $\kappa+\beta_0-1$, which can be made close to $\beta_0$ by taking $\kappa$ close to one (and hence decreasing $\beta_1$); however we will not use this fact in this paper.

We conclude this section with the following lemma, which is an immediate consequence of Lemma~\ref{l.gluepliss}, and therefore the proof is omitted.

\begin{lemma}\label{l.gluePliss}
\end{lemma}

\section{Proof of the main theorems}\label{s.mAproof}
We will prove the technical theorem, Theorem~\ref{m.B}, in Section~\ref{ss.6.1} to~\ref{ss.6.3} { by applying the improved Climenhaga-Thompson Criterion (Theorem \ref{t.improvedCL} to $X|_\Lambda$). } The Proof of Theorem~\ref{m.Lorenz} and Theorem~\ref{m.A} occupies Section~\ref{ss.proofAB}.

\subsection{Orbit segments with the Bowen Property and specification}\label{ss.6.1}

First we state two theorems that deal with the Bowen property and the specification property, respectively. Their proofs are postponed to Section~\ref{s.sing} and onward.
\begin{figure}[h!]
	\centering
	\def\svgwidth{\columnwidth}
	\includegraphics[scale=0.9]{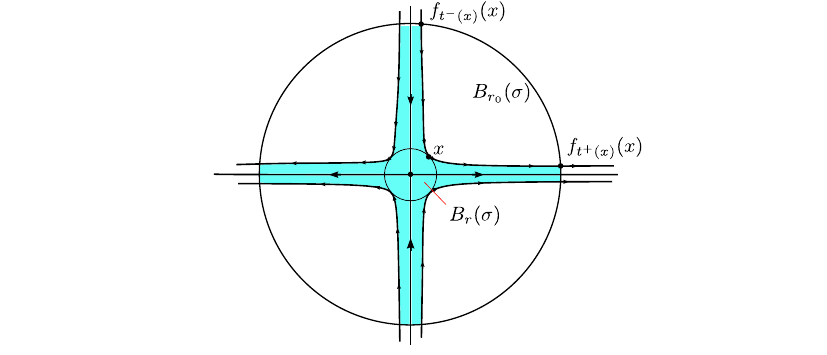}
	\caption{The neighborhood $W_r(\sigma)$ (the highlighted region).}
	\label{f.Wr}
\end{figure}

Let us start with the choice of a small neighborhood for each singularity. Fix $r_0>0$ (for the moment just assume that $r_0$ satisfies Lemma~\ref{l.nearEc} for some $\alpha>0$ small enough; the precise choice will be made clear in Section~\ref{s.sing}) and take $r\in (0,r_0)$. For each $\sigma\in\Sing(X)\cap \Lambda$ and $x\in {B_{r}(\sigma)}$, we let 
$$
t^-=t^-(x)=\sup\{s>0: (x_{-s},s)\subset \overline{B_{r_0}(\sigma)}\},\,\mbox{ and }
$$ 
$$
t^+=t^+(x)=\sup\{s>0: (x,s)\subset \overline{B_{r_0}(\sigma)}\}.
$$
Then the orbit segment $(x_{-t^-},t^-+t^+)$ is contained in the closed ball $\overline{B_{r_0}(\sigma)}$, with $x_{t^\pm}\in \partial B_{r_0}(\sigma)$. Now, we define
\begin{equation}\label{e.W.def}
	W_{r }(\sigma) = \bigcup_{x\in B_{r }(\sigma)} \{x_s:s\in(-t^-,t^+)\}.
\end{equation}
See Figure~\ref{f.Wr}. In other words, $W_{r }(\sigma)$ is a neighborhood of $\sigma$ {which is a union of finite orbit segments} and satisfies
$$
B_r(\sigma)\subset W_r\subset B_{r_0}(x).
$$
Note that $\partial B_{r_0}(\sigma)\cap \overline W_{r }(\sigma)$ consists of two pieces $\partial W_{r }^-(\sigma)$ and $\partial W_{r }^+(\sigma)$, such that if $x\in \partial W_{r }^-(\sigma)$ then there exists $x^+\in \partial W_{r }^+(\sigma)$ on the forward orbit of $x$ such that the orbit segment from  $x$ to $x^+$ is contained in $W_{r }(\sigma)$. In other words, points in $\partial W_{r }^-(\sigma)$ are moving towards $\sigma$ and points in $\partial W_{r }^+(\sigma)$ are moving away from $\sigma$. 

Clearly $W_{r }(\sigma)$ is an open neighborhood of $\sigma$. Meanwhile, define the open set 
$$
W_{r }:=\bigcup_{\sigma\in\Sing(X)\cap\Lambda}W_{r }(\sigma),
$$ then points in $(W_{r })^c$ 
are bounded away from all singularities. In particular, the flow speed on $(W_{r })^c$ is bounded away from zero, and the ``uniformly relative'' scale as in Section~\ref{ss.liao} is indeed uniform.

Given $\lambda_0>1$, $\beta\in (0,1)$ and $0<r<r_0$, we let $\cG_B=\cG_B(\lambda_0,\beta, r_0, W_{r})$ be the collection of orbit segments $(x,t)$ with $t\in\NN$ such that:
\begin{enumerate}
	\item $x_t$ is a $(\lambda_0,cu, \beta,W_r)$-simultaneous Pliss times for the orbit segment $(x,t)$ and the set $W_r$;
	\item $x_0=x\notin W_{r }$.
\end{enumerate}

Recall that $x_t$ being a $(\beta_0, W_r)$-recurrence Pliss time for the set $W_r$ already implies that $x_t\notin W_r$, due to Remark~\ref{r.RecPliss}. The following theorem provides the Bowen property on $\cG_B$; the proof can be found in Section~\ref{s.sing} and~\ref{s.bowen}.
\begin{theorem}\label{t.Bowen} { Let $\Lambda$ be a sectional-hyperbolic attractor of a $C^1$ vector field $X$ with all singularities hyperbolic. Then,} there exists $\beta_0\in(0,1)$, such that for every $\beta\in (0,\beta_0]$ and $r_0>0$ small enough, there exists $\overline r\in (0,r_0)$, such that for every $r\in (0,\overline r)$, there exists $\vep>0$ such that $\cG_B(\lambda_0,  \beta,r_0, W_r)$ has the Bowen property for every H\"older continuous function at the scale $\vep$.
\end{theorem}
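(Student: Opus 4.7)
The plan is to first invoke the Main Proposition (Proposition~\ref{p.key}), which under the hypotheses we are building produces $\vep_2>0$ and $\rho\in(0,\rho_0]$ such that for every $(x,t)\in\cG_B(\lambda_0,\beta,r_0,W_r)$ and every $y\in B_t(x,\vep_2)$, the orbit of $y$ is $\rho$-scaled shadowed by that of $x$ up to time $t$. Granting this, Lemma~\ref{l.tau} produces a differentiable reparametrization $\tau_{x,y}:[0,t]\to[0,\tau_{x,y}(t)]$ with $f_{\tau_{x,y}(s)}(y)\in\cN_{\rho|X(x_s)|}(x_s)$ and $|\tau'_{x,y}-1|$ uniformly small; combined with the flow-speed comparison of Proposition~\ref{p.Fx}, this lets one pass from the Bowen-metric distance $d(x_s,y_s)$ to the slice distance $d_s:=d_{\cN(x_s)}(x_s,f_{\tau_{x,y}(s)}(y))$ with only bounded distortion and a bounded boundary term $\int_{\tau_{x,y}(t)}^{t}\phi(y_u)\,du$. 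H\"older continuity of $\phi$ then reduces the Bowen property to the uniform bound
\[
\int_0^t d_s^{\,\alpha}\,ds\le K.
\]

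Next, I would decompose the slice point $f_{\tau_{x,y}(s)}(y)\in\cN_{\rho|X(x_s)|}(x_s)$ along the fake foliations $\cF^s_{x_s,N}$ and $\cF^{cu}_{x_s,N}$ of Section~\ref{ss.fake.normal} into an $E^s_N$-coordinate and an $F^{cu}_N$-coordinate, and bound each separately. The $E^s_N$-coordinate is uniformly contracted forward by $\psi^*_t$ thanks to the uniform contraction of $E^s_N$ (Lemma~\ref{l.basic}), so at time $s$ it is at most $C\vep_2\,\lambda_0^{-s}|X(x_s)|$. The $F^{cu}_N$-coordinate is contracted \emph{backward} from $s=t$ because $x_t$ is by definition a $(\lambda_0,cu)$-hyperbolic time, so at time $s$ it is at most $C\vep_2\,\lambda_0^{-(t-s)}|X(x_s)|$; converting back to slice distances via Proposition~\ref{p.Fx}, summing, and integrating yields $\int_0^t d_s^\alpha\,ds\le C\vep_2^\alpha\sum_{k\ge 0}\lambda_0^{-k\alpha}$, which is uniform in $t$, as desired.

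The genuine obstacle is that the normal-bundle fake foliation $\cF^{*}_{x,N}$ of~\cite{BW} is only defined on uniformly relative scales and cannot be used as a single coordinate system near a singularity; thus each excursion of $(x,t)$ into some $W_r(\sigma)$ forces a change of coordinates to the auxiliary foliations $\cF^{*}_\sigma$ of Section~\ref{s.sing}, during which only the larger of the $E$- and $F$-coordinates is preserved, in accordance with Lemmas~\ref{l.Elarge} and~\ref{l.Flarge}. The lost coordinate must be bounded afresh by the Bowen radius, and the surviving one is amplified by $\lambda_\sigma^T$ for an excursion of length $T$. The simultaneous Pliss condition built into $\cG_B$ is designed precisely to survive this: the $(\beta,W_r)$-recurrence Pliss condition (Definition~\ref{d.RecPliss}) bounds, on every terminal subsegment, the fraction of integer times inside $W_r$ by $\beta$, so that fixing $\beta_0$ small enough that
\[
(1-\beta_0)\log\lambda_0-\beta_0\log K_1'>\tfrac{1}{2}\log\lambda_0
\]
guarantees that the outside-$W_r$ hyperbolicity of $\psi^*_t$ dominates the inside-$W_r$ amplification across all of $[0,t]$. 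One then fixes $r_0$ small enough for Lemma~\ref{l.nearEc} and the change-of-coordinates Lemmas~\ref{l.Elarge}-\ref{l.Flarge}, picks $\overline r\in(0,r_0)$ so that both foliation families are simultaneously defined in a collar of $\partial B_{r_0}(\sigma)$, and finally sets $\vep=\vep_2$ from the Main Proposition. Managing these coordinate changes with quantitative control on the temporarily-discarded coordinate is by far the hardest part, which is why Sections~\ref{s.cone} and~\ref{s.sing} are developed in full before the Bowen estimate is executed in Section~\ref{s.bowen}.
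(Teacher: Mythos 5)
Your plan is correct and follows essentially the same route as the paper: invoke the Main Proposition (Proposition~\ref{p.key}) to upgrade membership in the Bowen ball to $\rho$-scaled shadowing, then decompose each slice point via the fake foliations and control the $F^{cu}_N$-coordinate backward from $t$ using the $(\lambda_0,cu)$-hyperbolic time (Lemma~\ref{l.cuhyptime1}), the $E^s_N$-coordinate forward using the uniform contraction of $E^s_N$, and the time-change via the $K_\tau$ bound of Lemma~\ref{l.tau}, summing geometric series after applying H\"older continuity. The only cosmetic differences are that the paper sums over unit time-blocks (via Lemma~\ref{l.holder}) rather than integrating $d_s^\alpha$ continuously, and its smallness condition on $\beta_0$ is the inequality~\eqref{e.beta0'} rather than your logarithmic variant, but these are inessential.
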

Note that Bowen property at scale $\vep$ implies the Bowen property at every smaller scale.

Next, let $\delta>0$ and $r_0>0$ be fixed. For each singularity $\sigma\in\Sing(X)$, let $\CP_\sigma = \CP_\sigma(\delta)$ be a compact neighborhood of $\sigma$ inside $W^u(\sigma)$ under the relative topology, and let $W(\sigma)\subset B_{r_0}(\sigma)$ be any open neighborhood of $\sigma$.\footnote{Later we will take $W(\sigma) = W_r(\sigma)$.} Writing 
$$
\CP_\Sing(\delta) = \cup_{\sigma\in\Sing(X)} \CP_\sigma, W_\Sing = \cup_{\sigma\in\Sing(X)} W(\sigma),
$$
and letting $U_\Sing$ be an arbitrary open neighborhood of $\CP_\Sing$, we define
\begin{equation}\label{e.D1}
	\cgs(\lambda_0,r_0, W_\Sing ,U_\Sing)\subset \Lambda\times\RR^+
\end{equation}
to be the collection of orbit segments $(x,t)$ with the following properties:
\begin{enumerate}
	\item $x_t$ is a $(\lambda_0,cu)$-hyperbolic time for the orbit segment $(x,t)$ for the scaled linear Poincar\'e flow $\psi^*$;
	\item $x\notin W_\Sing \cup U_\Sing$, and $x_t\notin W_\Sing$.
\end{enumerate}

For any orbit segments collection $\cD\subset \Lambda\times\RR^+$, we denote by 
$$
\cD^{>L}\subset \cD
$$
to be the sub-collection of  those orbit segments $(x,t)\in \cD$ with $t>L$.

Then we have the following theorem on the tail specification property of $\cgs$:

\begin{theorem}\label{t.spec} {Let $\Lambda$ be a sectional-hyperbolic attractor of a $C^1$ vector field $X$ with all singularities hyperbolic. Also assume that Assumption (B) of Theorem \ref{m.B} holds. Then,} for any $\delta>0$ and $r_0>0$ small enough, and any neighborhoods $W(\sigma)\subset B_{r_0}(\sigma)$ of $\sigma$,  there exists a compact neighborhood $\CP_\sigma(\delta)\subset W^u(\sigma)$  for each $\sigma\in\Sing(X)$, such that for any neighborhood $U_\Sing$ of $\cup_\sigma \CP_\sigma(\delta)$, there exist $L>0,\tau>0$ such that the corresponding orbits collection $\cgs^{>L}(\lambda_0,r_0, W_\Sing,U_\Sing)$ as defined by~\eqref{e.D1} has the specification property at scale $\delta$ with maximum gap size $\tau$. { Furthermore, it is possible to choose the shadowing orbit to be contained in $\Lambda$.}
\end{theorem}

The proof of Theorem~\ref{t.spec} can be found in Section~\ref{s.spec}.

\begin{remark}
	Since we need to apply Theorem \ref{t.improvedCL} to $X|_\Lambda$, one must also verify that the shadowing orbit given by the specification property is also contained in $\Lambda$. In Section \ref{s.spec} we will see that the shadowing orbit is indeed taken from the unstable manifold of a periodic orbit $\Orb(p) \subset \Lambda.$ This is why we require $\Lambda$ to be an attractor.
\end{remark}

\subsection{The choice of parameters and the $(\cP,\cG,\cS)$-decomposition}
In this section we describe the choice of various parameters, most importantly the choice of $\vep$ and $\delta$. We shall also provide the construction of the $(\cP,\cG,\cS)$-decomposition.

First, we state the following lemma, which will later give us the ``pressure gap'' property.

Given any invariant measure $\mu$, we denote by 
$$
P_\mu(\phi) = h_\mu(f_1) + \int \phi\,d\mu
$$
the measure-theoretic pressure of $\mu$. 

\begin{lemma}\label{l.gap1}
	Let $\phi:\bM\to\RR$ be a H\"older continuous function satisfying Assumption (C). Then for any $b>0$, there exist $r_0>0$ and $a_0>0$ such that for any $r\le r_0$ and any invariant probability measure $\mu$ satisfying $\supp\mu\subset \Lambda$ and $\mu\left(\Cl\left(\cup_{\sigma\in\Sing(X)\cap\Lambda} B_{r}(\sigma)\right)\right)\ge b$, we have $P_\mu(\phi)<P(\phi)-a_0$. 
\end{lemma}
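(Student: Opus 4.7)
The plan is a compactness argument combined with upper semi-continuity of the metric pressure. First, since $X|_\Lambda$ is almost expansive at some scale $\vep>0$ by Theorem~\ref{t.PYY}, the system is entropy-expansive at the same scale (as noted right after that theorem), and therefore Bowen's theorem gives that $\mu\mapsto h_\mu(f_1)$, and hence $\mu\mapsto P_\mu(\phi)=h_\mu(f_1)+\int\phi\,d\mu$, is upper semi-continuous on the weak$^*$-compact set of $X$-invariant probability measures supported in $\Lambda$. Set
\[
\Delta := P(\phi) - \max_{\sigma\in\Sing(X)\cap\Lambda}\phi(\sigma),
\]
which is strictly positive because $\Sing(X)\cap\Lambda$ is finite and hypothesis~\eqref{e.pgap} applies to each singularity.

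Second, I would reduce to a single scale: if the conclusion holds for some $r_0$, then for any $r\le r_0$ the inclusion $B_r(\sigma)\subset B_{r_0}(\sigma)$ upgrades the hypothesis at scale $r$ to the same hypothesis at scale $r_0$, so the bound on $P_\mu(\phi)$ is inherited. Hence it suffices to produce a single $r_0>0$ and $a_0>0$.

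Third, suppose for contradiction that no such $(r_0,a_0)$ exists. Then there exist sequences $r_n\downarrow 0$ and $X$-invariant probability measures $\mu_n$ with $\supp\mu_n\subset\Lambda$, $\mu_n\bigl(\Cl(\bigcup_\sigma B_{r_n}(\sigma))\bigr)\ge b$, and $P_{\mu_n}(\phi)\to P(\phi)$. Extract a weak$^*$ limit $\mu_\infty$ (also $X$-invariant with $\supp\mu_\infty\subset\Lambda$). For each $\eta>0$ the closed neighborhood $F_\eta := \{x : d(x,\Sing(X)\cap\Lambda)\le\eta\}$ eventually contains $\Cl(\bigcup_\sigma B_{r_n}(\sigma))$, so the Portmanteau inequality for closed sets gives
\[
\mu_\infty(F_\eta)\ge\limsup_n\mu_n(F_\eta)\ge b.
\]
Letting $\eta\to 0$ yields $c := \mu_\infty(\Sing(X)\cap\Lambda)\ge b$.

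Finally, since each singularity is a fixed point, both $\mu_\infty|_{\Sing(X)\cap\Lambda}$ and $\mu_\infty|_{\Lambda\setminus\Sing(X)}$ are $X$-invariant; decompose $\mu_\infty = c\,\mu_{\rm Sing}+(1-c)\,\mu_{\rm Reg}$ into normalized invariant measures (the second term being absent when $c=1$). Using affineness of entropy and integral, $h_{\delta_\sigma}(f_1)=0$, and $P_{\mu_{\rm Reg}}(\phi)\le P(\phi)$,
\[
P_{\mu_\infty}(\phi)\le c\max_\sigma\phi(\sigma)+(1-c)P(\phi)=P(\phi)-c\Delta\le P(\phi)-b\Delta,
\]
which contradicts $P_{\mu_\infty}(\phi)\ge\limsup_n P_{\mu_n}(\phi)=P(\phi)$ from upper semi-continuity. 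The lemma follows by taking $a_0 = b\Delta/2$ and $r_0$ small enough. The only mild subtlety is transferring entropy expansivity on a small open neighborhood of $\Lambda$ (where it is stated) to upper semi-continuity of $h_\mu$ on measures supported in $\Lambda$, which is immediate because $\Lambda$ is the maximal invariant set in such a neighborhood.
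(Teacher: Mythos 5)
Your proof is correct and follows essentially the same route as the paper: a contradiction/compactness argument using upper semi-continuity of $\mu\mapsto P_\mu(\phi)$ (from almost/entropy expansiveness), a weak$^*$ limit whose mass on $\Sing(X)\cap\Lambda$ is at least $b$, and a decomposition of that limit into point masses at singularities plus a regular part, contradicting~\eqref{e.pgap}. The only cosmetic difference is that you make the gap $\Delta$ and the choice $a_0=b\Delta/2$ explicit (which requires rerunning the contradiction with that fixed $a_0$, and does go through), whereas the paper leaves $a_0$ existential.
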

\begin{proof}
	Assume that this is not the case, that is, there exist $b>0$ and a sequence of measures $\{\mu_n\}$ supported on $\Lambda$ such that 
	\begin{equation}\label{e.largemeasure}
		\mu_n\left(\Cl\left(\cup_{\sigma\in\Sing(X)}B_{\frac1n}(\sigma)\right)\right)\ge b;
	\end{equation}
	furthermore, 
	\begin{equation}\label{e.largepressure}
		P_{\mu_n}(\phi)\ge P(\phi)-\frac1n.	
	\end{equation}
	Taking a subsequence if necessary, we assume that $\mu_n\to\mu$ where $\mu$ is an invariant probability measure supported on $\Lambda$. Then by~\eqref{e.largemeasure} we must have 
	$$
	\mu(\Sing(X)\cap\Lambda)\ge b.
	$$
	Moreover, since $h_\mu(X|_\Lambda)$ is upper semi-continuous with respect to invariant measures (see Theorem~\ref{t.PYY} and~\cite{PYY}), it follows from~\eqref{e.largepressure} that 
	\begin{equation}\label{e.llll}
		P_\mu(\phi)\ge \limsup_{n\to\infty} P_{\mu_n}(\phi)\ge P(\phi).
	\end{equation}
	On the other hand, writing 
	$$
	\mu = \sum_{\sigma\in\Sing(X|_\Lambda)} c_\sigma \delta_\sigma + c_0\tilde\mu
	$$
	where the coefficients satisfy $\sum_\sigma c_\sigma\ge b>0$ and $\sum_\sigma c_\sigma + c_0=1$, by~\eqref{e.pgap} we obtain 
	\begin{align*}
		P_\mu(\phi) &= \sum_{\sigma\in\Sing(X|_\Lambda)} c_\sigma P_{\delta_\sigma}(\phi) + c_0P_{\tilde \mu}(\phi)\\
		& = \sum_{\sigma\in\Sing(X|_\Lambda)} c_\sigma \phi(\sigma)+ c_0P(\tilde\mu)\\
		&< P(\phi)\left( \sum_{\sigma\in\Sing(X|_\Lambda)} c_\sigma + c_0\right)\\
		& = P(\phi),
	\end{align*}
	a contradiction with~\eqref{e.llll}.
	
\end{proof}

Let $\phi$ be a H\"older continuous function that satisfies Assumption (C). Next we describe the choice of various parameters, which consists of the following steps.
\begin{enumerate}
	\item We first pick $\beta_0\in(0,1)$ given by Theorem~\ref{t.Bowen} (the Bowen property); this is the frequency for the recurrence Pliss times;
	\item using $\beta_0$, we obtain $\beta_1\in(0,\beta_0)$ from Lemma~\ref{l.bihyptime}; $\beta_1$ dictates the maximum overall frequency that an orbit segment can spend in $W$ to guarantee the existence of simultaneous Pliss times;
	\item we then apply Lemma~\ref{l.gap1} with $b=\beta_1/2$ to obtain $r_0$, $a_0>0$ such that for any invariant probability measure $\mu$,
	\begin{equation}\label{e.pgap1}
		\mu\left(\Cl\left(\cup_{\sigma\in\Sing(X)\cap\Lambda} B_{r_0}(\sigma)\right)\right)\ge \beta_1/2\, \implies \, P(\mu) < P(\phi)-a_0;
	\end{equation}
	\item decreasing $r_0$ if necessary, we apply Theorem~\ref{t.Bowen} with $\beta = \beta_0$ to obtain $0< r<\overline r$, $\vep >0$ and an open set  $W_r = \cup_{\sigma\in\Sing(X)\cap\Lambda} W_r(\sigma)$  with $\Sing(X)\cap\Lambda\subset W_r\subset \cup_{\sigma\in\Sing(X)\cap\Lambda} B_{r_0}(\sigma)$  such that the corresponding orbits collection $\cG_B(\lambda_0, \beta_0,r_0,W_r)$ has the Bowen property at scale $\vep;$
	\item  we shall further decrease $\vep$ if necessary, such that $X|_ \Lambda$ is almost expansive at scale $\vep$ (see Theorem~\ref{t.PYY}); furthermore,
	\begin{equation}\label{e.vep}
		\sup_{x,y\in\bM, d(x,y)\le\vep} |\phi(x)-\phi(y)|< \frac{a_0}{2};
	\end{equation}
	the purpose of this step is to get rid of the second scale in the pressure $P([\cC], \phi,\delta,\vep)$;
	\item then, in view of the improved CT criterion (Theorem~\ref{t.improvedCL}), we fix (recall $Lip$ from~\eqref{e.lip})
	$$
	\delta =\frac{\vep}{1000Lip};
	$$
	$\delta$ will be the scale for the specification; we now apply Theorem~\ref{t.spec} with $W(\sigma) = W_r(\sigma)$ to get the compact neighborhoods (under the relative topology of $W^u(\sigma)$) $\sigma \in \CP_\sigma(\delta)\subset W^u(\sigma)$ for each singularity $\sigma\in\Lambda$; 
	
\end{enumerate}

Before proceeding, we state the following lemmas concerning the measure of $\CP_\sigma(\delta)$.

\begin{lemma}\label{l.D}
	For any invariant probability measure $\mu$, we have 
	$$
	\mu(\CP_\sigma(\delta)\setminus \Sing(X)) = 0.
	$$
\end{lemma}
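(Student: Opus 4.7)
The plan is to apply Birkhoff's ergodic theorem to the time-reversed flow $(f_{-t})_{t\ge 0}$, exploiting the fact that every point of $D_\sigma\setminus\{\sigma\}$ lies on the unstable manifold of $\sigma$ and so has backward orbit converging to $\sigma$.

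First I would record the elementary identity $D_\sigma\cap\Sing(X)=\{\sigma\}$: if $\sigma'\in W^u(\sigma)\cap\Sing(X)$ then $\sigma'=f_{-t}(\sigma')\to\sigma$, forcing $\sigma'=\sigma$. Hence $D_\sigma\setminus\Sing(X)=D_\sigma\setminus\{\sigma\}$, and by the definition of $W^u(\sigma)$ every $x$ in this set satisfies $f_{-t}(x)\to\sigma$ as $t\to+\infty$.

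By the ergodic decomposition it suffices to establish the claim for ergodic invariant $\mu$. Arguing by contradiction, suppose $\mu$ is ergodic with $\mu(D_\sigma\setminus\{\sigma\})>0$. Fix a countable dense subset $\{g_n\}\subset C(\bM)$. Applying Birkhoff's theorem to the $\mu$-preserving flow $(f_{-t})$ and intersecting countably many full-$\mu$-measure sets yields a set $\Omega\subset\bM$ with $\mu(\Omega)=1$ such that
\[
\lim_{T\to\infty}\frac{1}{T}\int_0^T g_n(f_{-t}(x))\,dt=\int g_n\,d\mu
\]
for every $x\in\Omega$ and every $n$. Since $\mu(\Omega\cap(D_\sigma\setminus\{\sigma\}))>0$ we may pick such an $x$; as $f_{-t}(x)\to\sigma$ and each $g_n$ is continuous on the compact space $\bM$, the Ces\`aro averages on the left also converge to $g_n(\sigma)$ for every $n$. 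Comparing the two limits forces $\int g_n\,d\mu=g_n(\sigma)$ for all $n$, so by density of $\{g_n\}$ in $C(\bM)$ we conclude $\mu=\delta_\sigma$, contradicting $\mu(D_\sigma\setminus\{\sigma\})>0$.

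There is no serious obstacle here; the only point worth emphasizing is the identification $D_\sigma\setminus\Sing(X)=D_\sigma\setminus\{\sigma\}$, which guarantees that a \emph{generic} point of the ``bad'' set is automatically a regular point on $W^u(\sigma)$ whose backward orbit accumulates at $\sigma$. The rest is a routine application of the backward Birkhoff theorem combined with the ergodic decomposition.
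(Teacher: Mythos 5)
Your argument is correct, but it takes a different (and somewhat heavier) route than the paper. The paper's proof is a one-line application of the Poincar\'e recurrence theorem to the time-$(-1)$ map: since every $y\in D_\sigma\setminus\Sing(X)$ lies in $W^u(\sigma)$ and hence satisfies $y_{-n}\to\sigma$, such a $y$ visits the set $D_\sigma\setminus B_\epsilon(\sigma)$ only finitely often under backward iteration, so by recurrence each set $D_\sigma\setminus B_\epsilon(\sigma)$ has $\mu$-measure zero, and taking a countable union over $\epsilon=1/k$ finishes the proof. You instead reduce to ergodic measures via the ergodic decomposition and apply the backward Birkhoff theorem along a countable dense family of continuous functions to conclude that any ergodic $\mu$ charging $D_\sigma\setminus\{\sigma\}$ must equal $\delta_\sigma$, which is a contradiction; all the steps (the identity $D_\sigma\cap\Sing(X)=\{\sigma\}$, the convergence of backward Ces\`aro averages to $g(\sigma)$, the identification $\mu=\delta_\sigma$) are sound. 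What your approach buys is a slightly stronger structural statement (the only ergodic measure giving mass to $D_\sigma$ is $\delta_\sigma$), at the cost of invoking the ergodic decomposition and a separability argument; the recurrence argument is shorter, needs no ergodicity, and is the natural choice when all one wants is the measure-zero statement.
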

\begin{proof}
	Note that for every $y\in \CP_\sigma(\delta)\setminus \Sing(X)$, it holds that $y_{-n}\to \sigma$ as $n\to\infty$.  Then this lemma is a simple corollary of the Poincar\'e recurrence theorem applied to the map $f_{-1}$.
\end{proof}

\begin{lemma}\label{l.U}
	For every $\upsilon>0$, there exists a open neighborhood $U_\Sing$ of $\cup_{\sigma\in\Sing(X)\cap\Lambda} \CP_\sigma(\delta)$ such that for any invariant probability measure $\mu$, we have 
	\begin{equation*}
		\mu\left(\Cl\left(U_\Sing\setminus \left( \cup_{\sigma\in\Sing(X)\cap\Lambda} B_{r_0}(\sigma)\right)\right)\right) < \upsilon.
	\end{equation*}
\end{lemma}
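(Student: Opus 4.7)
The plan is to combine Lemma~\ref{l.D} with weak-$*$ compactness of the space of invariant probability measures, arguing by contradiction. Write $D = \bigcup_{\sigma\in\Sing(X)\cap\Lambda} D_\sigma$ and $B = \bigcup_{\sigma\in\Sing(X)\cap\Lambda} B_{r_0}(\sigma)$. The key observation is that the compact set $K := D\setminus B$ contains no singularity, since each $\sigma$ lies in the (open) ball $B_{r_0}(\sigma)$; consequently $K\subset D\setminus \Sing(X)$, and Lemma~\ref{l.D} gives $\mu(K)=0$ for every $X$-invariant probability measure $\mu$. So the target set is null for every invariant measure; the lemma asks for a \emph{uniform} quantitative version of this qualitative statement, with the null set replaced by a slightly fattened closed neighborhood.

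To produce candidate neighborhoods, I would take $U_n$ to be the open $1/n$-neighborhood of $D$ and set $K_n := \overline{U_n\setminus B}$. Each $K_n$ is compact, the sequence is nested ($K_{n+1}\subset K_n$), and one checks directly that $\bigcap_n K_n \subset \overline{D}\setminus B = D\setminus B = K$, since $D$ is closed and $B$ is open.

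Now I would argue by contradiction: suppose no $U_n$ works, so for every $n$ there is an $X$-invariant probability measure $\mu_n$ with $\mu_n(K_n)\ge \upsilon$. By weak-$*$ compactness of $\mathcal M_X(\bM)$, pass to a subsequence with $\mu_n\to\mu$, and $\mu$ is again $X$-invariant. For any fixed $m$, whenever $n\ge m$ we have $K_n\subset K_m$, hence $\mu_n(K_m)\ge \mu_n(K_n)\ge \upsilon$. Since $K_m$ is closed, the Portmanteau upper semi-continuity yields
\[
\mu(K_m) \;\ge\; \limsup_{n\to\infty} \mu_n(K_m) \;\ge\; \upsilon.
\]
Sending $m\to\infty$ and using continuity of $\mu$ from above on the decreasing family $\{K_m\}$ gives $\mu\!\left(\bigcap_m K_m\right)\ge \upsilon$, hence $\mu(K)\ge \upsilon>0$, contradicting Lemma~\ref{l.D}.

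The only mild subtlety — and thus the step one needs to set up carefully — is ensuring that the closures and intersections behave as claimed, i.e., that $\bigcap_n \overline{U_n\setminus B}\subset D\setminus B$. This uses that $D$ is a finite union of compact sets (hence closed) and that $B$ is open, so $\bigcap_n \overline{U_n}=D$ and $\overline{U_n\setminus B}\subset \overline{U_n}\cap B^c$. Apart from this bookkeeping, the proof is a standard compactness-plus-Portmanteau argument and requires no information specific to the sectional-hyperbolic setting beyond what Lemma~\ref{l.D} already provides.
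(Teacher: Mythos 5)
Your proposal is correct and follows essentially the same route as the paper's proof: a contradiction argument with nested neighborhoods $U_n$ shrinking to $\cup_\sigma D_\sigma$, weak-$*$ compactness of invariant measures, upper semicontinuity on the closed sets $\Cl(U_n\setminus B)$, and an appeal to Lemma~\ref{l.D} on the limit measure. Your version just spells out the Portmanteau and continuity-from-above bookkeeping that the paper leaves implicit.
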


\begin{proof}
	Assume that this is not the case, that is, there exist $\upsilon>0$, a sequence of open sets $\{U_n\}_{n=1}^\infty$ with $U_{n+1}\subset U_n$ and $\cap_{n>0} \Cl(U_n) = \cup_\sigma \CP_\sigma(\delta)$, and a sequence of invariant probability measures $\mu_n$ such that 
	\begin{equation}\label{e.U1}
		\mu_n\left(\Cl\left(U_n\setminus \left( \cup_{\sigma\in\Sing(X)\cap\Lambda} B_{r_0}(\sigma)\right)\right)\right) \ge  \upsilon>0.
	\end{equation}
	
	Taking a subsequence if necessary, we assume that $\mu_n\to \mu$ where $\mu$ is an invariant probability measure. By~\eqref{e.U1} and the fact that $U_{n+1}\subset U_n$, we see that 
	$$
	\mu\left(\Cl\left(U_n\setminus \left( \cup_{\sigma\in\Sing(X)\cap\Lambda} B_{r_0}(\sigma)\right)\right)\right) \ge  \upsilon>0, \forall n>0,
	$$
	and consequently, $\mu(\cup_\sigma \CP_\sigma(\delta)\setminus \Sing(X))>0$. However, this is impossible due to Lemma~\ref{l.D}.
\end{proof}

We continue with the choice of parameters.  By Lemma~\ref{l.U}, we may take $U_\Sing$ an open neighborhood of $\cup_\sigma \CP_\sigma(\delta)$ such that for any invariant probability  measure $\mu$, it holds that 
\begin{equation}\label{e.U}
	\mu\left(\Cl\left(U_\Sing\setminus \left( \cup_{\sigma\in\Sing(X)\cap\Lambda} B_{r_0}(\sigma)\right)\right)\right) < \frac{\beta_1}{1000},
\end{equation} 
where we recall that $\beta_1$ is given by Lemma~\ref{l.bihyptime} applied to $\beta_0$; 
then by Theorem~\ref{t.spec}, the corresponding orbits collection $\cG_{spec}(\lambda_0,r_0,W_r, U_\Sing)$ has the tail specification property at scale $\delta.$

Finally we are ready to define $\cG$, the collection of ``good'' orbit segments. Keeping Theorem~\ref{t.Bowen} and Theorem~\ref{t.spec} in mind, we defined $\cG$ to be the collection of orbit segments $(x,t)$ such that:
\begin{enumerate}
	\item $x_0\notin W_r\cup U_\Sing$, and $x_t\notin  W_r$;
	\item $x_t$ is a $(\lambda_0,cu, \beta_0,W_r)$-simultaneous Pliss times.
\end{enumerate}
In other words, $\cG = \cG_{B}\cap \cG_{spec}$. As an immediate corollary of the aforementioned theorems, we obtain:
\begin{proposition}\label{p.cG}
	The orbit segments collection $\cG$ defined above has the Bowen property at scale $\vep$, and the tail specification property at scale $\delta$. 
\end{proposition}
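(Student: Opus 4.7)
The plan is essentially to observe that Proposition~\ref{p.cG} is a formal consequence of the two main theorems cited just above it, together with the definition $\cG = \cG_B \cap \cG_{spec}$. Both the Bowen property and the tail (W)-specification property are inherited by subcollections, so it suffices to verify two inclusions:
$$
\cG \subset \cG_B(\lambda_0,\beta_0,r_0,W_r) \quand \cG \subset \cG_{spec}(\lambda_0,r_0,W_r,U_\Sing).
$$

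The first inclusion is immediate from unpacking definitions. An orbit segment $(x,t)\in\cG$ satisfies $x_0\notin W_r\cup U_\Sing$ (hence in particular $x_0\notin W_r$) and its endpoint $x_t$ is a $(\lambda_0,cu,\beta_0,W_r)$-simultaneous Pliss time; these are exactly the two defining conditions of $\cG_B(\lambda_0,\beta_0,r_0,W_r)$. Therefore, since Theorem~\ref{t.Bowen} was applied in the previous subsection with $\beta=\beta_0$ (after verifying $\beta_0\in(0,\beta_0]$) and gave the Bowen property for every H\"older continuous function at the scale $\vep$ on $\cG_B(\lambda_0,\beta_0,r_0,W_r)$, the Bowen property at scale $\vep$ for $\phi$ on $\cG$ follows by restriction.

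The second inclusion also follows by unpacking: any $(x,t)\in\cG$ has $x\notin W_r\cup U_\Sing = W_\Sing\cup U_\Sing$ and $x_t\notin W_r = W_\Sing$; moreover, since $x_t$ is a $(\lambda_0,cu,\beta_0,W_r)$-simultaneous Pliss time, it is in particular a $(\lambda_0,cu)$-hyperbolic time for $(x,t)$ under the scaled linear Poincar\'e flow (see Definition~\ref{l.bi-Pliss}). These three conditions are precisely the defining properties of $\cgs(\lambda_0,r_0,W_r,U_\Sing)$, so $\cG\subset\cG_{spec}$. Invoking Theorem~\ref{t.spec} with the same parameters, $\cG_{spec}^{>L}$ enjoys (W)-specification at scale $\delta$ with transition time $\tau$, and since $\cG^{>L}\subset\cG_{spec}^{>L}$, the same specification constants $\delta,\tau,L$ work for $\cG$, giving tail (W)-specification at scale $\delta$ in the sense of Definition~\ref{d.tailspec}.

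There is no real obstacle: the work lies entirely in Theorems~\ref{t.Bowen} and~\ref{t.spec}. The only small subtlety to record is that the parameters chosen above were deliberately aligned so that both theorems apply to the \emph{same} neighborhood $W_r$ and the \emph{same} $\beta_0$ and $\lambda_0$; once this alignment is made explicit, the proposition is simply a restatement.
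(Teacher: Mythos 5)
Your proposal is correct and is essentially the paper's own argument: the paper defines $\cG = \cG_B \cap \cG_{spec}$ with parameters aligned exactly as you describe, and states Proposition~\ref{p.cG} as an immediate corollary of Theorems~\ref{t.Bowen} and~\ref{t.spec}, relying on the same observation that the Bowen property and tail (W)-specification pass to subcollections. No gap; nothing further is needed.
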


Below we shall describe the orbit segments collection $\cD$, on which there exists a $(\cP,\cG,\cS)$-decomposition. 

Given an orbit segment $(x,t)\in\Lambda\times \RR^+$, we define the following two functions: 
\begin{equation}\label{e.ps}
	\begin{split}&p(x,t) = \min\{s\in[0,t]: x_s\notin W_r\cup U_\Sing\}, \mbox{ and }\\
		&\tilde s(x,t) = \max\{s\in[0,t]: x_{s}\notin W_r\}. 
	\end{split}
\end{equation}
We let 
\begin{equation}\label{e.B1}
	\cB_1=\{(x,t)\in\Lambda\times \RR^+: \mbox{ either $p$ or $\tilde s$ does not exist, or $\tilde s-p\le N_1$}\},
\end{equation}
where $N_1$ is given by Lemma~\ref{l.bihyptime} applied to $W_r.$

\begin{figure}[h!]
	\centering
	\def\svgwidth{\columnwidth}
	\includegraphics[scale=1.1]{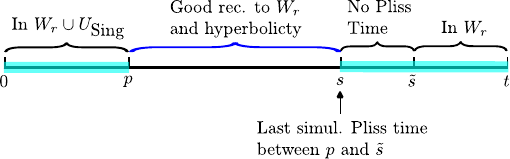}
	\caption{The $(\cP,\cG,\cS)$ decomposition.}
	\label{f.decomp}
\end{figure}

For those orbit segments $(x,t)$ such that $\tilde s-p> N_1$, we take $s=s(x,t)\in (p,\tilde s]$ to be the largest time $\tau$ such that $\tau$ is a $(\lambda_0,cu, \beta_0,W_r)$-simultaneous Pliss times time for the orbit segment $(x_{p}, \tau-p)$. See Figure \ref{f.decomp}.   If there is no such time between $(p,\tilde s]$, then we put $(x,t)$ in the set $\cB_2$; that is,
\begin{equation*}
	\begin{split}
		\cB_2 = \big\{(x,t)\in (\cB^1)^c:&\,\, x_s \mbox{ is not a $(\lambda_0,cu, \beta_0,W_r)$-simultaneous Pliss time for }\\& (x_p,s-p), \forall s\in (p,\tilde s]{\cap \NN}\big\}.
	\end{split}
\end{equation*}
Note that the definition of $p$ and $\tilde s$ implies that $x_{p}$ and $x_{\tilde s}$ are not in $W_r$. As an immediate corollary of Lemma~\ref{l.bihyptime}, we obtain:

\begin{lemma}\label{l.B2}
	If $(x,t)\in\cB_2$, then 
	$$
	\frac{\#\{j=0,\ldots, \floor{\tilde s-p}-1: x_{\tilde s -j}\in W_r\}}{\floor{\tilde s-p}} >\beta_1.
	$$
\end{lemma}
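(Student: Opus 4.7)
The plan is to derive Lemma~\ref{l.B2} as the contrapositive of Lemma~\ref{l.bihyptime}, applied to the orbit segment $(x_p, \tilde s - p)$. First I would verify the hypotheses of Lemma~\ref{l.bihyptime} for this segment. Since $(x,t) \in \cB_2$, in particular $(x,t) \notin \cB_1$, so $\tilde s - p > N_1$. By the definition of $p$ as the first time the orbit leaves $W_r \cup U_\Sing$ (and since $W_r \subset W_r \cup U_\Sing$), we have $x_p \notin W_r$; and by the definition of $\tilde s$ as the largest time in $[0,t]$ with $x_{\tilde s} \notin W_r$, we have $x_{\tilde s} \notin W_r$. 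Thus both endpoint conditions of Lemma~\ref{l.bihyptime} hold for the segment $(x_p, \tilde s - p)$.

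Next I would invoke the negation. By definition of $\cB_2$, no $\tau \in (p, \tilde s]$ is a $(\lambda_0, cu, \beta_0, W_r)$-simultaneous Pliss time for $(x_p, \tau - p)$; in particular no integer $s \in [0, \tilde s - p] \cap \NN$ furnishes such a time. Lemma~\ref{l.bihyptime} would produce precisely such an $s$ were its density hypothesis satisfied. Hence the density hypothesis must fail, giving
\[
\frac{\#\{s = 0, \ldots, \floor{\tilde s - p} : x_{\tilde s - s} \in W_r\}}{\floor{\tilde s - p} + 1} > \beta_1.
\]

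Finally I would reconcile this inequality with the form stated in Lemma~\ref{l.B2}. Since $x_{\tilde s} \notin W_r$, the $s = 0$ term contributes nothing; after re-indexing $j = s - 1$ and rebalancing the denominator, the inequality above implies
\[
\frac{\#\{j = 0, \ldots, \floor{\tilde s - p} - 1 : x_{\tilde s - j} \in W_r\}}{\floor{\tilde s - p}} > \beta_1,
\]
which is the claim. The one endpoint discrepancy between the counts is harmless, since it can be absorbed either by observing that $\floor{\tilde s - p} \ge N_1$ is large (so passing from $\beta_1(\floor{\tilde s - p}+1)$ to $\beta_1 \floor{\tilde s - p}$ costs at most one) or, more cleanly, by initially selecting $\beta_1$ in Lemma~\ref{l.bihyptime} to be strictly smaller than the target constant used here (which is admissible by the monotonicity of its conclusion in $\beta_1$).

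The only subtle point is this bookkeeping of indices and the denominator; there is no genuine obstacle, which is why the author advertises this as an immediate corollary of Lemma~\ref{l.bihyptime}.
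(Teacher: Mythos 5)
Your overall route coincides with the paper's: Lemma~\ref{l.B2} is presented there as an immediate corollary of Lemma~\ref{l.bihyptime}, obtained exactly as you do, by applying its contrapositive to the segment $(x_p,\tilde s-p)$. Your verification of the hypotheses is correct ($x_p\notin W_r$ and $x_{\tilde s}\notin W_r$ by the definitions of $p$ and $\tilde s$, and $\tilde s-p>N_1$ since $(x,t)\notin\cB_1$), and the passage from ``no simultaneous Pliss time at any $\tau\in(p,\tilde s]$'' to ``the density hypothesis of Lemma~\ref{l.bihyptime} must fail'' is fine, since a simultaneous Pliss time requires segment length at least $1$, so the integer $s$ produced by that lemma would necessarily give a time in $(p,\tilde s]$ and contradict membership in $\cB_2$.

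The step you yourself flag as the only subtle one, however, is not justified correctly. Write $N=\floor{\tilde s-p}$. The negated hypothesis gives $\#\{s=0,\dots,N:\ x_{\tilde s-s}\in W_r\}>\beta_1(N+1)$; after discarding the $s=0$ term (legitimate, since $x_{\tilde s}\notin W_r$) your re-indexed family is $\{x_{\tilde s-1},\dots,x_{\tilde s-N}\}$, whereas the lemma counts $\{x_{\tilde s},\dots,x_{\tilde s-N+1}\}$; the difference is exactly the indicator of $x_{\tilde s-N}=x_{p+\theta}\in W_r$, $\theta=(\tilde s-p)-N\in[0,1)$, and nothing rules out that this point lies in $W_r$. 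In that case the numerator drops by $1$ while the threshold only drops by $\beta_1<1$, and largeness of $N$ does not help: with $\beta_1=0.1$, $N=10$ and two visits among the $N+1$ points, one of them being $x_{p+\theta}$, one has $2/11>\beta_1$ but $1/10\not>\beta_1$. Your second patch is oriented the wrong way as well: taking the constant in Lemma~\ref{l.bihyptime} \emph{smaller} than the one in Lemma~\ref{l.B2} only weakens the lower bound delivered by the contrapositive, so it cannot recover the larger threshold. What does repair the bookkeeping is the mirror-image adjustment: state Lemma~\ref{l.B2} with a constant strictly below the $\beta_1$ of Lemma~\ref{l.bihyptime} (enlarging $N_1$ so that $\beta_1(N+1)-1\ge\beta_1'N$ for $N\ge N_1$), or else argue that the extra point $x_{p+\theta}$ avoids $W_r$. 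To be fair, the paper itself is loose about precisely these $\floor{t}$ versus $\floor{t}+1$ normalizations (compare the hypotheses of Lemma~\ref{l.rec} and Lemma~\ref{l.bihyptime}), and downstream only a lower bound by a fixed fraction of $\beta_1$ is ever used, so the issue is cosmetic for the main theorem; but as written your justification of the reconciliation step does not hold.
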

It is straightforward from the construction that the orbit segment $(x_p,s-p)$ is contained in $\cG$. Therefore the functions $p(x,t)$ and $s(x,t)$ gives a $(\cP,\cG,\cS)$-decomposition on the orbit collection $\cD = (\cB_1\cup \cB_2)^c$.

\subsection{Proof of Theorem~\ref{m.B}}\label{ss.6.3}
Finally we are ready to prove Theorem~\ref{m.B}.

First, recall that $X|_ \Lambda$ is almost expansive at scale $\vep$.
Meanwhile, the tail specification at scale $\delta$ and the Bowen property at scale $\vep$ on $\cG$ has been proven by Proposition~\ref{p.cG}. Therefore we are only left to prove that the ``pressure gap'' property, that is, 
\begin{equation}\label{e.gap11}
	P(\cB_1\cup\cB_2 \cup [\cP]\cup[\cS],\phi,\delta,\vep) < P(\phi).
\end{equation}

We start with the following observations, whose proofs are straightforward.

\medskip 
\noindent {\em Observation 1}:
\begin{equation*}
	\begin{split}
		&P(\cB_1\cup\cB_2 \cup [\cP]\cup[\cS],\phi,\delta,\vep)\\ &\le \max\left\{P(\cB_1,\phi,\delta,\vep), P(\cB_2,\phi,\delta,\vep), P([\cP],\phi,\delta,\vep), P([\cS],\phi,\delta,\vep)
		\right\}.
	\end{split}
\end{equation*}

\noindent {\em Observation 2}:
For any $\cC\subset \Lambda\times\RR^+$, by~\eqref{e.vep} we have $P(\cC,\phi,\delta,\vep)< P(\cC,\phi,\delta) + \frac{a_0}{2}$

Therefore, in order to prove~\eqref{e.gap11} we must prove that $P(\cC, \phi,\delta) < P(\phi)-\frac{a_0}{2}$ for $\cC = \cB_1,\cB_2,[\cP]$ and $[\cS]$. The proof relies on the following general results concerning the pressure of an orbit segments collection.

Let $\cC\subset \Lambda\times \RR^+$ be a collection of orbit segments. Recall that the pressure of the potential $\phi$ on $\cC$ is defined as (here the second scale is zero, in view of Observation 2 above)
$$
\Lambda(\cC,\phi, \delta,t) = \sup\left\{\sum_{x\in E_t}\exp(\Phi_0(x,t)): E_t \mbox{ is a $(t,\delta)$-separated set of } \cC_t \right\}, 
$$
and
$$
P(\cC,\phi, \delta)  =\limsup_{t\to\infty} \frac1t \log \Lambda(\cC,\phi, \delta,t),
$$
where $\cC_t = \{x:(x,t)\in\cC\}$.

For each $t>0$ we choose $E_t$ a $(t,\delta)$-separated set of $\cC_t$ with 
$$
\log\sum_{x\in E_t}\exp(\Phi_0(x,t)) \ge \log \Lambda(\cC,\phi, \delta,t)-1.
$$
Then we consider 
\begin{equation}\label{e.mu}
	\begin{split}
		\nu_t :&= \frac{\sum_{x\in E_t}\exp(\Phi_0(x,t))\cdot \delta_x}{\sum_{x\in E_t}\exp(\Phi_0(x,t))}, \mbox{ and }\\
		\mu_t :&= \frac1t \int_0^t(f_s)_*\nu_t\, ds\\
		& = \frac{1}{\sum_{x\in E_t}\exp(\Phi_0(x,t))} \sum_{x\in E_t}\exp(\Phi_0(x,t))\cdot \left(\frac1t\int_0^t\delta_{x_s} \,ds\right).
	\end{split}
\end{equation}
Let $\mu$ be any limit point of the integer-indexed sequence $(\mu_t)_{t\in \NN}$, under the weak-* topology.

\begin{lemma}\cite[Proposition 5.1]{BCTT}\label{l.var.princ}
	It holds that 
	$$
	P(\cC, \phi,\delta)\le P(\mu).
	$$
\end{lemma}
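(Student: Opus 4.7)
The plan is to carry out Misiurewicz's proof of the variational principle in the present weighted setting, essentially following the argument of~\cite{BCTT}.

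First, any weak-$*$ limit $\mu$ of $\{\mu_t\}_{t\in\NN}$ is $X$-invariant: the definition $\mu_t=\frac{1}{t}\int_0^t(f_s)_*\nu_t\,ds$ forces $\|(f_\tau)_*\mu_t-\mu_t\|\le 2\tau/t\to 0$ for each fixed $\tau$. Next I would fix a finite measurable partition $\xi$ of $\bM$ of diameter small enough (using uniform continuity of $\{f_s\}_{s\in[0,1]}$) that distinct points sharing an atom of $\xi^{N}:=\bigvee_{i=0}^{N-1}f_{-i}\xi$ satisfy $d_N(\cdot,\cdot)<\delta$, and with $\mu(\partial\xi)=0$ so that $\nu\mapsto H_\nu(\xi^n)$ is weak-$*$ continuous at $\mu$ for every $n$. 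Because $E_t$ is $(t,\delta)$-separated, every atom of $\xi^{\floor{t}+1}$ contains at most one point of $E_t$, so a direct Shannon computation yields
\begin{equation*}
\log Z_t \;=\; H_{\nu_t}\!\left(\xi^{\floor{t}+1}\right) + \int \Phi_0(\cdot,t)\,d\nu_t \;=\; H_{\nu_t}\!\left(\xi^{\floor{t}+1}\right) + t\int \phi\,d\mu_t,
\end{equation*}
where $Z_t:=\sum_{x\in E_t}e^{\Phi_0(x,t)}$, and the second equality uses Fubini and the definition of $\mu_t$.

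To bound $H_{\nu_t}(\xi^{\floor{t}+1})$ I would apply the Misiurewicz shift-and-average trick: fix $n\in\NN$, write $\floor{t}+1=qn+r$ with $0\le r<n$, and for each $k\in\{0,\ldots,n-1\}$ apply subadditivity of Shannon entropy to the block decomposition $\xi^{\floor{t}+1}=\xi^k\vee\bigvee_{j=0}^{q-1}f_{-(k+jn)}\xi^n\vee f_{-(k+qn)}\xi^{\floor{t}+1-k-qn}$. Averaging over $k$ and invoking concavity of $\nu\mapsto H_\nu(\xi^n)$ gives
\begin{equation*}
H_{\nu_t}\!\left(\xi^{\floor{t}+1}\right)\;\le\; 2n\log\#\xi + q\,H_{\mu'_t}(\xi^n),\qquad \mu'_t:=\tfrac{1}{qn}\sum_{m=0}^{qn-1}(f_m)_*\nu_t.
\end{equation*}
Since $\|\mu'_t-\mu_t\|\to 0$, $\mu'_t\to\mu$ along any integer subsequence on which $\mu_t\to\mu$ and $\frac{1}{t}\log Z_t\to P(\cC,\phi,\delta)$. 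Dividing by $t\approx qn$, using weak-$*$ continuity of $H_{(\cdot)}(\xi^n)$ at $\mu$, and then letting $n\to\infty$ yields $P(\cC,\phi,\delta)\le h_\mu(f_1,\xi)+\int\phi\,d\mu\le P(\mu)$.

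The only point requiring real care is the bridge from the integer-time structure of $\xi^{\floor{t}+1}$ to the continuous-time definition of the $d_t$-Bowen metric used in defining $E_t$; this is handled by shrinking $\diam\xi$ using uniform continuity of the flow on $[0,1]$, just as in the classical proof of the variational principle for flows. Everything else is routine bookkeeping.
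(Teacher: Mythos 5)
Your strategy is the same one the paper uses (reduce to the time-one map with potential $\Phi_0(\cdot,1)$, choose a partition of small diameter and negligible boundary so that $(t,\delta)$-separated points lie in distinct atoms, apply the Shannon identity and the Misiurewicz shift-and-average estimate, then pass to the limit), but there is one step that fails as written: the claim that $\|\mu'_t-\mu_t\|\to 0$ and hence $\mu'_t\to\mu$. The continuous average $\mu_t=\frac1t\int_0^t(f_s)_*\nu_t\,ds$ is, up to an error of order $1/t$, the unit-time smoothing $\int_0^1(f_u)_*\mu'_t\,du$ of the discrete average $\mu'_t$, not $\mu'_t$ itself, and these need not be close: if, say, $\nu_t$ were concentrated at a point on a period-one orbit, $\mu'_t$ would remain a point mass while $\mu_t$ is spread along the orbit, and the two stay a definite weak-$*$ distance apart for all $t$. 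Consequently, along your subsequence the entropy term $H_{\mu'_t}(\xi^n)$ converges (after a further subsequence) to $H_{\mu^1}(\xi^n)$ for some $f_1$-invariant measure $\mu^1$ which in general is \emph{not} $\mu$; the correct relation is $\mu=\int_0^1(f_s)_*\mu^1\,ds$. Note also that the null-boundary condition must then be imposed at $\mu^1$ (the measure at which you invoke weak-$*$ continuity of $\nu\mapsto H_\nu(\xi^n)$), not at $\mu$. The potential term is unaffected, since there you correctly use $\mu_t\to\mu$.

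The gap is repairable, and the repair is exactly the bridging step the paper's proof performs by introducing $\mu^1_n=\frac1n\sum_{k=0}^{n-1}(f_1)^k_*\nu_n$ and its limit $\mu^1$: carry out your estimate with $\mu^1$ in place of $\mu$, obtaining $P(\cC,\phi,\delta)\le h_{\mu^1}(f_1,\xi)+\int\phi\,d\mu$, and then pass from $\mu^1$ to $\mu$ using $\int\Phi_0(\cdot,1)\,d\mu^1=\int\phi\,d\mu$ and $h_{\mu^1}(f_1)=h_\mu(f_1)$; the latter holds because metric entropy of $f_1$ is affine and is preserved under conjugation by each $f_s$, so $h_\mu(f_1)=\int_0^1 h_{(f_s)_*\mu^1}(f_1)\,ds=h_{\mu^1}(f_1)$. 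With that step inserted (and the partition chosen with $\mu^1(\partial\xi)=0$), your argument goes through; the remaining ingredients — flow-invariance of $\mu$, the Shannon identity for $\log Z_t$, the block subadditivity bound, and the uniform-continuity reduction from the continuous Bowen metric to integer times — are correct and coincide with the paper's proof.
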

The proof follows the proof of the variational principle~\cite[Theorem 8.6]{Wal} closely. We include it in the appendix for completeness; see Appendix~\ref{ap.var}.

For simplicity, below we will denote by 
$$
\delta_{(x,t)} = \frac1t \int_0^t \delta_{x_s} \,ds
$$
the empirical measure on the orbit segment $(x,t)$. It should not be confused with $\delta_{x_t}$ which is the point mass at the point $x_t = f_t(x)$. The next four lemmas deal with the pressure of $\cB_1, \cB_2, [\cP]$ and $[\cS]$, respectively.

\begin{lemma}\label{l.B1}
	We have $P(\cB_1, \phi,\delta) < P(\phi)- a_0$.
\end{lemma}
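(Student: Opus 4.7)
The plan is to combine Lemma~\ref{l.var.princ} with the pressure gap~\eqref{e.pgap1}. For each integer $t>0$, choose a $(t,\delta)$-separated set $E_t\subset (\cB_1)_t$ that almost attains the supremum in $\Lambda(\cB_1,\phi,\delta,t)$, and form the measures $\nu_t,\mu_t$ as in~\eqref{e.mu}. Pass to a weak-$*$ convergent subsequence $\mu_{t_k}\to\mu$, which is $X$-invariant. By Lemma~\ref{l.var.princ} it suffices to prove $P_\mu(\phi)<P(\phi)-a_0$.

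The first step is the observation that every $(x,t)\in \cB_1$ spends at most $N_1$ time outside $W_r\cup U_\Sing$. Indeed, in the first two cases of~\eqref{e.B1} the whole orbit segment already lies inside $W_r\cup U_\Sing$ (or even in $W_r$), while if $\tilde s-p\le N_1$ then by~\eqref{e.ps} the orbit is in $W_r\cup U_\Sing$ on $[0,p)$, in $W_r$ on $(\tilde s,t]$, and only on the middle portion of length $\le N_1$ can it exit $W_r\cup U_\Sing$. Therefore
$$
\dxt(W_r\cup U_\Sing)\ge \frac{t-N_1}{t},
$$
hence $\mu_t(W_r\cup U_\Sing)\ge (t-N_1)/t$ for every $t$. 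Since $W_r\cup U_\Sing$ is open and its closure is closed, the Portmanteau theorem gives
$$
\mu(\Cl(W_r\cup U_\Sing))\ge \limsup_{t_k\to\infty}\mu_{t_k}(W_r\cup U_\Sing)=1.
$$

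The second step transfers this to large $\mu$-mass on $\Cl(\cup_\sigma B_{r_0}(\sigma))$ by using~\eqref{e.U}. Since $W_r\subset \cup_\sigma B_{r_0}(\sigma)$, we have $\Cl(W_r)\subset \Cl(\cup_\sigma B_{r_0}(\sigma))$, and a short point-set argument shows
$$
\Cl(W_r\cup U_\Sing)\setminus\Cl(\cup_\sigma B_{r_0}(\sigma))\subset \Cl(U_\Sing\setminus \cup_\sigma B_{r_0}(\sigma)).
$$
Combining this with~\eqref{e.U} (applied to the invariant measure $\mu$) yields
$$
\mu(\Cl(\cup_\sigma B_{r_0}(\sigma)))\ge 1-\beta_1/1000 > \beta_1/2.
$$
Now~\eqref{e.pgap1}, applied with $b=\beta_1/2$ and the neighborhood radius $r_0$ fixed earlier, gives $P_\mu(\phi)<P(\phi)-a_0$, as required.

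The argument is essentially bookkeeping once the Pliss/recurrence parameters have been chosen; the only mildly delicate point is the point-set topology in the second step that lets us push the closure through the set-theoretic difference so that~\eqref{e.U} becomes directly applicable. No new dynamical input is needed beyond what has already been set up.
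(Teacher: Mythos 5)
Your proof is correct and follows essentially the same route as the paper: empirical measures from near-maximizing separated sets, a weak-$*$ limit controlled via Lemma~\ref{l.var.princ}, the observation that every segment in $\cB_1$ spends all but at most $N_1$ of its time in $W_r\cup U_\Sing$, and then the transfer of mass to $\Cl\left(\cup_\sigma B_{r_0}(\sigma)\right)$ via~\eqref{e.U} so that~\eqref{e.pgap1} applies. Your version is in fact slightly cleaner (you get mass $1$ on $\Cl(W_r\cup U_\Sing)$ and make explicit the point-set step the paper leaves implicit), but it is the same argument.
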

\begin{proof}
	Recall that $\cB_1$ consists of those orbit segments $(x,t)$ for which either $p$ or $\tilde s$ defined by~\eqref{e.ps} does not exist, or $p> \tilde s - N_1$ where $N_1$ is the constant given by Lemma~\ref{l.bihyptime}. We consider the following cases:
	
	\medskip
	
	\noindent Case 1: $p(x,t)$ does not exist. In this case, the entire orbit segment is contained in $W_r\cup U_\Sing$. As a result, the empirical measure $\delta_{(x,t)}$ satisfies $\dxt(W_r\cup U_\Sing) =1$.
	
	\noindent Case 2: $\tilde s(x,t)$ does not exist. Similar to the previous case, we must have $\dxt(W_r) =1$.
	
	\noindent Case 3: $\tilde s-p\le N_1$. Since $N_1$ is a constant, for $t$ sufficiently large we have 
	$$
	\dxt(W_r\cup U_\Sing)  = 0.99 > 0.6\cdot \beta_1. 
	$$
	
	In all cases, we see that for any $(t,\delta)$-separated set $E_t$ of $(\cB_1)_t$, the measure $\mu_t$ defined as in~\eqref{e.mu} is a convex combination of empirical measures $\dxt$ for $x\in E_t$, and consequently, satisfies
	$$
	\mu_t(W_r\cup U_\Sing) > 0.6\beta_1
	$$
	if $t$ is sufficiently large.
	This shows that for any limit point $\mu$ of $\mu_t$ (as $t\to\infty$), it holds that
	$$
	\mu\left(\Cl\left(W_r\cup U_\Sing\right)\right)\ge 0.6\beta_1.
	$$ 
	Keeping in mind that $W_r\subset \cup_{\sigma\in\Sing(X)\cap\Lambda} B_{r_0}(\sigma)$ and~\eqref{e.U} which holds for all invariant probability measures, we have 
	$$
	\mu\left(\Cl\left(\cup_{\sigma\in\Sing(X)\cap\Lambda} B_{r_0}(\sigma)\right)\right)\ge \beta_1/2,
	$$ 
	which, according to~\eqref{e.pgap1}, implies that $P(\mu) < P(\phi) - a_0$. Then we obtain from Lemma~\ref{l.var.princ} 
	$$
	P(\cB_1, \phi,\delta) \le P(\mu) < P(\phi) - a_0,
	$$
	as required.
\end{proof}

\begin{lemma}\label{l.B2'}
	$P(\cB_2, \phi,\delta) < P(\phi)- a_0$.
\end{lemma}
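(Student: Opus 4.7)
The plan is to mirror the argument used for $\cB_1$ in Lemma~\ref{l.B1}, with the additional input of Lemma~\ref{l.B2} which quantifies the density of visits to $W_r$ along the ``middle'' portion $[p,\tilde s]$ of any orbit segment in $\cB_2$. As before, I will associate to each $(x,t)\in\cB_2$ its empirical measure $\dxt=\frac{1}{t}\int_0^t\delta_{x_s}\,ds$, build measures $\mu_t$ as in~\eqref{e.mu} from a near-maximizing $(t,\delta)$-separated subset of $(\cB_2)_t$, pass to a weak-* limit $\mu$, invoke Lemma~\ref{l.var.princ}, and conclude via the pressure gap~\eqref{e.pgap1}.

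The key elementary step is a bookkeeping estimate: for $(x,t)\in\cB_2$, by the definitions of $p$ and $\tilde s$ in~\eqref{e.ps}, the orbit is contained in $W_r\cup U_\Sing$ on $[0,p]$ and in $W_r$ on $[\tilde s,t]$, while Lemma~\ref{l.B2} gives that the proportion of integer times in $[p,\tilde s]$ landing in $W_r$ exceeds $\beta_1$. Adding the three contributions and using $\tilde s - p\le t$, the (Lebesgue) measure of times $s\in[0,t]$ with $x_s\in W_r\cup U_\Sing$ is at least
\[
p+\beta_1(\tilde s-p)+(t-\tilde s)\;=\;t-(1-\beta_1)(\tilde s-p)\;\ge\;\beta_1\, t
\]
(up to a negligible error coming from passing between discrete and continuous time, which is bounded by a constant independent of $t$ and thus absorbed into $o(1)$ as $t\to\infty$). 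Consequently $\dxt(W_r\cup U_\Sing)\ge \beta_1-o(1)$ uniformly on $\cB_2$.

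Averaging these empirical measures with the Gibbs-like weights $e^{\Phi_0(x,t)}$ from~\eqref{e.mu} preserves the lower bound, so any weak-* limit $\mu$ satisfies $\mu\!\left(\Cl(W_r\cup U_\Sing)\right)\ge \beta_1$. Since $W_r\subset \bigcup_\sigma B_{r_0}(\sigma)$, the excess mass can only sit in $\Cl(U_\Sing\setminus\bigcup_\sigma B_{r_0}(\sigma))$, which is controlled by~\eqref{e.U} to have $\mu$-mass strictly less than $\beta_1/1000$. Therefore
\[
\mu\!\left(\Cl\!\Big(\bigcup_{\sigma\in\Sing(X)\cap\Lambda}B_{r_0}(\sigma)\Big)\right)\;\ge\;\beta_1-\tfrac{\beta_1}{1000}\;>\;\tfrac{\beta_1}{2},
\]
so~\eqref{e.pgap1} (applied with $b=\beta_1/2$) yields $P_\mu(\phi)<P(\phi)-a_0$. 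Combining with Lemma~\ref{l.var.princ} gives $P(\cB_2,\phi,\delta)\le P_\mu(\phi)<P(\phi)-a_0$, as desired.

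I do not anticipate a genuine obstacle: the only subtlety is the discrete-vs-continuous counting in the lower bound for $\dxt(W_r\cup U_\Sing)$, which is handled by observing that each integer visit to $W_r$ contributes a flow-segment of length $1$ (modulo a single boundary segment of bounded length), and by choosing $t\in\NN$ when passing to the limit (as is already done in defining $\mu$). Everything else is a verbatim repetition of the scheme already laid out in Lemma~\ref{l.B1}.
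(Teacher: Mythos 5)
Your overall scheme is the same as the paper's: feed Lemma~\ref{l.B2} into the empirical-measure machinery of~\eqref{e.mu}, pass to a weak-* limit, invoke Lemma~\ref{l.var.princ}, discard the excess mass on $U_\Sing$ via~\eqref{e.U}, and conclude with the pressure gap~\eqref{e.pgap1}. The bookkeeping with $W_r\cup U_\Sing$ and the margin $\beta_1-\beta_1/1000>\beta_1/2$ is fine and mirrors Lemma~\ref{l.B1}.

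The genuine gap is in your discrete-to-continuous step. Lemma~\ref{l.B2} counts \emph{integer} times $j$ with $x_{\tilde s-j}\in W_r$, and in general a high density of integer-time visits to an open set does not give a comparable lower bound on the Lebesgue measure of time spent in it: the orbit could sit in the set only for a short moment around each integer time (think of a thin neighborhood of a point hit exactly at integer times). Your justification — ``each integer visit contributes a flow-segment of length $1$, modulo a single boundary segment of bounded length'' — is false as stated: there is a boundary discrepancy of size up to $1$ at \emph{every} entry/exit of $W_r$, and the number of such passages along $(x,t)$ can grow linearly in $t$, so the error is not a constant independent of $t$. What rescues the argument is precisely the structural property of $W_r$ that the paper's proof invokes: since $W_r(\sigma)$ is a union of complete local orbit segments crossing $B_{r_0}(\sigma)$ through $B_r(\sigma)$ (see~\eqref{e.W.def}), every passage through $W_r$ lasts at least unit time (indeed, of order $\log(r_0/r)$, since $|X|$ is Lipschitz and vanishes at $\sigma$); equivalently, if $x_i$ and $x_{i+1}$ both lie in $W_r^c$ then the whole segment $(x_i,1)$ misses $W_r$. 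With this, the per-passage error of at most $1$ is negligible compared to the passage length, so the number of integer times in $W_r$ is comparable (up to a factor $1+O(1/\log(r_0/r))$) to the continuous time in $W_r$, and $\dxt(W_r\cup U_\Sing)\gtrsim\beta_1$ follows. Without stating and using this property of $W_r$ (or some equivalent lower bound on passage times), the key inequality $\dxt(W_r\cup U_\Sing)\ge\beta_1-o(1)$ in your proposal is unjustified.
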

\begin{proof}
	By Lemma~\ref{l.B2}, we have 
	$$
	\frac{\#\{j=0,\ldots, \floor{\tilde s-p}-1: x_{\tilde s -j}\in W_r\}}{\floor{\tilde s-p}} >\beta_1.
	$$
	for any $(x,t)\in \cB_2$. The definition of $W_r$ (see~\eqref{e.W.def}) guarantees that if $x_{i}$ and $x_{i+1} = f_1(x_i)$ are both in $W_r^c$, then the orbit segment $(x_i,1)$ does not intersect with $W_r$.	Also note that the orbit segment before $p$ or after $\tilde s$ are contained in $W_r\cup U_\Sing$. 
	Then, for any $(t,\delta)$-separated set $E_t$ of $(\cB_2)_t$ and every $x\in E_t$, it holds 
	$$
	\dxt(W_r) > \beta_1.
	$$
	Similar to the previous lemma, we then have 
	$$
	\mu\left(\Cl\left( \cup_{\sigma\in\Sing(X)\cap\Lambda} B_{r_0}(\sigma)\right)\right)  \ge \frac{\beta_1}{2},
	$$
	where $\mu$ is any limit point of the sequence of measures $(\mu_t)_t$ defined by~\eqref{e.mu}. Now Lemma~\ref{l.var.princ} and Equation~\eqref{e.pgap1} implies that 
	$$
	P(\cB_2, \phi,\delta) < P(\phi)- a_0,
	$$
	as required.
\end{proof}

\begin{lemma}\label{l.P}
	$P([\cP], \phi,\delta) < P(\phi)- a_0$.
\end{lemma}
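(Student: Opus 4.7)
The proof parallels the arguments for $\cB_1$ and $\cB_2$ in Lemmas~\ref{l.B1} and~\ref{l.B2'}. The key observation is that by the definition of $p(x,t)$ in~\eqref{e.ps}, whenever $(x,p)\in\cP$ the entire orbit segment $\{x_s:s\in[0,p)\}$ lies inside $W_r\cup U_\Sing$. Passing from $\cP$ to $[\cP]$ via~\eqref{e.[C]} only modifies the length of the orbit segment by a total of at most $2$ (the two gap parameters $s,t\in[0,1)$), so for any $(y,n)\in[\cP]$ we still have
$$
\delta_{(y,n)}(W_r\cup U_\Sing)\ge 1-\frac{2}{n}.
$$

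Fix, for each $n\in\NN$, an $(n,\delta)$-separated subset $E_n\subset([\cP])_n$ that nearly realizes $\Lambda([\cP],\phi,\delta,n)$, and form $\mu_n$ as in~\eqref{e.mu}. Since $\mu_n$ is a convex combination of the empirical measures $\delta_{(y,n)}$ with $y\in E_n$, the estimate above yields $\mu_n(\Cl(W_r\cup U_\Sing))\ge 1-2/n$. Let $\mu$ be any weak-$*$ limit of $(\mu_n)_{n\in\NN}$; passing to the limit and using that $\Cl(W_r\cup U_\Sing)$ is closed, we obtain $\mu(\Cl(W_r\cup U_\Sing))=1$. In particular $\mu$ is an $X$-invariant probability measure.

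Now split
$$
\Cl(W_r\cup U_\Sing)\subset \Cl\bigl(\cup_\sigma B_{r_0}(\sigma)\bigr)\,\cup\, \Cl\bigl(U_\Sing\setminus \cup_\sigma B_{r_0}(\sigma)\bigr),
$$
using $W_r\subset\cup_\sigma B_{r_0}(\sigma)$. By the defining property~\eqref{e.U} of $U_\Sing$, the second piece has $\mu$-measure smaller than $\beta_1/1000$, so
$$
\mu\bigl(\Cl(\cup_{\sigma\in\Sing(X)\cap\Lambda} B_{r_0}(\sigma))\bigr)\ge 1-\frac{\beta_1}{1000}>\frac{\beta_1}{2}.
$$
Hence $\mu$ satisfies the hypothesis of~\eqref{e.pgap1}, giving $P_\mu(\phi)<P(\phi)-a_0$.

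Finally, Lemma~\ref{l.var.princ} applied to $\cC=[\cP]$ yields $P([\cP],\phi,\delta)\le P_\mu(\phi)<P(\phi)-a_0$, as required. The only mildly subtle point is handling the ``enlargement'' from $\cP$ to $[\cP]$, but the loss of at most two units of time is harmless once we take $n\to\infty$; no new estimates beyond~\eqref{e.U}, \eqref{e.pgap1}, and Lemma~\ref{l.var.princ} are needed.
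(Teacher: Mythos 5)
Your proof is correct and follows essentially the same route as the paper: you note that every orbit segment in $[\cP]$ spends (all but a negligible portion of) its time in $W_r\cup U_\Sing$, and then you run exactly the limit-measure argument of Lemmas~\ref{l.B1} and~\ref{l.B2'}, combining \eqref{e.U}, \eqref{e.pgap1} and Lemma~\ref{l.var.princ}. The paper's own proof records the same empirical-measure bound for $[\cP]$ and simply refers back to that argument, so your write-up just makes those shared details explicit.
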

\begin{proof}
	Recall the definition of  $[\cP]$ from~\eqref{e.[C]}.  Also recall that $\cP$ consists of orbit segments of the form $(x,p(x,t))$ where $p(x,t)$ is defined by~\eqref{e.ps}. By the definition of $p$, we see that if $(x,p)\in [\cP]$, then  
	$$
	\delta_{(x,p)} (W_r\cup U_\Sing) > 0.9.
	$$
	Then the rest of the proof follows from the same argument in the proof of the previous two lemmas.
\end{proof}

\begin{lemma}\label{l.S}
	$P([\cS], \phi,\delta) < P(\phi)- a_0$.
\end{lemma}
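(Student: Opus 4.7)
The plan is to mirror the strategy used for $\cB_2$ and $[\cP]$: show that every sufficiently long orbit segment $(x_s, t-s) \in \cS$ has empirical measure $\dxt$ assigning at least $\beta_1/2$ mass to $W_r$, then invoke~\eqref{e.pgap1} and Lemma~\ref{l.var.princ}. First, I would unpack the structure of $\cS$. By construction, $(x_s, t-s) \in \cS$ means $s = s(x,t) \in (p, \tilde s]$ is the \emph{largest} time for which $x_s$ is a $(\lambda_0,cu,\beta_0,W_r)$-simultaneous Pliss time for $(x_p, s-p)$; moreover, by definition of $\tilde s$, the tail $(x_{\tilde s}, t-\tilde s)$ lies entirely inside $W_r$, while $x_s, x_{\tilde s} \notin W_r$.

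I would then split into two cases. If $t - \tilde s \ge (\beta_1/2)(t-s)$, the tail alone already forces $\dxt(W_r) \ge \beta_1/2$. Otherwise, the middle piece $(x_s, \tilde s - s)$ has length more than $(1-\beta_1/2)(t-s)$. Here I argue by contradiction: if the $W_r$-visit density on $(x_s, \tilde s - s)$ were at most $\beta_1$ and its length exceeded $N_1$, Lemma~\ref{l.bihyptime} (applied to this sub-segment, whose endpoints lie outside $W_r$) would produce an integer $s' \in (0, \tilde s - s]$ such that $x_{s+s'}$ is a simultaneous Pliss time for $(x_s, s')$. Combining this with the Pliss time at $x_s$ for $(x_p, s-p)$ via Lemma~\ref{l.gluePliss} would yield a simultaneous Pliss time at $x_{s+s'}$ for $(x_p, s+s'-p)$ with $s < s+s' \le \tilde s$, contradicting the maximality of $s$. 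The edge case $\tilde s - s \le N_1$ forces $t - \tilde s$ to dominate $t-s$ once $t-s \ge 2N_1/\beta_1$, collapsing it into the first case. Hence there exists $N_2$ such that whenever $(x_s, t-s) \in \cS$ with $t - s \ge N_2$, one has $\delta_{(x_s, t-s)}(W_r) \ge \beta_1/2$.

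Next, I would transfer this estimate to $[\cS]$: by~\eqref{e.[C]}, an element of $[\cS]$ of integer length $n$ differs from a $\cS$-segment by adding/removing pieces of total length at most $2$, so for $n$ sufficiently large (say $n \ge N_2 + 4/\beta_1$) the perturbation is negligible and the analogous bound $\delta_{(x,n)}(W_r) \ge \beta_1/3$ (or any fixed positive multiple of $\beta_1$, after adjusting constants in~\eqref{e.pgap1}) still holds. Finally, exactly as in the proof of Lemma~\ref{l.B2'}, for each large $n$ pick a near-maximizing $(n,\delta)$-separated subset $E_n$ of $[\cS]_n$ and form $\mu_n$ as in~\eqref{e.mu}; any weak-$*$ limit $\mu$ of $(\mu_n)_{n\in\NN}$ satisfies $\mu(\Cl(W_r)) \ge \beta_1/2$, hence $\mu\!\left(\Cl\!\left(\cup_\sigma B_{r_0}(\sigma)\right)\right) \ge \beta_1/2$; applying~\eqref{e.pgap1} and Lemma~\ref{l.var.princ} delivers $P([\cS],\phi,\delta) \le P(\mu) < P(\phi) - a_0$.

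The main obstacle is the contradiction argument in Case~2: one has to carefully verify that the integer $s'$ produced by Lemma~\ref{l.bihyptime} can genuinely be glued with the Pliss time at $s$ via Lemma~\ref{l.gluePliss}. This requires $s$ and $s+s'$ to both lie in $\NN$ and $s+s' \le \tilde s$, which holds because $\cD$ is restricted to integer times in the $(\cP,\cG,\cS)$-decomposition and $\tilde s - s$ dominates $s'$ by the choice of $s'$ within the Pliss construction. Once this bookkeeping is in place, the argument is otherwise routine and parallels the previous two lemmas.
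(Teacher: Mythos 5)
Your proposal is correct and follows essentially the same route as the paper: the tail after $\tilde s$ lies in $\Cl(W_r)$, the maximality of $s$ combined with Lemma~\ref{l.gluePliss} and the contrapositive of Lemma~\ref{l.bihyptime} forces $W_r$-visit density exceeding $\beta_1$ on the middle segment, and then Lemma~\ref{l.var.princ} with~\eqref{e.pgap1} gives the pressure gap; your case split (tail fraction versus $\tilde s-s\le N_1$) is only a cosmetic reorganization of the paper's Cases 1 and 2. The only blemish is the slide from $\beta_1/3$ back to $\beta_1/2$ when passing from $[\cS]$ to the limit measure, but since the prefix/suffix in~\eqref{e.[C]} has length at most one its effect vanishes as $t\to\infty$ (as the paper notes), so no constant adjustment in~\eqref{e.pgap1} is actually needed.
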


\begin{proof} As in the previous lemma, we shall consider  $\cS$  instead, as the effect of the prefix and suffix of length at most one in the empirical measure $\mu_t$ is inconsequential after sending $t$ to infinity.
	
	Let $(x,t)\in \cS$, and assume that it is obtained from some orbit $(y,\tau)\in \cD$. By this, we mean
	$$
	x = y_{s(y,\tau)}, \mbox{ and }t = \tau - s(y,\tau),
	$$
	where $s(y,\tau)$ is the largest $(\lambda_0,cu, \beta_0,W_r)$-simultaneous Pliss times  between $(p,\tilde s]$ where $\tilde s(y,\tau)$ is defined by~\eqref{e.ps}.

	From the definition of $\tilde s(y,\tau)$ we see that the orbit segment $(y_{\tilde s}, \tau - \tilde s) = (x_{\tilde s - s}, \tau -\tilde s)$ is contained in $\Cl(W_r)$. Then we have 
	\begin{equation}\label{e.tildes}
		\delta_{(x_{\tilde s - s},\tau-\tilde s)} (W_r) = 1.
	\end{equation}

	Furthermore, we have  $x_{\tilde s - s}=y_{\tilde s}\notin W_r$ and $x\notin W_r$ due to the definition of $s$ and $\tilde s$.  Now we consider the orbit segment $(x,\tilde s-s)$; there are two cases: 
	
	\medskip
	\noindent Case 1. ${\tilde s - s}\le N_1$; in this case, for all $t$ large enough (say, larger than $20N_1$) one has $	\delta_{(x,t)} (W_r) > 0.9$.
	
	\noindent Case 2. ${\tilde s - s}>N_1$; in this case we enlist the help of Lemma~\ref{l.gluePliss}, which shows that one cannot find $\tilde t\in [0,\tilde s-s]$  such that $x_{\tilde t}$ is a $(\lambda_0,cu, \beta_0,W_r)$-simultaneous Pliss times  for the orbit segment $(x,{\tilde t})$; otherwise, $y_{s+\tilde t} = x_{\tilde t}$ would become a $(\lambda_0,cu, \beta_0,W_r)$-simultaneous Pliss times for the orbit segment  $(y_p,s-p+\tilde t)$, and contradicts with the maximality of $s(y,\tau)$. Then by Lemma~\ref{l.bihyptime} (and the observation that $x = y_{s} ,x_{\tilde s -s } = y_{\tilde s}$ are both in $W_r^c$) we have 
	$$
	\frac{\#\{i=0,\ldots, \floor{\tilde s - s}-1: x_i\in W_r\}}{\floor{\tilde s - s }} > \beta_1.
	$$
	This, combined with~\eqref{e.tildes}, shows that 
	$$
	\dxt > \beta_1.
	$$
	Then we have $\mu(\Cl(W_r)) \ge \beta_1$ where $\mu$ is any limit point of $\mu_t$ defined using~\eqref{e.mu}. By Lemma~\ref{l.var.princ} and Equation~\eqref{e.pgap1} we have
	$$
	P(\cS, \phi,\delta) < P(\phi)- a_0,
	$$
	as required.
\end{proof}

Now by Observation 1 and 2, we have 
\begin{align*}
	&P(\cD^c\cup[\cP]\cup[\cS],\phi,\delta,\vep)\\ 
	&\le \max\left\{P(\cB_1,\phi,\delta,\vep),  P(\cB_2,\phi,\delta,\vep),  P([\cP],\phi,\delta,\vep),  P([\cS],\phi,\delta,\vep)\right\}\\
	&\le \max\left\{P(\cB_1,\phi,\delta),  P(\cB_2,\phi,\delta),  P([\cP],\phi,\delta),  P([\cS],\phi,\delta)\right\} + \frac{a_0}{2}\\
	&\le P(\phi)-a_0+ \frac{a_0}{2}\\
	&= P(\phi)-\frac{a_0}{2}.
\end{align*}
This verifies Assumption (III) of the improved CT criterion (Theorem~\ref{t.improvedCL}) and concludes the proof of Theorem~\ref{m.B}, {leaving only the proof of Theorem \ref{t.Bowen} and \ref{t.spec}.}

\subsection{Proof of Theorem~\ref{m.Lorenz} and~\ref{m.A}}\label{ss.proofAB}
Now we are ready to prove  Theorem~\ref{m.Lorenz} and Theorem~\ref{m.A} by verifying the assumptions in Theorem~\ref{m.B}. 

\begin{proof}[Proof of Theorem~\ref{m.Lorenz}]	
	The existence and the singular-hyperbolicity of the classical Lorenz attractor were proven in~\cite{Tucker} (see also~\cite{AM} on the smoothness of the stable foliation, a fact that Tucker used in his paper without giving the proof). 
	
	Note that the classical Lorenz equation is a compact attractor. Let $U$ be an attracting neighborhood of $\Lambda$ whose closure is compact, then it is straightforward to verify that all the discussion in Section~\ref{s.p1} and \ref{s.p2} applies to $X|_U.$ In particular, Theorem~\ref{m.B} applies despite it being stated for compact manifolds without boundary.\footnote{Alternatively one can smoothly identify $X|_U$ with a $C^1$ flow $\tilde X$ on a small open region of $S^3$. $\tilde X$ can then be extended to the entire $S^3$ such that the only invariant set outside $U$ is a hyperbolic source. Then one can apply Theorem~\ref{m.B} to $\tilde X$.} Below we shall verify the three assumptions of Theorem~\ref{m.B}.

	Recall that the classical Lorenz attractor has a hyperbolic singularity $\sigma$ at the origin $(0,0,0)$; it is the only singularity contained in the attractor $\Lambda$.  Therefore Assumption (A) of Theorem~\ref{m.B} is satisfied.
	
	Next we verify Assumption (B). 
	The existence of a hyperbolic periodic orbit $\gamma$ is proven in~\cite[Corollary D]{PYY}. Below we will show that $\gamma$ has the desired properties. For this purpose, we briefly recall Tucker's work concerning the topological structure of the classical Lorenz attractor.
	
	We shall write $(x_1,x_2,x_3)$ for the coordinates in $\RR^3$. The following facts are summarized in~\cite[Section 2.1 and Proposition 5.1]{Tucker}: 
	\begin{enumerate}
		\item there exists a return plane $\Sigma$ at $\{x_3=27=\rho-1\}$ (taken to be $[-6,6]\times [-3,3]\times \{27\}$) such that $\Sigma\setminus W^s(\sigma)$ consists of two disjoint rectangular pieces $\Sigma^+$ and $\Sigma^-$, and a return map $R: (\Sigma^+\cup \Sigma^-)\to\Sigma$; the set  $\Lambda_\Sigma := \bigcap_{n=0}^\infty R^n(\Sigma^+\cup \Sigma^-)$ is a transitive attractor for the return map;
		\item there exists a one-dimensional $C^{1+\alpha}$ stable foliation on $\Sigma$, invariant under the return map $R$ (see also~\cite{AM} for the smoothness); the quotient of $R$ over the stable foliation gives a $C^{1+\alpha}$ one-dimensional map $f_\Sigma$;
		\item on $\Sigma$ there exists a cone field $\mathfrak C$ (the unstable cone field) which is mapped strictly into itself by $DR$;
		\item let $\ell$ be a leaf of the stable foliation on $\Sigma$; we call the region between $\ell$ and $R(\ell)$ a {\em fundamental domain} of $R$; note that the quotient of a fundamental domain over the stable foliation is an interval of the form $(x,f_\Sigma (x))$;
		\item (\cite[Proposition 5.1]{Tucker}) there exists a set $F\subset \Sigma$ containing a fundamental domain 
		such that 
		\begin{itemize}
			\item any orbit with $x_0\in F$ that eventually leaves $F$ satisfies that for every return $= R^n(x)\in F$,
			$$
			\min \{|DR^n(x_0)v|: v\in\mathfrak C\} >2|v|;			
			$$
			\item  any orbit completely contained in $F$ satisfies 
			$$
			\min \{|DR^n(x_0)v|: v\in\mathfrak C\}>2^{n/2}|v|;
			$$
		\end{itemize}
		in particular, any small segment tangent to the cone field $\mathfrak C$ must more than double its length between two consecutive slicings over $\Gamma$;
		\item the one-dimensional map $f_\Sigma$ is uniformly expanding; in particular, if $x\notin W^s(\sigma)$ and $F$ is a fundamental domain of $R$, then the orbit of $x$ will eventually enter $F$ (this fact is hidden in the proof of~\cite[Main Theorem]{Tucker}; see the end of~\cite[Section 2.4]{Tucker}). 
	\end{enumerate}
	
	With this, we are ready to prove Assumption (B). 
	
	\smallskip  
	\noindent (B1) {\em   $W^u(\gamma)$ is a quasi $u$-section.} We take $p\in\gamma\cap \Sigma$ a hyperbolic periodic point of $R$, and denote by $S_0:=W^u_\Sigma(p)$ the connected component of the intersection between $\Sigma$ and the local unstable manifold of $\gamma$  that contains $p.$ Then $S_0$ is a one-dimensional segment tangent to $\mathfrak C$. By (5) above, the first return of $S_0$ to $F$, denoted by $\tilde S_1$, must have more than double the length of $S_0$. It may be sliced by $\Gamma$ into two pieces; if this happens, we take the longer piece and denote it by $S_1$; otherwise let $S_1=\tilde S_1$. Then the length of $S_1$ is greater than $S_0$ with a ratio bounded away from one and is still tangent to $\mathfrak C$. Repeating this process, we obtain a sequence of longer and longer segments $\{S_n\}$ tangent to $\mathfrak C$. They will eventually totally cross a fundamental domain $F$. Now take any regular point  $x\in \Lambda$; there are two possibilities:
	
	\noindent Case 1. $x\in W^s(\sigma)$. Then $\Gamma \subset W^s(\Orb(x))$  has a non-trivial transverse intersection with $S_n$ for $n$ large enough.
	
	\noindent Case 2. $x\ne W^s(\sigma)$. In this case, we let $\tilde x$ be the first entry of the orbit of $x$ into $F$. Let $W^s_\Sigma(\tilde x)\subset\Sigma$ be the (local) intersection between the stable manifold of $\Orb(\tilde x)$ and $\Sigma$. Then we have 
	$$
	W^s_\Sigma(\tilde x)\pitchfork S_n\ne\emptyset \mbox{ for some } n>0.
	$$ 
	In both cases, by invariance of invariant manifolds under the flow, we have $W^s(x)\pitchfork W^u(\gamma)\ne\emptyset$, that is, $W^u(\gamma)$ is a quasi $u$-section.
	
	\smallskip
	\noindent  (B2) {\em $W^s(\gamma)$ is dense in $\Lambda$.} The proof is similar. For any open set $U\cap\Lambda\ne\emptyset$, we take $z\in (U\cap\Lambda)\setminus W^s(\sigma)$ and $S\subset U$ a two-dimensional disk centered at $z$ and tangent to a small $(\alpha,F^{cu})$-cone. Let $z^0 = f_\tau(z)$ be the first entry of the orbit of $z$ into a fixed fundamental domain $F\subset \Sigma$, and $S_0$ be a segment contained in $f_t(S)\cap \Sigma$ that contains $z^0$. Then $S_0$ is tangent to $\mathfrak C$ if $\alpha$ is small enough. Using the previous argument, we obtain a sequence of longer and longer segments $\{S_n\}$ that eventually fully cross $F$. On the other hand, let $p\in\gamma\cap F$;  then, for $n$ large enough, $S_n$ must have a non-trivial transverse intersection with $W^s_\Sigma(p)$. By invariance, we have 
	$$
	S\pitchfork W^s(\gamma )\ne\emptyset.
	$$
	This shows that $W^s(\gamma)\cap U\ne\emptyset$. Since $U$ is taken arbitrarily, we see that $W^s(\gamma)$ is dense in $\Lambda$. This finishes the proof of Assumption (B).
	
	Finally we consider Assumption (C). Recall that the topological entropy of the Lorenz attractor is positive (since it supports an SRB measure (\cite[Main Theorem]{Tucker}) which satisfies Pesin's entropy formula; see also \cite[Theorem C]{PYY}). By Remark~\ref{r.mainthm}, for the constant potential $\phi\equiv 0$ we have $\phi(0) = 0< h_{top}(X)$. This verifies Assumption (C) for $\phi\equiv 0$; as a result, there exists a unique measure of maximal entropy. 
	
	The proof of Theorem~\ref{m.Lorenz} is now complete. 
\end{proof}

\begin{proof}[Proof of Theorem~\ref{m.A}] 
	Again, we verify the assumptions of Theorem~\ref{m.B}. 
	
	Clearly Assumption (A) is an open and dense condition among $C^1$ vector fields. Assumption (B) is also satisfied by an open and dense subset of $\mathfrak{X}^1(\bM)$ thanks to Theorem~\ref{t.top}.  Therefore, for every H\"older potential $\phi$ that satisfies Assumption (C), there exists a unique equilibrium state on $\Lambda$. 
\end{proof}

\section{Estimates near singularities}\label{s.sing}

	In the theory of uniform hyperbolicity, the Bowen property (also known as the bounded distortion property) requires the existence of a local product structure formed by local invariant manifolds at a uniform scale. For singular flows, however, there are two essential problems:
	\begin{itemize}
		\item there does not exist an unstable foliation, either on the manifold $\bM$ or on the normal planes $\cN(x), x\in\Reg(X)$;\footnote{The Pesin unstable manifolds do not have uniform size, and therefore do not provide a local product structure with uniform size. Furthermore, they are defined on the manifold, and we need a local product structure on the normal planes.}
		\item for the map $\cP_{t,x}$, its derivative is only uniformly continuous at the scale $\rho_1|X(x)|$; see Proposition~\ref{p.tubular3}.  Here we need the uniform continuity to obtain the exponential contraction
			for the distance between $x$ and $[x, y]$ using the hyperbolicity at $x$, where $[x,y]$ is the point given by the local product structure on the normal plane of $x$.
	\end{itemize}
	
	To solve these issues, in this section we will construct two families of fake foliations:
	\begin{itemize}
		\item $\cF^{s/cu}_{x,\cN}$ are fake foliations defined on $\cN_{\rho_0|X(x)|}(x)$ at every regular point $x\in\Lambda$; they form a local product structure on the normal planes at the uniformly relative scale $\rho_1|X(x)|$; 
		\item $\cF_\sigma^{*},*=s,c,u,cs,cu$ are fake foliations defined near each singularity in $\Lambda$; they have uniform scales and are used when an orbit passes through a neighborhood of a singularity. 
	\end{itemize} 
	Then, in Section~\ref{ss.nearsing} we will state and prove two key lemmas that provide the expansion/contraction estimates on respective leaves of $\cF_{x,\cN}^{s/cu}$ when an orbit segment passes through the set $W_r(\sigma)$ defined by~\eqref{e.W.def}. The key in the proof is a change of coordinate between $(\cF_{x,\cN}^{s},\cF_{x,\cN}^{cu})$ and $(\cF_\sigma^{s},\cF_\sigma^{c},\cF_\sigma^{u})$. When such a change of coordinate is performed, one will lose information on the smaller coordinate (Section~\ref{ss.cone}). For this reason, we need to consider two cases that are determined by whether $s$ or $cu$ has the larger coordinate separately.

\subsection{Cone Estimates}\label{ss.cone}
The foliations $(\cF^s_{x,\cN},\cF^{cu}_{x,\cN})$ and $(\cF^s_\sigma, \cF^c_\sigma,\cF^u_\sigma)$ that will be constructed later are tangent to some small cones around their respective bundles,
but in general are not compatible,   not to mention that they are invariant under different dynamics ($\cP_{1,x}$ and $f_1$, respectively). For example, for $y\in B_{r_0}(\sigma)\cap \cN_{\rho_0|X(x)|}(x)\cap\Lambda$ where $x$ is a regular point in $\Lambda$, it is possible that $\cF^s_{x,\cN}(y)$ and $\cF^s_\sigma(y)$ intersect transversely at $y$,  despite them both  being tangent to a small cone of $E^{ss}= E^s_N$.  
We start this section with a general setup regarding the change of coordinates between (fake) foliation coordinate systems.  For simplicity, in this subsection we shall only consider foliations defined in $\RR^n$, but the same results hold on $\bM$ or on $\cN(x),x\in\Reg(X)$ if one only considers small scales.

\subsubsection{An elementary lemma for vectors contained in a cone}

Fix $0<k<n$. We write $\RR^n=E\oplus F$ with $E$ being the subspace generated by the first $k$ coordinates and $F$ the subspace generated by the last $n-k$ coordinates.

For a vector $ v\in \RR^n$ we will say that $ v$ is contained in the $(\alpha,F)$-cone if $ v =  v^F +  v^{E}$ with $ v^F\in F$ and $ v^{E}\in E$ such that $|v^{E}|\le\alpha | v^F|$. Vectors contained in the $(\alpha,E)$-cone are defined analogously.

\begin{lemma}\label{l.comparecoord}
	There exists a constant $L_0>1$, such that for any $\alpha>0$ sufficiently small, the following holds:
	
	Let $y$ and $y^F$ be points in $\RR^n$,  such that  $y^F$ (considered as a vector) is contained in the $(\alpha,F)$-cone, and $y-y^F$ (considered as a vector) is contained in the $(\alpha,E)$-cone. 
	Write $y=y^1+y^2$ with $y^1\in E$ and $y^2\in F$. 
	Then:
	\begin{align*}
		|y^2|-L_0\alpha |y^1|\le \,\hspace{4.1mm}&|y^F|\hspace{3.7mm}\le|y^2|+L_0\alpha (|y^1|+|y^2|),\mbox{ and }\\
		|y^1|-L_0\alpha |y^2|\le \,|y&-y^F|\le |y^1|+L_0\alpha (|y^1|+|y^2|).
	\end{align*}
	Furthermore, by increasing  $L_0$ if necessary, we have
	\begin{enumerate}
		\item if $|y^F|\ge C_0|y-y^F|$ for some constant $C_0>0$, then there exists $C_0'>0$ such that for $\alpha>0$ sufficiently small,
		$$
		y^F\in \left[(1-C_0^{-1}L_0\alpha )|y^2|, (1+(C_0^{-1}+1)L_0\alpha )|y^2|	\right];
		$$
		moreover,  $|y^2|\ge C_0(1-C_0'\alpha)|y^1|$.
		\item if $|y-y^F|\ge C_0|y^F|$ for some constant $C_0>0$, then there exists $C_0'>0$ such that for $\alpha>0$ sufficiently small,
		$$|y-y^F|\in \left[(1-C_0^{-1}L_0\alpha )|y^1|, (1+(C_0^{-1}+1)L_0\alpha )|y^1|	\right];
		$$
		moreover,  $|y^1|\ge C_0(1-C_0'\alpha)|y^2|$.
	\end{enumerate}	
\end{lemma}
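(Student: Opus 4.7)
The setup is purely linear-algebraic, so the plan is just to chase the cone inequalities carefully and collect all error terms into a single constant $L_0$.

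First I would decompose everything along $E\oplus F$: write
\[
y^F = a+b,\qquad y-y^F = c+d,\qquad a,c\in E,\ b,d\in F.
\]
The cone hypotheses become $|a|\le \alpha|b|$ and $|d|\le \alpha|c|$, and since $y=y^F+(y-y^F)$ we immediately read off $y^1=a+c$ and $y^2=b+d$. From $|y^F|^2=|a|^2+|b|^2$ with $|a|\le\alpha|b|$ we get $|b|\le|y^F|\le\sqrt{1+\alpha^2}\,|b|$, and analogously $|c|\le|y-y^F|\le\sqrt{1+\alpha^2}\,|c|$. So after paying a $1+O(\alpha^2)$ factor, it suffices to compare $|b|$ with $|y^2|$ and $|c|$ with $|y^1|$.

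Next I would chain the triangle inequalities. From $y^2=b+d$ and $|d|\le\alpha|c|$,
\[
\bigl||y^2|-|b|\bigr|\le \alpha|c|,
\]
and symmetrically $\bigl||y^1|-|c|\bigr|\le\alpha|b|$. Substituting the second into the first and vice versa yields
\[
|b|(1-\alpha^2)\le |y^2|+\alpha|y^1|,\qquad |c|(1-\alpha^2)\le |y^1|+\alpha|y^2|,
\]
together with the matching lower bounds $|b|\ge |y^2|-\alpha|y^1|-\alpha^2|b|$ and $|c|\ge |y^1|-\alpha|y^2|-\alpha^2|c|$. Combining these with $|b|\le|y^F|\le\sqrt{1+\alpha^2}\,|b|$ and using $|y^1|,|y^2|\le |y|\le 1$ to absorb every $O(\alpha^2)$ term into a linear error, one reads off the displayed two-sided bounds for $|y^F|$ and $|y-y^F|$, with some absolute constant $L_0$. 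This is the routine computational step; the only subtlety is that in the upper bound one wants $L_0\alpha(|y^1|+|y^2|)$ rather than just $L_0\alpha|y^1|$, which is exactly what the $\sqrt{1+\alpha^2}$ factor from the projection $b\mapsto y^F$ contributes.

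For the two refined items, the idea is that the hypothesis $|y^F|\ge C_0|y-y^F|$ forces $|c|$ to be small relative to $|b|$, which tightens the estimate $||y^2|-|b||\le \alpha|c|$. Concretely, from the first part $|c|\le(1+O(\alpha))|y-y^F|\le C_0^{-1}(1+O(\alpha))|y^F|\le C_0^{-1}(1+O(\alpha))|y^2|$ (for $\alpha$ small), so
\[
|y^F|\in \bigl[\,|b|,\ \sqrt{1+\alpha^2}\,|b|\,\bigr]\subset \bigl[\,|y^2|-\alpha|c|,\ \sqrt{1+\alpha^2}(|y^2|+\alpha|c|)\,\bigr],
\]
and plugging in $|c|\le C_0^{-1}(1+O(\alpha))|y^2|$ gives the stated two-sided bound on $|y^F|$ with the asymmetric constants $1-C_0^{-1}L_0\alpha$ and $1+(C_0^{-1}+1)L_0\alpha$. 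The comparison $|y^2|\ge C_0(1-C_0'\alpha)|y^1|$ then follows by substituting the analogous bound $|y^1|\le |c|+\alpha|b|\le C_0^{-1}(1+O(\alpha))|y^2|+O(\alpha)|y^2|$. Item (2) is obtained in exactly the same way by swapping the roles of $b,c$ (equivalently of $E,F$). The only place one has to be a little careful is keeping track of which $\alpha$-terms are linear in $|y^1|+|y^2|$ and which can be absorbed using $|c|\le C_0^{-1}|y^F|$; I expect this bookkeeping, not any conceptual issue, to be the main thing to get right.
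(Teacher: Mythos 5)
Your proposal is correct and takes essentially the same route as the paper's proof in Appendix A: your components $a,b,c,d$ are precisely the projections $\pi^E(y^F),\pi^F(y^F),\pi^E(y-y^F),\pi^F(y-y^F)$ used there, and the chaining of triangle inequalities followed by rearranging terms under the domination hypothesis $|y^F|\ge C_0|y-y^F|$ (and its symmetric counterpart for case (2)) is the same computation. The one caveat, that the lower bound for $|y^F|$ actually carries an extra $O(\alpha^2)|y^2|$ term which cannot be absorbed into $L_0\alpha|y^1|$ when $|y^1|$ is very small, is equally present in the paper's own argument, so your bookkeeping is at the same level of precision as the original.
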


\begin{remark}
	Note that the lower bounds $	|y^2|-L_0\alpha |y^1|$ or $|y^1|-L_0\alpha |y^2|$ could be negative (e.g., fix $\alpha$ and take either $|y^2|/|y^1|$ or $|y^1|/|y^2|$  sufficiently small). However, with assumptions in either Case (1) or (2), we can control one of the lower bounds and obtain that $|y-y^F|$ or $|y^F|$, whichever is larger, is ``comparable'' to the corresponding Euclidean coordinate $|y^i|, i=1,2$. 
\end{remark}

The proof of this lemma can be found in Appendix~\ref{s.A1}.

\subsubsection{{Change of fake foliation coordinates}}\label{ss.foliationcoord}
Given $\alpha>0$ and an embedded disk $D\subset \RR^n$ with dimension $k$, we say that $D$ is {\em tangent to the $(\alpha,E)$-cone}, if $D$ is the graph of a $C^1$ function $g: \{v\in\RR^k: |v|<R\}\to\RR^{n-k}$ for some $R>0$, with $\|Dg\|_{C^0}\le \alpha$. Note that  disks tangent to the $(\alpha,F)$-cone can be defined similarly, and the definition here is consistent with Section~\ref{ss.DS}.  It is clear that if $D$ is tangent to the $(\alpha,E)$-cone, then for any points $x,y\in D$, the vector $y-x$ is contained in the $(\alpha,E)$-cone.  The same statement holds for disks tangent to the $(\alpha, F)$-cone. 

Given two foliations $\cF^E$ and $\cF^{F}$ of $\RR^n$ such that each of their leaves is tangent to the corresponding cones and form a local product structure, for each point $y\in \RR^n$ we write
$$
y=[y^{\cF^E},y^{\cF^F}]
$$
if $y^{\cF^{E}}\in \cF^{E}(0),y^{\cF^{F}}\in \cF^{F}(0)$ such that 
$$
\{y\}=\cF^{F}(y^{\cF^{E}})\pitchfork \cF^{E}(y^{\cF^{F}}).
$$ 
We remark that the vector $y^{\cF^E}$ is contained in the $(\alpha, E)$-cone and the vector $y-y^{\cF^E}$ is contained in the $(\alpha, F)$-cone.

\begin{figure}[h!]
	\centering
	\def\svgwidth{\columnwidth}
	\includegraphics[scale=0.5]{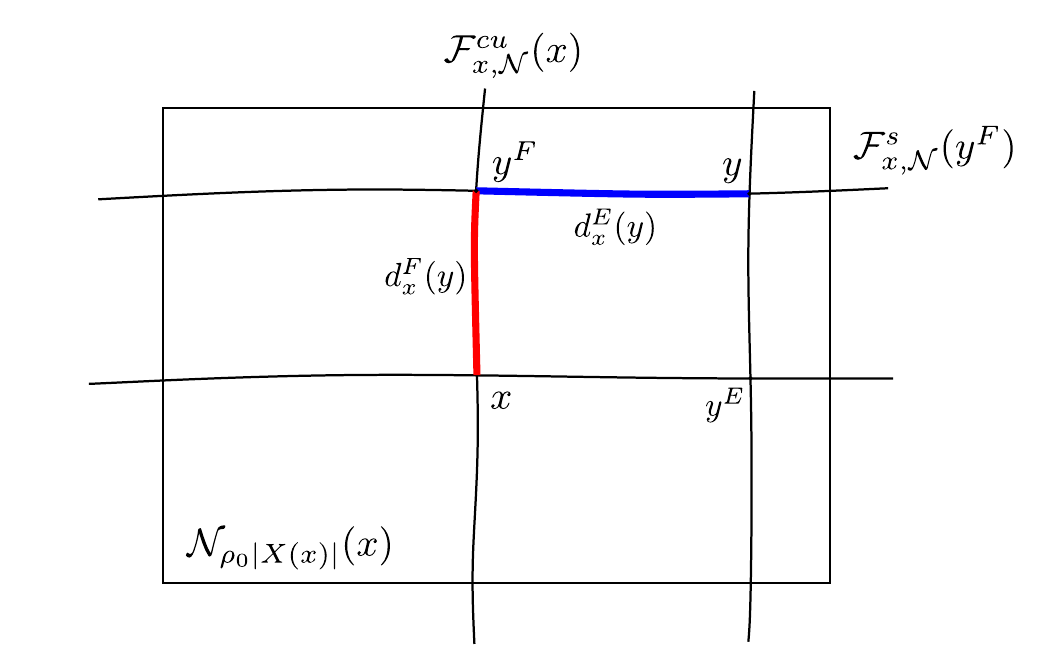}
	\caption{The local product structure and the $E$-length and $F$-length of $y$.}
	\label{f.EF}
\end{figure}

We will call the pair $(\cF^E,\cF^{F})$ a {\em foliation coordinate system of $\RR^n$}.
Clearly, for a given $y$, the points $y^{\cF^{*}}, *=E,F$ depend on the choice of the foliations. As explained before, it is crucial for us to make the transition from one foliation coordinate system $(\cF^E,\cF^{F})$ to another foliation coordinate system $((\cF^{E})',(\cF^{F})')$. This is made possible through the following lemmas.

Let $D$ be an embedded disk in $\RR^n$. For $x,y\in D$ we denote by $d_D(x,y)$ the shortest distance within $D$. The next lemma allows us to interchange $d_D(x,y)$ with $|y-x|$ as long as the disk in question is contained in a proper cone.

\begin{lemma}\label{l.disktovec}
	There exists $L_1$ such that for every $\alpha>0$ small enough, every $k$-dimensional embedded disk  $D$ tangent to the $(\alpha, E)$-cone and every pair of points $x,y\in D$, it holds 
	$$
	|y-x|\le d_{D}(x,y)\le (1+L_1\alpha) |y-x|.
	$$
	A similar statement holds for disks tangent to the $(\alpha,E)$-cone.
\end{lemma}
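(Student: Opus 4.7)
The lower bound $|y-x|\le d_D(x,y)$ is immediate, since any rectifiable curve in $D$ connecting $x$ to $y$ has length at least the Euclidean distance $|y-x|$. All the work is in the upper bound.

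The plan is to parameterize $D$ explicitly and exhibit a specific path realizing an upper bound on $d_D(x,y)$. By hypothesis, $D$ is the graph of a $C^1$ map $g:\{v\in\RR^k:|v|<R\}\to\RR^{n-k}$ with $\|Dg\|_{C^0}\le\alpha$. Write $x=(a,g(a))$ and $y=(b,g(b))$ for some $a,b\in\RR^k$. Then the straight-line path in the base $\gamma(t)=(1-t)a+tb$, $t\in[0,1]$, lifts to a path $\tilde\gamma(t)=(\gamma(t),g(\gamma(t)))$ in $D$ connecting $x$ to $y$.

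Computing the length of $\tilde\gamma$, using the chain rule and the bound on $\|Dg\|$:
\begin{align*}
d_D(x,y)\;\le\;\mathrm{length}(\tilde\gamma)
&=\int_0^1\sqrt{|b-a|^2+|Dg_{\gamma(t)}(b-a)|^2}\,dt\\
&\le\sqrt{1+\alpha^2}\,|b-a|.
\end{align*}
On the other hand, since $y-x=(b-a,\,g(b)-g(a))$, we have $|y-x|\ge|b-a|$. Combining these two,
\[
d_D(x,y)\le\sqrt{1+\alpha^2}\,|y-x|\le(1+\alpha)\,|y-x|,
\]
where in the last step we used $\sqrt{1+\alpha^2}\le 1+\alpha$ for $\alpha\ge 0$. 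This gives the desired bound with $L_1=1$.

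There is no real obstacle here: the only subtlety is the choice of path, and the straight-line lift in the base coordinate is the natural candidate since it is where the $(\alpha,E)$-cone tangency gives direct control on the slope of $g$. The same argument, with the roles of $E$ and $F$ swapped (so $D$ is the graph over an $(n-k)$-dimensional base), yields the analogous statement for disks tangent to the $(\alpha,F)$-cone.
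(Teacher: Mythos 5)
Your proof is correct and is precisely the straightforward argument the paper had in mind when it omitted the proof: write $D$ as the graph of $g$ with $\|Dg\|_{C^0}\le\alpha$, lift the straight segment in the base (which stays in the domain of $g$ since that domain is a ball, hence convex), and bound the length by $\sqrt{1+\alpha^2}\,|b-a|\le(1+\alpha)|y-x|$, the lower bound being trivial. Nothing further is needed, and the case of disks tangent to the $(\alpha,F)$-cone is indeed symmetric.
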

The proof is straightforward and thus omitted. 

As an immediate corollary of Lemma~\ref{l.comparecoord} and~\ref{l.disktovec}, we have the following lemma.
\begin{lemma}\label{l.changecoord}
	There exists $L_2>0$ such that for all $\alpha>0$ small enough, the following holds:
	
	Let $y,y^F_1, y^F_2$ be points in $\RR^n$ such that
	\begin{itemize}
		\item there exist $k$-dimensional disks $D^E_1,D^E_2$ tangent to the $(\alpha, E)$-cone and contain $y$;
		\item there exist $(n-k)$-dimensional disks $D^F_1,D^F_2$ tangent to the $(\alpha, F)$-cone and contain $0$;
		\item $D^E_i$ and $D^F_i$ intersect transversely at $y^F_i$, for $i=1,2$.
	\end{itemize} 
	Then: 
	\begin{enumerate}
		\item if $d_{D^{F}_1}(0,y^{F}_1)\ge 0.9d_{D^{E}_1}(y^F_1,y)$, then 
		$$
		d_{D^{F}_2}(0,y^F_2)\in (1-L_2\alpha, 1+L_2 \alpha)d_{D^{F}_1}(0,y^F_1), 
		\mbox{ and }	d_{D^{F}_2}(0,y^F_2)\ge\frac12 d_{D^{E}_2}(y^F_2,y);
		$$
		\item  if $d_{D^{E}_1}(y^F_1,y)\ge 0.9d_{D^{F}_1}(0,y^{F}_1)$, then 
		$$
		d_{D^{E}_2}(y^F_2,y)\in (1-L_2\alpha, 1+L_2 \alpha)d_{D^{E}_1}(0,y^F_1), \mbox{ and }	d_{D^{E}_2}(y^F_2,y)\ge\frac12 d_{D^{F}_2}(0,y^F_2).
		$$
	\end{enumerate}
\end{lemma}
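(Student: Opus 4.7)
The plan is to reduce everything to the absolute Euclidean decomposition $y = y^1 + y^2$ with $y^1 \in E, y^2 \in F$, and then use the two prior lemmas in sequence. First I would apply Lemma~\ref{l.disktovec} to each disk to replace the intrinsic distances $d_{D^F_i}(0, y^F_i)$ and $d_{D^E_i}(y^F_i, y)$ by the Euclidean norms $|y^F_i|$ and $|y - y^F_i|$, at the multiplicative cost of $(1 + L_1\alpha)^{\pm 1}$. In particular, in Case (1) the hypothesis $d_{D^F_1}(0,y^F_1) \ge 0.9\, d_{D^E_1}(y^F_1,y)$ translates into $|y^F_1| \ge C_0\, |y - y^F_1|$ for some $C_0 \in (0.8, 0.9)$ once $\alpha$ is small.

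Next, since $D^F_i$ is tangent to the $(\alpha,F)$-cone and contains $0$, the vector $y^F_i$ itself lies in that cone; similarly $y - y^F_i$ lies in the $(\alpha, E)$-cone, because it is a chord of the disk $D^E_i$ tangent to the $(\alpha,E)$-cone. Thus the hypotheses of Lemma~\ref{l.comparecoord} are satisfied for both pairs $(y, y^F_1)$ and $(y, y^F_2)$. Applying case (1) of Lemma~\ref{l.comparecoord} to the first pair gives on the one hand
\[
|y^F_1| \in \bigl[(1 - C_0^{-1}L_0\alpha)\,|y^2|,\ (1 + (C_0^{-1}+1)L_0\alpha)\,|y^2|\bigr],
\]
and on the other hand the crucial absolute comparison $|y^2| \ge C_0(1 - C_0'\alpha)\,|y^1|$, which is a property of $y$ alone and therefore transfers verbatim to the second coordinate system.

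Now, even without a priori knowing which case of Lemma~\ref{l.comparecoord} applies to $y^F_2$, the unconditional two-sided bound of that lemma says
\[
|y^2| - L_0\alpha\,|y^1| \le |y^F_2| \le |y^2| + L_0\alpha(|y^1| + |y^2|).
\]
Plugging the estimate $|y^1| \le |y^2|/(C_0(1 - C_0'\alpha))$ into both sides shows $|y^F_2| = (1 + \mathcal{O}(\alpha))|y^2|$, with an absolute constant in $\mathcal{O}(\cdot)$ that only depends on $L_0, C_0$. Combining with the corresponding estimate for $|y^F_1|$ yields $|y^F_2|/|y^F_1| = 1 + \mathcal{O}(\alpha)$, which after translating back via Lemma~\ref{l.disktovec} gives the desired bound $d_{D^F_2}(0,y^F_2) \in (1-L_2\alpha, 1+L_2\alpha)\,d_{D^F_1}(0,y^F_1)$ for a suitable constant $L_2$, absorbing the $(1+L_1\alpha)^{\pm 1}$ factors. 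For the ``at least half'' clause, the same estimates give $|y-y^F_2| \le |y^1|(1 + \mathcal{O}(\alpha))$ and $|y^F_2| \ge |y^2|(1 - \mathcal{O}(\alpha)) \ge C_0(1 - \mathcal{O}(\alpha))|y^1|$, so $|y^F_2|/|y-y^F_2| \ge C_0 - \mathcal{O}(\alpha) > 1/2$ for $\alpha$ small; another application of Lemma~\ref{l.disktovec} finishes Case (1). Case (2) follows by swapping the roles of $E$ and $F$ and invoking case (2) of Lemma~\ref{l.comparecoord}, the argument being completely symmetric.

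The only real obstacle is bookkeeping: one must make sure that all the multiplicative errors coming from Lemma~\ref{l.disktovec} (the $L_1\alpha$ terms), from Lemma~\ref{l.comparecoord} (the $L_0\alpha$ and $C_0'\alpha$ terms), and from the conversion $|y^1| \lesssim |y^2|$ can be collected into a single constant $L_2$ independent of the disks. Since $C_0$ is bounded away from zero (it is a fixed function of the constant $0.9$ in the hypothesis and of $L_1\alpha$), and since all error terms are linear in $\alpha$ with constants depending only on $L_0, L_1$, this is a routine check, and the resulting $L_2$ indeed does not depend on the choice of $D^*_i$.
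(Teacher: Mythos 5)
Your proposal is correct and follows essentially the same reduction as the paper: convert intrinsic disk distances to Euclidean chord lengths via Lemma~\ref{l.disktovec}, then feed the pairs $(y,y^F_1)$ and $(y,y^F_2)$ into Lemma~\ref{l.comparecoord}. The one step you handle differently is the clause $d_{D^F_2}(0,y^F_2)\ge\frac12 d_{D^E_2}(y^F_2,y)$: the paper proves it by contradiction, assuming the $E$-part dominates in the second system and applying case (2) of Lemma~\ref{l.comparecoord} with $C_0=1.8$ to contradict $|y^2|\ge 0.8(1-C_0'\alpha)|y^1|$, and only then invokes case (1) again (with $C_0=\frac13$) to pin down $d_{D^F_2}(0,y^F_2)$ against $|y^2|$; you instead use the unconditional two-sided bounds from the first part of Lemma~\ref{l.comparecoord} together with the observation that the relation $|y^2|\ge C_0(1-C_0'\alpha)|y^1|$ is intrinsic to $y$ and so transfers to the second coordinate system. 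This is a legitimate and slightly more direct route, since it avoids the case analysis for the pair $(y,y^F_2)$ altogether. One intermediate line should be restated: the inequality $|y-y^F_2|\le|y^1|(1+\mathcal{O}(\alpha))$ is not true when $|y^2|\gg|y^1|$ (the unconditional bound only gives $|y-y^F_2|\le|y^1|+L_0\alpha(|y^1|+|y^2|)$, e.g.\ it is of order $\alpha|y^2|$ when $y^1=0$); however, the half-clause you want follows anyway, since $|y^F_2|\ge(1-\mathcal{O}(\alpha))|y^2|$ while $|y-y^F_2|\le|y^1|+\mathcal{O}(\alpha)|y^2|\le(C_0^{-1}+\mathcal{O}(\alpha))|y^2|$, giving a ratio bounded below by roughly $C_0>\frac12$ after absorbing the $(1+L_1\alpha)^{\pm1}$ conversion factors. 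So the slip is cosmetic, not a gap.
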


\begin{proof}
	We will only prove Case (1), as the proof of Case (2) is analogous.
	
	By Lemma~\ref{l.disktovec} we see that $|y^F_1|\ge 0.8|y-y^F_1|$ for $\alpha$ small enough. Applying Lemma~\ref{l.comparecoord}, case (1) with $C_0=0.8$, we obtain:
	\begin{equation}\label{e.2large}
		|y^F_1|\in \left[(1-2L_0\alpha )|y^2|, (1+3L_0\alpha )|y^2|	\right],\mbox{ and } |y^2|\ge 0.8(1-C_0'\alpha)|y^1|.
	\end{equation}
	\noindent {\em Claim:} $d_{D^{F}_2}(0,y^F_2)\ge\frac12 d_{D^{E}_2}(y^F_2,y)$. 
	
	\noindent {\em Proof of the claim}: assume that this is not the case, i.e., $d_{D^{E}_2}(y^F_2,y)\ge 2d_{D^{F}_2}(0,y^F_2)$. Then we can apply Lemma~\ref{l.disktovec} and Lemma~\ref{l.comparecoord}, case (2) with $C_0=1.8$ to obtain
	\begin{equation*}
		|y^1|\ge 1.8(1-C_0''\alpha)|y^2|.
	\end{equation*}
	However, this contradicts with~\eqref{e.2large} for $\alpha>0$ small enough. 
	
	Now we continue with the proof of the lemma. In view of the previous claim, we can apply  Lemma~\ref{l.comparecoord}, case (1) with $C_0=\frac13$ and Lemma~\ref{l.disktovec} to obtain
	\begin{equation}\label{e.2large'}
		d_{D^{F}_2}(0,y^F_2)\in \left[(1-3L_0\alpha )|y^2|, (1+4L_0\alpha )|y^2|	\right].
	\end{equation}
	Combining~\eqref{e.2large'} with~\eqref{e.2large} then apply Lemma~\ref{l.disktovec}, we get the desired result. 
\end{proof}

\begin{remark}\label{r.1.5}
	Clearly the constant $0.9$ is not essential for the proof. One could easily replace it with any constant $C_2$, and replace $\frac12$ with any constant $C_3<C_2$, and the lemma still holds (for all $\alpha<\overline\alpha$, where $\overline\alpha$ is determined by $C_2$ and $C_3$). 
\end{remark}

\subsection{Construction of fake foliation coordinate systems}
In this section we construct the previously mentioned families of fake foliations on the normal bundle and near each singularity, respectively.
\subsubsection{The Hadamard-Perron theorem}We start with the Hadamard-Perron theorem which is used in~\cite{BW} to construct fake foliations. See~\cite[Theorem 6.2.8]{Katok} and~\cite[Section 3]{BW}. 
\begin{theorem}[Hadamard-Perron Theorem]\label{t.fakeleaves} Fix $\zeta<\eta$
	and let  $\{A_x\}_{x\in I}$ be a $GL(n,\RR)$ cocycle over an invertible dynamical system $T:I\to I$ (here we do not assume $I$ to be compact) such that 
	\begin{itemize}
		\item  $\forall x\in I, u\in\RR^{n-k}$ and $v\in \RR^{k}$, $ A_x(u,v) = (A_x^1(u), A_x^2(v))$;
		\item $\|(A^1_x)^{-1}\|\le \eta^{-1}$, and $\|A_x^2\|\le \zeta$.
	\end{itemize}
	Then for $0<\alpha<\min\{1, \sqrt{\eta/\zeta}-1\}$ and $\iota>0$ sufficiently small, 
	for every $\{h_x\}_{x\in I}$ a family of $C^1$ diffeomorphisms of $\RR^n$ such that 
	$$
	h_x(u,v) = (A_x^1(u)+h_x^1(u,v), A_x^2(v) + h_x^2(u,v))
	$$
	with $\|h_x^1\|_{C^1}<\iota$ and $\|h_x^2\|_{C^1}<\iota$ for all $x\in I$, there exist two foliations $\cF^E_{x}$ and $\cF^{F}_{x}$, with the following properties:
	\begin{enumerate}
		\item (tangent to respective cones) each leaf of $\cF^E_{x}$ is the graph of a $C^1$ function $h_x^E:\RR^{k}\to \RR^{n-k}$ with $\|Dh_x^E\|_{C^0}<\alpha$ for all $x\in I$; similarly, each leaf of $\cF^{F}_{x}$ is the graph of a $C^1$ function $h_x^{F}:\RR^{n-k}\to \RR^{k}$ with $\|Dh_x^{F}\|_{C^0}<\alpha$;
		\item (invariance) for every $x\in I$ and $y\in\RR^n$, it holds for $*=E,F$:
		$$
		h_x\left(\cF^{*}_{x}(y)\right) = \cF^{*}_{Tx}(h_x(y));
		$$
		\item (product structure) for every $y\in \RR^n$ and every $x\in I$, there exist two unique points $y^E\in \cF^{E}_{x}(0),y^{F}\in \cF^{F}_{x}(0)$ such that 
		$$
		\{y\}=\cF^{F}_{x}(y^E)\pitchfork \cF^{E}_{x}(y^{F}).
		$$ 
	\end{enumerate}
\end{theorem}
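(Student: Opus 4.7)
The plan is to apply the classical graph transform method: obtain $\cF^F$ as the unique fixed point of a forward graph transform, obtain $\cF^E$ by the symmetric backward argument, and then produce the local product structure from uniform transversality.

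For $\cF^F$, I would let $\mathfrak G^F_x$ denote the set of $C^1$ maps $h : \RR^k \to \RR^{n-k}$ with $\|Dh\|_{C^0}\le\alpha$, and work in $\mathfrak G^F = \prod_{x\in I}\mathfrak G^F_x$ under the $C^0$ sup norm. Given a family $(h_x)\in\mathfrak G^F$ and a reference point $y\in\RR^n$, the translated graph $y+\mathrm{graph}(h_x)$ pushes forward by $f_x$ to a set that reparametrizes as a graph over $\RR^k$ at $f_x(y)$, because the first-coordinate map $u\mapsto A_x^1(u)+f_x^1(u,h_x(u))$ is a $C^1$ diffeomorphism of $\RR^k$ (using $\|(A_x^1)^{-1}\|\le\eta^{-1}$ together with $\|Df_x^1\|_{C^0}\le\iota\ll\eta$). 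This defines the graph transform $\Gamma^F$ on $\mathfrak G^F$. A direct tangent-plane computation shows that the new slope is bounded by $(\zeta\alpha+O(\iota))/(\eta-O(\iota))$, which is $\le\alpha$ once $\alpha<\sqrt{\eta/\zeta}-1$ and $\iota$ is small enough; the same estimate applied to the difference of two families makes $\Gamma^F$ a contraction on $\mathfrak G^F$ with rate $\zeta/\eta+O(\iota)<1$. The Banach fixed-point theorem produces a unique invariant family $\{h_x^F\}$; varying the reference point $y$ traces out the foliation $\cF^F_x$, and the invariance $f_x(\cF^F_x(y))=\cF^F_{Tx}(f_x(y))$ is built into the construction.

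The foliation $\cF^E$ is obtained by the same argument applied to the inverse cocycle $\{f_x^{-1}\}$, whose linear parts satisfy the analogous domination with the roles of $\eta,\zeta$ reversed and with $\RR^{n-k}$ playing the less-contracted role. The same $\alpha$ works by symmetry. For the local product structure at a fixed $x$, the leaves $\cF^E_x(0)$ and $\cF^F_x(0)$ are graphs of slope $\le\alpha\ll 1$ over the complementary subspaces $\RR^{n-k}$ and $\RR^k$, hence are uniformly transverse; for each $y\in\RR^n$, finding $(y^E,y^F)\in\cF^E_x(0)\times\cF^F_x(0)$ with $\{y\}=\cF^F_x(y^E)\pitchfork\cF^E_x(y^F)$ reduces to a fixed-point equation on $\RR^n$ assembled from the two graph maps $h_x^E,h_x^F$, which is a contraction whenever $\alpha<1$, giving uniqueness and continuous dependence on $y$.

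The main obstacle is promoting the Lipschitz bound that $\Gamma^F$ manifestly preserves to the pointwise bound $\|Dh_x\|_{C^0}\le\alpha$. The standard route is to run the graph-transform construction on $1$-jets --- i.e.\ on pairs $(h_x,Dh_x)$ --- and verify that the differential of the transform preserves an $\alpha$-cone field in the Grassmannian; this is exactly the step where the square-root improvement $\alpha<\sqrt{\eta/\zeta}-1$ (rather than the weaker first-order bound $\alpha<\eta/\zeta-1$) is needed to make the derivative-level transform still contract. Apart from this bookkeeping, the argument is a routine adaptation of the treatments in Katok--Hasselblatt (Theorem 6.2.8) and Burns--Wilkinson (Section 3), to which we refer for the detailed estimates.
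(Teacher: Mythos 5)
The paper itself gives no proof of this theorem---it is quoted with references to Katok--Hasselblatt (Theorem 6.2.8) and Burns--Wilkinson (Section 3)---and your graph-transform sketch (forward transform for $\cF^F$, backward for $\cF^E$, cone invariance plus contraction under $\alpha<\sqrt{\eta/\zeta}-1$, transversality for the product structure) is exactly the standard argument of those references, so the approach is essentially the same. The one refinement worth noting is that the contraction should act on foliation candidates, i.e.\ a family of $\alpha$-Lipschitz graphs through every point of $\RR^n$ forming a partition (as in Burns--Wilkinson), rather than on a single graph $h_x$ per fiber translated to each reference point $y$: translates of one graph are not preserved by the nonlinear transform, and working with partitions is what delivers the foliation (coherence) property and the local product structure at the fixed point.
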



The proof is omitted. We invite interested readers to~\cite[Section 3]{BW}. It is worth noting that the index set $I$ need not be compact (in fact, it needs not to be a topological space). The uniform estimate of the foliations is due to, as noted by the authors of~\cite{BW}, the uniformity of the parameters concerning $f^{*}_x$ and $A^{*}_{x}, *=1,2.$

Below we will apply this theorem in two different ways: first we will apply it to the family of (lifted) Poincar\'e maps $\{P_{1,x}: x\in \bM\setminus \Sing(X)\}$ extended to the normal bundle via some smooth bump function, where each $\RR^n$ should be considered as $N(x)$ (hence $n=\dim \bM-1$); this gives the foliations $\cF^{s/cu}_{x,\cN}$ defined on the normal plane of regular points at a uniformly relative scale;
next, we will also apply it to each singularity, resulting in a family of foliations $\cF_\sigma^*, *=s,c,u,cs,cu$ for each $\sigma\in\Sing(X)$ at a uniform scale.

\subsubsection{Fake foliations on the scaled normal bundle}\label{ss.fake.normal}
Fix $\alpha>0$ small enough so that all the results in Section~\ref{ss.cone} hold, with $1000\alpha\cdot \max\{L_0, L_1,L_2\}<0.01$.

We will apply the previous theorem for this $\alpha$ (decrease it if necessary) to the family of scaled sectional Poincar\'e maps  $\{P^*_{1,x}\}$ defined by~\eqref{e.p*} over the time-one map $T=f_1$ on the index set $I=\Reg(X)$. In particular, we have  $n=\dim \bM-1$, $\RR^k=E^s_N$ and $\RR^{n-k}=F^{cu}_N$. Here each normal plane $N(x)$ is identified with $\RR^n$, and the $C^1$ diffeomorphisms $f_x$ are defined as
$$
\tilde P^*_{1,x} := \begin{cases}
	P_{1,x}^*, &|y|<\upsilon\\
	g_x, &|y|\in[\upsilon,K\upsilon]\\
	\psi^*_{1,x}, &|y|>K\upsilon
\end{cases}
$$
where $\upsilon$ is taken small enough, independent of $x$, such that the maps $P^*_{1,x}$ are $\iota$-close to $\psi^*_{1,x}$  under $C^1$ topology where $\iota=a>0$ is given by the previous theorem (see Proposition~\ref{p.tubular3} (2)); also recall that 
$$
D_0\tilde P^*_{1,x} =D_0P_{1,x}^*=\psi^*_{1,x};
$$
$K>0$ is a constant large enough such that the smooth bump functions $g_x$ satisfy $\|g_x-\psi^*_{1,x}\|_{C^1}<\iota$.  The construction of $\tilde P^*_{1,x}$ guarantees that 
$$
\|\tilde P^*_{1,x}-\psi^*_{1,x}\|_{C^1}<\iota,
$$
provided that $\upsilon$ is taken small enough and $K$ sufficiently large.

Then Theorem~\ref*{t.fakeleaves} gives us two foliations $\cF^{s,*}_{x,N}$ and $\cF^{cu,*}_{x,N}$. Here the sub-index $N$ highlights the fact that these foliations are defined on the (scaled) normal plane $N(x)\subset T_x\bM$ for every $x\in \Reg(X)$. The $*$ in the superscript is due to the foliations being defined using the scaled maps $P^*_{1,x}$. Note that these foliations are tangent to the $(\alpha, E_N^s)$-cone and $(\alpha, F_N^{cu})$-cone, respectively. They are invariant in the sense that 
$$
\tilde P^*_{1,x}(\cF^{s,*}_{x,N}) = \cF^{s,*}_{x_1,N},\,\,\, \tilde P^*_{1,x}(\cF^{cu,*}_{x,N}) = \cF^{cu,*}_{x_1,N}.
$$ 
We remark that for each $x\in \Reg(X)$,  $\cF^s_{x,N}$ and $\cF^{cu}_{x,N}$ form a foliation coordinate system of $N(x)$, as described in Section~\ref{ss.foliationcoord}.

Finally, we rescale these foliations by the flow speed $|X(x)|$ and push them to the manifold $\bM$ via the exponential map $\exp_x$. To be more precise, we define the maps
$$
S_x: N(x)\to N(x), S_x(u) = |X(x)|u
$$
and the foliations
$$
\cF^i_{x,\cN} = \exp_{x}\left(S_x\left(\cF^{i,*}_{x,N}\right)\right) , i=s,cu,
$$
Note that $\cF^i_{x,\cN}$ are well-defined on $\cN_{{\rho_0}|X(x)|}(x)$ \footnote{Indeed, these foliations are well-defined at a uniform scale $\cN(x) = \exp_x (N_{\mathfrak{d}_0}(x))$ where $\mathfrak{d}_0$ is the injectivity radius; however, since $\cP_{1,x}$ is only defined at a uniformly relative scale $\rho_0|X(x)|$, the invariance of those foliations only holds at a uniformly relative scale.} and are invariant under the maps $\cP_{1,x}$ in the following sense:
\begin{equation}\label{e.inv}
	\cP_{1,x}\left(\cF^{cu}_{x,\cN} (y)\right)\supset \cF^{cu}_{x_1,\cN} (\cP_{1,x}(y)),\,\, \mbox{ for } y\in \cN_{\min\{\rho_0K_0^{-1},|v|\}|X(x)|}(x), x\in\Reg(X);
\end{equation}
and a similar statement holds for $\cF^{s}_{x,\cN}$ under the maps $\cP_{-1,x}$. Below we will always assume that $\rho_0K_0^{-1}< |v|$ by decreasing $\rho_0$ when necessary.

Consequently, we have a local product structure on $\cN_{{\rho_0}|X(x)|}(x)$  that is invariant under the maps $\cP_{1,x}$, and $[\cdot,\cdot]$ is well-defined. It is worth pointing out that these foliations are likely not invariant under $\cP_{t,x}$ for $t\notin \ZZ$.

\subsubsection{A coordinate system near singularities}

The main issue with the foliations defined in the previous subsection is that they are only well-defined and invariant within the scale ${\rho_0}|X(x)|$ which becomes arbitrarily small when $x$ is close to $\Sing(X)$. For $y\in B_{t,\vep}(x)$, as the trajectories of  $y$ and $x$ approach singularities and consequently $|X(x)|\ll \vep$, we do not know {\em a priori} whether $[\cdot,\cdot]$ is always well-defined at every point in the orbit of $y$. To solve this issue, we introduce a coordinate system consisting of three foliations, well-defined in an open neighborhood of each singularity. The tools in Section~\ref{ss.cone} will allow us to transition from one coordinate system to the other.

Let $\sigma\in\Lambda$ be a singularity.  Recall that all singularities in $\Lambda$ are  Lorenz-like (Proposition~\ref{p.sh}): there is a dominated splitting $T_\sigma \bM =  E_\sigma^{ss}\oplus E_\sigma^c\oplus E_\sigma^u$ under the tangent flow with $\dim E_\sigma^c=1$, such that the eigenvalue $\lambda^c$ on $E^c_\sigma$ satisfies $|\lambda^c|<1$. Furthermore, $E^{ss}_\sigma = E^{ss}(\sigma)$, $E_\sigma^c\oplus E_\sigma^u=F^{cu}(\sigma)$ where $E^{ss}(\sigma)$ and $F^{cu}(\sigma)$ are given by the sectional-hyperbolic splitting on $\Lambda$. Without loss of generality, we will assume that $E_\sigma^{ss},  E_\sigma^c$ and $E_\sigma^u$ are pairwise orthogonal. This is consistent with our assumption that $E^{ss}$ and $F^{cu}$ are orthogonal at every point.

On $T_\sigma\bM$, we apply Theorem~\ref{t.fakeleaves} (or, more precisely, \cite[Proposition 3.1]{BW}) with the constant $\alpha>0$, $r_0$ given by Lemma~\ref{l.nearEc}  (we assume that $r_0$ is less than the injectivity radius), $n=\dim \bM$,  the index set $I=\{\sigma\},$ $A_\sigma = Df_1|_{T_\sigma\bM}$ and the map 
$$
g_\sigma:= \begin{cases}
	\exp_\sigma^{-1}\circ f_1\circ\exp_\sigma, &|u|<\upsilon\\
	h, &|u|\in[\upsilon,K\upsilon]\\
	Df_{1}(\sigma), &|u|>K\upsilon
\end{cases}
$$
where $\upsilon$ is small enough, $K$ large enough and $h$ is a smooth bump function such that $\|g_\sigma -A_\sigma\|_{C^1}<\iota$ where $\iota$ is the constant given by Theorem~\ref{t.fakeleaves}. This gives us foliations $\cF_\sigma^*$, $*=s,c,u,cs,cu$ tangent to the corresponding $\alpha$-cones, with $\cF^{s}_\sigma$ and $\cF^c_\sigma$ sub-foliating $\cF^{cs}_\sigma$, and $\cF^{c}_\sigma$, $\cF^u_\sigma$ sub-foliating $\cF^{cu}_\sigma$; we then push these foliations to the ball $B_{r_0}(\sigma)$ by the exponential map $\exp_\sigma$.  Here we decrease $r_0$ if necessary such that the resulting foliations, still denoted by $\cF_\sigma^*$, are well defined in $B_{r_0}(\sigma)$ and are invariant under the time-one map $f_1$ in the following sense: 
$$
f_1\left(\cF_\sigma^{s}(y)\right)\subset \cF_\sigma^s(y_1) ,\,\,\mbox{ for }y\in B_{r_0}(\sigma) \mbox{ such that }y_1\in B_{r_0}(\sigma). 
$$
Similar statements hold for $\cF^*_\sigma$, $*=c,u,cs,cu$.  It is worth noting that these foliations may not be invariant under $f_t$ for $t\notin \ZZ$. 

Since there are only finitely many singularities in $\Lambda$, we will assume that $r_0$ is taken so that the aforementioned foliations are well-defined in $B_{r_0}(\sigma)$ for every $\sigma\in\Sing(X)\cap\Lambda$.

\subsection{Two key lemmas}\label{ss.nearsing}

Let $\cF^s_{x,\cN},\cF^{cu}_{x,\cN}$ be the fake foliations on the normal plane $\cN_{\rho_0|X(x)|}(x)$, and recall that $[\cdot,\cdot]$ is the local product structure defined on $\cN_{\rho_0|X(x)|}(x)$. To simplify notation, for $y\in \cN_{\rho_0|X(x)|}(x)$ we denote by 
$$
y^E = y^{\cF^s_{x,\cN}}, \mbox{ and }y^F = y^{\cF^{cu}_{x,\cN}};
$$
so we have $\{y\} = [y^E,y^F]$. 
For $0<r<r_0$, recall that $W_r(\sigma)$ is the neighborhood of $\sigma\in \Sing(X)\cap\Lambda$ defined by~\eqref{e.W.def}, and $W_r=\cup_{\sigma\in\Sing(X)\cap\Lambda} W_r(\sigma)$. See Figure~\ref{f.EF}. 

From now on, to simplify notation and highlight the structure of the proof, we make the following definition:
\begin{definition}\label{d.EFcomparison}
	Let $x\in\Reg(X)$ and $y\in \cN_{\rho_0|X(x)|}(x)$. Let $y^E, y^F\in \cN(x)$ be such that $y=[y^E,y^F]$. We define:
	$$
	d^E_x(y) = d_{\cF_{x,\cN}^s}( y^{F}, y),\,\, \mbox{ and }\,\, d^F_x(y) = d_{\cF_{x,\cN}^{cu}}( x,y^{F}). 
	$$ 
	We will sometimes refer to $d^E_x(y)$ and $d^F_x(y)$ as the $E$-length and $F$-length of $y$. See Figure~\ref{f.EF}. 
\end{definition}

The following two lemmas play central roles in the proof of the Bowen property.  For $x\in\partial W_r^-\cap\Lambda$ we write $t(x) = \sup\{s>0: (x,s)\in W_r\}$ and $x^+= x_{t(x)}\in \partial W_r^+$; take $y\in B_{t(x),\vep}(x)\cap \Lambda$ and let $y^+ = \cP_{x^+}(y_{t(x)})\in \cN_{\rho_0|X(x^+)|}(x^+)$. Then, the following two lemmas  state that
\begin{enumerate}
	\item if upon {\em entering} $W_{r }$ we have the $F$-length of $y$ larger than the $E$-length, then the same is true when the orbit of $y$ leaves $W_{r }$;
	\item on the other hand, if upon {\em leaving} $W_{r }$ we have the $E$-length of $y^+$ larger than the $F$-length, then the same is true when $y$ enters $W_{r }$;
	\item in the second case, the orbit of $y$ is scaled shadowed by the orbit of $x$ before leaving $W_r$;
	\item in both cases, we see forward/backward expansion on the larger coordinate at an exponential speed.
\end{enumerate}


%

Recall the definition of $\rho$-scaled shadowing from Definition~\ref{d.shadow1} and its properties from Lemma \ref{l.shadowing2}.
\begin{lemma}\label{l.Elarge}
	For all $r_0>0$ sufficiently small and every $\rho\in (0,\rho_0]$, there exist $\overline r\in(0,{r_0}),\vep_1>0$ such that for $0<r <\overline r$ and $0<\vep<\vep_1$, the following holds:\\
	Let  $x\in \partial W_{r }^-\cap\Lambda$ and $ x^+ =f_{t(x)}(x)$. For $y\in B_{t(x),\vep}(x)\cap \cN(x)$, assume that $y=[y^E,y^F]$ and that $y^+ = [y^{+,E}, y^{+,F}]$  where $y^+=\cP_{x^+}(y_{t(x)})\in \cN_{\rho_0|X(x^+)|}(x^+)$. Finally, assume that 
	\begin{equation}\label{e.Elarge}
		d^E_{x^+}(y^+)\ge d^F_{x^+}(y^+) , \hspace{0.5cm} \mbox{($y^+$ has large $E$-length at $x^+$)}
	\end{equation}
	then we have the following statements:
	\begin{enumerate}
		\item ($y$ has large $E$-length at $x$) we have $d^E_{x}(y)\ge d^F_{x}(y) $;
		\item (backward expansion on $E$-length) there exists $\lambda_\sigma>1$ such that 
		$$d^E_{x}(y)\ge\lambda_\sigma^{t(x)}d^E_{x^+}(y^+).$$
		\item  (scaled shadowing until leaving $W_{r }$) $y$ is $\rho$-scaled shadowed by the orbit of $x$ up to  time $t(x)$.
	\end{enumerate}
\end{lemma}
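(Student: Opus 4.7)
The plan is to transfer the hypothesis~\eqref{e.Elarge} from the normal-plane fake foliation system $(\cF^{s}_{x^+,\cN}, \cF^{cu}_{x^+,\cN})$ into the singularity-centered foliation system $(\cF^{s}_\sigma, \cF^{cu}_\sigma)$, exploit the backward expansion of $\cF^{s}_\sigma$-leaves under $f_{-1}$ coming from the Lorenz-like splitting at $\sigma$, and finally transfer back at $x$ to recover the assertion. Since the orbit segment $(x,t(x))$ lies entirely in $W_{r}(\sigma)\subset B_{r_0}(\sigma)$, both coordinate systems are simultaneously defined along the orbit (shrinking $r_0$ if necessary), and both are tangent to cones of opening $\cO(\alpha)$ about the splittings $E^s\oplus F^{cu}$ at regular points and $E^{ss}_\sigma\oplus(E^c_\sigma\oplus E^u_\sigma)$ at $\sigma$, which are $\cO(r_0)$-close to each other. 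Throughout, I will therefore treat the two foliation pairs near $\sigma$ as small perturbations of one another, absorbing all such discrepancies into the constant $L_2$ of Lemma~\ref{l.changecoord}.

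\textbf{Steps at $x^+$ and at $x$.} By Lemma~\ref{l.changecoord} case~(2), hypothesis~\eqref{e.Elarge} translates into the analogous statement for the $(\cF^{s}_\sigma,\cF^{cu}_\sigma)$-system at $x^+$: the $\cF^{s}_\sigma$-coordinate of $y^+$ dominates its $\cF^{cu}_\sigma$-coordinate, up to a factor $1\pm L_2\alpha$. Now invoke the $f_1$-invariance of $\cF^{*}_\sigma$ inside $B_{r_0}(\sigma)$ and the Lorenz-like domination $E^{ss}_\sigma\oplus (E^c_\sigma\oplus E^u_\sigma)$: there exists $\lambda_\sigma>1$, depending only on the hyperbolicity of $\sigma$, such that each application of $f_{-1}$ inside $B_{r_0}(\sigma)$ expands $\cF^{s}_\sigma$-segments by at least $\lambda_\sigma$, while expanding $\cF^{cu}_\sigma$-segments by a factor strictly less than $\lambda_\sigma$ (by the domination, after possibly speeding up the flow as in Section~\ref{ss.DS}). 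Iterating $f_{-1}$ from $x^+$ back to $x$ preserves the dominance of the $\cF^{s}_\sigma$-coordinate and yields the backward expansion of conclusion~(2). A final application of Lemma~\ref{l.changecoord} case~(2) at $x$ converts the estimate back into the $(\cF^{s}_{x,\cN},\cF^{cu}_{x,\cN})$-system, delivering conclusion~(1).

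\textbf{Scaled shadowing.} For (3), by Lemma~\ref{l.shadowing2} it suffices to verify that for each integer $k\in[1,\floor{t(x)}]$ the iterate
\[
y^{k,*}:=\cP_{1,x_{k-1}}\circ\cdots\circ\cP_{1,x}(y)
\]
is well-defined and lies in $\cN_{\rho K_0^{-1}|X(x_k)|}(x_k)$. Running the steps above at $x_k$ in place of $x^+$ shows that $y^{k,*}$ still has dominant $\cF^{s}_{x_k,\cN}$-coordinate, with $E$-size bounded by $\lambda_\sigma^{-(t(x)-k)}\cdot \vep_1$. The critical bound is $d(y^{k,*},x_k)\le \rho K_0^{-1}|X(x_k)|$. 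Since $X(x_s)\in F^{cu}(x_s)$ (Lemma~\ref{l.basic}), the flow speed $|X(x_s)|$ inside $W_r(\sigma)$ decays essentially at the rate of $E^c_\sigma\oplus E^u_\sigma$, which is strictly slower than the contraction rate $\lambda_\sigma^{-1}$ of $E^{ss}_\sigma$. Taking $\vep_1$ small enough depending on $\rho$, $\lambda_\sigma$, the domination gap, and the minimum flow speed on $\partial W_r^-\cup\partial W_r^+$, the ratio $d(y^{k,*},x_k)/|X(x_k)|$ stays below $\rho K_0^{-1}$ throughout, proving (3).

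\textbf{Main obstacle.} The delicate step is the scaled shadowing, because the target $\rho|X(x_s)|$ shrinks as $x_s$ dives toward $\sigma$. What saves us is precisely the Lorenz-like domination: the $E^{ss}_\sigma$-contraction that controls $d(y_s,x_s)$ strictly beats the $E^c_\sigma\oplus E^u_\sigma$-behavior that controls $|X(x_s)|$, uniformly in the depth of the excursion into $W_r(\sigma)$. All other arithmetic (coordinate changes, dominance preservation) is handled by the general cone machinery of Section~\ref{s.cone}, so the proof is really a bookkeeping of Lemma~\ref{l.changecoord} on the two endpoints glued together by the hyperbolicity of $\sigma$ in the middle.
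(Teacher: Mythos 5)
Your overall strategy is the same as the paper's (change coordinates at the entry and exit of $W_r(\sigma)$, use the domination at the Lorenz-like singularity in between, and for the scaled shadowing compare the stable deviation against the flow direction, which lies in the $cu$-cone). However, as written the proposal has two genuine gaps, and they are exactly where the real work of the proof lies.

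First, the reduction of the estimate near $\sigma$ to a two-foliation bookkeeping between $(\cF^s_{x^+,\cN},\cF^{cu}_{x^+,\cN})$ and $(\cF^s_\sigma,\cF^{cu}_\sigma)$, with the discrepancy ``absorbed into $L_2$,'' is not justified. The two systems do not differ by a small perturbation: the singular system lives on the $n$-dimensional ball $B_{r_0}(\sigma)$ and carries a one-dimensional center direction, whereas the normal planes are $(n-1)$-dimensional and transverse to the flow, and the flow direction is almost parallel to $E^c_\sigma$ at the entry point $x\in\partial W_r^-$ but almost parallel to $E^u_\sigma$ at the exit point $x^+\in\partial W_r^+$. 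Consequently the center coordinate is swallowed by the flow (time) direction at $x$ but contributes to the normal $cu$-coordinate at $x^+$, and a disk tangent to the $u$-cone at the exit contains the flow direction and cannot even be projected along flow lines onto $\cN(x^+)$ as a manifold. This asymmetry is why the paper's proof replaces the pair $(\cF^s_\sigma,\cF^{cu}_\sigma)$ by the flow-adapted triple $(D^s_i,\ell^c_i,D^u_i)$, projects $D^s$ and the arc $\ell^c$ separately, and intersects the flow-saturated disk $D_T$ with $\cN(x^+)$; Lemma~\ref{l.changecoord} is applied only on the two normal planes, not between the normal-plane and singular systems. Without tracking the center separately (it is contracted forward, unlike $E^u$), conclusions (1) and (2) do not follow from the two-foliation comparison you describe.

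Second, your argument for (3) is circular as stated: you analyze the fake-foliation coordinates of $y^{k,*}=\cP_{1,x_{k-1}}\circ\cdots\circ\cP_{1,x}(y)$ at the intermediate points $x_k$, but the maps $\cP_{1,x_k}$, the projections $\cP_{x_k}$, and the local product structure on $\cN(x_k)$ are only defined at the uniformly relative scale $\rho_0|X(x_k)|$, which shrinks as the orbit dives toward $\sigma$; knowing that the iterates stay within these shrinking domains is precisely the content of the scaled-shadowing statement being proven. The paper avoids this by working with the genuine flow images $\tilde y_i=f_i(\tilde y_0)$ and the $\sigma$-centered objects (defined at the uniform scale $r_0$), proving first that the $E$-part dominates at every intermediate time and then that $d(x_i,\tilde y_i)\le c\,\vep\,|X(x_i)|$ via the domination applied with $v=X(x)$ in the $cu$-cone; only after this bound does it invoke Lemma~\ref{l.shadowing2}. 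Your rate comparison (``$E^{ss}$-contraction beats the $E^c\oplus E^u$-behavior of $|X(x_s)|$'') is the right mechanism, but it must be run on flow images with the uniform-scale foliations, not on Poincar\'e iterates whose existence is not yet known.
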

See Figure \ref{f.6.7}.
\begin{figure}[h!]
	\centering
	\includegraphics[scale=0.95]{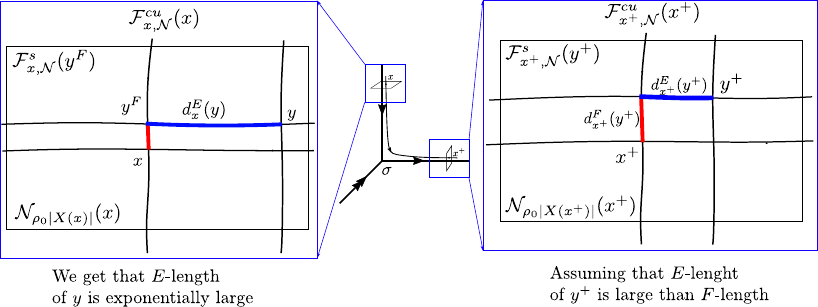}
	\caption{Backward exponential expansion of $E$-length.}
	\label{f.6.7}
\end{figure}

Similarly, we have 	

\begin{lemma}\label{l.Flarge}
	For all $r_0>0$ sufficiently small, there exist $\overline r\in(0,{r_0}),\vep_1>0$ such that for $0<r <\overline r$ and $0<\vep<\vep_1$, the following holds:\\
	Let  $x\in \partial W_{r }^-\cap\Lambda$ and $ x^+ = f_{t(x)}(x)$. For $y\in B_{t(x),\vep}(x)\cap \cN(x)$, assume that $y=[y^E,y^F]$ and that $y^+ = [y^{+,E}, y^{+,F}]$  where $y^+=f_{\overline t_x(y)}(y)\in \cN(x^+)$. Finally assume that 
	\begin{equation}\label{e.Flarge}
		d^F_x(y)\ge d^E_x(y), \hspace{1cm} \mbox{($y$ has large $F$-length at $x$)}
	\end{equation}
	then we have the following statements:
	\begin{enumerate}
		\item ($y^+$ has large $F$-length at $x^+$) we have $$ d^F_{x^+}(y^+)\ge d^E_{x^+}(y^+);$$
		\item (forward expansion on $F$-length) there exists $\lambda_\sigma>1$ such that 
		$$ d^F_{x^+}(y^+)\ge\lambda_\sigma^{t(x)}d^F_{x}(y).$$
	\end{enumerate}
	
\end{lemma}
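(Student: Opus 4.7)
The argument mirrors Lemma~\ref{l.Elarge}, but is run forward in time and exploits the $F^{cu}$-expansion rather than the $E^s$-contraction. The plan is: (a) change coordinates from $(\cF^s_{x,\cN}, \cF^{cu}_{x,\cN})$ into the $\sigma$-foliations $(\cF^s_\sigma, \cF^{cu}_\sigma)$ at the entry point $x$; (b) use the $f_1$-invariance of $\cF^s_\sigma$ and $\cF^{cu}_\sigma$ together with sectional-hyperbolicity to propagate the $F$-large condition forward and to grow the $\cF^{cu}_\sigma$-coordinate of $y$; and (c) change coordinates back to $(\cF^s_{x^+,\cN}, \cF^{cu}_{x^+,\cN})$ at the exit point $x^+$. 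Throughout, the $(\alpha, F^{cu})$-cone is forward-invariant under $Df_1$ by~\eqref{e.onestepDS}, so the $F$-large condition is preserved at every step.

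For the precise setup, choose $r_0>0$ small enough that Lemma~\ref{l.nearEc} applies and the foliations $\cF^{*}_\sigma$ are defined and $f_1$-invariant on a slightly larger ball $B_{r_0+\vep_1}(\sigma)$; pick $\vep_1>0$ such that the $\vep$-Bowen shadow $y$ (for $\vep<\vep_1$) remains inside $B_{r_0+\vep_1}(\sigma)$ during the transit. At $x$, both coordinate systems $(\cF^s_{x,\cN}, \cF^{cu}_{x,\cN})$ and $(\cF^s_\sigma, \cF^{cu}_\sigma)$ are tangent to the same $(\alpha, E^s_N)$- and $(\alpha, F^{cu}_N)$-cones up to a small angle depending on $r_0$ (via Lemma~\ref{l.basic} and~\ref{l.lgw}). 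Hence Lemma~\ref{l.changecoord} case~(1), applied to these two coordinate systems, transports the $F$-large hypothesis from $(\cF^s_{x,\cN}, \cF^{cu}_{x,\cN})$ to $(\cF^s_\sigma, \cF^{cu}_\sigma)$, distorting the $F$-magnitude by at most a factor of $(1 \pm L_2\alpha)$. The same change of coordinates is carried out in the opposite direction at $x^+$ at the end.

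We then iterate $f_1$ for $\lfloor t(x) \rfloor$ steps. The sectional-hyperbolicity estimate derived in the proof of Lemma~\ref{l.cuhyp} gives the telescoping bound
\[
\prod_{i=0}^{\lfloor t(x)\rfloor - 1} \bigl\|\psi^*_{-1, x_{i+1}}|_{F^{cu}_N(x_{i+1})}\bigr\| \le \overline\lambda^{-\lfloor t(x)\rfloor} \frac{|X(x^+)|^2}{|X(x)|^2},
\]
and since both $x, x^+$ lie on $\partial B_{r_0}(\sigma)$, the flow-speed ratio on the right is bounded by some $r_0$-dependent constant $C_{r_0}$. Translating this via Lemma~\ref{l.comparecoord} into growth of $d_{\cF^{cu}_\sigma}(x, y^F_\sigma)$ inside the $\sigma$-coordinates, and then composing with the two changes of coordinates from step (a) and step (c), one obtains items (1) and (2) with some $\lambda_\sigma > 1$ after absorbing the $(1 \pm L_2\alpha)$ distortions.

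The main obstacle is precisely that the per-step scaled backward contraction $\|\psi^*_{-1}|_{F^{cu}_N}\|$ is \emph{not} bounded away from $1$ near $\sigma$, because the scaling involves $|X(x_s)|$ which can be arbitrarily small there; what rescues the argument is the telescoping cancellation above, which depends only on the endpoint flow-speeds. Choosing $r$ sufficiently small forces $t(x)$ to exceed any prescribed threshold (orbits entering $B_r(\sigma)$ must spend time comparable to $\log(r_0/r)$ inside $W_r(\sigma)$), so that the factor $\overline\lambda^{t(x)}/C_{r_0}$ dominates and yields a uniform exponential expansion rate $\lambda_\sigma > 1$ on the $F$-coordinate.
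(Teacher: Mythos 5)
Your high-level plan (transfer to the $\sigma$-foliations at entry, propagate through $B_{r_0}(\sigma)$, transfer back at exit, and absorb constants by making $t(x)$ large) matches the paper's strategy, but the central step has a genuine gap. You propose to get the $F$-expansion from the telescoping bound on $\prod\|\psi^*_{-1}|_{F^{cu}_N}\|$ (Lemma~\ref{l.cuhyp}) and then to ``translate this via Lemma~\ref{l.comparecoord} into growth of $d_{\cF^{cu}_\sigma}(x,y^F_\sigma)$.'' That translation is not available: the bound on $\psi^*_t|_{F^{cu}_N}$ is a linear estimate on the normal bundle over the orbit of $x$, and it only controls the nonlinear separation of $y$ from $x$ while $y$ stays in Liao's scaled tubular neighborhood, i.e.\ within $\rho_0|X(x_s)|$ of $x_s$ and with the maps $\cP_{1,x_s}$ identified with $\psi^*_{1,x_s}$ up to uniform continuity. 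Inside $W_r(\sigma)$ the scale $\rho_0|X(x_s)|$ degenerates, while $y$ is only known to be $\vep$-close in the Bowen sense, and in the $F$-large case no scaled-shadowing statement is available (the paper explicitly remarks after Lemma~\ref{l.Flarge} that the shadowing conclusion of Lemma~\ref{l.Elarge}(3) cannot be obtained here, precisely because $F^{cu}$ is not uniformly expanded). Lemma~\ref{l.comparecoord} only compares the coordinates of a single point in two cone-tangent coordinate systems; it cannot convert a derivative estimate along the orbit of $x$ into growth of the foliation coordinate of a nearby point $y$.

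There is a second, related gap: even if you work entirely in the $\sigma$-coordinates, leaves of $\cF^{cu}_\sigma$ are \emph{not} uniformly expanded by $f_1$, since the center eigenvalue at a Lorenz-like singularity is contracting; so ``growing the $\cF^{cu}_\sigma$-coordinate'' step by step is false as stated. The paper's proof resolves exactly this by splitting the $cu$-coordinate into a $u$-disk $D^u_i$ (uniformly expanded) and a center arc $\ell^c_i$, and by obtaining expansion of the arc not from $Df_1$ but from the sectional (area) expansion of the parallelogram spanned by $\ell^c$ and the flow direction, combined with the comparability of $|X(x)|$ and $|X(x^+)|$ at the two boundary points; your $|X|^2$ telescoping factor is the linearized shadow of this argument, but it must be run on the nonlinear objects $(D^s_i,\ell^c_i,D^u_i)$, together with the careful projections along flow lines at $x$ and $x^+$ (the exit projection of the $u$-disk is delicate because it contains the flow direction), none of which your proposal addresses. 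Finally, note that item (1) is obtained in the paper simply by contradiction with Lemma~\ref{l.Elarge}(1); your proposed step-by-step cone propagation would itself require intermediate coordinate systems that do not uniformly exist near $\sigma$.
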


It is worth noting that the second lemma does not follow from the first lemma applied to $-X$, since $\sigma$, as a singularity of $-X$, is not Lorenz-like. 
We also remark that the second lemma does not allow one to show the same scaled shadowing property as item (3) of the first lemma. This is due to the fact that the $F^{cu}$ bundle is not uniformly expanded by the flow.

The proofs of these lemmas are technical and therefore are left in Appendix~\ref{s.A2}.  We remark that the proof of these lemmas in fact do not rely on the fact that $\Lambda$ is sectional-hyperbolic. Indeed they only require $\sigma$ to be a Lorenz-like singularity, and that the orbit of $x$ enters $B_{r_0}(\sigma)$ through the center cone.

We end this section by remarking that these two lemmas are not needed (or at least the proof can be simplified) if one assumes that the flow can be smoothly linearized near each singularity. However, this would require the flow to be at least $C^2$ (see~\cite{Newhouse, Sternberg}), which we do not assume in this paper.

\section{The Bowen property}\label{s.bowen}
In this section we finish the proof of Theorem~\ref{t.Bowen}. 

\subsection{The Main Proposition}

In this subsection we state and prove the Main Proposition (Proposition~\ref{p.key}), which is essential for the proof of the Bowen property. Recall that given $0<r <{r_0}$ and a singularity $\sigma$, $W_{r }(\sigma)$ is the union of local orbits that are contained in $B_{r_0}(\sigma)$ and intersects with $B_{r }(\sigma)$; see~\eqref{e.W.def}. Let $\rho_0$ and $K_0$ be given by Section~\ref{ss.liao}, and $\cP_x$ be defined as~\eqref{e.Px1} as before.

Now we recall the definition of $\cG_{B}$ from Section~\ref{ss.6.1}. 
Let $\lambda_0$ be given by Lemma~\ref{l.hyptime}. Given $\beta\in(0,1)$ and $0<r <{r_0}$, $\cG_B=\cG_B(\lambda_0,\beta, r_0, W_{r })$ is the collection of orbit segments $(x,t)$ such that:
\begin{enumerate}
	\item $x_t$ is a $(\lambda_0, cu,\beta,W_r)$-simultaneous Pliss time for the orbit segment $(x,t)$;
	\item $x_0=x\notin W_{r }$.
\end{enumerate}
In particular, Remark~\ref{r.RecPliss} shows that $x_t\notin W_r$. 

Below we will assume that $a>0$ is taken small enough such that 
\begin{equation}\label{e.lambda1}
	\lambda_1:=\frac{\lambda_0}{a+1}>1. 
\end{equation}
The next lemma is a useful property for $(\lambda_0,cu)$-hyperbolic times.
\begin{lemma}\label{l.cuhyptime1}
	There exist $\rho_1\in (0,\rho_0)$ and $\alpha_0>0$ such that every $\rho\in(0,\rho_1]$ and $\alpha\in (0,\alpha_0]$ have the following property: 
	
	Let $x$ be a regular point (not necessarily in $W_r^c$) and $x_t$ be a $(\lambda_0,cu)$-hyperbolic time for orbit segment $(x,t)$ under the scaled linear Poincar\'e flow. Let $y\in \cN_{\rho|X(x)|}(x)$ be $\rho$-scaled shadowed by the orbit of $x$ up to time $t$. Then, for every $j=0,\ldots,\floor{t}$ and every vector $v\in C_\alpha(F^{cu}_N(y))$ one has 
	\begin{equation}\label{e.cucont}
		|D_{\cP_{x_t}(y_t)}\cP_{-j, x_t}(u)|\le 4 \frac{|X(x_{t-j})|}{|X(x_t)|}\lambda_1^{-j} |u|.
	\end{equation}
	where $u= D_y\cP_{t,x}(v)$. 
\end{lemma}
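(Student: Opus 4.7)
The plan is to reformulate \eqref{e.cucont} as a forward expansion estimate along the orbit segment, and then reduce to a per-step multiplicative perturbation controlled via the scaled linear Poincar\'e flow.

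First, I would observe that \eqref{e.cucont} is equivalent to a forward growth statement. Applying the chain rule to the decomposition $\cP_{t,x}=\cP_{t-j,x_j}\circ\cP_{j,x}$ yields $u = D_{\cP_{j,x}(y)}\cP_{t-j,x_j}(D_y\cP_{j,x}(v))$, whence
$$D_{\cP_{x_t}(y_t)}\cP_{-(t-j),x_t}(u) = D_y\cP_{j,x}(v).$$
Setting $v_s:=D_y\cP_{s,x}(v)$, the claim becomes the forward expansion estimate $|v_j|\le 4\tfrac{|X(x_j)|}{|X(x_t)|}\lambda_1^{-(t-j)}|v_t|$ for vectors in the $(\alpha,F^{cu}_N)$-cone.

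Second, I would decompose $\cP_{t,x}$ as the iterated composition $\cP_{1,x_{t-1}}\circ\cdots\circ\cP_{1,x}$ and pass to the lifted normal-bundle maps $P_{1,x_s}$. The rescaling identity
$$D_{z}P_{1,x_s} \;=\; \tfrac{|X(x_{s+1})|}{|X(x_s)|}\,D_{z/|X(x_s)|}P^*_{1,x_s}$$
lets me write $v_{s+1}=\tfrac{|X(x_{s+1})|}{|X(x_s)|}A_s(v_s)$, where $A_s:=D_{z_s/|X(x_s)|}P^*_{1,x_s}$ and the scaled evaluation points $z_s/|X(x_s)|$ lie in the \emph{uniform} ball of radius $\rho\le\rho_1$ by the scaled-shadowing hypothesis. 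Telescoping gives
$$\frac{|v_t|}{|v_j|} \;=\; \frac{|X(x_t)|}{|X(x_j)|}\cdot\frac{|A_{t-1}\circ\cdots\circ A_j(v_j)|}{|v_j|},$$
so the task reduces to lower-bounding $|A_{t-1}\circ\cdots\circ A_j(v_j)|$ for cone vectors $v_j$.

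Third, and most delicate, I would establish the per-step multiplicative bound
$$|A_s(w)| \;\ge\; \frac{m_s}{1+a}\,|w|\quad \text{for every } w\in C_\alpha(F^{cu}_N),\qquad m_s:=m(\psi^*_{1,x_s}|_{F^{cu}_N}).$$
The key analytic input is the \emph{uniform positive lower bound} $m_s\ge 1/C_1$, which follows from the pointwise bound $\|\psi^*_{-1,x}\|\le C_1$ of Lemma~\ref{l.scaledflow}. I choose $\alpha_0<a$ and then $a'>0$ small enough (depending on $a,\alpha_0,C_1$) so that
$$\frac{1/C_1}{1+\alpha_0} - a' \;\ge\; \frac{1/C_1}{1+a};$$
Proposition~\ref{p.tubular3}(2) then supplies $\rho_1\in(0,\rho_0)$ with $\|A_s-\psi^*_{1,x_s}\|<a'$ at all scaled evaluation points of norm $\le\rho_1$. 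For $w=w_E+w_F$ in the cone, the invariance of the splitting under $\psi^*$ and the orthogonality of $E^s_N$ and $F^{cu}_N$ give $|\psi^*_{1,x_s}(w)|\ge m_s(1+\alpha)^{-1}|w|$; combining this with the operator-norm closeness of $A_s$ to $\psi^*_{1,x_s}$ and the arithmetic choice of $a'$ yields the desired per-step estimate. Further shrinking $\alpha_0$ ensures forward invariance of $C_\alpha(F^{cu}_N)$ under each $A_s$, so all $v_s$ remain in the cone.

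Finally, iterating the per-step estimate from $s=j$ to $s=t-1$ and invoking the $(\lambda_0,cu)$-hyperbolicity in the form
$$\prod_{s=j}^{t-1}m_s \;=\;\Bigl(\prod_{i=0}^{t-j-1}\|\psi^*_{-1,x_{t-i}}|_{F^{cu}_N}\|\Bigr)^{-1}\;\ge\;\lambda_0^{t-j}$$
gives $|A_{t-1}\circ\cdots\circ A_j(v_j)|\ge\lambda_1^{t-j}|v_j|$, hence $|v_t|/|v_j|\ge\tfrac{|X(x_t)|}{|X(x_j)|}\lambda_1^{t-j}$; the factor $4$ in \eqref{e.cucont} provides slack for the exponential-chart identifications and cone-opening constants suppressed in the reformulation. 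The main obstacle is the third step: Proposition~\ref{p.tubular3}(2) only yields \emph{additive} closeness of $DP^*_{1,x}$ to $\psi^*_{1,x}$, which alone would be useless if $m_s$ could degenerate; the uniform lower bound $m_s\ge 1/C_1$ from Lemma~\ref{l.scaledflow} is precisely what converts the additive error into the required multiplicative factor $(1+a)^{-1}$.
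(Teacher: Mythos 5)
Your proposal is correct and follows essentially the same route as the paper's proof: both pass to the lifted scaled maps $P^*_{1,x_s}$, compare their derivatives at the scaled shadowing points to $\psi^*_{1,x_s}$ via the uniform continuity at uniform scale of Proposition~\ref{p.tubular3}(2), iterate using the $(\lambda_0,cu)$-hyperbolic time product, and absorb the exponential-chart distortion into the factor $4$. The only difference is bookkeeping: the paper bounds the backward derivatives $DP^*_{-1,x_{t-i}}$ on the relevant cone vectors by $(1+a)\|\psi^*_{-1,x_{t-i}}|_{F^{cu}_N}\|$ directly, whereas you dualize to forward mininorms via $m(\psi^*_{1,x_s}|_{F^{cu}_N})=\|\psi^*_{-1,x_{s+1}}|_{F^{cu}_N}\|^{-1}$ and make explicit the uniform lower bound $m_s\ge C_1^{-1}$ from Lemma~\ref{l.scaledflow}, a point the paper's $(1+a)$-comparison also implicitly relies on.
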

We remark that the cone field $C_\alpha(F^{cu}_N)$ is only forward invariant. This is why we do not directly take $u\in C_\alpha(F^{cu}(\cP_{x_t}(y_t))). $
\begin{proof}
	Without loss of generality, we assume that for all $x\in\bM$ it holds 
	$$
	\frac12 \le \inf_ym(D_y\exp_x) <\sup_y\|D_y\exp_x\|\le 2.
	$$
	where the supremum and infimum are taken over $y\in B_{\mathfrak{d}_0}(x)$ with $\mathfrak{d}_0$ being the injectivity radius.  
	Recalling the definition of $P_{1,x}$ on the normal bundle by~\eqref{e.p}, we only need to prove~\eqref{e.cucont} for the map $P_{1,x}$ without the coefficient $4$. 
	
	Let $y\in\cN_{\rho|X(x)|}(x)$ be $\rho$-scaled shadowed by $x$ up to time $t$. To simplify notation, we write 
	$$
	\tilde y^i = \cP_{x_i}(y_i)
	$$
	and $\overline y^i = \exp_{x_i}^{-1}(y_i)$ for its image on the normal bundle. Then from Proposition \ref{p.Fx} we have $\overline y^i\in N_{3 \rho|X(x_i)|}(x_i)$, and
	$$
	P_{1,x_i}(\overline y^i) = \overline y^{i+1}. 
	$$
	If we further define $\overline y^{i,*} =  \frac{\overline y^i}{|X(x_i)|}$, then
	$$
	P^*_{1,x_i}(\overline y^{i,*}) = \overline y^{i+1,*},
	$$
	where $P^*_{t,x}$ is defined by \eqref{e.p*}.
	Furthermore, we have  $|y^{i+1,*}|<3\rho$ by hypothesis, for all $i=0,\ldots,t$ (for simplicity we assume that $t\in\NN$).
	
	Recall that $D_z P^*_{1,x}$ is uniformly continuous at a uniform scale (Proposition~\ref{p.tubular3} (2)). Then, for every $a$ sufficiently small,  there exist $\alpha_0>0$ and $\rho_1>0$ such that for all $\rho \le \rho_1$, $\alpha\le\alpha_0$, $v\in C_\alpha(F^{cu}_N(\overline y^{i,*}))$ and $u= DP^*_{1, x_{i}}(v)\in C_\alpha(F^{cu}_N(\overline y^{i+1,*}))$, we have 
	$$
	|D_{\overline y^{i,*}} P^*_{-1, x_{i}}(u)|\le  (1+a) \left\|D_{\overline x_i} P^*_{-1, x_{i}}|_{F^{cu}_N(x_i)}\right\|  |u| =  (1+a)\left\|\psi^*_{-1,x_i}|_{F^{cu}_N(x_i)}\right\|  |u|.
	$$
	This implies that 		
	\begin{align*}
		|D_{\overline y^{t,*}} P^*_{-(t-j), x_t}(u)|&= \left|\left( \prod_{i=0}^{t-j-1}D_{\overline y^{t-i,*}} P^*_{-1, x_{t-i}}\right)(u)\right| \\
		&\le \prod_{i=0}^{t-j-1} \left\|D_{\overline y^{t-i,*}} P^*_{-1, x_{t-i}}|_{F^{cu}_N(x_{t-i})}\right\||u|\\
		&\le (1+a)^{t-j}\prod_{i=0}^{t-j-1}\left\|\psi^*_{-1,x_{t-i}}|_{F^{cu}_N(x_{t-i})}\right\||u|\\
		&\le \left((1+a)\lambda_0^{-1}\right)^{t-j}|u|
	\end{align*}
	where the last inequality follows from the assumption that $x_t$ is a $(\lambda_0,cu)$-hyperbolic time under the scaled linear Poincar\'e flow. This shows that 
	$$
	|D_{\overline y^{t}} P_{-(t-j), x_t}(u)|\le \frac{|X(x_j)|}{|X(x_t)|}\left((1+a)\lambda_0^{-1}\right)^{t-j}|u| =  \frac{|X(x_j)|}{|X(x_t)|}\left(\lambda_1^{-1}\right)^{t-j}|u|,
	$$
	and the lemma follows.

\end{proof}

From now on, $\rho$, the scale for the shadowing property, will be taken from $(0,\rho_1]$. 
Let $\rho'$ be given by Lemma~\ref{p.tubular4} applied to $\rho K_0^{-2}$. In particular, if $y\in \cN_{\rho'|X(x)|}(x)$ then $\tilde y_{-1}:=\cP_{-1,x_{-1}}(y)$ exists and is contained in $\cN_{\rho K_0^{-2}|X(x_{-1})|}(x_{-1})$; consequently, $\tilde y_{-1}$ is $\rho$-scaled shadowed by the orbit of  $x_{-1}$ up to time $T=2$. We also assume that $\alpha_0$ obtained from the previous lemma satisfies the discussion in the previous section, namely $1000\alpha_0\cdot \max\{L_0, L_1,L_2\}<0.01$. We therefore obtain $r_0$ and $\overline r$ from Lemma~\ref{l.Elarge} and~\ref{l.Flarge} for $\alpha=\alpha_0$.

The next proposition is the key for the Bowen property on $\cG_B$. 

\begin{proposition}[The Main Proposition]\label{p.key}
	There exists $\beta_0\in(0,1)$, such that for every $\beta\in(0,\beta_0]$ and $r_0>0$ small enough, there exists $\overline r\in (0,{r_0})$, such that for every $r \in (0,\overline r]$, there exists $\vep_2>0$ such that for every  $(x,t)\in \cG_B(\lambda_0,\beta, r_0, W_{r })$, we have that 
	every $y\in \cP_{x}\left(B_{t,\vep_2}(x)\right)$ is $\rho$-scaled shadowed by the orbit of $x$ up to  time $t$.
\end{proposition}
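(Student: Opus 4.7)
The plan is to track the orbit of $y$ in fake foliation coordinates and show, by a backward induction from the simultaneous Pliss time $x_t$, that $y_s := \cP_{s,x}(\cP_x(y))$ remains in $\cN_{\rho|X(x_s)|}(x_s)$ throughout $[0,t]$. First I would choose $\beta_0 \in (0,1)$ so small that $\lambda_1^{-(1-\beta_0)}\lambda_\sigma^{\beta_0} < 1$, where $\lambda_1>1$ is the backward $F^{cu}$-contraction rate at $(\lambda_0,cu)$-hyperbolic times from Lemma~\ref{l.cuhyptime1} and $\lambda_\sigma>1$ is the expansion rate through $W_r(\sigma)$ from Lemmas~\ref{l.Elarge} and~\ref{l.Flarge}. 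Then fix $r_0$, $\overline r$, $\vep_1$ as required by those two lemmas, and select $\vep_2 \le \vep_1$ small enough that $\cP_x(\cdot)$ is defined on $B_{t,\vep_2}(x)$ and lands in a disc on which the first step of scaled shadowing is available via Lemma~\ref{p.tubular4}.

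Decompose $[0,t]$ into maximal intervals $[b_i, a_{i+1}]$ outside $W_r$ and $[a_i, b_i]$ inside some $W_r(\sigma_i)$. On regular pieces I work in the fake foliation coordinates $(\cF^s_{x_s,\cN}, \cF^{cu}_{x_s,\cN})$, writing $y_s = [y_s^E, y_s^F]$; inside $W_r(\sigma_i)$ I switch to the auxiliary foliations $\cF^{*}_{\sigma_i}$ by the change-of-coordinates Lemma~\ref{l.changecoord} (which preserves, up to a factor $1\pm L_2\alpha$, only the larger of the two coordinates). On each regular interval, the backward $F^{cu}$-contraction from Lemma~\ref{l.cuhyptime1} --- applicable at each $x_{a_{i+1}}$ since, by Lemma~\ref{l.gluepliss}, it is again a $(\lambda_0,cu)$-hyperbolic time --- together with the forward $E^s_N$-contraction from Lemma~\ref{l.basic}, shrink both coordinates under the backward flow. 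At each boundary $\partial W_r^{\pm}(\sigma_i)$ one identifies the larger coordinate: if $E$ dominates at the exit $x_{b_i}$, Lemma~\ref{l.Elarge} provides both direct $\rho$-scaled shadowing across $[a_i,b_i]$ and backward expansion $|y_{a_i}^E| \ge \lambda_\sigma^{b_i-a_i}|y_{b_i}^E|$; if $F$ dominates at the entry $x_{a_i}$, Lemma~\ref{l.Flarge} gives the complementary forward expansion $|y_{b_i}^F| \ge \lambda_\sigma^{b_i-a_i}|y_{a_i}^F|$, which is backward contraction of $F$ as one traverses $W_r$ backwards.

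Collecting these estimates, the backward $F$-evolution from time $t$ to any intermediate time $s$ is controlled by roughly $\lambda_1^{-(1-\beta_0)(t-s)}\lambda_\sigma^{\beta_0(t-s)} < 1$, with the exponents read off from the $(\beta_0,W_r)$-recurrence Pliss property at $x_t$ applied to the sub-segment $[s,t]$; a parallel and easier estimate controls $|y_s^E|$ since $E$ contracts forward both on regular pieces (Lemma~\ref{l.basic}) and through $W_r$ when $E$ dominates (Lemma~\ref{l.Elarge}). Starting from the Bowen-ball bound $|y_t^E|, |y_t^F| \le C\vep_2$, both coordinates then remain below the threshold $\rho|X(x_s)|$ throughout $[0,t]$, and Lemma~\ref{l.shadowing2} upgrades this discrete-time control into full $\rho$-scaled shadowing. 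The principal obstacle is that $\psi^*|_{F^{cu}_N}$ is not uniformly expanded near $\Sing(X)\cap\Lambda$ --- in fact some directions in $F^{cu}_N$ are contracted as the orbit enters $W_r$ --- so the $E/F$-dichotomy can only be propagated through the non-trivial change of fake foliation coordinates at $\partial W_r^{\pm}$, where only the larger coordinate survives. The recurrence Pliss property is then delicately calibrated so that backward contraction accumulated outside $W_r$ dominates forward expansion inside, not just cumulatively but at every intermediate time $s$; this uniform-in-$s$ requirement is the real reason one needs the full Pliss property rather than an average-frequency condition.
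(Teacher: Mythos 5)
Your proposal assembles the right ingredients (the $E/F$-dichotomy at $\partial W_r^\pm$, Lemmas~\ref{l.Elarge} and~\ref{l.Flarge}, the $cu$-hyperbolic time at $x_t$, and the recurrence Pliss property), but it has a genuine gap exactly at the point you flag as ``the principal obstacle'' and then do not resolve: in the $F$-dominant case you have no mechanism keeping $y_s$ inside the relative tube $\cN_{\rho|X(x_s)|}(x_s)$ at times $s$ \emph{strictly inside} a passage through $W_r$. Your boundary estimates at the entry/exit times $a_i,b_i$ (where $|X|$ is bounded below) say nothing about intermediate times where $|X(x_s)|$ is tiny; worse, the coordinates $y_s^E,y_s^F$ you propagate are not even a priori defined there, since the projection $\cP_{x_s}$ and the product structure of $\cF^{s/cu}_{x_s,\cN}$ only exist at the scale $\rho_0|X(x_s)|$ --- this is precisely the circularity the proposition is designed to break. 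Lemma~\ref{l.Flarge} deliberately has no analogue of item (3) of Lemma~\ref{l.Elarge} (the paper remarks that this is because $F^{cu}$ is not uniformly expanded by the flow), so the forward expansion of the $F$-length through $W_r$ cannot by itself be converted into scaled shadowing across the window, and your backward induction stalls at the first $F$-dominant window.

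Relatedly, your calibration $\lambda_1^{-(1-\beta_0)}\lambda_\sigma^{\beta_0}<1$ pits the contraction against the wrong constant: $\lambda_\sigma>1$ is expansion of the \emph{dominant} coordinate and is never the obstruction. The quantity that must be beaten inside $W_r$ is the crude bound $K_1'$ on $\|D(P^*_{1,x})^{-1}\|$, since no hyperbolic estimate is available there. The paper's resolution (Lemma~\ref{l.Flarge1}) is a contradiction argument rather than a coordinate induction: take the critical time $s_1$ at which scaled shadowing first fails, note it lies in some window $(t_i^-,t_i^+)$, use the $(\lambda_0,cu)$-hyperbolic time at $x_t$ to contract backwards from $t$ to the last exit time $t^+$ at the relative scale, then pull back from $t^+$ to $\roof{s_1}$ paying $(K_1')^{t^+-\roof{s_1}}$, where the recurrence Pliss property bounds $t^+-\roof{s_1}\le \frac{\beta}{1-\beta}(t-t^+)$ and guarantees $t-t^+\ge\beta^{-1}-1$; choosing $\beta_0$ so that $\bigl((K_1')^{\beta_0/(1-\beta_0)}\lambda_1^{-1}\bigr)^{\beta_0^{-1}-1}\rho_1<\tfrac1{16}\rho'$ lands the point inside $\cN_{\rho'|X|}$, and Lemma~\ref{p.tubular4} then extends the shadowing one more integer step, contradicting the minimality of $s_1$. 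The $E$-dominant case is the one where shadowing through the window comes directly from Lemma~\ref{l.Elarge}(3). Without this (or an equivalent) argument for the $F$-dominant windows, your proof does not go through.
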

\begin{remark}
	Thinking of the Bowen ball $B_{t,\vep}(x)$ as
	$$
	B_{t,\vep}(x)=\bigcap_{s=0}^tf_{-s}(B_\vep(f_s(x))),
	$$ 
	we see the discrepancy between the Bowen ball and Liao's tubular neighborhood: while the Bowen ball consists of points $y$ that are shadowed by $x$ at a uniform scale $\vep$,  Liao's tubular neighborhood consists of points that are shadowed at a uniformly relative scale, proportional to the flow speed.  The Main Proposition states that, despite the difference between their scales, for ``good'' orbits in $\cG$, points in the Bowen ball are indeed  {\bf scaled} shadowed by $x$.
\end{remark}

\subsection{Preparation and estimates outside $W_r$}

We make a few remarks and observations before starting the proof of Proposition~\ref{p.key}.

\begin{enumerate}
	\item Throughout the proof we will assume that $r_0$ is sufficiently small so that Lemma~\ref{l.Elarge} and~\ref{l.Flarge} are applicable. In particular, $\overline r$ will be given by Lemma~\ref{e.Elarge} applied to $r_0$ and $\rho$.  
	\item Given $r$ and $r_0$, by Remark~\ref{r.RecPliss}, for every $(x,t)\in\cG_B$ we have $x_t\notin W_{r }$. Consequently, such $x_t$'s are bounded away from $\Sing(X)$. As a result, there exists $\vep_2=\vep_2(r,r_0)>0$ such that the foliations $\cF_{x_t, \cN}^i,i=s,cu$ are well-defined on $\cN_{\vep_2}(x_t)$; furthermore, $\cF_{x_t, N}^i,i=s,cu$ form a local product structure in $\cN_{\vep_2}(x_t)$, for every $(x,t) \in \cG_B$. As a result, for $y\in B_{t,\vep_2}(x)$, the point $\tilde y_t:=\cP_{x_t}(y_t)$ belongs to $\cN_{\vep_2}(x_t)$ and, therefore, the points $(\tilde y_t)^E, (\tilde y_t)^F$ exist and satisfy $\tilde y_t = [(\tilde y_t)^E, (\tilde y_t)^F]$.   A similar argument can be made near the starting point $x$ since those points are also outside $W_{r }$.
	\item As in the previous item, points outside $W_{r }$ are bounded away from all singularities; this further indicates that $\inf_{z\in (W_{r })^c}\{|X(z)|\}>0.$ Consequently, we may resort to a smaller $\vep_2$ such that 
	$$
	\cP_z(B_{\vep_2}(z))\subset \cN_{\rho K_0^{-1}|X(z)|}(z), \,\,\forall z\in (W_{r })^c. 
	$$
	This forces every $y\in \cP_z(B_{\vep_2}(z))$ to be $\rho$-scaled shadowed by the orbit of  $z$ up to time $T = 1$. The same argument can be applied again at $z_1$, as long as $z_1 \in (W_{r })^c$.
	As a result, if an orbit segment $(x,t)$ does not enter $W_{r }$, then the choice of $\vep_2$ above guarantees that 	every $y\in \cP_{x}\left(B_{t,\vep_2}(x)\right)\subset  \cP_{x}\left(B_{\vep_2}(x)\right)\subset \cN_{\vep_2}(x)$ is $\rho$-scaled shadowed by the orbit of $x$ up to time $t$ (recall Lemma~\ref{l.shadowing2}). In other words, the conclusion of the proposition may only fail for those orbit segments $(x,t)$ that visit $W_{r }$. 
	\item Because of the previous item, given any orbit segment $(x,t)\in\cG_B$, we have a collection of times 
	$$
	0\le t_1^-< t_1^+ < t_2^-<t_2^+<\cdots < t_k^-<t_k^+\le t,
	$$
	where  $t_i^\pm$ are chosen so that for every $i=1,\ldots, k$:
	\begin{itemize}
		\item  there is a singularity $\sigma_i\in\Sing(X)$ such that $x_{t_i^-}\in \partial W^-_{r }(\sigma_i)$, and $x_{t_i^+}\in \partial W^+_{r }(\sigma_i)$;
		\item the (open) orbit segment $(x_{t_i^-}, t_i^+-t_i^-)$ belongs to $W_{r }(\sigma_i)$; and
		\item the (closed) orbit segment $(x_{t_i^+}, t_{i+1}^--t_i^+)$ are outside $W_{r }$. 
	\end{itemize}
	 Later we will see that if $x_t$ is a $\beta$-recursive Pliss time, then the time ratio $\frac{t^+_i - t^-_i}{t-t_i^-}$ must be small. This allows the hyperbolicity from $(t_i^+,t)$ to overcome the lack of estimate inside $(t^-_i,t^+_i)$ for every $i$.
\end{enumerate}

It should be noted that $d^E_x(y)$ is defined using $y^F$ and $y$, as opposed to $x$ and $y^E$ (although our proof can be modified if one chooses to use $y^E$ instead). As a result, the point $y^E$ is not used in the definition. This is due to how Lemma~\ref{l.Elarge} and~\ref{l.Flarge} are stated. 

Also, it is worth keeping in mind that $y\in B_{t,\vep_2}(x)$ does not imply {\em a priori} that $d^i_{x_s}(\cP_{x_s}(y_s)),i=E,F$ are well-defined for every $s\in [0,t]$. This is because the projection $\cP_{x_s}(\cdot )$ is only defined in $B_{\rho_0|X(x_s)|}(x_s)$, and its domain could be smaller than $\vep_2$ if $x_s$ is close to $\Sing(X)$ (the goal of the Main Proposition is to show that this does not happen); also recall that the product structure on $\cN(x_s)$ only exists at a scale of $\rho_0|X(x_s)|$ which could also be less than $\vep_2$. To solve this issue, we will resort to Lemma~\ref{l.Elarge} and~\ref{l.Flarge} when dealing with orbit segments inside $W_{r }$. 

We take 
\begin{equation}\label{e.vep2}
	\vep_2 = \vep_2(r ,r_0,\rho) = \frac12 \min\left\{ \rho K_0^{-1} \inf_{z\in W_{r }^c}\{|X(z)|\},\,\,\vep_1\right\}.
\end{equation}
where $\vep_1$ is given by Lemma~\ref{l.Elarge} and~\ref{l.Flarge}.

The next lemma summarizes (3) in the previous discussion and provides the desired expansion/contraction on $\cF_{x,\cN}^i,i=s,cu$ for orbit segments that do not enter $W_{r }$. 
\begin{lemma}\label{l.reg}
	There exists  $\lambda_R>1$, such that for every $\rho\in(0,\rho_1)$ and $0<r <\overline r$ and every orbit segment $(x,t)$ that does not intersect with $W_{r }$, we have 
	\begin{enumerate}[label=(\alph*)]
		\item for every $y\in B_{t,\vep_2} (x)$, $\cP_x(y)$ is $\rho$-scaled shadowed by the orbit of $x$ up to  time $t$;
		\item assume that 
		\begin{equation}\label{e.Flarge1}
			d^F_x(P_x(y))\ge d^E_x(P_x(y)), \hspace{1cm} \mbox{(large $F$-length  	near $x$)}
		\end{equation}
		then we have 
		\begin{equation*}
			d^F_{x_t}(P_{x_t}(y_t))\ge \lambda_R^t\cdot d^E_{x_t}(P_{x_t}(y_t)); \hspace{1cm} \mbox{(exponentially large $F$-length 	near $x_t$)}
		\end{equation*}
		\item  assume that 
		\begin{equation}\label{e.Elarge1}
			d^E_{x_t}(P_{x_t}(y_t))\ge d^F_{x_t}(P_{x_t}(y_t)), \hspace{1cm} \mbox{(large $E$-length  	near $x_t$)}
		\end{equation}
		then we have 
		\begin{equation*}
			d^E_x(P_x(y))\ge \lambda_R^t\cdot d^F_x(P_x(y)). \hspace{1cm} \mbox{( exponentially large $E$-length	near $x$)}
		\end{equation*}
	\end{enumerate} 
\end{lemma}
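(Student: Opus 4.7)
The strategy exploits the uniform hyperbolicity of the scaled linear Poincar\'e flow along orbit segments contained in $W_r^c$, and transfers it to distances along the fake foliations $\cF^s_{\cdot,\cN}$ and $\cF^{cu}_{\cdot,\cN}$ via their invariance under the sectional Poincar\'e maps (see~\eqref{e.inv}).

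For part (a), since $(x,t)$ avoids $W_r$ we have $|X(x_s)|\ge \inf_{z\in W_r^c}|X(z)|>0$ for every $s\in[0,t]$, so the choice of $\vep_2$ in~\eqref{e.vep2} forces $\vep_2\le \rho K_0^{-1}|X(x_s)|$ throughout. Consequently, for every $y\in B_{t,\vep_2}(x)$ and every integer $k\in[0,t]$, the point $y_k$ lies in $B_{\rho K_0^{-1}|X(x_k)|}(x_k)$, so $\cP_{x_k}(y_k)\in\cN_{\rho K_0^{-1}|X(x_k)|}(x_k)$. Lemma~\ref{l.shadowing2} then yields the $\rho$-scaled shadowing.

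For (b) and (c), two uniform estimates for $(\psi^*_t)_t$ on orbit segments in $W_r^c$ drive the argument. First, Lemma~\ref{l.basic} provides $C>0$ and $\lambda_s\in(0,1)$ with $\|\psi^*_{t,x_s}|_{E^s_N}\|\le C\lambda_s^{t}$, uniformly in the base point. Second, the sectional-hyperbolicity computation from the proof of Lemma~\ref{l.cuhyp} yields, for any $v\in F^{cu}_N(x_s)$,
\[
m\bigl(\psi^*_{1,x_s}|_{F^{cu}_N}\bigr)\;\ge\;\overline\lambda\,\frac{|X(x_s)|^{2}}{|X(x_{s+1})|^{2}},
\]
and since $|X|$ is bounded between positive constants on $W_r^c$, telescoping produces $m\bigl(\psi^*_{t,x_s}|_{F^{cu}_N}\bigr)\ge c\,\mu^{t}$ for some $\mu>1$. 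Now the fake foliations $\cF^i_{x_s,\cN}$ are tangent to the $\alpha$-cones around $E^s_N$ and $F^{cu}_N$, invariant under $\cP_{1,x_s}$, and arise from the scaled tangent-space foliations via $\exp_{x_s}\circ S_{x_s}$; since $D_0 P^*_{1,x_s}=\psi^*_{1,x_s}$ and $DP^*_{1,x_s}$ is uniformly continuous at a uniform scale by Proposition~\ref{p.tubular3}(2), together with Lemma~\ref{l.disktovec} converting intrinsic leaf distances to Euclidean distances up to $1\pm O(\alpha)$, the arc length along a leaf transforms under $\cP_{1,x_s}$ by a factor $(1\pm\eta)$ times the corresponding one-step norm of $\psi_{1,x_s}=\tfrac{|X(x_{s+1})|}{|X(x_s)|}\psi^*_{1,x_s}$, where $\eta\to 0$ as $\vep_2\to 0$.

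Iterating this one-step estimate over $s=0,\ldots,t-1$, the flow-speed prefactors telescope and cancel in the ratio $d^F/d^E$, leaving
\[
\frac{d^F_{x_t}(\cP_{x_t}(y_t))}{d^E_{x_t}(\cP_{x_t}(y_t))}\;\ge\;C_0\Bigl(\tfrac{(1-\eta)\mu}{(1+\eta)\lambda_s}\Bigr)^{t}\frac{d^F_x(\cP_x(y))}{d^E_x(\cP_x(y))},
\]
for some constant $C_0>0$ depending only on the flow-speed bounds on $W_r^c$. Fixing any $\lambda_R\in(1,\mu/\lambda_s)$ and shrinking $\vep_2$ so that $(1-\eta)\mu/((1+\eta)\lambda_s)>\lambda_R$, we obtain (b) (the assumption~\eqref{e.Flarge1} gives the base case $t=0$, and the constant $C_0$ is absorbed into $\lambda_R$ by a mild further shrinking, noting that for bounded $t$ the estimate is trivial from $\vep_2$ small). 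Part (c) is the mirror image, obtained by running the backward Poincar\'e maps $\cP_{-1,\cdot}$ from $x_t$ back to $x$: on the backward orbit $\psi^*_{-1}$ contracts $F^{cu}_N$ at rate $\mu^{-1}$ and expands $E^s_N$ at rate $\lambda_s^{-1}$, so the inverse ratio $d^E/d^F$ grows at the same rate $\lambda_R^{t}$. The main obstacle is the passage from the infinitesimal hyperbolicity rates of $\psi^*_{1,\cdot}$ to the macroscopic arc-length distortion along the \emph{nonlinear} fake foliations, but this is controlled cleanly by Proposition~\ref{p.tubular3}(2) together with Lemma~\ref{l.disktovec}, which ensure that nonlinear correction terms contribute a per-step factor arbitrarily close to one once $\vep_2$ is sufficiently small.
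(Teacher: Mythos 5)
Part (a) of your argument is exactly the paper's: the choice of $\vep_2$ in~\eqref{e.vep2} puts every $\cP_{x_s}(y_s)$ inside $\cN_{\rho K_0^{-1}|X(x_s)|}(x_s)$, and Lemma~\ref{l.shadowing2} finishes.

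For (b) and (c) there is a genuine gap in how you handle the multiplicative constants. Your expansion rate on $F^{cu}_N$ is obtained by telescoping the sectional-area estimate from the proof of Lemma~\ref{l.cuhyp}, and your contraction on $E^s_N$ comes from Lemma~\ref{l.basic}; both of these hold only up to multiplicative constants (the one-step estimate $m(\psi^*_{1}|_{F^{cu}_N})\ge\overline\lambda\,|X(x_s)|^2/|X(x_{s+1})|^2$ can be smaller than $1$ when the flow speed increases, so genuine per-step expansion is not available). Hence your final inequality has the form $d^F_{x_t}/d^E_{x_t}\ge C_0\lambda'^{\,t}\,d^F_x/d^E_x$ with possibly $C_0<1$, which does not give the stated conclusion $d^F_{x_t}\ge\lambda_R^t d^E_{x_t}$ for small or moderate $t$, and your patch --- that ``for bounded $t$ the estimate is trivial from $\vep_2$ small'' --- is false: both leaf lengths shrink proportionally as $\vep_2\to0$, so the ratio $d^F/d^E$ is unaffected by shrinking the scale. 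This matters in the application (Lemma~\ref{l.Flarge1}), where the lemma is applied to excursions between consecutive visits to $W_r$ whose lengths have no a priori lower bound; at the very least one needs the ratio to stay $\ge1$ after every such excursion.

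The repair is exactly the paper's route, and it also makes your rates $\mu,\lambda_s$ unnecessary: instead of combining expansion and contraction separately, use the one-step domination~\eqref{e.onestepDS1} of the scaled linear Poincar\'e flow. In the ratio of the one-step norms on the two cones the flow-speed factors cancel identically (as you note), and~\eqref{e.onestepDS1} then gives an unconditional per-step growth of $d^F/d^E$ by a factor close to $2$, degraded only by the $(1\pm\eta)$ distortion from Proposition~\ref{p.tubular3}(2) and the $O(\alpha)$ cone corrections. No constant $C_0$ ever appears, the estimate holds for every $t$, and $\lambda_R$ can be taken close to $2$; (c) then follows from the same unconditional ratio inequality read at time $t$, without running a separate backward argument.
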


\begin{proof}
	As explained earlier, we take 
	$$
	\vep_2< \frac{\rho}{K_0} \inf_{z\in W_{r }^c}\{|X(z)|\}. 
	$$
	This choice of $\vep_2$ indicates that if $y\in B_{t,\vep_2}(x)$, then for every $s\in [0,t]$ we have  
	$$
	\cP_{x_s}(y_s) \in \cN_{\vep_2}(x_s)\subset \cN_{\rho K_0^{-1} |X(x_s)|}(x_s). 
	$$
	Also note that if $s\in [0,t]\cap \NN$, then $\cP_{x_s}(y_s) = \cP_{1,f_{s-1}(x)}\circ \cdots\circ \cP_{1,x}(P_x(y))$. Therefore $\cP_x(y)$ is $\rho$-scaled shadowed by the orbit of $x$ up to  time $t$, according to Lemma~\ref{l.shadowing2}.
	
	(b) and (c) immediately follow from the uniform continuity of $D\cP_{1,t}$ (Proposition~\ref{p.tubular3}) and the fact that $E_N^s\oplus F^{cu}_N$ is a dominated splitting for the scaled linear Poincar\'e flow $\psi^*_t$; see Lemma~\ref{l.splitting}. The reason that we do not need a constant $C$ is due to the one-step domination~\eqref{e.onestepDS1} with constant $\frac12$. In particular, $\lambda_R$ can be taken close to 2.  
\end{proof}

Here we remark that the {\em  ratio} between the $E$ and $F$-length is a result of the dominated splitting between $E_N^{s}$ and $F^{cu}_N$, and does not rely on the is unrelated to the flow speed $|X(x)|$. The flow speed is only relevant when calculating the actual {\em value} of the $E$ and $F$-length.

\subsection{An important lemma}

The rest of this section is devoted to the proof of Proposition~\ref{p.key}. 
As explained before, every orbit segment $(x,t)\in \cG_B$ can be parsed as 
$$
0\le t_1^-< t_1^+ < t_2^-<t_2^+<\cdots < t_k^-<t_k^+\le t.
$$
In particular,  $x_{t_1^+}$ is on $\partial W_{r }^+$. The choice of $\vep_2$ by~\eqref{e.vep2} implies that  $\cP_{x_{t_1^+}}$ is well-defined in $B_{\vep_2}(x_{t_1^+})$; furthermore, $\cP_{x_{t_1^+}}(y_{t_1^+})$ is contained in $\cN_{\rho K_0^{-1}|X(x_{t_1^+})|}(x_{t_1^+})$; this makes the $E$ and $F$-length of $\cP_{x_{t_1^+}}(y_{t_1^+})$ well-defined. 

We first consider a special case, namely the $F$-length is larger than the $E$-length at $x_{t_1^+}$.

\begin{lemma}\label{l.Flarge1}
	Proposition~\ref{p.key} holds   under the extra assumption that 
	\begin{equation}\label{e.Flarge2}
		d^F_{x_{t_1^+}}\left(\cP_{x_{t_1^+}}(y_{t_1^+})\right)\ge d^E_{x_{t_1^+}}\left(\cP_{x_{t_1^+}}(y_{t_1^+})\right). \hspace{1cm} (\mbox{Large $F$-length	at $x_{t_1^+}$})
	\end{equation}
\end{lemma}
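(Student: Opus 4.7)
The plan is to combine Lemma~\ref{l.reg} on the subsegments outside $W_r$ with Lemma~\ref{l.Flarge} on the interior crossings of $W_r$, anchored by the $(\lambda_0,cu)$-hyperbolic time at $x_t$ that comes with membership in $\cG_B$. I parse $(x,t)$ using the entry/exit times $0\le t_1^- < t_1^+ < \cdots < t_k^- < t_k^+ \le t$; if $k=0$ the orbit avoids $W_r$ and Lemma~\ref{l.reg}(a) immediately yields the conclusion, so I assume $k\ge 1$.

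First I would propagate the large-$F$ condition through the whole orbit. Lemma~\ref{l.reg}(a) gives $\rho$-scaled shadowing on the initial segment $(x, t_1^-)$ and makes the $E$- and $F$-lengths at $\cP_{x_{t_1^-}}(y_{t_1^-})$ well defined. The contrapositive of Lemma~\ref{l.Elarge}(1), together with~\eqref{e.Flarge2}, rules out the strictly-large-$E$ scenario at $x_{t_1^-}$, so $\cP_{x_{t_1^-}}(y_{t_1^-})$ has large $F$-coordinate at $x_{t_1^-}$ (the degenerate case $d^F = d^E$ at $x_{t_1^+}$ is vacuous: Lemma~\ref{l.Elarge}(2) would otherwise force $d^E_{x_{t_1^-}}$ to exceed the Bowen scale $\vep_2$). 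Then I would alternate Lemma~\ref{l.Flarge}(1) on each interior interval $[t_i^-, t_i^+]$ with Lemma~\ref{l.reg}(b) on each exterior interval $[t_i^+, t_{i+1}^-]$ to obtain large $F$ at every $x_{t_i^\pm}$ and at $x_t$; simultaneously, Lemma~\ref{l.reg}(a) yields $\rho$-scaled shadowing on each subsegment outside $W_r$, including the terminal interval $[t_k^+, t]$.

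The remaining task is scaled shadowing on each interior segment $[t_i^-, t_i^+]$ inside $W_r(\sigma_i)$, where the allowable scale $\rho|X(x_s)|$ collapses as $x_s$ approaches $\sigma_i$. I would combine the backward-contraction estimate of Lemma~\ref{l.cuhyptime1} (applied at the $cu$-hyperbolic time $x_t$) with the forward expansion of $d^F$ from Lemma~\ref{l.Flarge}(2) and Lemma~\ref{l.reg}(b). Back-propagating from the bound $d^F_{x_t}(\cP_{x_t}(y_t)) \le \vep_2$ yields $d^F_{x_{t_i^\pm}} \le C \lambda_*^{-(t-t_i^\pm)} \vep_2$ for some $\lambda_* > 1$, $C>0$, and the dominance of $d^F$ over $d^E$ along the large-$F$ propagation gives the analogous bound on $d^E$. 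On each interior $[t_i^-, t_i^+]$, the almost-pairwise-orthogonal triple $(D^s, \ell^c, D^u)$ constructed in the proof of Lemma~\ref{l.Flarge} controls $d(x_s, y_s)$ by a constant multiple of $|X(x_s)|\cdot\vep_2$, because the sectional-hyperbolic expansion rate of $F^{cu}$ near $\sigma_i$ matches, up to bounded multiplicative constants, the decay/recovery rate of $|X(x_s)|$ on the approach to and departure from the singularity. Taking $\vep_2$ small enough forces $\cP_{x_s}(y_s) \in \cN_{\rho|X(x_s)|}(x_s)$ throughout, and Lemma~\ref{l.shadowing2} stitches the pieces into $\rho$-scaled shadowing on all of $[0,t]$.

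The main obstacle is precisely the interior interpolation: the scale $\rho|X(x_s)|$ degenerates as $x_s$ approaches $\sigma_i$, and the bare forward expansion of $d^F$ inherited from Lemma~\ref{l.Flarge}(2) does not by itself control the separation at the correct relative scale. The remedy is the flow-speed-weighted backward contraction from Lemma~\ref{l.cuhyptime1}, which is exactly why the $(\lambda_0, cu)$-hyperbolic-time assumption at $x_t$ is built into the definition of $\cG_B$, and why Lemma~\ref{l.Flarge} is proven with the triple $(D^s,\ell^c,D^u)$ rather than a bare $(D^s, D^{cu})$-pair; a secondary (but essentially bookkeeping) obstacle is verifying that the degenerate equality $d^F=d^E$ on the boundary of the two regimes is absorbed into the large-$F$ case so that the forward propagation in Step~2 is unambiguous.
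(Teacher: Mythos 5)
Your forward propagation of the large-$F$ condition, alternating Lemma~\ref{l.reg}(b) on the exterior intervals with Lemma~\ref{l.Flarge}(1) on the crossings, is exactly the paper's first step (this is~\eqref{e.Flarge3}), and the initial detour through $x_{t_1^-}$ is harmless but unnecessary. The genuine gap is in your treatment of the interior segments. The claim that on $[t_i^-,t_i^+]$ the triple $(D^s,\ell^c,D^u)$ controls $d(x_s,y_s)$ by a constant multiple of $|X(x_s)|\cdot\vep_2$ because the sectional expansion of $F^{cu}$ ``matches'' the decay of $|X(x_s)|$ is unjustified and false in general: near a Lorenz-like singularity a displacement in the $E^u_\sigma$-direction grows under $Df_s$ while the flow speed along the approach is governed by the contracting centre direction, so the ratio $d(x_s,y_s)/|X(x_s)|$ can blow up in the middle of the crossing. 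The paper flags exactly this after Lemma~\ref{l.Flarge}: unlike Lemma~\ref{l.Elarge}, the large-$F$ lemma does \emph{not} yield scaled shadowing across $W_r$, because $F^{cu}$ is not uniformly expanded. A secondary problem is circularity: your back-propagation of Lemma~\ref{l.cuhyptime1} to every $t_i^\pm$ assumes the hypothesis of that lemma, namely $\rho$-scaled shadowing of $y$ up to time $t$, which is the very conclusion of Proposition~\ref{p.key}.

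What the paper does instead, and what your proposal omits entirely, is to use the $(\beta_0,W_r)$-recurrence Pliss time half of the simultaneous Pliss time. One argues by contradiction with $s_1=\inf\{s\in[0,t]:\ \cP_{x_s}(y_s)\ \mbox{is $\rho$-scaled shadowed by $x_s$ up to time } t-s\}$, which must lie in some $[t_i^-,t_i^+]$ by Lemma~\ref{l.reg}(a); Lemma~\ref{l.cuhyptime1} is then applied only on $[t^+,t]$ with $t^+=\roof{t_i^+}>s_1$, where shadowing is already known, giving a bound of order $\lambda_1^{-(t-t^+)}\rho_1|X(x_{t^+})|$ at $t^+$. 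This is pulled back across the partial crossing by the bounded norm $(K_1')^{t^+-s_1'}$ of the inverse scaled Poincar\'e maps, and the recurrence Pliss property supplies both $t^+-s_1'\le\frac{\beta}{1-\beta}(t-t^+)$ and $t-t^+>\beta^{-1}-1$, so the choice~\eqref{e.beta0'} forces the distance at $s_1'$ below $\frac12\rho'|X(x_{s_1'})|$; Lemma~\ref{p.tubular4} then extends the shadowing one integer step past $s_1$, contradicting minimality. Without this recurrence ingredient and the extremal-time structure, the merely bounded (non-hyperbolic) behaviour of the scaled flow inside $W_r$ is not compensated, and your argument does not close.
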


\begin{proof}
	We apply Lemma~\ref{l.Flarge} to obtain $\overline r$, and fix any $r <\overline r$.  Also note that by~\eqref{e.vep2}, $\vep_2<\vep_1$ where $\vep_1$ is given by Lemma~\ref{l.Flarge}.  We will assume, without loss of generality, that $t\in\NN$. 
	We also take $\beta_0$ sufficiently close to zero, such that 
	\begin{equation}\label{e.beta0'}
		\left((K_1')^{\frac{\beta_0}{1-\beta_0}}\lambda_1^{-1}\right)^{\beta_0^{-1}-1}\rho_1<\frac{1}{16}\rho', 
	\end{equation}
	where $K_1'$ is given by Proposition~\ref{p.tubular3} and $\lambda_1$ is given by~\eqref{e.lambda1}. 
	Here~\eqref{e.beta0'} is possible because the base 
	$$
	(K_1')^{\frac{\beta_0}{1-\beta_0}}\lambda_1^{-1}<1
	$$
	as long as $\beta_0$ is close to zero, meanwhile $(\beta_0^{-1}-1)$ can be made arbitrarily large. Then we fix any $\beta\in(0,\beta_0]$. 
	The reason behind this choice will be clear near the end of the proof.

	To start with, we apply Lemma~\ref{l.reg} (b) to the orbit segment $(x_{t_1^+}, t_2^--t_1^+)$. This orbit segment is outside $W_{r }$. 	  We therefore  have 
	$$
	d^F_{x_{t_2^-}}\left(\cP_{x_{t_2^-}}(y_{t_2^-})\right)\ge d^E_{x_{t_2^-}}\left(\cP_{x_{t_2^-}}(y_{t_2^-})\right). \hspace{1cm} (\mbox{Large $F$-length at } x_{t_2^-})
	$$
	
	Now we can apply Lemma~\ref{l.Flarge} to the orbit segment $(x_{t_2^-}, t_2^+-t_2^-)$ which is inside $W_{r }$ and obtain 
	$$
	d^F_{x_{t_2^+}}\left(\cP_{x_{t_2^+}}(y_{t_2^+})\right)\ge d^E_{x_{t_2^+}}\left(\cP_{x_{t_2^+}}(y_{t_2^+})\right). \hspace{1cm} (\mbox{Large $F$-length at } x_{t_2^+})
	$$
	Then we are in a position to apply Lemma~\ref{l.reg} (b) again on the orbit segment $(x_{t_2^+}, t_3^- - t_2^+)$ which is outside $W_{r }$. Following this recursive argument, we see that for $i=1,\ldots, k$, 
	\begin{equation}\label{e.Flarge3}
		d^F_{x_{t_i^\pm}}\left(\cP_{x_{t_i^\pm}}(y_{t_i^\pm})\right)\ge 	d^E_{x_{t_i^\pm}}\left(\cP_{x_{t_i^\pm}}(y_{t_i^\pm})\right). \hspace{1cm} (\mbox{Large $F$-length at every } x_{t_i^\pm})
	\end{equation}	
	We remark that we indeed get that $y_s$ has large $F$-length at every $s\in [t_i^+,t_{i+1}^-]$ as well as at $x_t$, but we will not use this fact.
	
	For contradiction's sake, we define 
	\begin{align*}
	s_1=\min\{s\in[0,t]: \exists \tilde s\in(0,t) \mbox{ such that } y_{\tilde s} \in \cN_{\rho_0|X(x_s)|}(x_s) \\
		\mbox{ is $\rho$-scaled shadowed by $x_s$ up to time $t-s$}\}. 
	\end{align*}
	Due to Lemma~\ref{l.reg} (a), $s_1\in [t_i^-,t_i^+]$ for some $i\in[1,k]\cap\NN$. 


	Without loss of generality we may assume that $s_1\in\NN$ (otherwise round $s_1$ to the next integer, and the proof below still follows). We also let 
	$$
	s_0: = s_1 -1. 
	$$
	Below we will prove that 
	$$
	y_{\tilde s}\in \cN_{\rho'|X(x_{s_1})|}(x_{s_1})
	$$
	where $\rho'$ is given by Lemma~\ref{p.tubular4} applied to $\rho K_0^{-2}$; this shows that  $\cP_{1,x_{s_0}}^{-1}\left(y_{\tilde s}\right)$ exists and is contained in $\cN_{\rho K_0^{-2}|X(x_{s_0})|}(x_{s_0}); 
	$
	this, together with Lemma~\ref{l.shadowing2}, will give the desired contradiction.

	To this end, we write 
	$$
	t^+ = \roof{t_i^+} < t.
	$$
	Since $t^+>s_1$, the orbit of $\cP_{x_{t^+}}(y_{t^+})$ (which is well-defined because $x_{t^+}$ is not in $W_r$) is $\rho$-scaled shadowed by the orbit of $x_{t^+}$ up to the  time $t-t^+$. Also recall that $x_t$ is a $(\lambda_0, cu)$-hyperbolic time for the orbit segment $(x,t)$ under the scaled linear Poincar\'e flow $\psi^*$. This allows us to apply Lemma~\ref{l.cuhyptime1} with $j=t^+$
	and obtain
	\begin{align*}
		d_{x_{t^+}}^F\left(\cP_{x_{t^+}}(y_{t^+})\right)&\le4 \frac{|X(x_{t^+})|}{|X(x_t)|}\lambda_1^{-(t-t^+)}d_{x_{t}}^F\left(\cP_{x_{t}}(y_{t})\right)\\
		&\le4  \frac{|X(x_{t^+})|}{|X(x_t)|}\lambda_1^{-(t-t^+)}\vep_2\\
		&\le 4\frac{|X(x_{t^+})|}{|X(x_t)|}\lambda_1^{-(t-t^+)}\rho_1|X(x_t)|\\
		&=4\lambda_1^{-(t-t^+)}\rho_1|X(x_{t^+})|.
	\end{align*}

	Here the second line is due to  $\cP_{x_{t}}(y_{t})\in \cN_{\vep_2}(x), $ and the next line is due to the choice of $\vep_2$; see~\eqref{e.vep2}.
	In particular, by~\eqref{e.Flarge3} we have 
	\begin{align*}
		d_{\cN(x_{t^+})}\left(x_{t^+}, \cP_{x_{t^+}}(y_{t^+}) \right) &\le d_{x_{t^+}}^F\left(\cP_{x_{t^+}}(y_{t^+})\right) + d_{x_{t^+}}^E\left(\cP_{x_{t^+}}(y_{t^+})\right)\\
		&\le 2 d_{x_{t^+}}^F\left(\cP_{x_{t^+}}(y_{t^+})\right)\\
		\numberthis\label{e.dist}	&\le 8\lambda_1^{-(t-t^+)}\rho_1|X(x_{t^+})|. 
	\end{align*}
	
	To estimate the change of distance on the normal plane along the orbit segment between the times $s_1$ and $t^+$, recall that $K'_1>1$ is the upper-bound of $|D(P^*_{1,x})^{-1}|$ given by Proposition~\ref{p.tubular3}. Then we have  
	\begin{equation}\label{e.Dsmall}
		\|D(\cP_{(t^+-s_1), x_{s_1}})^{-1}\|\le \frac{|X(x_{s_1})|} {|X(x_{t^+})|}(K_1')^{t^+-s_1}. 
	\end{equation}
	Combining~\eqref{e.dist} and~\eqref{e.Dsmall}, we obtain
	\begin{align*}
		d_{\cN(x_{s_1})}\left(x_{s_1}, y_{\tilde s}) \right) &\le
		\frac{|X(x_{s_1})|} {|X(x_{t^+})|}(K_1')^{t^+-s_1} d_{\cN(x_{t^+})}\left(x_{t^+}, \cP_{x_{t^+}}(y_{t^+}) \right) \\
		&\le 8\frac{|X(x_{s_1})|} {|X(x_{t^+})|}(K_1')^{t^+-s_1}\lambda_1^{-(t-t^+)}\rho_1|X(x_{t^+})|\\
		\numberthis\label{e.dist1}	&=8(K_1')^{t^+-s_1}\lambda_1^{-(t-t^+)}\rho_1|X(x_{s_1})|.
	\end{align*}
	
	On the other hand, since $x_t$ is a $\beta$-recurrence Pliss time for the orbit segment $(x,t)$, we have 
	$$
	\frac{t^+-s_1}{t-s_1}<\beta,
	$$
	which leads to 
	\begin{equation}\label{e.smalltime}
		t^+-s_1\le \frac{\beta}{1-\beta} (t-t^+). 
	\end{equation}
	Combining~\eqref{e.smalltime} with~\eqref{e.dist1}, we see that 
	\begin{align*}
		d_{\cN(x_{s_1})}\left(x_{s_1}, y_{\tilde s} \right) &\le 8\left((K_1')^{\frac{\beta}{1-\beta}}\right)^ {(t-t^+)}\lambda_1^{-(t-t^+)}\rho_1|X(x_{s_1})|\\
		&=8\left((K_1')^{\frac{\beta}{1-\beta}}\lambda_1^{-1}\right)^ {(t-t^+)}\rho_1|X(x_{s_1})|\\
		&	\le\frac12 \rho'|X(x_{s_1})|,
	\end{align*}
	where the last inequality follows from~\eqref{e.beta0'} and the observation that 
	$$
	t-t^+>\beta^{-1}-1
	$$
	which is due to Remark~\ref{r.RecPliss}. As a result of Lemma~\ref{p.tubular4}, the point 
	$$
	\overline y:=\cP_{1,x_{s_0}}^{-1}\left(y_{\tilde s}\right) \in \cP_{1,x_{s_0}}^{-1}\left(\cN_{\rho'|X(x_{s_1})|}(x_{s_1})\right)
	$$ 
	exists and is contained in $\cN_{\rho K_0^{-2}|X(x_{s_0})|}(x_{s_0})$. By Proposition~\ref{p.tubular} and Remark~\ref{r.shadowing}, $\overline y$ is $\rho$-scaled shadowed by the orbit of $x_{s_0}$ up to  time $s=2$. Since $s_1=s_0+1$, and  $y_{\tilde s}$ is $\rho$-scaled shadowed by the orbit of $x_{s_1}$ up to  time $t-s_1$, by Lemma~\ref{l.shadowing2} we see that $\overline y$ is indeed $\rho$-scaled shadowed by the orbit of $x_{s_0}$ up to  time $t-s_0$. Because $s_0<s_1$, this contradicts the minimality of $s_1$, and concludes the proof of Lemma~\ref{l.Flarge1}.

\end{proof}

\subsection{Proof of Proposition~\ref{p.key}}
Finally we are ready to prove Proposition~\ref{p.key}. Assuming that the conclusion is not true, we write 
$$
s_1=\sup\left\{s\in[0,t]: \cP_{x}(y) \mbox{ is $\rho$-scaled shadowed by $x$ up to  time $s$}\right\}
$$	
(not to be confused with $s_1$ in the proof of Lemma~\ref{l.Flarge1}). Due to Lemma~\ref{l.reg} (a), there exists $i\in [1,k]\cap \NN$ such that 
$$
t_i^-<s_1<t_i^+.
$$
In particular, we see that $\cP_x(y)$ is $\rho$-scaled shadowed by the orbit of $x$ up to  time $t_i^-$, due to Lemma~\ref{l.shadowing} (1).

Now we consider the $E$ and $F$-length of $\cP_{x_{t_i^+}}(y_{t_i^+})$.  They exist due to the choice of $\vep_2$ by~\eqref{e.vep2}, and hence $\cP_{x_{t_i^+}}(y_{t_i^+})\in \cN_{\rho K_0^{-1}|X(x_{t_i^+})|}(x_{t_i^+})$. In particular, there are two possibilities:

\medskip
\noindent Case 1: Large $E$-length, i.e., 
\begin{equation*}
	d^E_{x_{t_i^+}}\left(\cP_{x_{t_i^+}}(y_{t_i^+})\right)\ge d^F_{x_{t_i^+}}\left(\cP_{x_{t_i^+}}(y_{t_i^+})\right).
\end{equation*}

In this case we apply Lemma~\ref{l.Elarge} to the orbit segment $(x_{t_i^-}, t_i^+ - t_i^-)\subset \overline W_{r }$ and the point $\cP_{x_{t_i^-}}(y_{t_i^-}) \in \cN_{\rho K_0^{-1}|X(x_{t_i^-})|}(x_{t_i^-})$. By Lemma~\ref{l.Elarge} (3), $\cP_{x_{t_i^-}}(y_{t_i^-})$ is $\rho$-scaled shadowed by the orbit of $x_{t_i^-}$ up to  time $t_i^+ - t_i^-$. As a result, $\cP_x(y)$ must be $\rho$-scaled shadowed by the orbit of $x$ up to time $t_i^+$, contradicting the maximality of $s_1$.

\medskip
\noindent Case 2: Large $F$-length, i.e., 
\begin{equation*}
	d^F_{x_{t_i^+}}\left(\cP_{x_{t_i^+}}(y_{t_i^+})\right)\ge d^E_{x_{t_i^+}}\left(\cP_{x_{t_i^+}}(y_{t_i^+})\right).
\end{equation*}

In this case we consider the orbit segment $(x_{t_i^-}, t-t_i^-)$, and note that it is contained in $\cG_B(\lambda_0,\beta,r_0,W_{r })$, since $x_t$ is a simultaneous Pliss time for $(x,t)$ and therefore also a simultaneous Pliss time for the orbit segment $(x_{t_i^-}, t-t_i^-)$. This enables us to apply Lemma~\ref{l.Flarge1} (and note that $t_i^+$ for the orbit segment $(x,t)$ corresponds to $t_1^+$ for the orbit segment $(x_{t_i^-}, t-t_i^-)$), and obtain that $\cP_{x_{t_i^-}}(y_{t_i^-})$ is $\rho$-scaled shadowed by the orbit of $x_{t_i^-}$ up to  time $t-t_i^-$. By Lemma \ref{l.shadowing}, $\cP_x(y)$ is $\rho$-scaled shadowed by the orbit of $x$ up to time $t$, once again contradicting the maximality of $s_1$. 

Since we have obtained contradictions in both cases, the proof of the main proposition is complete.\qed

\subsection{The Bowen property}
We conclude this section by proving the Bowen property on $\cG_B$.  To save letters and simplify notations, we use the lower case letters $c, c_1,c_2,\ldots$ to denote {\em local variables} that appear only during the proof of various lemmas. Their meaning may vary from the proof one lemma to the next.

\begin{proposition}[The Bowen Property]\label{p.Bowen}
	For any H\"older continuous function $\phi:\bM\to\RR$, $\cG_B$ has the Bowen property at scale $\vep_2$.
\end{proposition}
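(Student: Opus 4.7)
The plan is to leverage the Main Proposition (Proposition~\ref{p.key}) together with the hyperbolicity implicit in the definition of $\cG_B$ to bound $|\Phi_0(x,t) - \Phi_0(y',t)|$ by a constant depending only on $\phi$, for every $(x,t)\in\cG_B$ and $y'\in B_{t,\vep_2}(x)$. The overall strategy is classical for hyperbolic flows, adapted to the scaled Poincar\'e framework: reparametrize the orbit of $y'$ to ``synchronize'' with the orbit of $x$ via the sectional Poincar\'e maps, then show that both the pointwise integrand difference and the infinitesimal reparametrization rate decay exponentially from the endpoints $s=0$ and $s=t$.

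First I would reduce to the case that the base point lies on $\cN(x)$ by setting $y=\cP_x(y')$; the initial projection shifts the integration interval by at most $\overline\rho_0$ (Proposition~\ref{p.Fx}), contributing only a bounded error $O(\overline\rho_0\|\phi\|_{C^0})$. Then Proposition~\ref{p.key} provides a reparametrization $\tau_{x,y}:[0,t]\to\RR_{\ge 0}$ with $\tau_{x,y}(0)=0$ such that $y^{(s)}:=f_{\tau_{x,y}(s)}(y)=\cP_{x_s}(f_s(y))\in\cN_{\rho|X(x_s)|}(x_s)$ for every $s\in[0,t]$. Setting $\sigma(s):=\tau_{x,y}(s)-s=\tau_{x_s}(f_s(y))$, the flow-box bound $|\tau_{x_s}(\cdot)|\le\overline\rho_0$ gives $|\sigma(s)|\le\overline\rho_0$; in particular $|\tau_{x,y}(t)-t|\le\overline\rho_0$, so the endpoint contribution $\int_t^{\tau_{x,y}(t)}\phi(f_u(y))\,du$ is bounded uniformly.

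The crucial estimate is the decay of the manifold distance $d(x_s,y^{(s)})$, obtained by decomposing $y^{(s)}=[y^{(s),E},y^{(s),F}]$ under $(\cF^s_{x_s,\cN},\cF^{cu}_{x_s,\cN})$. The uniform scaled contraction of $E^s_N$ (Lemma~\ref{l.basic}), together with the rescaling between the scaled normal bundle and manifold distance by a factor of $|X|$ built into the definition of $\cF^s_{x,\cN}$ and the lower bound $|X(x)|\ge c_X>0$ (since $x\notin W_r$), yields
\[
d^E_{x_s}(y^{(s)})\le C\,|X(x_s)|\,\mu^{-s}\,\vep_2
\]
for some $\mu>1$. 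Symmetrically, the $cu$-hyperbolic time property at $x_t$ expressed through Lemma~\ref{l.cuhyptime1}, together with $x_t\notin W_r$, gives
\[
d^F_{x_s}(y^{(s)})\le C\,|X(x_s)|\,\lambda_1^{-(t-s)}\,\vep_2.
\]
Summing, $d(x_s,y^{(s)})\le C|X(x_s)|(\mu^{-s}+\lambda_1^{-(t-s)})\vep_2$. The extra factor $|X(x_s)|$, which vanishes near singularities, is decisive for what follows.

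After the change of variables $u=\tau_{x,y}(s)$, one writes
\[
\int_0^{\tau_{x,y}(t)}\!\!\phi(f_u(y))\,du-\int_0^t\!\phi(x_s)\,ds=\int_0^t[\phi(y^{(s)})-\phi(x_s)]\,ds+\int_0^t\phi(y^{(s)})\sigma'(s)\,ds.
\]
The first integrand is controlled by H\"older continuity, giving $L\cdot d(x_s,y^{(s)})^\alpha\le LC^\alpha\|X\|_{C^0}^\alpha(\mu^{-s}+\lambda_1^{-(t-s)})^\alpha\vep_2^\alpha$, which integrates to a bounded geometric-series tail. For the second integral, differentiating implicitly the defining relation $\langle y^{(s)}-x_s,X(x_s)\rangle=0$ yields
\[
|\sigma'(s)|\le C\|DX\|_{C^0}\cdot\frac{d(x_s,y^{(s)})}{|X(x_s)|}\le C'\|DX\|_{C^0}(\mu^{-s}+\lambda_1^{-(t-s)}),
\]
in which the singular factor $|X(x_s)|$ in the denominator is exactly cancelled by the one produced by the distance bound above. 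Hence $\int_0^t|\sigma'(s)|\,ds$ is a bounded geometric-series tail, and $|\int_0^t\phi(y^{(s)})\sigma'(s)\,ds|\le\|\phi\|_{C^0}\int_0^t|\sigma'(s)|\,ds$ is also bounded. Adding the $O(\overline\rho_0\|\phi\|_{C^0})$ boundary terms produced by the initial projection $y'\mapsto y$ and by $\tau_{x,y}(t)\ne t$ yields a Bowen constant $K$ depending only on $\phi,L,\alpha,\mu,\lambda_1$ and $\rho_0$, proving the Bowen property at scale $\vep_2$. The principal obstacle is precisely the $\sigma'$-integral: a naive estimate $|\sigma'|\le C\rho$ would give only a linear-in-$t$ bound, so one must extract the hidden exponential decay from the scaled shadowing by carrying the $|X(x_s)|$ factor through every step of both the forward stable and backward $cu$-hyperbolic contraction estimates.
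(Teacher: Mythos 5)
Your proposal is correct and follows essentially the same route as the paper's proof: reduce to $y=\cP_x(y')\in\cN(x)$, invoke Proposition~\ref{p.key} to get scaled shadowing with a reparametrization $\tau_{x,y}$, split the error into a stable part decaying forward by the uniform contraction of $E^s_N$ under $\psi^*_t$, a $cu$ part decaying backward from the $(\lambda_0,cu)$-hyperbolic time via Lemma~\ref{l.cuhyptime1}, and a reparametrization term controlled by the same distances (with the $|X(x_s)|$ factors cancelling), then sum geometric series. The only difference is cosmetic: you integrate continuously with $\sigma'(s)$, whereas the paper discretizes into unit-time blocks and bounds $|\tau_{x,y}(i+1)-\tau_{x,y}(i)-1|$ via the Lipschitz bound of Lemma~\ref{l.tau}, which conveniently sidesteps needing the decay estimates and the fake-foliation invariance at non-integer times.
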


Before proving the proposition, we first state the following lemma.  Recall that 
$\displaystyle
\Phi_0(x,t) =\int_0^t\phi(f_s(y))\,ds
$
is the Birkhoff integral along the orbit $(x,t).$
\begin{lemma}\label{l.holder}
	Let $\phi$ be a H\"older continuous function with H\"older index $\gamma$. Then $\Phi_0(x,1)$, as a function of $x$, is also H\"older continuous with H\"older index $\gamma$.
\end{lemma}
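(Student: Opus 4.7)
The plan is straightforward and relies only on the uniform Lipschitz dependence of the flow on its initial condition over the compact time interval $[0,1]$. First I would write the difference as an integral:
\[
\Phi_0(x,1) - \Phi_0(y,1) \;=\; \int_0^1 \bigl(\phi(f_s(x)) - \phi(f_s(y))\bigr)\, ds,
\]
which gives the pointwise bound
\[
|\Phi_0(x,1) - \Phi_0(y,1)| \;\le\; \int_0^1 |\phi(f_s(x)) - \phi(f_s(y))|\, ds.
\]

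Next I would use the H\"older continuity of $\phi$: letting $K_\phi>0$ be its H\"older constant and $\gamma\in(0,1]$ its exponent, one has $|\phi(u)-\phi(v)|\le K_\phi d(u,v)^\gamma$ for all $u,v\in\bM$, so the integrand is bounded by $K_\phi d(f_s(x),f_s(y))^\gamma$.

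The only remaining ingredient is a uniform control of $d(f_s(x),f_s(y))$ for $s\in[0,1]$. Since $X\in\mathfrak X^1(\bM)$ is $C^1$ on the compact manifold $\bM$, Gronwall's inequality (applied in local charts, with a standard compactness/partition-of-unity argument) yields a constant $L=L(X)>0$, depending only on an upper bound for $\|DX\|$, such that
\[
d(f_s(x),f_s(y)) \;\le\; L\, d(x,y) \qquad \text{for all } x,y\in \bM,\; s\in[0,1].
\]
This is the only mildly non-trivial step, but it is classical and independent of the dynamics on $\Lambda$; no singular/sectional hyperbolic structure is needed.

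Combining these three ingredients gives
\[
|\Phi_0(x,1) - \Phi_0(y,1)| \;\le\; K_\phi \int_0^1 \bigl(L\, d(x,y)\bigr)^\gamma ds \;=\; K_\phi L^\gamma\, d(x,y)^\gamma,
\]
so $\Phi_0(\cdot,1)$ is H\"older continuous with the same exponent $\gamma$ and H\"older constant $K_\phi L^\gamma$, proving the lemma. I do not expect any genuine obstacle; the main (and only) subtle point is the uniform Lipschitz bound on the time-one map, which follows from the compactness of $\bM$ together with $X$ being $C^1$.
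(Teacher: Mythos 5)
Your proof is correct and follows essentially the same route as the paper: bound the integrand by the H\"older modulus of $\phi$ and use a uniform Lipschitz constant for $f_s$, $s\in[0,1]$ (the paper simply takes such a constant $c$ as given, while you justify it via Gronwall). No gaps.
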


\begin{proof}
	First we take $c>0$ large enough such that for every $x,y\in\bM$ and every $s\in [0,1]$, we have 
	$$
	d(x_s,y_s)<c\cdot d(x,y). 
	$$
	Let $c_1$ be the H\"older constant of $\phi$. Then we write
\begin{align*}
		|\Phi_0(x,1) - \Phi_0(y,1)|&\le \int_0^1  |\phi(x_s)-\phi(y_s)|  \,ds
		\le \int_0^1 c_1 \left(d(x_s,y_s)\right)^\gamma\,ds\\
		&\le c_1\int_0^1 c^\gamma \left(d(x,y)\right)^\gamma\,ds
		\le c_1c^\gamma \cdot \left(d(x,y)\right)^\gamma.
	\end{align*}

	This shows that $\Phi_0(x,1)$ is H\"older continuous with H\"older index $\gamma$, as desired. 
\end{proof}

\begin{proof}[Proof of Proposition~\ref{p.Bowen}]
	Let $(x,t)\in\cG_B$ and $y\in B_{t, \vep_2}(x)$. 
	Below we shall assume w.l.o.g.\ that $t\in \NN$ (otherwise let $t'=\floor{t}$ and consider the orbit segment $(x_{t-t'}, t')$ and $y_{t-t'}$; it is easy to see that this causes $G_0(x,t)$ to differ by  $\left|\int_0^{t-t'} g(x_s)\, ds\right| \le   \| g\|_{C^0}$ which does not depend on $(x,t)$ or $y$; the same can be said for $G_0(y,t)$). We shall also assume that $y\in \cN(x)$; this results an error of $G_0(y,t)$ by no more than $\|g\|_{C^0}$, which is again independent of $(x,t)$ and $y$.

	By Proposition~\ref{p.key}, $y$ is $\rho$-scaled shadowed by the orbit of $x$ up to  time $t$.  In particular, there exists a strictly increasing continuous function $\tau_{x,y}(t)$ that is differentiable (see Lemma~\ref{l.tau}) over $y$ with $\tau_{x,y}(0)=0$ such that 
	$$
	y_{\tau_{x,y}(s)} \in \cN_{\rho|X(x_s)|}(x_s), \forall s\in[0,t]. 
	$$
	To simplify notation, for the moment we shall drop the sub-indices and write $\tau(s)$. 
	Then for $i\in [0,t]\cap \NN$, the points $y_{\tau(i)}^j, j=E,F$ are well-defined and satisfy $y_{ \tau(i)} = [y_{ \tau(i)}^E,y_{ \tau(i)}^F]$. Furthermore, we have 
	$$
	\cP_{x_t}(y_t) = y_{ \tau(t)}, 
	$$
	and consequently, $|t-\tau(t)|<1$ (see~\eqref{e.Px2}).

		Now let $\phi$ be any H\"older continuous function with H\"older index $\gamma\in (0,1)$. Then we have\footnote{Here we slightly abuse notation and let $\Phi_0(y,t) = -\int^0_{t} g(y_s)\,ds$ if $t<0$. }
		\begin{align*}
			|\Phi_0(x,t) - \Phi_0(y,t)|&\le \sum_{i=0}^{t-1} \left| \Phi_0(x_i,1) - \Phi_0\left(y_{ \tau(i)}, \tau(i+1) -  \tau(i)\right)\right|\\&\quad + |\Phi_0(y_{ \tau(t)}, t-\tau(t))|\\
			\numberthis\label{e.step1}	&\le \|\phi\|_{C^0} +\sum_{i=0}^{t-1} \left| \Phi_0(x_i,1) - \Phi_0\left(y_{ \tau(i)}, \tau(i+1) -  \tau(i)\right)\right|.
		\end{align*}
		To control each summand on the right-hand side, we write
		\begin{align*}
			&\left|\Phi_0(x_i,1) - \Phi_0\left(y_{ \tau(i)}, \tau(i+1) -  \tau(i)\right)\right|\\ &\le  \left|\Phi_0(x_i,1) - \Phi_0(y_{\tau(i)}, 1)\right| + \left| \int_{1}^{\tau(i+1)-\tau(i)} \phi(y_s)\,ds\right|\\
			&\le  \left|\Phi_0(x_i,1) - \Phi_0(y_{\tau(i)}^F, 1)\right| +   \left|\Phi_0(y_{\tau(i)}^F,1) - \Phi_0(y_{\tau(i)}, 1)\right| \\
			&\quad + \|\phi\|_{C^0}\cdot  \left|\tau(i+1)-\tau(i)-1\right|\\
			&= I + II + III.
		\end{align*}

		To estimate I, we use the fact that $x_t$ is a $(\lambda_0,cu)$-hyperbolic time for the orbit segment $(x,t)$ under the scaled linear Poincar\'e flow. By Lemma~\ref{l.cuhyptime1}, for all $i\in [0,t]\cap \NN$:
		\begin{align*}
			d(x_i, y_{\tau(i)}^F) &\le d_{\cN(x_i)}(x_i, y_{\tau(i)}^F) \\
			&\le \frac{|X(x_i)|}{|X(x_t)|} c_2\lambda_1^{i-t} \cdot d_{\cN(x_t)}(x_t, y_{\tau(t)}^F)\\
			&\le \frac{|X(x_i)|}{|X(x_t)|} c_2\lambda_1^{i-t}\cdot\rho |X(x_t)|\\
			\numberthis\label{e.FF}	&=c_2\rho\lambda_1^{i-t}|X(x_i)|. 
		\end{align*} 
		Consequently, by Lemma~\ref{l.holder} ($c_1$ is the H\"older constant of $\Phi_0(x,1)$):
		\begin{align*}
			\left|\Phi_0(x_i,1) - \Phi_0(y_{\tau(i)}^F, 1)\right| &\le c_1  d(x_i,y_{\tau(i)}^F)^\gamma \le c_1c_2^\gamma \rho^\gamma \left(\lambda_1^\gamma\right)^{i-t} \sup_{z\in\bM}\{|X(z)|\}^\gamma\\
			\numberthis\label{e.xyF}	&= c_3 \left(\lambda_1^\gamma\right)^{i-t},
		\end{align*}
		where the constant $c_3$ is independent of $x$ and $t$.
		
		To estimate II, we use the uniform contraction of the scaled  linear Poincar\'e flow $\psi^*$ on the $E_N$ bundle (Lemma~\ref{l.basic}) and obtain, for  all $i\in[0,t]\cap\NN$:
		\begin{align*}
			d(y_{ \tau(i)}^F,y_{ \tau(i)}) \le d_{\cN(x_i)}(y_{ \tau(i)}^F,y_{ \tau(i)}) &\le \frac{|X(x_i)|}{|X(x)|} c_4\lambda_E^i \cdot d_{\cN(x)}( y_{\tau(0)}^E, y_{\tau(0)})\\
			&\le  \frac{|X(x_i)|}{|X(x)|} c_4\lambda_E^i \cdot\rho |X(x)|\\
			\numberthis\label{e.EE}	&=c_4\rho\lambda_E^i |X(x_i)|, 
		\end{align*} 
		where $c_4>0$, $\lambda_E\in (0,1)$ are some constants that do not depend on $y$ or $(x,t)$. As a result, we have 
		\begin{align*}
			\left|\Phi_0(y_{\tau(i)}^F,1) - \Phi_0(y_{\tau(i)}, 1)\right| &\le c_1 d(y_{\tau(i)}^F,y_{\tau(i)})^\gamma  \le c_1 c_4^\gamma \rho^\gamma \left(\lambda_E^\gamma \right)^i \sup_{z\in\bM}\{|X(z)|\}^\gamma \\
			\numberthis\label{e.yFy}	&= c_5  \left(\lambda_E^\gamma \right)^i,
		\end{align*}
		where the constant $c_5$ does not depend on $x$ or $t$.
		
		We are left with III. First, note that 
		$$
		\tau_{x,y}(i+1) - \tau_{x,y}(i)  = \tau_{x_i,y_{\tau(i)}}(1).  
		$$
		Then we apply Lemma~\ref{l.tau} to obtain
		\begin{align*}
			&\quad \left|\tau_{x,y}(i+1)-\tau_{x,y}(i)-1\right|\\ 
			&= \left| \tau_{x_i,y_{\tau(i)}}(1) - \tau_{x_i,x_i}(1)   \right|\\
			&\le  \sup\left\{\left\|D_z \tau_{x_i,z}(1)\right\| : z\in\cN_{\rho_0K_0^{-1}|X(x_i)|}(x_i) \right\}\cdot d_{\cN(x_i)}(y_{\tau(i)}, x_i)\\
			&\le \frac{1}{|X(x_i)|} K_\tau\cdot d_{\cN(x_i)}(y_{\tau(i)}, x_i)\\
			&\le \frac{1}{|X(x_i)|} K_\tau\left(d_{\cN(x_i)}(x_i,y_{\tau(i)}^F) + d_{\cN(x_i)}(y_{\tau(i)}^F,y_{\tau(i)}) \right)\\
			&\le  \frac{1}{|X(x_i)|} K_\tau\left(c_2\rho \lambda_1^{i-t} + c_4\rho \lambda_E^i\right)|X(x_i)|\\
			\numberthis\label{e.III}	&= c_6 \lambda_1^{i-t} + c_7 \lambda_E^i,
		\end{align*}
		where the fourth line follows from~\eqref{e.FF} and~\eqref{e.EE}. 
		
		Collecting~\eqref{e.xyF},~\eqref{e.yFy} and~\eqref{e.III}, we have 
		\begin{align*}
			& \left|\Phi_0(x_i,1) - \Phi_0\left(y_{ \tau(i)}, \tau(i+1) -  \tau(i)\right)\right|\\
			&\le  c_3 \left(\lambda_1^\gamma\right)^{i-t}+c_5  \left(\lambda_E^\gamma \right)^i+\|\phi\|_{C^0}\left( c_6 \lambda_1^{i-t} + c_7 \lambda_E^i\right),
		\end{align*} 
		where all the involved constants are independent of $x$ and $t$. 
		
		Summing over $i\in [0,t]\cap \NN$, we finally obtain
		\begin{align*}
			&\quad|\Phi_0(x,t) - \Phi_0(y,t)|\\
			&\le  \|\phi\|_{C^0} +\sum_{i=0}^{t-1} \left(c_3 \left(\lambda_1^\gamma\right)^{i-t}+c_5  \left(\lambda_E^\gamma \right)^i+\|\phi\|_{C^0}\left( c_6 \lambda_1^{i-t} + c_7 \lambda_E^i\right)\right)\\
			&\le c_8,
		\end{align*}
		for some constant $c_8>0$ that does not depend on $(x,t)\in\cG_B$ and $y\in B_{\vep_2,t}(x)$. We conclude that $\cG_B$ has the Bowen property at scale $\vep_2$ for every H\"older continuous function $\phi$.

	\end{proof}

\section{Specification}\label{s.spec}
In this section we prove Theorem~\ref{t.spec}, the specification on $\cG_{spec}$. The idea of the proof is to use the hyperbolic periodic orbit $\gamma$ provided by Assumption (B) as a ``bridge'' for the transition from one orbit segment to the next. The key observation of the proof is that, in order to achieve specification at scale $\delta$, a compact neighborhood $\CP_\sigma=\CP_\sigma(\delta)\subset W^u(\sigma)$ must be avoided by the starting point of each orbit segment (recall the definition of $\cG_{spec}$ from~\eqref{e.D1}, more specifically, item (2)). This means that the orbit collection $\cG_{spec}$ must depend on $\delta$; this is why the original CT-criterion does not apply to our case, and we need the improved version Theorem \ref{t.improvedCL}.

Let us also take this chance to explain why the original CT criterion~\cite{CT16} cannot be used in our setting. To begin with, note that Assumption (I) ``$\cG^M$ has specification at scale $\delta$ for every $M>0$'' can be replaced by Assumption (I') ``$\cG$ has specification at every $\delta>0$'' (\cite[Lemma 2.10]{CT16}). However, neither of these assumptions is applicable to singular flow. Let us explain.

\noindent {\em(I). $\cG^M$ has specification at scale $\delta$ for every $M>0$}. Recall that $\cG^M$ is obtained by adding a prefix and a suffix with length at most $M$ to orbit segments in $\cG$. For the Bowen property to hold, it is imperative that for orbit segment $(x,t)\in\cG$, both $x$ and $x_t$ are away from singularities. However, for $M$ sufficiently large, the endpoints of orbit segments in $\cG^M$ may approach singularities; this implies that the size of the local product structure, given by the fake foliations $\cF_{x,\cN}^{s/cu}$ may tend to zero. As a result, one should not expect $\cG^M$ to have specification at the same scale for every $M>0$.

\noindent {\em(I'). $\cG$ has specification at scale $\delta$ for every $\delta>0$}. For a singularity in a sectional-hyperbolic attractor, one has  $\left(W^{ss}(\sigma)\cap\Lambda\right)\setminus\{\sigma\}=\emptyset$ (see Proposition~\ref{p.sh}). For every $y\in W^u(\sigma)$, since $\cF^{s}(y)$ accumulates on $W^{ss}(\sigma) = \cF^{s}(\sigma)$ under backward iteration of the flow, there exists a continuous function $\delta(y)>0$ such that 
$
\left(\cF^{s}_{\delta(y)}(y)\cap \Lambda\right)\setminus \{y\}=\emptyset. 
$
In other words, $\delta(y)$ is an ``optimal'' scale that $y\in W^u(\sigma)$ can be shadowed by points in $\Lambda$. As a result, to prove that $\cG$ has specification at every scale, one must require that for every $(x,t)\in \cG$ one has $x\notin W^u(\sigma)$. However, $W^u(\sigma)$ is dense in $\Lambda$, and therefore the ``bad'' segments will not have a smaller pressure.

Therefore, for given $\delta>0$ we must remove a compact set $\CP_\sigma\subset W^u(\sigma)$ in which $\delta(y)>\delta$; then, after carefully removing a small neighborhood $U$ of $\CP_\sigma$ (this is needed for the existence of the maximal gap size $\tau$) one can prove that those good orbit segments $(x,t)$ with $x\notin U$ and $x_t$ a $(\lambda_0,cu)$-hyperbolic time have specification property.

Now we start the proof of Theorem~\ref{t.spec}. 

Recall that the collection  $\cG_{spec}(\lambda_0,r_0,W_\Sing, U_\Sing)$ consists of orbit segments $(x,t)$ such that 
\begin{itemize}
\item $x\notin W_\Sing\cup U_\Sing$, and $x_t\notin W_\Sing$ where $W_\Sing$ is an open neighborhood of $\Sing(X)\cap\Lambda$ that is contained in $B_{r_0}(\Sing(X)\cap\Lambda)$,\footnote{One can take $W_\Sing$ to be $W_r$ as defined in~\eqref{e.W.def}} and $U_\Sing$ is an open neighborhood of (relative) compact sets $\CP_\sigma\subset W^u(\sigma)$;  
\item $x_t$ is a $(\lambda_0,cu)$-hyperbolic time for the orbit segment $(x,t)$  under the scaled linear Poincar\'e flow.
\end{itemize}
Here the compact sets $\CP_\sigma$ are given by Lemma~\ref{l.(W^u)} and depend on $\delta$, and $\lambda_0$ is given by Lemma~\ref{l.hyptime}. 

Since $\Lambda$ is sectional-hyperbolic, the $E^{ss}$ bundle is uniformly contracted by the tangent flow $Df_t$; as a result, there exists a stable foliation $\cF^s$ whose leave at $x\in\Lambda$ will be denoted by $\cF^{s}(x)$. For a regular point $x$, this manifold can be pushed to $\cN_{\rho_0|X(x)|}(x)$ using the map $\cP_x$, and its image will be denoted by $W^{s,N}(x)$. Since we only consider orbit segments  $(x,t)$ with $x,x_t\notin W_\Sing$, $\cN_{\rho_0|X(x)|}(x)$ and $W^{s,N}(x)$ have uniform size at said points. The parameters $\xi$ and $\zeta$ below will be taken to be much smaller than this uniform size. Also note that vectors in $E^s_N$ are uniformly contracted by $\psi^*_t$ due to Lemma~\ref{l.basic}. In particular, this implies that every point in $W^{s,N}(x)$ is $\rho_0$-scaled shadowed by the orbit of $x$ up to all times $T>0$. 


To simplify notation, for fixed $W_\Sing$ we define 
$$
\epsilon = \inf_{z\notin W_\Sing}\rho_0 |X(z)|
$$
the uniform size for the normal planes outside $W_\Sing$. Throughout this section, unless otherwise specified, all $(\alpha,cu)$-disks are disks contained in an appropriate normal plane $\cN_{\epsilon}(x)$ at some $x\notin W_\Sing$, and are tangent to the $\alpha$-cone of the $F^{cu}_N$ bundle extended to every point on $\cN_\epsilon(x)$. 

We start with the following lemma on $(\lambda_0,cu)$-hyperbolic times, whose proof is an adapted version of the Hadamard-Perron theorem and can be found in~\cite{ABV00}, and is therefore omitted. 

\begin{lemma}\label{l.udisk}
For any open set $W_\Sing$ that satisfies  $\Sing(X)\cap\Lambda\subset  W_\Sing\subset  B_{r_0}(\Sing(X)\cap\Lambda)$, any $\lambda_2\in (1,\lambda_1)$ with $\lambda_1$ given by~\eqref{e.lambda1}, and any $\alpha>0$, $\xi>0$ small enough, there exists $\zeta_1>0$, such that for every $\zeta>0$,  there exists  $L>0$ such that the following property holds:

Let $D\subset \cN_\epsilon(x)$ be a $(\alpha,cu)$-disk and $x_t$ be a $(\lambda_0,cu)$-hyperbolic time for the orbit segment $(x,t)$ with $t>L$, such that $x\notin W_\Sing$ and $x_t\notin W_\Sing$; furthermore, assume that $D$ is centered at a point $z\in D\cap W^{s,N}_{3\xi}(x)$ with size at least $\zeta$. Then $D$ contains a smaller disk $D'$ such that every point in $D'$ is  $\rho_0$-scaled shadowed by the orbit of $x$ up to time $t$. Furthermore, $\cP_{t,x}(D')\subset \cN_{\epsilon}(x_t)$  is a $(\alpha,cu)$-disk with uniform size $\zeta_1>0$  
and has transverse intersection with $W^{s,N}_\xi(x_t)$. 

\end{lemma}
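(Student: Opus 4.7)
The plan is to obtain $D'$ as the backward pullback, from $x_t$, of a uniformly-sized $(\alpha,cu)$-disk $\Delta_t$, exploiting the backward contraction on $F^{cu}_N$-cones that is guaranteed at a $(\lambda_0,cu)$-hyperbolic time.

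First I would identify the intersection point $y_0 \in D \cap W^{s,N}_{3\xi}(x)$. Because $D$ is tangent to the $(\alpha, F^{cu}_N)$-cone while $W^{s,N}(x)$ is tangent to the complementary $(\alpha, E^s_N)$-cone (thanks to the dominated splitting $E^s_N \oplus F^{cu}_N$), this intersection is a single transverse point once $\alpha$ is small. Since $y_0 \in W^{s,N}(x)$ and $\psi^*$ uniformly contracts $E^s_N$ (Lemma~\ref{l.basic}), $y_0$ is $\rho_0$-scaled shadowed by $x$ for all positive time, and $\cP_{t,x}(y_0)$ lies at $W^{s,N}$-distance at most $C\lambda_E^t\xi$ from $x_t$ for some constants $C>0$ and $\lambda_E\in(0,1)$; in particular, $\cP_{t,x}(y_0)$ sits well inside $W^{s,N}_\xi(x_t)$ once $t$ is large enough.

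Next I would define $\Delta_t \subset \cN_\epsilon(x_t)$ as a $(\alpha, F^{cu}_N)$-disk of diameter $2\zeta$ centered at $\cP_{t,x}(y_0)$, and let $\Delta_{t-j}$, for $j=0,\ldots,\lfloor t\rfloor$, be the connected component of $\cP_{-j,x_t}(\Delta_t)$ containing $\cP_{t-j,x}(y_0)$. By Lemma~\ref{l.cuhyptime1}, which turns the $(\lambda_0,cu)$-hyperbolic time assumption into a pointwise bound on $\|D\cP_{-j,x_t}|_{F^{cu}_N}\|$, the diameter of $\Delta_{t-j}$ is at most $8\,|X(x_{t-j})|\,|X(x_t)|^{-1}\,\lambda_1^{-j}\,\zeta$. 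Choosing $\zeta$ small enough keeps each $\Delta_{t-j}$ inside $\cN_{\rho_0|X(x_{t-j})|}(x_{t-j})$, so all the intermediate sectional Poincar\'e maps $\cP_{s,x}$ are well-defined on $\Delta_0$, and Lemma~\ref{l.shadowing2} then yields that every point of $\Delta_0$ is $\rho_0$-scaled shadowed by $x$ up to time $t$. Forward invariance of the $(\alpha, F^{cu}_N)$-cone under $\psi^*$ ensures that each $\Delta_{t-j}$ remains tangent to the $(\alpha, F^{cu}_N)$-cone.

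Finally I would verify $\Delta_0\subset D$ and set $D':=\Delta_0$. Since the diameter bound $8\,|X(x)|\,|X(x_t)|^{-1}\,\lambda_1^{-t}\,\zeta$ tends to zero as $t\to\infty$, for $t$ larger than a threshold $L$ the small $(\alpha,cu)$-disk $\Delta_0$ around $y_0$ fits inside $D$, which itself contains a uniform $F^{cu}_N$-neighborhood of $y_0$ coming from the transverse intersection with $W^{s,N}_{3\xi}(x)$. Then $\cP_{t,x}(D') = \Delta_t$ is a $(\alpha,cu)$-disk of diameter $2\zeta$ centered inside $W^{s,N}_\xi(x_t)$, and complementarity of the $(\alpha, E^s_N)$ and $(\alpha, F^{cu}_N)$ cones forces it to cross $W^{s,N}_\xi(x_t)$ transversally. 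The main obstacle is making the a priori local extent of $D$ around $y_0$ explicit: because $D$ is only assumed to meet $W^{s,N}_{3\xi}(x)$, one has to invoke the graph-transform / Hadamard--Perron framework of \cite{ABV00} to guarantee that a $(\alpha,cu)$-disk meeting $W^{s,N}_{3\xi}(x)$ transversally carries a uniform-sized graph piece around the intersection point, uniformly in $x\notin W_\Sing$.
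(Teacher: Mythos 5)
The paper does not actually supply a proof here (it defers to the adapted Hadamard--Perron/hyperbolic-times argument of \cite{ABV00}), so your proposal must stand on its own, and it has a genuine gap at its last step. You build $\Delta_0$ by pulling back an \emph{arbitrary} $(\alpha,cu)$-disk $\Delta_t$ centered at $\cP_{t,x}(y_0)$ and then argue that, because $\diam\Delta_0=O(\lambda_1^{-t})$, it fits inside $D$. That does not follow: there is no invariant $cu$-foliation in this setting, only a cone field, so $\Delta_0$ and $D$ are in general two \emph{different} disks tangent to the $(\alpha,F^{cu}_N)$-cone through the single point $y_0$, and smallness of $\Delta_0$ only places it in a small neighborhood of $y_0$ in $\cN(x)$, not inside the submanifold $D$. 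Hence $D':=\Delta_0$ need not be a subdisk of $D$, and $\cP_{t,x}(D')=\Delta_t$ need not be the image of anything in $D$ --- which is exactly what the lemma asserts. A second, related problem is that your use of Lemma~\ref{l.cuhyptime1} on $\Delta_t$ is circular: that lemma bounds the backward derivative only at points of the form $\cP_{x_t}(y_t)$ for $y$ \emph{already known} to be $\rho$-scaled shadowed up to time $t$, and only on vectors $u=D_y\cP_{t,x}(v)$ with $v$ in the cone at such a $y$; for a generic point of your external disk $\Delta_t$ you do not yet know that the backward sectional Poincar\'e maps are defined at the relevant scales, nor that the point stays in the scaled tubular neighborhood where the cone estimates and the uniform continuity of $DP^*_{1,x}$ apply --- this is precisely what must be proved.

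The repair is to run the argument forward \emph{inside} $D$, as in \cite{ABV00}: keep your (correct) stable estimate that $\cP_{t,x}(y_0)\in W^{s,N}_{\xi}(x_t)$ for $t>L$ via Lemma~\ref{l.basic}, but then consider the set of points of $D$ near $y_0$ whose images under $\cP_{s,x}$ remain in $\cN_{\rho_0K_0^{-1}|X(x_s)|}(x_s)$ for all $s\le t$ (a continuation/stopping-time construction, which keeps Lemma~\ref{l.shadowing2} and the cone-field estimates available along the way), and let $\hat D_t$ be the maximal disk of radius $\le\zeta$ inside the time-$t$ image centered at $\cP_{t,x}(y_0)$. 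If its radius were $<\zeta$, a boundary point would come from a point of $D$ violating the intermediate scale constraint; but pulling a curve of length $<\zeta$ in $\hat D_t$ back with the bound of Lemma~\ref{l.cuhyptime1} --- legitimately now, since along this construction its hypotheses hold by induction --- gives intermediate lengths at most $4\lambda_1^{-(t-j)}\zeta\,|X(x_j)|/|X(x_t)|$, which for $\zeta$ small is far below $\rho_0K_0^{-1}|X(x_j)|$, a contradiction. This produces the desired $D'\subset D$ with $\cP_{t,x}(D')$ an $(\alpha,cu)$-disk of uniform size $\zeta$ through a point of $W^{s,N}_\xi(x_t)$, and the transversality then follows from cone complementarity exactly as you say; note also that the graph-transform input you relegate to the final sentence is needed for this forward-growth step, not merely to give $D$ a uniform graph piece around $y_0$.
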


We also need the following lemma regarding the shadowing at the uniform scale $\delta$.  This does not immediately follow from the previous lemma due to the time change $\tau_{x,y}(t)$ involved in the scaled shadowing. In particular, we need to obtain an upper bound on $|\tau_{x,y}(t)-t|$.

\begin{lemma}\label{l.bowen1}
Let $\delta>0$. Under the assumptions of the previous lemma, one can decrease $\xi,\zeta$ such that for every $y\in D'$, it holds that  $y\in B_{t,\delta}(x)$.

\end{lemma}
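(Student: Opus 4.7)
The plan is to pass through the intermediate point $\tilde y_s := \cP_{s,x}(y) = f_{\tau_{x,y}(s)}(y)$ provided by the $\rho_0$-scaled shadowing of Lemma~\ref{l.udisk}, and split
\[
d(x_s, y_s) \le d_{\cN(x_s)}(x_s, \tilde y_s) + d(\tilde y_s, y_s).
\]
For the first term, I will use the local product structure on $\cN(x_s)$ to write $\tilde y_s = [\tilde y_s^E, \tilde y_s^F]$ and bound the two components separately: the $E$-component comes from forward-iterating the stable intersection $D' \cap W^{s,N}_{3\xi}(x)$ and is controlled by the uniform contraction of $E^s_N$ under the scaled linear Poincar\'e flow given in Lemma~\ref{l.basic}, yielding a bound of order $C_1 \xi \lambda_E^{s}$ with $\lambda_E \in (0,1)$; the $F$-component is a point in $\cP_{s,x}(D')$, whose terminal image $\cP_{t,x}(D')$ has size $\zeta$, so backward iteration at the $(\lambda_0, cu)$-hyperbolic time $x_t$ (Lemma~\ref{l.cuhyptime1}) gives a bound of order $C_2 \zeta \lambda_1^{-(t-s)}$. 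The constants $C_1, C_2$ absorb the bounded flow-speed ratios $|X(x_s)|/|X(x)|$ and $|X(x_s)|/|X(x_t)|$, which are uniform because $x$ and $x_t$ lie outside $W_\Sing$.

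For the second term, since $y_s$ and $\tilde y_s$ lie on the same orbit with $y_s = f_{s-\tau_{x,y}(s)}(\tilde y_s)$, we have $d(\tilde y_s, y_s) \le \|X\|_{C^0} \cdot |s - \tau_{x,y}(s)|$. The time discrepancy is controlled by Lemma~\ref{l.tau}, which gives the per-step bound $|\tau_{x,y}(i+1) - \tau_{x,y}(i) - 1| \le K_\tau \, d_{\cN(x_i)}(x_i, \tilde y_i)$ for each integer $i \in [0, \lfloor t \rfloor)$. Telescoping and inserting the normal-plane estimate from the previous paragraph, the contributions assemble into two geometric series, $\sum_i \lambda_E^{i}$ and $\sum_i \lambda_1^{-(t-i)}$, both of which converge with sums bounded uniformly in $t$. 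This produces $|\tau_{x,y}(s) - s| \le K_\tau(C_1' \xi + C_2' \zeta)$ for all $s \in [0, t]$.

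Combining the two estimates gives $d(x_s, y_s) \le C(\xi + \zeta)$ for every $s \in [0, t]$, with $C$ depending only on the flow, on the domination constants, and on $K_\tau$; in particular, $C$ is independent of the segment length $t$ and of the initial choice of $\xi, \zeta$. Since Lemma~\ref{l.udisk} allows us to shrink $\xi$ (and hence the associated $\zeta$) as much as we wish, we choose them small enough that $C(\xi+\zeta) \le \max\{\xi,\zeta\}$ after relabelling, which is the desired Bowen-ball inclusion $y \in B_{t, \max\{\xi,\zeta\}}(x)$.

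The main obstacle is the control of the time reparametrization $|\tau_{x,y}(s) - s|$ over the \emph{entire} segment $[0, t]$: a priori, the per-step error given by Lemma~\ref{l.tau} is merely proportional to $d_{\cN(x_i)}(x_i, \tilde y_i)$, which need not be small when $x_i$ approaches a singularity (where $\rho_0 |X(x_i)|$ is small but the shadowing tolerance also is). The saving is that the two contributions to $d_{\cN(x_i)}(x_i, \tilde y_i)$ decay geometrically from the two endpoints $s=0$ and $s=t$ respectively, so their sum is summable uniformly in $t$ even when the middle of the segment passes arbitrarily close to $\Sing(X)$. Getting these two decays to fit together coherently through a single local product structure at every $x_s$ is the delicate point, and it is exactly what the $cu$-hyperbolic time assumption at $x_t$ combined with the uniform stable contraction provides.
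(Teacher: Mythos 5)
Your proposal is correct and follows essentially the same route as the paper's proof: the same splitting of $d(x_s,y_s)$ into a normal-plane part and a flow-time-discrepancy part, the same two geometric decays (stable contraction from $s=0$ at scale $\xi$, backward $cu$-contraction from the hyperbolic time $x_t$ at scale $\zeta$ via Lemma~\ref{l.cuhyptime1}), the same telescoping of $|\tau_{x,y}(s)-s|$ through the $K_\tau$ bound of Lemma~\ref{l.tau}, and the same final shrinking of $\xi,\zeta$. The only (cosmetic) difference is that the paper bounds $d_{\cN(x_s)}(x_s,\cP_{x_s}(y_s))$ by a plain triangle inequality through the image $z^s$ of the stable intersection point $z\in D\pitchfork W^{s,N}_{3\xi}(x)$, rather than through the fake-foliation coordinates $[\tilde y_s^E,\tilde y_s^F]$ as you do; your identification of those coordinates with the iterated stable point and a point of $\cP_{s,x}(D')$ is only correct up to the cone-comparison constants of Section~\ref{s.cone}, which the paper's reference-point formulation sidesteps.
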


\begin{proof}
Let $D'$ be the disk given by the previous lemma, with $\cP_{t,x}(D')$ having size $\zeta_1$. For $y\in D'$, the previous lemma shows that $y$ is $\rho_0$-scaled shadowed by the orbit of $x$ up to time $t$; consequently, there exists an increasing function  $\tau_{x,y}$ such that for $s\in[0,t]$,
$$
y_{\tau_{x,y}(s)} \in \cN_{\rho_0|X(x_s)|}(x_s). 
$$
Let $z$ be the transverse point of intersection between  $D$ and $W^{s,N}_{3\xi}(x)$. Then the previous argument also applies to $z$ (with a different function $\tau_{x,z}$). Write, for $i\in [0,t]\cap\NN$, 
$$
z^i = z_{\tau_{x,z}(i)} = \cP_{x_i}(z_i)\in \cN_{\rho_0|X(x_t)|}(x_i), \mbox{ and }y^i = y_{\tau_{x,y}(i)}; 
$$
we will use $z^i$ as a reference point when calculating $d_{\cN(x_i)}(y^i, x_i)$. For simplicity, we shall assume that $t\in\NN$.

First we need to estimate $|\tau_{x,y}(t) - t|.$ The proof is similar to the Proof of Proposition~\ref{p.Bowen} at the end of Section~\ref{s.bowen}, so we will only explain the structure of the proof and skip some details. 

We first write 
\begin{align*}
	|\tau_{x,y}(t) - t| &\le  \sum_{i=0}^{t-1}|\tau_{x,y}(i+1) - \tau_{x,y}(i)-1| 
	=\sum_{i=0}^{t-1} |\tau_{x_i,y^i}(1) - 1 |\\
	& = \sum_{i=0}^{t-1}|\tau_{x_i,y^i}(1) - \tau_{x_i,x_i}(1) |\\
	&\le\sum_{i=0}^{t-1}  {\frac{1}{|X(x_i)|}} K_\tau  d_{\cN(x_i)}(y^i,x_i)\\
	& \le \sum_{i=0}^{t-1}  {\frac{1}{|X(x_i)|}} K_\tau\left(d_{\cN(x_i)}(y^i,z^i)+d_{\cN(x_i)}(z^i,x_i)\right).
\end{align*}

Note that $y$ and $z$ are both contained in $D'$. Then, Lemma~\ref{l.cuhyptime1} gives the backward contraction: for $\lambda_1>1$ we have
$$
d_{\cN(x_i)}(y^i,z^i) \le \lambda_1^{-(t-i)} \zeta_1 |X(x_i)|; 
$$
on the other hand, $z\in W^{s,N}(x)$ which is exponential contracted by $\cP_{t,x}$; this leads to
$$
d_{\cN(x_i)}(z^i,x_i) \le 3\lambda_E^{i} \xi|X(x_i)|,
$$
where $\lambda_E\in (0,1)$. We thus obtain 
\begin{equation}\label{e.tau111}
	|\tau_{x,y}(t) - t| < c\max\{3\xi,\zeta_1\} 
\end{equation}
for some constant $c$ that does not depend on $\xi$ and $\zeta_1$. It is also clear from the proof that the same bound holds for every $s\in [0,t]$ (in which case the summation in the previous calculation only involves the first $s$ terms). 

Consequently, for every $s\in[0,t]\cap\NN$ we have
\begin{align*}
	d( f_s(y), f_s(x))&\le 	d(y_s, y^s) + d(y^s,x_s)\\
	& \le |\tau_{x,y}(s) - s|\cdot \max|X| + d_{\cN(x_i)}(y^s,z^s) + d_{\cN(x_s)}(z^s,x_s) \\
	&\le  c\max\{3\xi,\zeta_1\}\max|X| + \lambda_1^{-(t-s)} \zeta_1 |X(x_s)| + 3\lambda_E^{s} \xi|X(x_s)|\\
	&\le c'\max\{3\xi,\zeta_1\}.
\end{align*}
Take $Lip>0$ defined by~\eqref{e.lip}. Also recall that $\zeta_1$ is a constant multiple of $\zeta$.  Then, if $\xi$ and $\zeta$ are small enough so that  $c'\cdot Lip \max\{3\xi,\zeta_1\} < \delta$, we have $y\in B_{t,\delta}(x)$. The proof is complete.
\end{proof}

Next, we turn our attention to the transverse intersection between the invariant manifolds of regular points and the hyperbolic periodic point $p$.  
Recall that $p\in\Lambda$ is a hyperbolic periodic point with the following properties (Assumption~(B)):
\begin{itemize}
\item $\cF^{s}(\Orb(p)) = \cup_{y\in\Orb(p)} \cF^{s}(y)$ is dense in $\Lambda$;
\item for every $y\in\Lambda\setminus \Sing(X)$, we have $W^u(\Orb(p))\pitchfork \cF^{s}(y)\ne\emptyset$.
\end{itemize}

\begin{lemma}\label{l.(W^u)^c}
Given $\xi>0$ and $x\in (\cup_\sigma W^u(\sigma))^c{\cap\Lambda}$, there exists $K^u(x)>0$ such that $$W^u_{K^u(x)}(\Orb(p))\pitchfork \cF^{s}_\xi(x)\ne\emptyset.$$ Furthermore, it is possible to choose $K^u(x)$ locally continuously with respect to $x$.
\end{lemma}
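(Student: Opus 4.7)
The strategy is to invoke Theorem~\ref{t.top}(3) at a carefully chosen backward iterate $f_{-T}(x)$ of $x$, then push the resulting transverse intersection forward under $f_T$, exploiting the $f_t$-invariance of $W^u(\Orb(p))$ and the uniform contraction of $E^s$ under $Df_t$.

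First I would produce a sequence $t_n \uparrow \infty$ along which $f_{-t_n}(x)$ remains uniformly bounded away from $\Sing(X)$. Indeed, if $d(f_{-t}(x), \Sing(X)) \to 0$, then since $\Sing(X) \cap \Lambda$ is finite (and hence its points are pairwise isolated), the continuous backward orbit must eventually commit to a single ball $B_\delta(\sigma)$ and thus $f_{-t}(x) \to \sigma$, placing $x$ on $W^u(\sigma)$ and contradicting the hypothesis. Passing to a subsequence, $f_{-t_n}(x) \to z \in \Reg(X) \cap \Lambda$. Next I would observe that the function $K^s(\cdot)$ supplied by Theorem~\ref{t.top}(3) is locally bounded above on $\Reg(X) \cap \Lambda$: if $y_0$ is a transverse intersection point of $W^s_{K^s(z_0)}(z_0)$ with a small compact disk $D^u \subset W^u(\Orb(p))$, then by the $C^1$-continuity of the stable foliation at regular points of $\Lambda$, the disk $W^s_{K^s(z_0)+1}(x')$ still meets $D^u$ transversely for every $x'$ in a small neighborhood of $z_0$, yielding $K^s(x') \le K^s(z_0)+1$ there.

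Combining these observations, there is a neighborhood $U$ of $z$ on which $K^s \le M$ for some $M > 0$. Using the uniform contraction $\|Df_t|_{E^s}\| \le C \lambda_0^{-t}$ from the sectional-hyperbolic splitting, choose $n$ so large that $f_{-t_n}(x) \in U$ and $C \lambda_0^{-t_n} M \le \xi$. Setting $x' := f_{-t_n}(x)$, Theorem~\ref{t.top}(3) yields a transverse intersection $y' \in W^s_{K^s(x')}(x') \cap W^u(\Orb(p))$. By $f_t$-invariance of $W^u(\Orb(p))$, the point $y := f_{t_n}(y')$ lies in $W^u(\Orb(p))$; meanwhile $f_{t_n}(W^s_{K^s(x')}(x'))$ is a subset of $W^s(x)$ of diameter at most $C \lambda_0^{-t_n} K^s(x') \le \xi$, so $y \in W^s_\xi(x)$. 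Transversality is preserved by the diffeomorphism $f_{t_n}$, so $y$ is a transverse intersection point of $W^s_\xi(x)$ with $W^u(\Orb(p))$. Taking $K^u(x)$ large enough that $W^u_{K^u(x)}(\Orb(p))$ contains $y$ yields pointwise existence.

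For the continuous choice of $K^u$, the function $\tilde K^u(x) := \inf\{K > 0 : W^u_K(\Orb(p)) \pitchfork W^s_\xi(x) \ne \emptyset\}$ is upper semi-continuous on $(\cup_\sigma W^u(\sigma))^c \cap \Lambda$, since a transverse intersection with a fixed compact piece of $W^u$ persists under the $C^1$-continuous variation of $W^s_\xi(\cdot)$. A locally bounded upper semi-continuous function on a metric space admits a continuous majorant via a standard partition-of-unity construction, producing a continuous $K^u(x) \ge \tilde K^u(x)$. The main obstacle is the local boundedness of $K^s$ in the second paragraph, which rests on the $C^1$-continuity of $\cF^s$ at regular points together with the openness of transverse intersection; once this is in hand, the flow-forward step is routine.
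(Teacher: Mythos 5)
Your proof is correct and follows essentially the same route as the paper's: both locate backward iterates of $x$ bounded away from $\Sing(X)$ (the paper via a regular point of $\alpha(x)$, you via the contradiction that $f_{-t}(x)\to\sigma$ would force $x\in W^u(\sigma)$), apply Theorem~\ref{t.top}(3) at such an iterate with a locally uniform stable size, and push the transverse intersection forward using the uniform contraction of $E^s$. Your treatment of continuity (upper bound via persistence of the transverse intersection plus a continuous majorant) is just a more explicit version of the paper's one-line appeal to continuity of the invariant manifolds.
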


\begin{proof}
Since $x\in (\cup_\sigma W^u(\sigma))^c\cap\Lambda$, it follows that $\alpha(x)$, the $\alpha$-limit set of $x$, is not a subset of $\Sing(X)$. We fix any $y\in \alpha(x)\setminus \Sing(X)$. 
By Assumption (B) there exist $K_1(y), K_2(y)>0$ such that 
\begin{equation}\label{e.(W^u)^c}
	W^u_{K_1(y)}(\Orb(p))\pitchfork \cF^{s}_{K_2(y)}(y)\ne\emptyset.
\end{equation}
Now we take a sequence of times $t_i\nearrow+\infty$ such that $f_{-t_i}(x)\to y$. Then by continuity of the transverse intersection, for $i$ sufficiently large one has 
\begin{equation}\label{e.(W^u)^c11}
	W^u_{2K_1(y)}(\Orb(p))\pitchfork \cF^{s}_{2K_2(y)}(f_{-t_i}(x))\ne\emptyset.
\end{equation}

Since vectors in the $E^{ss}$ bundle are  uniformly contracted by the tangent flow, there exists $T(y)>0$ such that whenever $t>T(y)$, we have for every $z\in\Reg(X)\cap\Lambda$,
$$
f_t(\cF^{s}_{2K_2(y)}(z))\subset \cF^{s}_\xi( f_t(z)).
$$
Let $t_n$ be the smallest term in the sequence $(t_i)^\infty_{i=1}$ with $t_n>T(y)$, and take $K^u(x)$ large enough such that $ f_{t_n}(W^u_{2K_1(y)}(\Orb(p)))\subset W^u_{K^u(x)}(\Orb(p))$. Then~\eqref{e.(W^u)^c11} implies that 
$$
W^u_{K^u(x)}(\Orb(p))\pitchfork \cF^{s}_\xi(x)\ne\emptyset,
$$
as desired. The continuity of $K(x)$ easily follows from the construction and the continuity of the invariant manifolds.
\end{proof}

The next lemma states that once a relative compact neighborhood $\CP_\sigma$ is removed from the unstable manifold of each singularity, one obtains the transverse intersection between $\cF^{s}_\xi(x)$ and $W^u(\Orb(p))$; note that according to Lemma~\ref{l.bowen1}, $\xi$ depends on $\delta$, and so is $\CP_\sigma$.

\begin{lemma}\label{l.(W^u)}
Given $\xi>0$, there exists a compact (under the relative topology) neighborhood $\CP_\sigma\subset W^u(\sigma)$ for each $\sigma\in\Sing(X)$, such that for every $x\in \cup_\sigma\left(W^u(\sigma)\setminus \CP_\sigma\right)$, there exists $K(x)>0$ such that 
$$
W^u_{K(x)}(\Orb(p))\pitchfork \cF^{s}_\xi(x)\ne\emptyset. 
$$
\end{lemma}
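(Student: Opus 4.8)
The statement is essentially a ``compactification'' of Lemma~\ref{l.(W^u)^c}: that lemma already gives, for every $x$ lying outside all the unstable manifolds $W^u(\sigma)$, a transverse intersection $W^u_{K^u(x)}(\Orb(p))\pitchfork W^s_\xi(x)\ne\emptyset$ with $K^u(x)$ depending continuously on $x$. The difficulty is that we now want the same conclusion for points $x$ that \emph{do} lie in some $W^u(\sigma)$, except for a compact ``core'' $D_\sigma$ around $\sigma$ which we are allowed to discard. So the plan is: first handle a single singularity $\sigma$, and produce the neighborhood $D_\sigma$ by a compactness/continuity argument using the fact that $W^u(\sigma)$ minus any such compact core is foliated by forward orbits that eventually escape to the region where Theorem~\ref{t.top}(3) and Lemma~\ref{l.(W^u)^c}-type reasoning apply; then take the union over the finitely many singularities in $\Lambda$.

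\textbf{Key steps.} Fix $\sigma\in\Sing(X)\cap\Lambda$ and $\xi>0$. First I would note that for a point $y\in W^u(\sigma)\setminus\{\sigma\}$, its backward orbit $y_{-t}\to\sigma$ as $t\to\infty$, so $y$ is \emph{not} in the situation of Lemma~\ref{l.(W^u)^c} (whose hypothesis was $x\notin\cup_\sigma W^u(\sigma)$); however, its \emph{forward} orbit does not stay near $\sigma$ (since $\sigma$ is a hyperbolic singularity and $y$ is on the unstable manifold), so there is a first time $T(y)>0$ at which $y_{T(y)}$ leaves a fixed small ball $B_{r_0}(\sigma)$, and moreover $y_{T(y)}$ is a regular point bounded away from all singularities. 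Now I would invoke Theorem~\ref{t.top}(3) directly at the point $z:=y_{T(y)}$: there exists $K^s(z)>0$ with $W^s_{K^s(z)}(z)\pitchfork W^u(\Orb(p))\ne\emptyset$; equivalently $W^u_{K(z)}(\Orb(p))\pitchfork W^s_{K^s(z)}(z)\ne\emptyset$ for some $K(z)$. Pulling this intersection back by $f_{-T(y)}$ and using the uniform contraction of $E^s$ by the tangent flow (so that stable manifolds of definite size get pushed, under backward iteration by $f_{-T(y)}$, well inside $W^s_\xi$ once $T(y)$ is large), and using that $W^u(\Orb(p))$ is forward-invariant, I obtain $W^u_{K(y)}(\Orb(p))\pitchfork W^s_\xi(y)\ne\emptyset$ for a suitable $K(y)$. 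The only thing that can go wrong is if $T(y)$ is \emph{small}, i.e.\ $y$ is already well outside $B_{r_0}(\sigma)$; but for those $y$ I again apply Theorem~\ref{t.top}(3) at $y$ itself. The crucial quantitative point is: to push a $W^s_{K^s(z)}$ of definite size into a $W^s_\xi$ of the prescribed (small, $\delta$-dependent) size by backward iteration, one needs the escape time $T(y)$ to exceed some threshold $T_0=T_0(\xi)$; equivalently, one needs $y$ to be sufficiently close to $\sigma$ along $W^u(\sigma)$ so that it takes a long time to escape $B_{r_0}(\sigma)$ — \emph{but} $y$ must also not be \emph{too} close, since near $\sigma$ the size $K^s(z)$ at the escape point may itself shrink or the continuity breaks. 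Combining both constraints: there is an annular region $A_\sigma\subset W^u(\sigma)$, compactly contained between $\sigma$ and the boundary of escape, on which the argument works uniformly; and there is a compact core $D_\sigma\ni\sigma$ (the points escaping too slowly / too close to $\sigma$) that we must remove. For $x\in W^u(\sigma)\setminus D_\sigma$ that escapes $B_{r_0}(\sigma)$ within the threshold time, the escape point lies in a \emph{compact} set of regular points away from singularities, so $K^s(\cdot)$ is bounded there by Theorem~\ref{t.top}(3) and continuity, and one gets a uniform $K(x)$.

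\textbf{Carrying it out and the main obstacle.} Concretely: (1) fix $r_0$ small and, for $x\in W^u(\sigma)$, let $T(x)=\inf\{t\ge 0: x_t\notin B_{r_0}(\sigma)\}$, which is finite and continuous where defined, and $T(x)\to\infty$ as $x\to\sigma$ along $W^u(\sigma)$; (2) by the uniform contraction of $E^s$, choose $T_0$ so that $t\ge T_0\ \Rightarrow\ f_{-t}(W^s_{R}(z))\subset W^s_\xi(f_{-t}(z))$ for every regular $z$ bounded away from $\Sing(X)$ and every $R$ up to a uniform bound $R_0$; (3) set $D_\sigma=\{x\in W^u(\sigma): T(x)\ge T_0\}\cup\{\sigma\}$, which is a compact neighborhood of $\sigma$ in $W^u(\sigma)$ (compact because $T(\cdot)\ge T_0$ cuts out a closed set accumulating only on $\sigma$, and a relative neighborhood because $T(x)\to\infty$ near $\sigma$); (4) for $x\in W^u(\sigma)\setminus D_\sigma$ we have $T(x)<T_0$, hence the escape point $z=x_{T(x)}$ ranges over a compact subset of $\Reg(X)$ at distance $\ge$ const from $\Sing(X)$ (it lies on $\partial B_{r_0}(\sigma)$, essentially, or outside), so Theorem~\ref{t.top}(3) plus continuity of the invariant manifolds gives a uniform $R_0$ with $W^s_{R_0}(z)\pitchfork W^u(\Orb(p))\ne\emptyset$; (5) pull this intersection back by $f_{-T(x)}$: forward-invariance of $W^u(\Orb(p))$ and, by step (2) with $t=T(x)$... wait, here $T(x)<T_0$, so I cannot use (2) to shrink into $W^s_\xi$ — instead I note $T(x)<T_0$ is \emph{bounded}, so $f_{-T(x)}(W^s_{R_0}(z))$ still has size bounded below, and I simply take $\xi$ small enough at the outset and, if necessary, enlarge $D_\sigma$ so that the remaining annulus $W^u(\sigma)\setminus D_\sigma$ starts far enough from $\sigma$ that the pulled-back stable manifold lands inside $W^s_\xi(x)$; the honest way is to choose $T_0$ and a second threshold $T_1<T_0$ and set $D_\sigma=\{T(x)\notin[T_1,T_0]\}\cup\{\sigma\}$ so that on the complement $T(x)$ is bounded \emph{away from both $0$ and $\infty$}, the escape point is in a fixed compact set of regular points, the escape time is bounded, and everything is uniform; (6) finally take $D_\Sing=\cup_\sigma D_\sigma$ over the finitely many singularities and let $K(x)$ be the (finite, uniform on compact pieces) constant produced. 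The main obstacle, and the only subtle point, is step (5): arranging the pulled-back transverse stable manifold to land \emph{inside} the prescribed $W^s_\xi(x)$ uniformly — this forces the two-sided control on the escape time $T(x)$ and hence the precise shape of $D_\sigma$ (a genuine compact neighborhood, not just $\{\sigma\}$), and it is exactly here that the $\delta$-dependence (via $\xi$, via Lemma~\ref{l.bowen1}) of $D_\sigma$ enters, which is the phenomenon flagged in the discussion opening Section~\ref{s.spec} about why the original CT criterion fails.
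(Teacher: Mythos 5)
There is a genuine gap, and it is structural: you have the direction of the contraction reversed. Your step (2) asserts that for $t\ge T_0$ one has $f_{-t}(W^s_R(z))\subset W^s_\xi(f_{-t}(z))$; but uniform contraction of $E^s$ under the \emph{forward} flow gives $f_t(W^s_R(y))\subset W^s_\xi(f_t(y))$, while backward iteration \emph{expands} stable disks. Consequently your whole mechanism — anchor at the forward escape point $z=x_{T(x)}$, intersect $W^s_{R_0}(z)$ with $W^u(\Orb(p))$ via Theorem~\ref{t.top}(3), then pull back by $f_{-T(x)}$ — can never shrink the stable disk down to the prescribed size $\xi$: the pulled-back intersection point lies in $W^s(x)$ but at distance up to (expansion factor)$\times R_0$ from $x$, and $\xi$ is given, not at your disposal. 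You notice the problem in step (5), but the patches offered do not repair it: "enlarging $D_\sigma$ so the annulus starts far from $\sigma$" provides no shrinking mechanism, and the two-sided threshold $D_\sigma=\{T(x)\notin[T_1,T_0]\}\cup\{\sigma\}$ (with $T$ the forward escape time from $B_{r_0}(\sigma)$) is not compact, since it contains all of $W^u(\sigma)\setminus B_{r_0}(\sigma)$, where $T\equiv 0$. Even the one-sided set $\{T(x)\ge T_0\}$ is problematic for intrinsic compactness, because $W^u(\sigma)$ is dense in $\Lambda$ and re-enters $B_{r_0}(\sigma)$ along branches far from $\sigma$.

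The paper's argument runs in the opposite direction, and that is the missing idea. Let $D^0_\sigma$ be the closure of the connected component of $W^u(\sigma)\cap B_{r_0}(\sigma)$ containing $\sigma$ (taking the component is what guarantees compactness), choose $T_0$ so that $t\ge T_0$ implies $f_t(W^s_{K''}(y))\subset W^s_\xi(f_t(y))$, and set $D_\sigma=\bigcup_{t\in[0,T_0]}f_t(D^0_\sigma)$, which is compact as a continuous image of $[0,T_0]\times D^0_\sigma$. For $x\in W^u(\sigma)\setminus D_\sigma$, the \emph{backward} orbit of $x$ meets $\partial D^0_\sigma$ at some time $T(x)>T_0$; at the point $f_{-T(x)}(x)$, which is at definite distance from $\Sing(X)$, Theorem~\ref{t.top}(3) gives $W^u_{K'}(p)\pitchfork W^s_{K''}(f_{-T(x)}(x))\ne\emptyset$ with uniform constants, and pushing this intersection \emph{forward} by $f_{T(x)}$ contracts the stable disk into $W^s_\xi(x)$ while staying inside $W^u_{K(x)}(\Orb(p))$. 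So the set one must discard consists of the points whose forward travel time from $\partial D^0_\sigma$ is too short (those within time $T_0$ of the core), not the slow forward escapers near $\sigma$; with your parametrization by forward escape time the long-time contraction you need is simply not available.
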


\begin{proof}
For each $\sigma\in \Sing(X)$, denote by $\CP_\sigma^0$ the closure of the connected component of $B_{r_0}(\sigma)\cap W^u(\sigma)$  that contains $\sigma$. Fix $T>0$ and denote by 
$$
D^T_\sigma = \bigcup_{t\in[0,T]} f_t(D^0_\sigma).
$$
which is a compact subset of $W^u(\sigma)$. In what follows, we will prove that there exists $T>0$ sufficiently large such that the corresponding sets $D^T_\sigma$ have the desired property.

According to Assumption (B), there exist $K',K''>0$ such that
\begin{equation}\label{e.(W^u)}
	W^u_{K'}(\Orb(p))\pitchfork \cF^{s}_{K''}(y)\ne\emptyset
\end{equation}
for $y\notin B_{r_0}(\Sing(X)\cap\Lambda)$. On the other hand, if $x\in   \cup_\sigma\left(W^u(\sigma)\setminus D^T_\sigma\right)$ then there exist $T(x)>T$ and a singularity $\sigma\in \Sing(X)$ such that 
$$
f_{-T(x)}(x)\in \partial (\CP_\sigma)
$$
where $\partial (\CP_\sigma)$ refers to the relative boundary of $\CP_\sigma$ inside $W^u(\sigma)$. As such, we have 
\begin{equation}\label{e.(W^u)1}
	W^u_{K'}(\Orb(p))\pitchfork \cF^{s}_{K''}( f_{-T(x)}(x))\ne\emptyset.
\end{equation}

By the uniform contraction of $D f_t$ along the $E^s$ bundle, there exists $T_0>0$ such that for every $y\in\Lambda$, it holds for every $t\ge T_0$ that 
$$
f_t(\cF^{s}_{K''}(y)) \subset \cF^{s}_\xi( f_t(y)). 
$$
In particular, this inclusion applies to $y= f_{-T(x)}(x)$. 
Now we let $K(x)>0$ sufficiently large enough so that 
$$
f_{T(x)}(W^u_{K'}(\Orb(p))) \subset W^u_{K(x)}(\Orb(p)). 
$$
Finally we fix $T=T_0$ and consider the compact sets $ D^{T_0}_\sigma$. For  a point $x\in \cup_\sigma\left(W^u(\sigma)\setminus D^{T_0}_\sigma\right)$, we iterate~\eqref{e.(W^u)1} by $ f_{T(x)}$ and obtain
$$
W^u_{K(x)}(\Orb(p))\pitchfork \cF^{s}_\xi(x)\ne\emptyset,
$$
as desired.
\end{proof}

Combining the previous two lemmas, we obtain the following result concerning the transverse intersection between 
$\cF^{s}(x)$ and $W^u(\Orb(p))$ at a uniform scale. This requires that a small neighborhood of $\cup_\sigma \CP_\sigma$ be removed.
\begin{lemma}\label{l.p_intersection}
	For any open set $W_\Sing$ that satisfies  $\Sing(X)\cap\Lambda\subset  W_\Sing\subset  B_{r_0}(\Sing(X)\cap\Lambda)$ and any $\xi>0$ there exists a compact set $\CP_\sigma\subset W^u(\sigma)$ such that for every open neighborhood $U_\Sing$ of $\cup_\sigma \CP_\sigma$, there exists $K_0>0$ such that 
	$$
	W^u_{K_0}(\Orb(p))\pitchfork \cF^{s}_\xi(x)\ne\emptyset
	$$  
	holds for every $x\notin W_\Sing\cup U_\Sing$. 
\end{lemma}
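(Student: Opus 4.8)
The plan is to combine Lemma~\ref{l.(W^u)^c} and Lemma~\ref{l.(W^u)} to cover all points $x\notin W_\Sing$, and then use a compactness argument to pass from the pointwise sizes $K^u(x)$, $K(x)$ to a uniform constant $K_0$, at the cost of removing a neighborhood of $\cup_\sigma D_\sigma$.

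First I would fix $W_\Sing$ with $\Sing(X)\cap\Lambda\subset W_\Sing\subset B_{r_0}(\Sing(X)\cap\Lambda)$, and fix $\xi>0$. Apply Lemma~\ref{l.(W^u)} to this $\xi$ to obtain a compact (in the relative topology) neighborhood $D_\sigma\subset W^u(\sigma)$ for each $\sigma\in\Sing(X)$, such that for every $x\in\cup_\sigma(W^u(\sigma)\setminus D_\sigma)$ there is $K(x)>0$ with $W^u_{K(x)}(\Orb(p))\pitchfork W^s_\xi(x)\ne\emptyset$; moreover, by the construction in the proof of Lemma~\ref{l.(W^u)} (where $D_\sigma = D^{T_0}_\sigma$ is built from flowing $D^0_\sigma$ forward), $K(x)$ depends continuously on $x$ on a neighborhood of $\cup_\sigma(W^u(\sigma)\setminus D_\sigma)$. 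On the other hand, for $x\in(\cup_\sigma W^u(\sigma))^c$, Lemma~\ref{l.(W^u)^c} gives a continuous choice of $K^u(x)>0$ with $W^u_{K^u(x)}(\Orb(p))\pitchfork W^s_\xi(x)\ne\emptyset$. Thus the transverse intersection of $W^u(\Orb(p))$ with $W^s_\xi(x)$ holds, with a locally uniform (continuous) size, for every $x$ in the set
$$
A := \left(\cup_\sigma W^u(\sigma)\right)^c \,\cup\, \cup_\sigma\left(W^u(\sigma)\setminus D_\sigma\right) = \bM\setminus \cup_\sigma D_\sigma.
$$

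Next I would set up the compactness argument. Let $U_\Sing$ be any open neighborhood of $\cup_\sigma D_\sigma$. The set
$$
\Lambda\setminus\left(W_\Sing\cup U_\Sing\right)
$$
is compact (it is a closed subset of the compact set $\Lambda$) and is contained in $A$ — indeed any point of $\Lambda$ outside $U_\Sing$ is in particular outside $\cup_\sigma D_\sigma$, hence in $A$. For each $x$ in this compact set we have a transverse intersection point $W^u_{K(x) \text{ or } K^u(x)}(\Orb(p))\pitchfork W^s_\xi(x)\ne\emptyset$; transversality is an open condition, and the invariant manifolds vary continuously, so there is a neighborhood $V_x$ of $x$ and a radius $R_x$ (take $R_x = 2K(x)$ or $2K^u(x)$, say) such that $W^u_{R_x}(\Orb(p))\pitchfork W^s_\xi(x')\ne\emptyset$ for every $x'\in V_x$. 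Extract a finite subcover $V_{x_1},\dots,V_{x_m}$ of $\Lambda\setminus(W_\Sing\cup U_\Sing)$ and put $K_0 = \max_j R_{x_j}$. Then $W^u_{K_0}(\Orb(p))\pitchfork W^s_\xi(x)\ne\emptyset$ for every $x\in\Lambda\setminus(W_\Sing\cup U_\Sing)$, which is the assertion (restricting attention to $x\in\Lambda$, since $W^s_\xi(x)$ and $W^u(\sigma)$ are defined via the dynamics on $\Lambda$).

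The main obstacle — and the point I would be most careful about in the write-up — is the continuity/openness step: one needs that the transverse intersection of $W^u(\Orb(p))$ with the local stable manifold $W^s_\xi(x)$ persists under perturbation of $x$ with a uniform size bound, which relies on (i) continuous dependence of the local stable leaves $x\mapsto W^s_\xi(x)$ in the $C^1$ topology (standard, since $E^s$ is uniformly contracted and the stable foliation $\cF^s$ has continuously varying $C^1$ leaves), and (ii) the fact that a transverse intersection is stable under $C^1$-small perturbations of either submanifold. A minor subtlety is matching up the two regimes: near the boundary $\partial D_\sigma$ the relevant size comes from Lemma~\ref{l.(W^u)} and grows as $x\to\cup_\sigma D_\sigma$, which is exactly why $U_\Sing$ (and hence a compact truncation away from $\cup_\sigma D_\sigma$) must be removed before taking the finite subcover; I would emphasize that $K_0$ depends on $U_\Sing$. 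Everything else is routine bookkeeping.
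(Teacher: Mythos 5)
Your argument is essentially the paper's own proof: the paper likewise notes that, by continuity of the invariant manifolds (replacing $K(x)$ by $2K(x)$), the pointwise conclusions of Lemmas~\ref{l.(W^u)^c} and~\ref{l.(W^u)} persist on a neighborhood of each point, and then concludes by the standard compactness argument on $\Lambda\setminus(W_\Sing\cup U_\Sing)$. You simply spell out the two cases and the openness-of-transversality step in more detail, which is fine.
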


\begin{proof}
	First, note that by the continuity of the invariant manifolds and replace $K(x)$ by $2K(x)$ if necessary, the conclusion of Lemma~\ref{l.(W^u)}  holds for every $y$ (not necessarily in $\cup_\sigma W^u(\sigma)$) sufficiently close to $x$. Then Lemma~\ref{l.p_intersection} follows from a standard compactness argument applied to the complement of $W_\Sing\cup U_\Sing $. 
\end{proof}

We remark that, since $\Lambda$ is an attractor, we have $W^u(\Orb(p))\subset \Lambda$. Consequently, the point of intersection $q_x\in 	W^u_{K_0}(\Orb(p))\pitchfork \cF^{s}_\xi(x)$ is a point in $\Lambda$. Since we already assume that the bundles $E^{ss}$ and $F^{cu}$ are orthogonal at every point in $\Lambda$, the transverse intersection in the statement of the lemma is indeed orthogonal.

For the next lemma, we use the notation $\pitchfork^N$ to denote the transverse intersection inside $\cN_\epsilon(z)$ between two submanifolds of $\cN_\epsilon(z)$. 

Let $\zeta_1>0$ be given by Lemma~\ref{l.udisk}.
\begin{lemma}\label{l.D_intersection}
	For any open set $W_\Sing$ that satisfies  $\Sing(X)\cap\Lambda\subset  W_\Sing\subset  B_{r_0}(\Sing(X)\cap\Lambda)$  and every $\alpha>0$, $\xi>0$ small enough, there exist  compact sets $\CP_\sigma\subset W^u(\sigma), \sigma\in\Sing(X)\cap \Lambda$ such that for every open neighborhood $U_\Sing$ of $\cup_\sigma \CP_\sigma$,  there exist $\zeta>0$ and
	$\tau>0$ with the following property:
	
	For every $y\notin W_\Sing$ and $z\notin  W_\Sing\cup U_\Sing$, let $D$ be an $(\alpha,cu)$-disk centered at some $y'\in W^{s,N}_\xi(y)$ with size at least $\zeta_1$. Then there exists $s\in[0,\tau]$ such that $ \cP_z(f_s(D)\cap B_\epsilon(z))$ contains a $(\alpha,cu)$-disk  $D''$  centered at $z'\in \cP_z(f_s(D)\cap B_\epsilon(z))\pitchfork^N W^{s,N}_{2\xi}(z)$ 
	with size at least $\zeta$.
\end{lemma}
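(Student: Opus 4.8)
\textbf{Proof plan for Lemma~\ref{l.D_intersection}.}

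The plan is to use the hyperbolic periodic orbit $\Orb(p)$ as the ``bridge'': the disk $D$ sits near the unstable manifold of $y$ (inside a thin stable neighborhood $W^{s,N}_\xi(y)$), and by Lemma~\ref{l.p_intersection} the stable manifold $W^s_\xi(y)$ has a uniform-size transverse (indeed orthogonal) intersection $q_y$ with $W^u_{K_0}(\Orb(p))$; dually, $W^s_\xi(z)$ has a uniform transverse intersection $q_z$ with $W^u_{K_0}(\Orb(p))$. So the strategy is: first flow $D$ forward for a bounded time and project into $\cN_\epsilon(z)$, producing a disk that is close (in the $C^1$ sense, at the relative scale $\rho_0|X(\cdot)|$) to a piece of $W^{u,N}$ of a point on $\Orb(p)$, hence inherits a uniform-size $(\alpha,cu)$-disk crossing $W^{s,N}_{2\xi}(z)$ transversally. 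The bounded transition time $\tau$ is exactly the time it takes to go from a neighborhood of $q_y$ along $\Orb(p)$ to a neighborhood of $q_z$; because $\Orb(p)$ is a single compact periodic orbit, this time is bounded over all $y,z$ in the relevant compact sets, which is how we get $\tau = \tau(U_\Sing)$ uniform.

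The key steps, in order, would be: (1) By Lemma~\ref{l.p_intersection}, fix $K_0>0$ so that $W^u_{K_0}(\Orb(p))\pitchfork W^s_\xi(w)\ne\emptyset$ for every $w\notin W_\Sing\cup U_\Sing$; since $\Lambda$ is an attractor this intersection lies in $\Lambda$ and is orthogonal. (2) Choose $\zeta_1$ as in the statement so that a $(\alpha,cu)$-disk $D$ of size $\ge\zeta_1$ centered at $y'\in W^{s,N}_\xi(y)$, when flowed forward, contracts in the stable direction but keeps a definite size transverse to stable: here we use that vectors in $E^s_N$ are uniformly contracted by $\psi^*_t$ (Lemma~\ref{l.basic}) while the $(\alpha,cu)$-cone is forward-invariant and, once a $cu$-hyperbolic time is reached, expanded; the factor $1/(C^X(1-\lambda_2^{-1}))$ absorbs the flow-speed ratios and the geometric series controlling how much stable-direction ``slack'' can be lost. (3) Use $q_y$ as a reference: the forward orbit of $D$ shadows (at the relative scale $\rho_0|X|$) the forward orbit of $q_y$, which is a point in $\Lambda$ whose forward orbit passes near $\Orb(p)$; pick a bounded time $s_0$ so that $f_{s_0}(q_y)$ is within a tiny (uniform) ball of a point $p^*\in\Orb(p)$. (4) Continue along $\Orb(p)$ for a further bounded time until arriving near the point $p^{**}\in\Orb(p)$ lying in $W^s_\xi(z)$ (i.e.\ near $q_z$); at this moment $f_s(D)$, with $s\le\tau$, is a $C^1$-small perturbation at relative scale of a local unstable disk of $p^{**}$, so after projecting by $\cP_z$ and restricting to $B_\epsilon(z)$ it contains a genuine $(\alpha,cu)$-disk $D'$ of uniform size $\ge\zeta$, centered at a point $z'\in\cP_z(f_s(D)\cap B_\epsilon(z))\pitchfork^N W^{s,N}_{2\xi}(z)$. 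The constant $\zeta$ and the compact sets $D_\sigma$ are exactly those supplied by Lemma~\ref{l.p_intersection} and Lemma~\ref{l.udisk} (possibly shrunk once more), while $\tau$ is the supremum — finite by compactness of $\Orb(p)$ and of the complement of $W_\Sing\cup U_\Sing$ — of the transition times constructed in steps (3)--(4).

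The main obstacle I expect is step (2)--(3): controlling the size of the surviving $cu$-disk \emph{uniformly} while the disk is flowed through a region where there need not be any $cu$-hyperbolic time, so that a priori the $\psi^*_t$-cocycle only has a bounded norm (Lemma~\ref{l.scaledflow}) and no expansion. The point is that we are not asserting expansion during this bounded transition — only that the size is not lost too badly — and this is why the hypothesis demands the inflated size $\zeta_1$ rather than $\zeta$: the worst-case contraction over a time window of length $\tau$ is itself bounded (again by Lemma~\ref{l.scaledflow} applied with $\tau$ in place of the generic time-bound), and the arithmetic relating $\zeta_1$, $\zeta$, $C^X$ and $\lambda_2$ is arranged so that, after the bounded flow time and the projection $\cP_z$ (whose derivative is uniformly bounded by Proposition~\ref{p.tubular3}), a disk of size $\ge\zeta$ survives. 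A secondary technical nuisance is ensuring the intersection with $W^{s,N}_{2\xi}(z)$ is still transverse after projection and perturbation: this follows because the original transverse intersection at $q_z$ is orthogonal (the bundles $E^s$ and $F^{cu}$ being orthogonal on $\Lambda$) and transversality is $C^1$-open, so taking $\xi$ small enough (which is already permitted in the statement) keeps the intersection angle bounded away from zero throughout.
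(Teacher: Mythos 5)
There is a genuine gap, and it sits exactly at the heart of your steps (3)--(4). The point $q_y\in W^u_{K_0}(\Orb(p))\pitchfork W^s_\xi(y)$ lies on the \emph{unstable} manifold of the periodic orbit, so it is its \emph{backward} orbit that converges to $\Orb(p)$; its forward orbit tracks the forward orbit of $y$ (it lies on $W^s_\xi(y)$), which for an arbitrary $y\notin W_\Sing$ may wander anywhere in $\Lambda$ and can even converge to a singularity. Hence there is no time $s_0$ --- let alone one bounded uniformly in $y$ --- at which $f_{s_0}(q_y)$ is guaranteed to be near $\Orb(p)$, and your claimed uniform $\tau$ collapses. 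Moreover, even if the flowed disk did pass $C^0$-close to a point of $\Orb(p)$, that alone would not make $f_s(D)$ a $C^1$-small perturbation of a local unstable disk of the periodic orbit, which is what you need to produce a uniform-size $(\alpha,cu)$-disk crossing $W^{s,N}_{2\xi}(z)$; also note there is no reason a point $p^{**}$ of $\Orb(p)$ itself lies in $W^s_\xi(z)$ --- Lemma~\ref{l.p_intersection} only gives a point of $W^u_{K_0}(\Orb(p))$ there.

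The missing ingredients are the density of $W^s(\Orb(p))$ and the Inclination Lemma, and this is how the paper argues: since $W^s(\Orb(p))$ is dense near $\Lambda$ (Theorem~\ref{t.top}), one fixes $K_3$ with $W^s_{K_3}(\Orb(p))$ being $\zeta_1/2$-dense, so the $(\alpha,cu)$-disk $D$ (nearly orthogonal to $E^s$) meets $W^s_{K_3}(\Orb(p))$ transversally at an almost right angle, with a sub-disk $D'$ of size $\zeta_1/2$ around the intersection point. The Inclination Lemma applied to this uniformly transverse crossing then gives a time $\tau$ depending only on $\xi$ and $\alpha$ (not on $D,y$) such that for some $s\in[0,\tau]$ the flow-saturated image $f_s(f_{[-\rho_0,\rho_0]}(D'))$ is $\min\{\alpha,\xi\}$-close in $C^1$ to the fixed compact piece $W^u_{2K_0}(\Orb(p))\cap B_\epsilon(z)$; by Lemma~\ref{l.p_intersection} the projection of the latter crosses $W^{s,N}_{\xi}(z)$ transversally in $\cN_\epsilon(z)$, and the $C^1$-closeness transfers this to $\cP_z(f_s(D')\cap B_\epsilon(z))$, yielding the disk $D'$ of uniform size $\zeta$ meeting $W^{s,N}_{2\xi}(z)$. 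In particular the size $\zeta$ of the output disk comes from $C^1$-approximating a fixed compact unstable piece, not from your bound on the worst-case contraction of the flow over a window of length $\tau$; your use of Lemma~\ref{l.p_intersection} at the point $y$ is simply not the mechanism that brings the disk to the unstable manifold of $\Orb(p)$.
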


\begin{figure}[h!]
	\centering
	\def\svgwidth{\columnwidth}
	\includegraphics[scale=0.93]{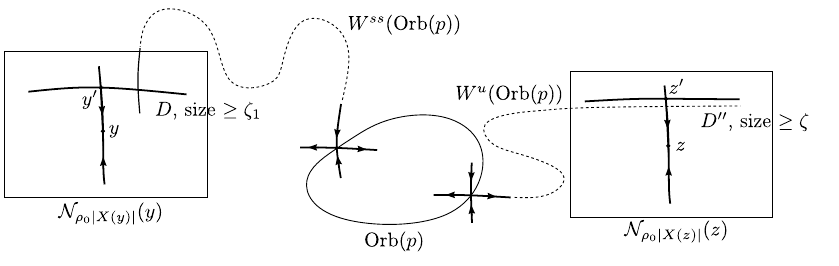}
	\caption{$\Orb(p)$ as a ``bridge''.}
	\label{f.lemma9.6}
\end{figure}

\begin{proof}
	We will use the hyperbolic periodic orbit $\Orb(p)$ as a ``bridge'' to build the transverse intersection. 
	
	 Recall that $\Lambda$ is a sectional-hyperbolic attractor, and the stable foliation $\cF^{s}$ is well-defined in an open neighborhood of $\Lambda$. 	
	Since $\cF^{s}(\Orb(p))$ is dense in a neighborhood of $\Lambda$, for every regular point $z$ in a small neighborhood of $\Lambda$, any disk $D$ on the normal plane of $z$ tangent to the $\alpha$-cone of $F^{cu}_N$ must have a nonempty transverse intersection with $\cF^{s}(\Orb(p))$ (which has dimension $\dim E^{ss}+1$)  at an angle within $(\frac\pi2-2\alpha,\frac\pi2+2\alpha)$. By the compactness of $W_\Sing^c$, we may take $K_3>0$ large enough such that for every $y\notin W_\Sing$, $\cF^{s}_{K_3}(\Orb(p))$ transversely intersect with any $(\alpha,cu)$-disk $D$ centered at some $y'\in W^{s,N}_\xi(y)$ with size at least $\zeta_1$.
	Furthermore, there is a sub-disk $D'$ contained in $D$ centered at $\tilde y'$ with size at least  $\zeta_1/2$. See Figure~\ref{f.lemma9.6}.
	
	By the previous lemma, we have  
	$$
	W^u_{K_0}(\Orb(p))\pitchfork \cF^{s}_\xi(z)\ne\emptyset.
	$$
	By the Inclination Lemma (see, for instance,~\cite[Proposition 6.2.23]{Katok}),  there exist $\tau>0$ depending on $\xi$ and $\alpha$ but is independent of $D$ and $y$, such that for some $s\in[0,\tau]$, $f_s(f_{[-\rho_0,\rho_0]}(D'))$ is  $\min\{\alpha, \xi\}$-approximated by $W^u_{2 K_0}(\Orb(p))\cap B_{\epsilon}(z)$ in $C^1$ topology, while $\cP_z\left(W^u_{2 K_0}(\Orb(p))\cap B_{\epsilon}(z)\right)$ has a transverse intersection with $W^{s,N}_{\xi}(z)$ inside $\cN_\epsilon (z)$. This shows that $\cP_z\left(f_s(D')\cap B_{\epsilon}(z)\right)$ has a transverse intersection with $W^{s,N}_{2\xi}(z)$ inside $\cN_\epsilon(z)$. Changing $s$  by no more than $\rho_0$ (recall the definition of $\cP_{z}$ from~\eqref{e.Px1}), we may assume that the point of intersection is $z' \in f_{s}(D').$
	In addition,  $\cP_{z}(f_s(D')\cap B_\epsilon(z)) \subset \cN_\epsilon (z)$ contains a disk $D''$ centered at $z'$ with size at least $\zeta$ and is tangent to the $(\alpha,cu)$-cone.

\end{proof}
\begin{remark}\label{r.timeshift}
	By~\eqref{e.Px1} and~\eqref{e.Px2} we see that for every $x\in D'$, there exists $\tilde x\in D$ such that  $x = f_{s(x)}(\tilde x)$ where $s(x)$ is a continuous function such that 
	$$
	|s-s(x)|<\rho_0.
	$$
	In particular, we have $s(x)\le\tau+\rho_0$ for all $x\in D'$.
\end{remark}

\begin{proof}[Proof of Theorem~\ref{t.spec}]
	In the proof of this theorem, to make our notation readable, we will stop using the notation $x_t = f_t(x)$ temporarily.

	Fix $r_0>0,\delta>0$ and an open set $W_\Sing$ with $\Sing(X)\cap\Lambda\subset W_\Sing\subset B_{r_0}(\Sing(X)\cap\Lambda)$. Then we take $\alpha>0$ small enough and  $\xi$ small enough such that Lemma~\ref{l.bowen1} holds. Then we apply Lemma~\ref{l.udisk} to obtain $\zeta_1$, and apply
	Lemma~\ref{l.D_intersection} on any small neighborhood $U_\Sing$ of $\cup_\sigma\CP_\sigma$ to obtain $\tau(U_\Sing)$ and $\zeta$. Decreasing $\zeta$ if necessary, we assume that $\zeta$ also satisfies Lemma~\ref{l.bowen1}. With now fixed $\zeta_1$ and $\zeta$, we use Lemma~\ref{l.udisk} to obtain $L$, the lower bound for the length of orbit segments that we will consider below.  

	Take orbit segments $(x^1,t_1)$, $\ldots, (x^k,t_k)\in \cgs^{>L}(\delta,r_0,W_\Sing,U_\Sing)$, we will construct a sequence of points $w^1,\ldots, w^k$ recursively such that $w^k$ has the desired shadowing property. 
	
	\noindent {\em Construction}:
	We start with $w^1=x^1=y^1$ and take $D^1$ a $(\alpha,cu)$-disk at $w^1$ with size at least $\zeta$.  Furthermore, we may assume that $D^1$ belongs to the unstable manifold of $\Orb(p)$; this is possible due to Assumption (B1) of Theorem \ref{m.B}. Since $\Lambda$ is an attractor and therefore contains the unstable manifold of every hyperbolic periodic orbit in $\Lambda$, this means that $D^1\subset \Lambda$. By Lemma~\ref{l.udisk}, $D^1$ contains a smaller disk $(D^1)'$ such that $(D^1)'': = \cP_{t_1,x}((D^1)')$ is a $(\alpha,cu)$-disk with size $\zeta_1>0$ and $(D^1)''\pitchfork^N W^{x,N}_\xi((x^1)_{t_1})\ne\emptyset$. We then 
	apply Lemma~\ref{l.D_intersection} to obtain a time $s_1\in[0,\tau]$ for which $\cP_{x^2}( f_{s_1}((D^1)''))$ contains a $(\alpha,cu)$-disk $D^2$ centered at a point $y^2\in \cP_{x^2}( f_{s_1}((D^1)'')\cap B_\epsilon(x^2))\pitchfork^N W^{s,N}_{2\xi}(x^2)$ with size  $\zeta$. 
	Then define
	$$
	w^2 =  \cP_{x_1}(f_{-s_1-t_1}(y^2)) = f_{-(t_1 + s_1^1)}(y^2)\in D^1
	$$
	for some $s_1^1 < s_1 + 2\rho_0 < \tau+2\rho_0$ (recall the definition of $\cP_{x}$ by~\eqref{e.Px1}). 
	Note that $	w^2\in B_{t_1, \delta}(x^1)$ by Lemma~\ref{l.bowen1}. Also since $  f_{s_1^1+t_1}(w^2)=y^2\in W^{s,N}_{2\xi}(x^2)$, we get $f_{s_1^1+t_1}(w^2)\in B_{\delta,t_2}(x^2)$. Indeed, for all $\overline y^2\in D^2$, by Remark~\ref{r.timeshift} we find $s(\overline y^2)\le \tau+\rho_0$ such that $f_{-s(\overline y^2)}(y^2)\in (D^1)''$; as a result, Lemma~\ref{l.bowen1} shows that $f_{-t_1-s(\overline y^2)}(\overline y^2)\in B_{t_1,\delta}(x^1)$. This observation will be useful when we later find the transition time for the next point $w^3$.

	Having constructed $w^1,\ldots, w^i$, a sequence of disks $D^j$ for $1\le j\le i$ which are $(\alpha,cu)$-disks with size $\zeta$ centered at a point of transverse intersection $y^j\in D^j\pitchfork^N W^{s,N}_{2\xi}(x^j)$, as well as a sequence of times\footnote{The superscript in $s_j^i$ is because, since all the points $\omega^i$ are defined using the sectional Poincar\'e map, each time we define the next $w^{i+1}$ we must make a small modification in the existing transition times $s_j$. However, as we shall see later, the modification made is exponentially small; therefore, all the transition times are uniformly bounded.} $s_1^{i-1},\ldots, s_{i-1}^{i-1}\in[0,\tau]$, we now construct $w^{i+1}, D^{i+1}$, $y^{i+1}$ and the new transition times $\{s_j^i,j=1,\ldots,i\}$. Let $(D^i)'$ be the $(\alpha, x^i_{t_i},cu)$-disk contained in $D^i$ such that $(D^i)'': = \cP_{t_i,x^i}((D^i)')$ is a $(\alpha,cu)$ disk with size $\zeta_1$ given by Lemma~\ref{l.udisk}, we then 
	apply Lemma~\ref{l.D_intersection} to obtain a time $s_i\in[0,\tau]$ such that 
	$$
	\cP_{x^{i+1}}\circ f_{s_{i}}((D^i)'')\cap B_\epsilon (x^{i+1})\pitchfork^N W^{s,N}_{2\xi}(x^{i+1}).
	$$
	Furthermore, $\cP_{x^{i+1}}\circ f_{s_{i}}((D^i)'')\cap B_\epsilon (x^{i+1})$ contains a $(\alpha,cu)$-disk centered at the transverse intersection point $y^{i+1}$ with size $\zeta$. Denote this disk by $D^{i+1}$. By changing $s_{i}$ by no more than $\rho_0$ we may assume that $y^{i+1} \in f_{s_i}((D^i)'')$. 
	
	Note that our construction above implies that for every $j\le i$, the disk $D^{j+1}$ is contained in the image of a sub-disk $(D^j)'\subset D^j$, under the map 
	\begin{equation}\label{e.F}
		F_j:=\cP_{x^{j+1}}\circ f_{s_j} \circ \cP_{t_j,x^{j}}  = \cP_{x^{j+1}}\circ f_{s_j} \circ \cP_{f_{t_j}(x^{j})}\circ f_{t_j} 
	\end{equation}	
	where the second equality follows from the alternate definition of $\cP_{t,x}$ by \eqref{e.Ptx1}; see also~\eqref{e.tau111}.
	Then, we consider the point 
	$$
	w^{i+1} : = F_1^{-1}\circ\cdots \circ F_i^{-1} (y^{i+1})\in D^1.
	$$
	Repeating this process, we find a point $w^k\in D^1$ which is the pre-image of a point $y^k\in D^k\pitchfork^N W^{s,N}_{2\xi} (x^k)$. Below we will prove that $w^k$ satisfies the desired shadowing property, with transition times $s_i(y)\le \tau +2\rho_0$ ($s_i(y)$'s may differ from  $s_i$'s constructed earlier).
	
	First, note that $y^k\in B_{t_k,\delta}(x^k)$ since it is contained in $W^{x,N}_{2\xi}(x^k)$. By \eqref{e.F}, \eqref{e.Px1}, \eqref{e.Px2} and Remark~\ref{r.timeshift}, there exists $s_{k-1}(y^k)\le\tau+2\rho_0$, such that 
	$$
	z^{k-1} :=f_{-s_{k-1}(y^k) - t_{k-1}} (y^k) =  F_k^{-1}(y^k)\in (D^{k-1})'\subset D^{k-1}.
	$$
	Furthermore, by Lemma~\ref{l.bowen1} we have $z^{k-1}\in B_{t_{k-1}, \delta}(x^{k-1})$. Next, we find $s_{k-1}(y^k) = s_{k-1}(z^{k-1}) \le \tau + 2\rho_0$ such that 
	$$
	z^{k-2} :=f_{-s_{k-2}(y^k) - t_{k-2}} (z^{k-1}) =  F_{k-1}^{-1}(z^{k-1})\in (D^{k-2})'\subset D^{k-2}.
	$$
	Again, Lemma~\ref{l.bowen1} shows that  $z^{k-2}\in B_{t_{k-2}, \delta}(x^{k-2})$. Recursively, we obtain a sequence of times $s_{j}(y^k) = s_{j}(z^{j+1}) \le\tau+2\rho_0$, $j=1,\ldots,k-1$ such that
	$$
	z^{j} = f_{-s_{j}(y^k) - t_{j}} (z^{j+1}) =  F_{j+1}^{-1}(z^{j+1})\in (D^{j})'\subset D^j.
	$$
	Furthermore, Lemma~\ref{l.bowen1} shows that  $z^{j}\in B_{t_{j}, \delta}(x^{j})$ for all $j=1,\ldots, k-1$. The construction above implies that $z^1 = w^k$. Therefore we have, for all $j=1,\ldots, k$,
	$$
	f_{\sum_{i=1}^{j-1} (t_i + s_i(y^k))}(w^k) = z^j\in B_{t_j,\delta}(x^j).
	$$
	Finally, we note that $w^k\in D^1\subset W^u(\Orb(p))\subset\Lambda.$	This concludes the proof of Theorem~\ref{t.spec}.
	
\end{proof}

\appendix

\section{Proof of Lemma~\ref{l.var.princ}}\label{ap.var}

\begin{proof}
	
	Recall that $h_\mu(X) = h_\mu(f_1)$. We consider $\Phi_0(x,1) = \int_0^1\phi(x_t)\,dt$ and note that for $n\in\NN$, 
	$$
	\nu_n=\frac{\sum_{x\in E_t}S_n^{f_1}\Phi_0(x,1)\cdot \delta_x}{\sum_{x\in E_t}S_n^{f_1}\Phi_0(x,1)}, 
	$$
	where $S_n^{f_1}\Phi_0(x,1)$ denotes the Birkhoff sum of the function $\Phi_0(x,1)$ under the time-one map $f_1$. Let 
	$$
	\mu_n^1 = \frac1n \sum_{k=0}^{n-1}(f_1)^k_* \nu_n.
	$$
	Then $\mu_n = \int_0^1(f_s)_*\mu_n^1\,ds$. Let $(n_k)$ be the sequence along which we have $\mu_{n_k}\to \mu$. Taking subsequence if necessary, we let 
	$$
	\mu^1 = \lim_{k\to\infty} \mu^1_{n_k}. 
	$$
	Note that $P(\cC, \phi,\delta,X) = P(\cC, \Phi_0(x,1),\delta, f_1)$ if we define the latter using  Bowen's $d^1$ metric. Below we will prove that 
	$$
	P(\cC, \Phi_0(x,1),\delta, f_1)\le h_{\mu^1}(f_1)+\int\Phi_0(x,1)\,d\mu^1,
	$$
	from which the statement of the lemma follows.
	
	For simplicity's sake we put $f = f_1$ and drop the superscription $1$ in $\mu^1$ and $\Phi_0(x,1)$. The proof of this inequality follows the proof of the variational principle by Walters~\cite{Wal}. One needs to carefully check that the proof holds when the pressure is only defined on the orbit collection $\cC$. However this is easy since the computation of $h_\mu(f)$ does not involve $\cC$.
	
	Following the second part of~\cite[Theorem 9.10]{Wal}, we choose a finite measurable partition $\cA = \{A_1,\ldots, A_k\}$ of $M$ with $\diam \cA<\delta$ and $\mu(\partial \cA) = 0.$ Then every element of $\bigvee_{j=0}^{n-1}f^{-j}\cA$ contains at most one element of $E_n$. It follows that 
	\begin{align*}
		&H_{\nu_n}\left(\bigvee_{j=0}^{n-1}f^{-j}\cA\right) + \int S_n\phi\,d\nu_n\\
		=&\sum_{x\in E_n}\nu_n(\{x\})\left(S_n\phi(x) - \log\nu_n(\{x\})\right)\\
		=&\log \sum_{x\in E_n}\exp(S_n\phi(x))\\
		\ge&  \log \Lambda(\cC, \phi, \delta,n,f)-1, 
	\end{align*}
	where the second equality follows from~\cite[Lemma 9.9]{Wal}, and the last line is due to our choice of $E_n$.
	
	For natural numbers $1\le q<n$ we define, for $0\le j\le q-1$, $a(j) =[(n-j)/q]$. Fix such $j$, we have
	$$
	\bigvee_{j=0}^{n-1}f^{-j}\cA = \bigvee_{r=0}^{a(j)-1}f^{-(rq+j)}\bigvee_{i=0}^{q-1}f^{-i}\cA\vee\bigvee_{l\in S}f^{-l}\cA
	$$
	and $S$ is a set with cardinality at most $2q$. 
	
	Therefore,
	\begin{align*}
		&\log \Lambda(\cC,\phi, \delta,n,f)-1\\
		\le& H_{\nu_n}\left(\bigvee_{j=0}^{n-1}f^{-j}\cA\right) + \int S_n\phi\,d\nu_n\\
		\le& \sum_{r=0}^{a(j)-1}H_{\nu_n}\left(f^{-(rq+j)}\bigvee_{i=0}^{q-1}f^{-i}\cA\right) + H_{\nu_n}\left(\bigvee_{l\in S}f^{-l}\cA \right)+ \int S_n\phi\,d\nu_n\\
		\le& \sum_{r=0}^{a(j)-1}H_{\nu_n\circ f^{-(rq+j)}}\left(\bigvee_{i=0}^{q-1}f^{-i}\cA\right)  + 2q\log k +  \int S_n\phi\,d\nu_n.
	\end{align*}
	Summing over $j$ from $0$ to $q-1$ and dividing by $n$ gives
	\begin{align*}
		&\frac qn\log \Lambda(\cC, \phi, \delta,n,f)-\frac qn\\\le & \frac1n\sum_{p=0}^{n-1}H_{\nu_n\circ f^{-p}}\left(\bigvee_{i=0}^{q-1}f^{-i}\cA\right)  + \frac{2q^2}{n}\log k +  q \int \phi\,d\mu_n\\
		\le &H_{\mu_n}\left(\bigvee_{i=0}^{q-1}f^{-i}\cA\right)  + \frac{2q^2}{n}\log k +  q \int \phi\,d\mu_n
	\end{align*}
	where the second inequality follows from the convexity of $H_*$ as a function of the measure (\cite[Section 8.2, Remark 2]{Wal}).
	Sending $n$ to infinity along the sequence $n_j$, we finally obtain
	$$
	qP(\cC,\phi,\delta,f)\le H_{\mu}\left(\bigvee_{i=0}^{q-1}f^{-i}\cA\right)  +  q \int \phi\,d\mu.
	$$
	Dividing by $q$ and sending it to infinity, we conclude that 
	$$
	P(\cC,\phi,\delta,f)\le h_{\mu}\left(f,\cA\right)  +  \int \phi\,d\mu \le h_{\mu}\left(f\right)  +  \int \phi\,d\mu.
	$$
	This finishes the proof of the lemma.
\end{proof}
 
\section{Proof of Lemma~\ref{l.comparecoord}}\label{s.A1}

\begin{proof}[Proof of Lemma~\ref{l.comparecoord}]
	Denote by $\pi^E: \RR^n\to E$ and $\pi^F: \RR^n\to F$ the projections to the first $k$ coordinates and to the last $n-k$ coordinates, respectively. We have $y^1 = \pi^E(y)$ and $y^2 = \pi^F(y)$. 
	
	We start the proof with the following general observation:  for all $\alpha>0$ small enough, if $v$ is a vector contained in the $(\alpha, E)$-cone, then it holds that 
	\begin{equation}\label{e.cone0}
		|\pi^E(v)|\le |v|\le (1+{O}(\alpha))\cdot|\pi^E(v)|.
	\end{equation}

	The same holds true for vectors contained in the $(\alpha,F)$-cone, in which case the projection is replaced by $\pi^F$.
	
	Let $y$ and $y^F$ be points in $\RR^n$ such that $y^F$ (as a vector) is in the $(\alpha, F)$-cone, $y-y^F$ is in the $(\alpha, E)$-cone and $|y|, |y^F|\le 1$. To simplify notation we will let $v^E=y-y^F$. See Figure~\ref{f.cone}. We observe that $y^2=\pi^F(y)=\pi^F(v^E)+\pi^F(y^F)$. 
	The key step in the proof is to obtain upper bounds for $|v^E|$ and $|\pi^F(v^E)|$.
	
	By projecting $y,y^F,v^E$ to $E$ and noting that $v^E=-y^F+y^1+y^2$, we get:
	\begin{align*}
		|\pi^E(v^E)|\le &|\pi^E(y^F)|+|\pi^E(y^1)|+|\pi^E(y^2)|\\
		\numberthis\label{e.vu}	\le &|y^1|+\mathcal{O}(\alpha)\cdot|y^F|
	\end{align*}
	where we used 
	the fact that $|\pi^E(y^F)|=\mathcal{O}(\alpha)|y^F|$ since $y^F$ is contained in the $(\alpha,E)$-cone, and $\pi^E(y^2) = \vec 0$. 
	Combining~\eqref{e.vu} and~\eqref{e.cone0} we obtain an upper bound for $|v^E|$:
	\begin{equation}\label{e.vuub}
		|v^E|\le(1+\mathcal{O}(\alpha))|\pi^E(v^E)|\le |y^1|+\mathcal{O}(\alpha)(|y^1|+|y^F|);
	\end{equation}	
	here we omit the higher order term $\cO(\alpha^2)|y^F|$ since $|y^F|\le 1$. 
	To obtain an upper bound for $|y^F|$ in terms of $|y^2|$, we project all vectors to $F$ (and note that $y^F=y^1+y^2-v^E$, and )
	\begin{align*}
		|y^F|\le& (1+\mathcal{O}(\alpha))|\pi^F(y^F)|\\
		\le &(1+\mathcal{O}(\alpha))\big( |y^2|+|\pi^F(v^E)|\big) && \mbox{since } \pi^F(y^1) = 0\\ 
		\numberthis\label{e.ysub0}	\le & |y^2|+ \mathcal{O}(\alpha)\big(|y^2|+|v^E|\big) && \mbox{since } \pi^F(v^E) = \mathcal{O}(\alpha)|v^E|\hspace{-1cm}\\
		\numberthis\label{e.ysub}	\le & |y^2|+\mathcal{O}(\alpha)(|y^1|+|y^2|),
	\end{align*}
	where the last inequality follows from~\eqref{e.vuub}. Substituting~\eqref{e.ysub} back in~\eqref{e.vuub} yields
	\begin{equation}\label{e.vuub1}
		|v^E|\le |y^1|+\mathcal{O}(\alpha)(|y^1|+|y^2|),
	\end{equation}
	as desired.
	
	\begin{figure}
		\centering
		\def\svgwidth{\columnwidth}
		\includegraphics[scale=1]{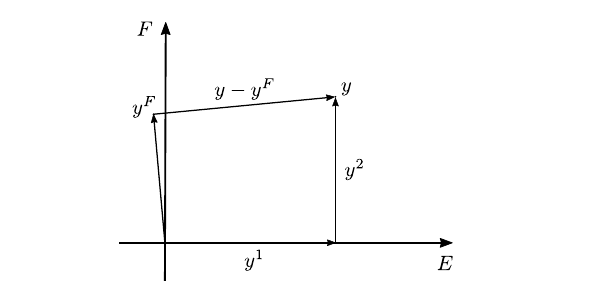}
		\caption{The vectors $y^F, v^E, y^1$ and $y^2$.}
		\label{f.cone}
	\end{figure}
	To obtain lower estimates on $y^F$ and $v^E$, we write
	
		\begin{align*}
		|y^F|\ge &|\pi^F(y^F)|\ge & |y^2|-|\pi^F(v^E)| & 
		\numberthis\label{e.yslb}\ge&|y^2|-\mathcal{O}(\alpha)|v^E| \ge |y^2|-\mathcal{O}(\alpha)|y^1|
	\end{align*}
	where we use~\eqref{e.vuub1} to obtain the last inequality. Similarly,

	\begin{align*}
		|v^E|\ge & |\pi^E(v^E)| \ge & |y^1|-|\pi^E(y^F)| \ge & |y^1|-\mathcal{O}(\alpha)|y^F|
		\numberthis\label{e.vulb}	\ge & |y^1|-\mathcal{O}(\alpha)|y^2|
	\end{align*}

	where the last inequality follows from~\eqref{e.ysub}. We conclude the proof for the first part of the proposition.
	
	Next we will prove case (1) under the extra assumption that $|y^F|\ge C_0|v^E|$. The proof of case (2) is analogous and therefore omitted.
	
	Under this extra assumption, we rewrite~\eqref{e.ysub0} as (assume that $\alpha\ll C_0$)
	$$
	|y^F|\le |y^2|+\mathcal{O}(\alpha)(|y^2|+C_0^{-1}|y^F|),
	$$
	where the constant in $\mathcal{O}(\cdot)$ does not depend on $C_0$. Rearranging terms, we obtain, for $\alpha$ sufficiently small,
	\begin{align*}
		|y^F|&\le \frac{1+\mathcal{O}(\alpha)}{1-C_0^{-1}\mathcal{O}(\alpha)}|y^2|\\
		&= (1+(C_0^{-1}+1)\mathcal{O}(\alpha))|y^2|,
	\end{align*}
	where the second inequality is due to the Taylor expansion.
	
	For the lower estimate, we use~\eqref{e.yslb} and the assumption that $|y^F|\ge C_0|v^E|$ to obtain
	\begin{align*}
		|y^F|&\ge|y^2|-\mathcal{O}(\alpha) |v^E|\\
		&\ge |y^2|- C_0^{-1}\mathcal{O}(\alpha)|y^F|,
	\end{align*}
	which, after rearranging terms and using the Taylor expansion, becomes
	$$
	|y^F|\ge(1-C_0^{-1}\mathcal{O}(\alpha))|y^2|,
	$$
	as required.
	
	We are left with the proof of $|y^2|\ge C_0(1-C_0'\alpha)|y^1|$. We write
	\begin{align*}
		|y^2|\ge&\frac{1}{1+(C_0^{-1}+1)\mathcal{O}(\alpha)}|y^F| \\
		= & (1-(C_0^{-1}+1)\mathcal{O}(\alpha))|y^F|\\
		\ge & C_0(1-(C_0^{-1}+1)\mathcal{O}(\alpha)) |v^E| \hspace{3.92cm}\mbox{ by assumption}\\
		\ge &C_0(1-(C_0^{-1}+1)\mathcal{O}(\alpha)) (|y^1|-\mathcal{O}(\alpha)|y^2|)\hspace{1.8cm}\mbox{ by \eqref{e.vulb}}.
	\end{align*}
	Rearranging terms, we get 
	$$
	|y^2|\ge C_0\cdot\left(\frac{1}{1+C_0\mathcal{O}(\alpha)}(1-(C_0^{-1}+1)\mathcal{O}(\alpha))\right)|y^1|.
	$$
	It is clear that the term in the parentheses is bounded from below by $(1- C_0'\alpha)$ for some constant $C_0'$ that depends on $C_0.$ With this we conclude the proof of the proposition. 
	
\end{proof}

\section{Proof of Lemma~\ref{l.Elarge} and~\ref{l.Flarge}}\label{s.A2}

\begin{proof}[Proof of Lemma~\ref{l.Elarge}]
	The key idea of the proof is to use Lemma~\ref{l.changecoord} to transition from the coordinate system $(\cF_{x^+,\cN}^s, \cF_{x^+,\cN}^{cu})$ to a proper coordinate system defined using $\cF_\sigma^j,j=s,c,u,cs,cu$ near the point $y^+$, use the dominated splitting $E^{ss}_\sigma\oplus E^c_\sigma\oplus E^u_\sigma$ to get the desired expansion/contraction,  and then transition back to $(\cF_{x,\cN}^s, \cF_{x,\cN}^{cu})$ near the point $x$.

	Without loss of generality, we will assume that $T:=t(x)$ is a positive integer. 
	
	Fix $\alpha>0$ such that all the lemmas in Section~\ref{ss.cone} hold with  $1000\alpha\cdot \max\{L_0, L_1,L_2\}<0.01$. Then we get $r_0$ and $\overline r \in (0,r)$ from Lemma~\ref{l.nearEc} (we also assume that the foliations $\cF^*_\sigma$ are well-defined in $B_{r_0}(\sigma)$). For simplicity's sake we will treat $B_{r_0}(\sigma)$ as $\RR^n$ ($n=\dim \bM$) with $\sigma$ sitting at the origin. The subspaces $E_\sigma^i, i=s,c,u$, which are assumed to be pairwise orthogonal, will be identified with the subspaces $\RR^s, \RR^c$ and $\RR^u$, respectively. Here we slightly abuse notation and write $s,c,u$ for the dimension of their corresponding subspace; consequently, we have $c=1$ and $s+c+u=n=\dim\bM$.

	\noindent {\em Preparation: a coordinate system near $\sigma$}. 
	
	Recall that since $\sigma$ is Lorenz-like, orbits in $\Lambda$ can only approach $\sigma$ along $E^c_\sigma$ direction, thanks to Lemma~\ref{l.lgw} and~\ref{l.nearEc}. We may assume that for $x\in \partial W_{r }^-$ the flow direction $X(x)$ is almost parallel to the one-dimensional subspace $\RR^c$ (henceforth, ``almost parallel'' means that the vectors are contained in the $\alpha$-cone of the corresponding bundle). This means that the normal plane $N(x)$ is almost parallel to the co-dimension one subspace $\RR^s\oplus \RR^u$. Furthermore, the subspaces $E_N^s(x)$ and $F_N^{cu}(s)$ (orthogonal to each other by assumption) are almost parallel to $\RR^s$ and $\RR^{u}$, respectively. On the other hand, at the point $x^+$ the flow direction is almost parallel to $\RR^u$. 
	
	We fix $r\le \overline r$; towards the end of the proof we will further decrease $\overline r,$ if necessary. In the compact set $W^c_r$ there is no singularity, and consequently, the flow speed is bounded away from zero. We take $\vep_0$ such that 
	$$
	0<\vep_0< \inf_{y\in W_r^c}\rho_0|X(y)|;
	$$
	this choice of $\vep_0$ implies that the map $\cP_x$ given by~\eqref{e.Px1} is well-defined in $B_{\vep_0}(x)$ for every $x\in \partial W^\pm_{r^1}\subset W_r^c$. To simplify notation, in the proof of this lemma we shall write $\cN(x)=\cN_{\vep}(x)\subset \cN_{\rho_0|X(x)|}(x)$,  and $\cN(x^+)=\cN_{\vep}(x^+)\subset \cN_{\rho_0|X(x^+)|}(x^+)$.
	Below we will always assume that $\vep\le \vep_0$.
	
	\begin{figure}
		\centering
		\def\svgwidth{\columnwidth}
		\includegraphics[scale=0.4]{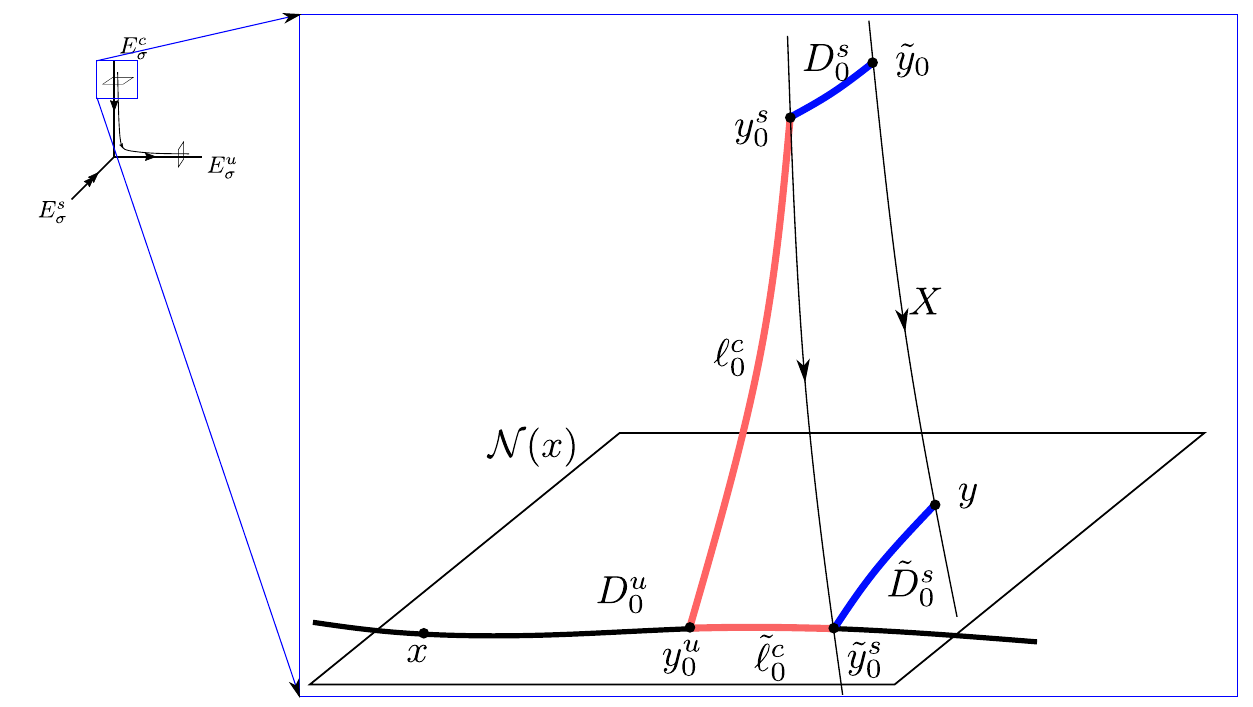}
		\caption{The triple $(D_0^s,\ell_0^c,D_0^u)$.}
		\label{f.nearw-}
	\end{figure}
	Following the discussion above, we may take a disk $D_0^u\subset \cN(x)$ with $\dim D_0^u=u$, such that $D_0^u$ is tangent to the $(\alpha, \RR^u)$-cone (this means that $D_0^u$ is the graph of a $C^1$ function $g:\RR^u\to \RR^s\oplus \RR^c$ with $\|Dg\|_{C^1}<\alpha$). Writing $D_0=f_{[-1,1]}(D_0^u)$, we see that $D_0$ is tangent to the $(\alpha,\RR^c\oplus \RR^u)$-cone. By the dominated splitting $E_\sigma^s\oplus_< (E_\sigma^c\oplus E_\sigma^u) $, we see that this cone field is forward invariant under the time-one map $f_1$. In particular, for every $0\le i\le T$, $D_i:=f_i(D_0)$ is a $(u+1)$-dimensional disk that is also tangent to the $(\alpha, \RR^c\oplus \RR^u)$-cone. 
	
	
	Now we consider the following points:
	\begin{align*}
		&\tilde y_0=f_{-T}(y^+),\\ 
		\{&y_0^s\} = D_0\pitchfork \cF_\sigma^s(\tilde y_0), \mbox{ and }\\
		\{&y_0^u\}=\cF_\sigma^{cs}(\tilde y_0)\pitchfork D_0^u.
	\end{align*}
	We also take:
	\begin{enumerate}
		\item an $s$-dimensional embedded disk $D^s_0\subset \cF_\sigma^s(\tilde y_0)$ center at $\tilde y_0$ and contains $y_0^s;$
		\item a one-dimensional arc $\ell^c_0$ in $\cF_\sigma^{cs}(\tilde y_0)\cap D_0$ that connects $y_0^s$ with $y_0^u$.\footnote{Recall that $\cF_\sigma^s$ sub-foliate $\cF_\sigma^{cs}$; therefore $y_0^s\in\cF_\sigma^{cs}(\tilde y_0)\cap D_0$.}
	\end{enumerate}
	For the second item, we remark that 
	$$
	\dim \cF_\sigma^{cs}(y_0^s)+\dim  D_0= s+1+u+1=\dim\bM+1,
	$$
	and therefore the intersection of $\cF_\sigma^{cs}(y_0^s)$ with $D_0$ contains a one-dimensional submanifold which is tangent to the $(\alpha,\RR^c)$-cone. In summary, the triple 
	$$
	(D_0^s,\ell_0^c,D_0^u)
	$$
	connects $x$ and $\tilde y_0$ via $y_0^u$ and $y_0^s$. See Figure~\ref{f.nearw-}. 
	
	For each $i=0,1,\ldots, T$ we define the points
	$$
	\tilde y_i = f_i(\tilde y_0),\footnote{This means that $\tilde y_T=y^+$} \,\mbox{ and }\, y_i^j = f_i( y_0^j), j=s,u
	$$
	together with the triple $(D_i^s,\ell_i^c,D_i^u)$ where
	$$
	\ell_i^c =f_i(\ell_0^c), \mbox{ and } D_i^j = f_i(D_0^j), j=s,u.
	$$
	The construction above implies the following facts:
	\begin{enumerate}
		\item by the forward invariance of the $(\alpha,\RR^u)$-cone, for every $i=0,\ldots, T$, $D^u_i$ is tangent to the $(\alpha,\RR^u)$-cone and connects $x_i$ with $y^u_i$; 
		\item by the invariant of the foliation $\cF_\sigma^s$, for every $i=0,\ldots, T$, $D^s_i$ is tangent to the $(\alpha,\RR^s)$-cone and connects $y_i^s$ with $\tilde y_i$;
		\item for every $i=0,\ldots, T$, 
		$$
		\ell^c_i\subset \cF_{\sigma}^{cs}(\tilde y_i)\cap  D_i
		$$
		is an arc that connects $ y^u_i$ with $ y^s_i$; 
		since $\cF_{\sigma}^{cs}(\tilde y_i)$ is tangent to the $(\alpha, \RR^s\oplus \RR^c)$-cone and $D_i$ is tangent to the  $(\alpha, \RR^c\oplus \RR^u)$-cone, we see that $\ell^c_i$ is a smooth arc tangent to the $(\alpha, \RR^c)$-cone;
		\item for every $i=0,\ldots, T$, $D^u_i$ and $\ell_i^c$ are transverse inside $D_i$; $\ell_i^c$ and $D_i^s$ are transverse inside $\cF_\sigma^{cs}(\tilde y_i)$.
	\end{enumerate}
	
	\begin{figure}
		\centering
		\def\svgwidth{\columnwidth}
		\includegraphics[scale=0.6]{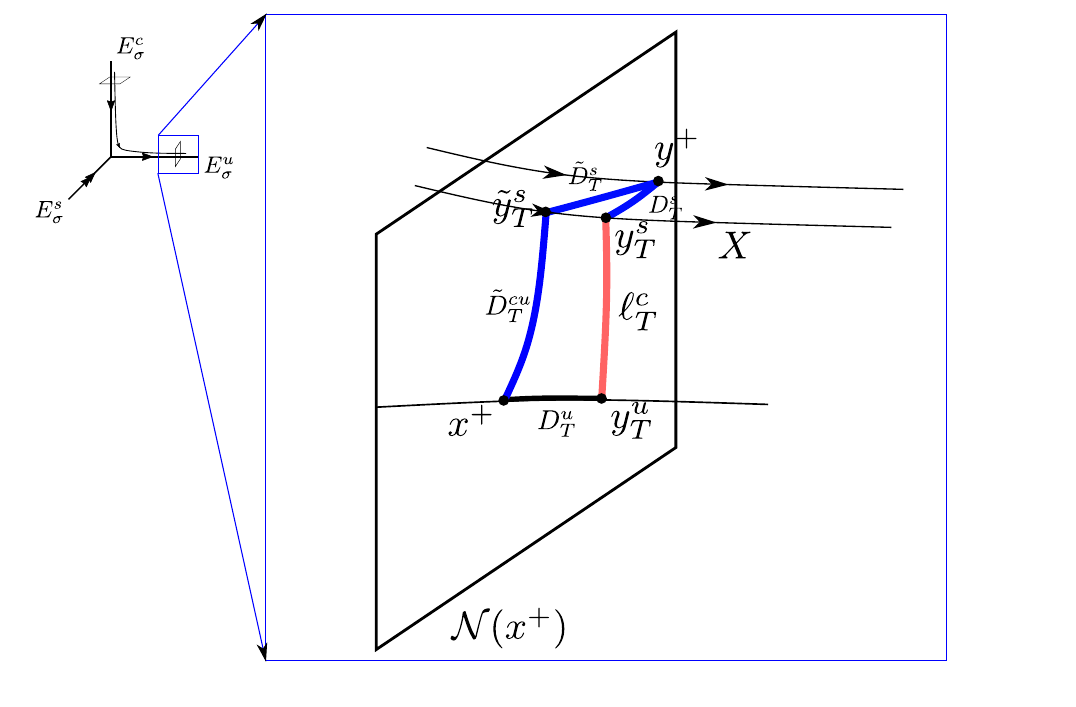}
		\caption{The triple $(D_T^s,\ell_T^c,D_T^u)$ and their projection to $\cN(x^+)$.}
		\label{f.nearw+}
	\end{figure}
	
	\medskip
	\noindent {\em Step 1: transition from $(\cF_{x^+,\cN}^s, \cF^{cu}_{x^+,\cN})$ to $(D^s_T,\ell^c_T,D^u_T)$.}
	
	Near the point $x^+$, we note that $y^s_T$ and $y^u_T$ (keep in mind that $\tilde y_T=y^+\in \cN(x^+)$), as well as the triple $(D_T^s,\ell_T^c,D_T^u)$, may not be on the normal plane $\cN(x^+)$. To solve this issue, we project  $D_T^s$ (which is almost parallel to $\cN(x^+)$) to $\cN(x^+)$ using the map $\cP_{x^+}(\cdot)$ defined by~\eqref{e.Px1}. This results in a disk $\tilde D^s_T$ tangent to the $(2\alpha,E^S_N(x^+))$-cone, together with a point $\tilde y^s_T=\cP_{x^+}(y^s_T)$. See Figure~\ref{f.nearw+}. 
	
	We could project $\ell^c_T$ to $\cN(x^+)$ which results in a one-dimensional arc; however, the same cannot be done for $D^u_T$ because it is not (almost) parallel to $\cN(x^+)$. This is because $D^u_T$ is tangent to the $(\alpha, \RR^u)$-cone which contains the flow direction $X(x^+)$, and the project of  $D^u_T$ to $\cN(x^+)$ along the flow lines may not even be a manifold. To get around this, note that both $\ell^c_T$ and $D^u_T$ are contained in $D_T=f_T(f_{[-1,1]}D^u_0)$, which is a $(u+1)$-dimensional disk tangent to the $(\alpha, \RR^c\oplus\RR^u)$-cone and consists of entire flow segments; every such flow segment is almost orthogonal to $\cN(x^+)$ (recall that at $x^+$, $\vep$ is less than the uniformly relative scale $\rho_0|X(x^+)|$) is taken small enough, and therefore the intersection of $D_T$ with $\cN(x^+)$ contains a $u$-dimensional disk tangent to the $(2\alpha, F^{cu}_N(x^+))$-cone;  this disk can be extended to a disk $\tilde D^{cu}_T$ that is tangent to the $(2\alpha, F^{cu}_{N}(x^+))$-cone. We remark that by construction we have $x^+,\tilde y^s_T\in \tilde D^{cu}_T$.
	
	In summary, on $\cN(x^+)$ we have the following objects:
	\begin{itemize}
		\item  $s$-dimensional disks $\tilde D^s_T$ and $\cF_{x^+,\cN}^s(y^+)$; they are both tangent to the $(2\alpha,E^s_N(x^+))$-cone and contain $y^+$;
		\item  $u$-dimensional disks $\tilde D^{cu}_T$ and $\cF_{x^+,\cN}^{cu}(x^+)$; they are both tangent to the $(2\alpha, F^{cu}_N(x^+))$-cone and contain $x^+$;
		\item we have 
		$$
		\tilde D^s_T\pitchfork \tilde D^{cu}_T= \{\tilde y^s_T\},\,\, \mbox{ and }\,\, \cF_{x^+,\cN}^s(y^+)\pitchfork\cF_{x^+,\cN}^{cu}(x^+)=\{y^{+,F}\}.
		$$ 
		
	\end{itemize}
	Therefore on $\cN(x^+)$, the assumptions of Lemma~\ref{l.changecoord}, Case (2) are satisfied. Recall our assumption that $1000\alpha\cdot \max\{L_0, L_1,L_2\}<0.01$. Thus, assumption~\eqref{e.Elarge} implies
	\begin{equation}\label{e.Ecomparable}
		d_{\tilde D^{s}_T}(\tilde y^s_T,y^+)\in(0.9,1.1)d_{\cF^{s}_{x^+,\cN}}(y^{+,F},y^+) = (0.9,1.1) d_{x^+}^E(y^+).	\footnote{The notation $(a,b)K$ refers to the interval $(aK,bK)$.}
	\end{equation}
	Since both $\tilde D^s_T$ and $D^s_T$ are contained in the $\vep$-ball of $x^+$ and are tangent to the $(3\alpha, \RR^s)$-cone,
	\begin{equation}\label{e.Ecomparable1}
		d_{D^s_T}(y^s_T,y^+)\in (1-c_\vep\alpha,1+c_\vep\alpha)d_{\tilde D^s_T}(\tilde y^s_T,y^+)
	\end{equation}
	where $c_\vep$ is a  constant that remains bounded as $\vep\to0$.  Next, note that by assumption~\eqref{e.Elarge}, there exists a constant $c_1>0$ which only depends on the integers $s,u$ and the dimension of $\cN(x^+)$ (which is $n-1$),  such that 
	$$
	d_{x^+}^E(y^+)\ge c_1d_{\cN(x^+)}(x^+,y^+). 
	$$
	Together with~\eqref{e.Ecomparable} and~\eqref{e.Ecomparable1} we see that
	\begin{equation}\label{e.Ecomparable2}
		d_{ D^s_T}( y^s_T,y^+)\ge \frac12c_1d_{\cN(x^+)}(x^+,y^+)\ge \frac12c_1d(x^+,y^+).
	\end{equation}
	
	{\em Claim}: we have  
	\begin{equation}\label{e.claim111}
		d_{ D^s_T}( y^s_T,y^+)\ge c_2 \max\left\{\length(\ell^c_T), d_{D^u_T}(x^+,y^u_T)\right\}	
	\end{equation}
	for some constant $c_2$ that only depends on the integers $s,u$ and $n$. 
	
	{\em Proof of the claim.} Keep in mind that the triple $(D^s_T,\ell^c_T,D^u_T)$ are pairwise (almost) orthogonal with $\dim D^s_T+\dim \ell^c_T+\dim D^u_T=n=\dim\bM$. This implies that 
	$$
	d_{D^u_T}(x^+,y^u_T)+\length(\ell^c_T)+d_{\tilde D^s_T}(\tilde y^s_T,y^+)\le c_3 d(x^+,y^+)
	$$ 
	for some $c_3$ that only depends on the integers $s,u$ and $n$. This further shows that 
	$$
	\max\left\{d_{D^u_T}(x^+,y^u_T),\length(\ell^c_T)\right\}\le  c_3d(x^+,y^+).
	$$
	Combining it with~\eqref{e.Ecomparable2} we have 
	$$
	d_{ D^s_T}( y^s_T,y^+)\ge\frac12 c_1c_3^{-1}\max\left\{d_{D^u_T}(x^+,y^u_T),\length(\ell^c_T)\right\},
	$$
	as claimed.
	
	\medskip
	\noindent {\em Step 2: backward expansion on $D^s$.}
	
	By~\eqref{e.claim111} and the dominated splitting $(E^{ss}_\sigma\oplus E^c_\sigma)\oplus_< E^u_\sigma$, there exist constants $c_4>0$ and $\tilde \lambda_\sigma>1$ such that 
	\begin{equation}\label{e.domination}
		d_{ D^s_{T-i}}( y^s_{T-i},\tilde y_{T-i})\ge c_2c_4\tilde\lambda_\sigma^i \max\left\{\length(\ell^c_{T-i}), d_{D^u_{T-i}}(x_{T-i},y^u_{T-i})\right\}, 
	\end{equation}
	for $i=0,1,\ldots, T.$ 
	Furthermore, since vectors contained in the $(\alpha, \RR^s)$-cone are exponentially expanded under the iteration of $f_{-1}$, we have
	\begin{equation}\label{e.expansion}
		d_{ D^s_{0}}( y^s_{0},\tilde y_{0})\ge c_4\tilde\lambda_\sigma^T d_{ D^s_{T}}( y^s_{T},y^+)\ge \frac12 c_4\tilde\lambda_\sigma^T d_{x^+}^E(y^+),
	\end{equation}
	where the second inequality follows from~\eqref{e.Ecomparable} and~\eqref{e.Ecomparable1}. 
	
	\medskip
	\noindent {\em Step 3: transition from $(D^s_0,\ell^c_0,D^u_0)$ back to $(\cF^s_{x,\cN},\cF^{cu}_{x,\cN})$.}
	
	Below we will make the transition from the triple $(D^s_0,\ell^c_0,D^u_0)$ back to the coordinate system $(\cF^s_{x,\cN},\cF^{cu}_{x,\cN})$ and compare $d_{ D^s_{0}}( y^s_{0},\tilde y_0)$ with ${\cF_{x,\cN}^s}(y^F, y)$. 
	
	To achieve this, we project $D^s_0$ (which is almost parallel to $\cN(x)$) using the map $\cP_x$ to $\cN(x)$. This gives us an $s$-dimensional disk $\tilde D^s_0$ tangent to the $(2\alpha,E^s_N(x))$-cone, together with a point $\tilde y^s_0$ which is the projection of $y^s_0$; see Figure~\ref{f.nearw-}. Since $y^s_0$ is contained in $D_0=f_{[-1,1]}(D^u_0)$ where the latter is saturated by local flow orbits, we see that $\tilde y^s_0\in D^u_0$. We also remark that the projection of $\tilde y_0$ to $\cN(x)$ is precisely $y$. Therefore we have (increase $c_\vep$ if necessary)
	\begin{equation}\label{e.Ecomparable3}
		d_{\tilde D^s_0}(\tilde y^s_0,y)\in (1-c_\vep\alpha,1+c_\vep\alpha)d_{D^s_0}(y^s_0,\tilde y_0).
	\end{equation}
	
	On $\cN(x)$ we have the following objects:
	\begin{itemize}
		\item $s$-dimensional disks $\tilde D^s_0$ and $\cF_{x,\cN}^s(y)$; they are both tangent to the $(2\alpha,E^s_N(x))$-cone and contain $y$;
		\item $u$-dimensional disks $D^{u}_0$ and $\cF_{x,\cN}^{cu}(x)$; they are both tangent to the $(2\alpha, F^{cu}_N(x))$-cone and contain $x$;
		\item we have 
		$$
		\tilde D^s_0\pitchfork  D^{u}_0= \{\tilde y^s_0\},\,\, \mbox{ and }\,\, \cF_{x,\cN}^s(y)\pitchfork\cF_{x,\cN}^{cu}(x)=\{y^{F}\}.
		$$ 
	\end{itemize}
	We also project $\ell^c_0$ to $\cN(x)$ which gives us an arc $\tilde \ell^c_0\subset D^u_0$ connecting $y^u_0$ and $\tilde y^s_0$. Inside $D^u_0$ we have 
	$$	
	d_{D^u_0}(x,\tilde y^s_0)\le d_{D^u_0}(x,y^u_0)+\length(\tilde \ell^c_0)\le d_{D^u_0}(x,y^u_0)+\length(\ell^c_0).\footnote{This explains the use of the triple $(D^s_i,\ell^c_i,D^u_i)$ instead of the pair $(D^s_i,D_i^u)$.}
	$$
	In view of~\eqref{e.domination} we conclude that 
	\begin{equation*}
		d_{D^u_0}(x,\tilde y^s_0)\le 2c_2^{-1}c_4^{-1}\tilde\lambda_\sigma^{-T}d_{D^s_0}(y^s_0, \tilde y_0)\le 4c_2^{-1}c_4^{-1}\tilde\lambda_\sigma^{-T}d_{\tilde D^s_0}(\tilde y^s_0,y), 
	\end{equation*}
	where the second inequality follows from~\eqref{e.Ecomparable3}. Note that the constants $c_2,c_4$ and $\tilde\lambda_\sigma$ do not depend on $r $ (the radius of the smaller ball). Therefore, by taking $r >0$ small enough and thus increasing $T$, we have 
	$$
	4c_2^{-1}c_4^{-1}\tilde\lambda_\sigma^{-T}<\frac12.
	$$
	In particular, the assumptions of Lemma~\ref{l.changecoord}, Case (2) is, once again, satisfied. We obtain, in view of Remark~\ref{r.1.5},
	\begin{equation}\label{e.Ecomparable4}
		\begin{split}
			d_{x}^E(y) = d_{\cF^{s}_{x,\cN}}(y^{F},y)\in(0.9,1.1)d_{\tilde D^{s}_0}(\tilde y^s_0,y),\,\,\mbox{ and }d_{x}^E(y)\ge d_{x}^F(y).
		\end{split}
	\end{equation}
	Collecting~\eqref{e.expansion},~\eqref{e.Ecomparable3} and~\eqref{e.Ecomparable4} we have
	$$
	d_{x}^E(y)\ge \frac18c_4\tilde\lambda_\sigma^T \cdot d_{x^+}^E(y^+).
	$$
	Note that as $r\to 0$, $T=t(x)$ goes to infinitely uniformly in $x\in \partial W_r^-\cap\Lambda$. Therefore, we can fix $\lambda_\sigma\in (1,\tilde\lambda_\sigma)$ and decreasing $\overline r\in (0,r)$ such that for all $r\le \overline r$ and $x\in \partial W_r^-\cap\Lambda$ it holds 
	$$
	\frac18c_4\tilde\lambda_\sigma^T\ge \lambda_\sigma^T.
	$$
	We conclude that 
	$$
	d_{x}^E(y)\ge \lambda_\sigma^T \cdot d_{x^+}^E(y^+),
	$$ 
	as desired.

	We are left with the scaled shadowing property. Let $\rho\in (0,\rho_0]$ be fixed. We recall Proposition~\ref{p.Fx} which shows that for every $z\in\Reg(X)$ (not necessarily in $W^c_r$), there exists $\tilde \rho\le \rho_0K_0^{-1}$ such that 
	\begin{equation}\label{e.tilderho}
		B_{\tilde\rho|X(z)|}(z)\subset F_x\left(U_{ 3\rho K_0^{-1}|X(z)|}(z)\right).
	\end{equation}
	This in particular implies that $\cP_{z}$ is well-defined on $B_{\tilde\rho|X(z)|}(z)$ and send it to $\cN_{\rho K_0^{-1}|X(z)|}(z)$. 
	
	Below we will prove that for every $i\in [0,T]\cap\NN$, the point $\tilde y_i$ is contained in $B_{\tilde \rho|X(x_i)|}(x_i)$, and consequently, can be projected to $\cN_{\rho K_0^{-1}|X(x_i)|} (x_i)$ by the map $\cP_{x_i}$. This shows that $\cP_{x_i}(\tilde y_i)$ is $\rho$-scaled shadowed by $x_i$ up to time $t=1$; then the conclusion of (3) follows from Lemma~\ref{l.shadowing2}.

To estimate $d(x_i,\tilde y_i)$, we use~\eqref{e.domination} to obtain
\begin{align*}
	d(x_i,\tilde y_i)&< d(x_i, y^u_i) + d(y^u_i, y^s_i) + d(y^s_i,\tilde y_i)\\ 
	&\le d_{D^u_{i}}(x_{i},y^u_{i}) +\length(\ell^c_{i})+ d_{D^s_i}(y^s_i, \tilde y_i)  \\
	&\le \left(1+2c_2^{-1}c_4^{-1} \tilde\lambda_\sigma^{-(T-i)}\right)	d_{D^s_i}(y^s_i, \tilde y_i)\\
	\numberthis\label{e.dd}	&\le c_5d_{D^s_i}(y^s_i, \tilde y_i). 
\end{align*}

On the other hand, note that for every $i\in[0,T]\cap \NN$, $X(x_i)$ is contained in the $(\alpha, \RR^c\oplus \RR^u)$-cone. By the dominated splitting $E^{ss}_\sigma \oplus_<\left(E^c_\sigma\oplus E^u_\sigma\right)$, for $\alpha>0$ small enough there exist constants $c_6>0$ and $\overline\lambda_\sigma>1$ such that, for every vector $u$ in the $(\alpha, \RR^s)$-cone and vector $v$ in the $(\alpha, \RR^c\oplus \RR^u)$-cone, we have 
$$
|Df_i (u)| \le c_6 (\overline\lambda_\sigma)^{-i}|Df_i (v)|. 
$$ 
Together with~\eqref{e.dd}, we obtain, using~\eqref{e.Ecomparable3}:
\begin{align*}
	d(x_i,\tilde y_i)&<c_5c_6(\overline\lambda_\sigma)^{-i}\cdot|Df_i (X(x))|\cdot d_{D^s_0}(y^s_0, \tilde y_0)\\
	&\le  2c_5c_6|X(x_i)|d_{\tilde D^s_0}(\tilde y^s_0,y)\\
	&\le c_7 |X(x_i)| d(x,y)\\
	&\le c_7 |X(x_i)|\vep,
\end{align*}
for some constant $c_7>0$ that does not depend on $\vep$. 

Taking $\vep\le \vep_1< \frac{\tilde \rho}{c_7}$ guarantees that $d(x_i,\tilde y_i)<\tilde \rho |X(x_i)|$. As a result, $\cP_{x_i}(\tilde y_i)$ is well-defined and is contained in $\cN_{\rho K_0^{-1}|X(x_i)|}(x_i)$ for every $0\le i\le T$. 
By Lemma~\ref{l.shadowing2}, the orbit of $y$ is $\rho$-scaled shadowed by the orbit of $x$ up to  time $T$. We conclude the proof of Lemma~\ref{l.Elarge}.

\end{proof}

\begin{proof}[Proof of Lemma~\ref{l.Flarge}]
First we show  $ d_{x^+}^F(y^+)\ge d_{x^+}^E(y^+)$ under the assumptions of this lemma. Assume that this is not the case; that is, 
$$
d_{x^+}^E(y^+)>  d_{x^+}^F(y^+)
$$
which is essentially~\eqref{e.Elarge}. Then (the same proof as) Lemma~\ref{e.Elarge} (1) shows that $d_{x}^E(y)> d_{x}^F(y)$, which contradicts the assumption~\eqref{e.Flarge}. 

Below we will prove the desired expansion on the fake $cu$-leaf. The proof is analogous to the previous lemma, with a major difficulty: vectors in $cu$-cone may not be expanded by $Df_1$. To solve this issue, we use the sectional-hyperbolicity of $Df_1$ to get the desired expansion on $\ell^c$. See Step 2 below for the detail.

\medskip
\noindent {\em Step 1: transition from $(\cF^s_{x,\cN},\cF^{cu}_{x,\cN})$ to $(D^s_0,\ell^c_0,D^u_0)$.}

We define the triple $(D^s_i,\ell^c_i,D^u_i)$ as well as the points $y^u_i,y^s_i$ and $\tilde y_i$ in the same way as before. 
We also write $\tilde D^s_0$, $\tilde \ell^c_0$ and $\tilde y^s_0$ for the projection of $D^s_0$, $\ell^c_0$ and $y^s_0$ to the normal plane $\cN(x)$, see Figure~\ref{f.nearw-}, and $\tilde D^s_T, \tilde D^{cu}_T$ for the projection of $D^s_T$, $D_T$ to the normal plane $\cN(x^+)$, see Figure~\ref{f.nearw+}. Then~\eqref{e.Ecomparable3} remains valid. Furthermore, the assumptions of Lemma~\ref{l.changecoord}, Case (1) is satisfied. We thus obtain
\begin{equation}\label{e.Fcomparable1}
	d_{D^u_0}(x,\tilde y^s_0)\in (0.9,1.1)d_{x}^F(y),\,\,	\mbox{ and }\,\, d_{D^u_0}(x,\tilde y^s_0)\ge d_{\tilde D^s_0}(\tilde y^s,y).
\end{equation}

\medskip
\noindent {\em Step 2: forward expansion.}

Note that inside $D^u_0$ we have 
$$
d_{D^u_0}(x,y^u_0)+\length(\tilde \ell^c_0)\ge d_{D^u_0}(x,\tilde y^s_0)\ge d_{\tilde D^s_0}(\tilde y^s,y)\ge\frac12 d_{D^s_0}(y^s_0,\tilde y_0),
$$
where we use~\eqref{e.Fcomparable1} for the second, and~\eqref{e.Ecomparable3} for the last inequality. This further implies that 
\begin{equation}\label{e.Fcomparable2}
	d_{D^u_0}(x,y^u_0)+\length( \ell^c_0)\ge \frac12 d_{D^s_0}(y^s_0,\tilde y_0).   \hspace{2cm} (u+c\mbox{ large at }x) 
\end{equation}

By the dominated splitting $E^{ss}_\sigma\oplus E^c_\sigma\oplus E^u_\sigma$, there exist $\tilde\lambda_\sigma>1$ and a constant $c_1>0$ such that~\eqref{e.Fcomparable2} becomes
\begin{equation}\label{e.Fcomparable2.51}\begin{split}
		&d_{D^u_i}(x_i,y^u_i)+\length( \ell^c_i)\\ &\ge \frac12c_1\tilde\lambda_\sigma^{i} d_{D^s_i}(y^s_i,\tilde y_i),\,\, i=0,1,\ldots, T.  \hspace{1.1cm} (u+c\mbox{ large at every }x_i ) \end{split}
\end{equation}
Furthermore, by the uniform expansion of vectors in the $(\alpha,\RR^u)$-cone, we have 
\begin{equation}\label{e.Fcomparable2.52}
	d_{D^u_i}(x,y^u_i)\ge c_1\tilde\lambda_\sigma^{i}d_{D^u_0}(x,y^u_0),\,\, i=0,1,\ldots, T.
\end{equation}
Next, we project $\ell^c_T$ to $\cN(x^+)$ along the flow lines. This gives us a one-dimensional arc $\tilde \ell^c_T\subset D_T$.
Note that  $\ell^c_i\subset D_i$ where the latter is tangent to the $(2\alpha, \RR^c\oplus \RR^u)$-cone for $i=1,\ldots, T$. By the sectional hyperbolicity applied to the area of the (almost) parallelogram with $\tilde\ell^c_0$ and $X(x)$ as its sides, we have (decrease $c_1$ and $\tilde\lambda_\sigma$ if necessary)
\begin{equation}\label{e.Fcomparable3}
	\length(\tilde \ell^c_T)|X(x^+)|\ge c_1\tilde\lambda_\sigma^T\length(\tilde \ell^c_0)|X(x)|.  \footnote{Here we can use $|X(x)|$ to replace $|X(y)|$ for some $y\in \tilde\ell^c_0$ because $\tilde\ell^c_0\subset B_\vep(x)\subset B_{\rho_0|X(x)|}(x)$, in which the flow speed differ by a uniform ratio which can be combined with $c_1$ (see Proposition~\ref{p.Fx}). The same holds at $x^+$.}
\end{equation}
Since the flow speed $|X(\cdot)|$ depends Lipschitz continuously on $d(\cdot,\sigma)$, we see that $$|X(x)|/|X(x^+)|>c_2$$ for some constant $c_2>0$, as $x$ and $x^+$ are both contained in $\partial B_{r_0}(\sigma)$. Then~\eqref{e.Fcomparable3} implies the following exponential expansion on $\ell^c$:
\begin{equation}\label{e.Fcomparable4}
	\length( \ell^c_T)\ge \frac12\length(\tilde \ell^c_T)\ge \frac12c_1c_2\tilde\lambda_\sigma^T\length(\tilde \ell^c_0).
\end{equation}
The first inequality holds because $\ell^c_T$ is almost parallel to $\cN(x^+)$, and $\ell^c_T$, $\tilde \ell^c_T$ are both contained in $B_\vep(x^+)$.

Adding~\eqref{e.Fcomparable2.52} and~\eqref{e.Fcomparable4} together and applying~\eqref{e.Fcomparable2}, we conclude the following exponential expansion on the $c$ and $u$ direction:
\begin{equation}\label{e.Fcomparable5}\begin{split}
		d_{D^u_T}(x^+,y^u_T)+\length( \ell^c_T)&\ge c_3\tilde\lambda_\sigma^T\left(d_{D^u_0}(x,y^u_0)+\length(\tilde \ell^c_0)\right)\\
		&\ge \frac14c_3\tilde\lambda_\sigma^Td_{D^u_0}(x,\tilde y^s_0).
	\end{split}
\end{equation}

\medskip
\noindent {\em Step 3: transition from   $(D^s_T,\ell^c_T,D^u_T)$ to $(\cF^s_{x^+,\cN},\cF^{cu}_{x^+,\cN})$.}

On $\cN(x^+)$ we already proved that $  d_{x^+}^F(y^+)\ge d_{x^+}^E(y^+)$. As in Step 1 of the previous lemma, we can apply Lemma~\ref{l.changecoord} to get 
\begin{equation}\label{e.Fcomparable5.5}\begin{split}
		&d_{\tilde D^{cu}_T}(x^+,\tilde y^s_T)\in (0.9,1.1)d_{x^+}^F(y^+),\,\,\mbox{ and }\\
		&d_{\tilde D^{cu}_T}(x^+,\tilde y^s_T)\ge 0.9 d_{\tilde D^s_T}(\tilde y^s_T,y^+).\end{split}
\end{equation}
Since $\tilde D^{cu}_T$ and $\tilde D^s_T$ are tangent to $(2\alpha, F^{cu}_N(x^+))$-cone and $(2\alpha, E^s_N(x^+))$-cone, respectively, following the previous inequality there exists a constant $c_4>0$ depending only on the integers $s,u$ such that 
\begin{equation}\label{e.Fcomparable6}
	d_{\tilde D^{cu}_T}(x^+,\tilde y^s_T)\ge c_4d_{\cN(x^+)}(x^+,y^+)\ge c_4d(x^+,y^+). 
\end{equation}
On the other hand, since $D^u_T$, $\ell^c_T$ and $D^s_T$ are (almost) pairwise orthogonal, there exists a constant $c_5>0$ depending only on $s,u,n$ such that 
\begin{align*}
	d(x^+,y^+)&\ge c_5 \max\left\{ d_{D^s_T}(y^s_T,y^+),\length(\ell^c_T),d_{D^u_T}(x^+,y^u_T) \right\}\\
	\numberthis\label{e.Fcomparable7}	&\ge c_6 \left(d_{D^u_T}(x^+,y^u_T)+\length(\ell^c_T)\right),
\end{align*}
where the second inequality is due to~\eqref{e.Fcomparable2.51} with $i=T$.  Collecting~\eqref{e.Fcomparable1}, \eqref{e.Fcomparable5}, \eqref{e.Fcomparable5.5}, \eqref{e.Fcomparable6} and~\eqref{e.Fcomparable7}, we see that
\begin{align*}
	&d_{x^+}^F(y^+) &&\\
	&\ge \frac12 d_{\tilde D^{cu}_T}(x^+,\tilde y^s_T)  &&\mbox{by}~\eqref{e.Fcomparable5.5}\\
	&\ge  \frac12c_4d(x^+,y^+) &&\mbox{by}~\eqref{e.Fcomparable6}\\
	&\ge \frac12c_4 c_6 \left(d_{D^u_T}(x^+,y^u_T)+\length(\ell^c_T)\right)&&\mbox{by}~\eqref{e.Fcomparable7}\\
	&\ge  \frac18 c_3c_4 c_6  \tilde\lambda_\sigma^Td_{D^u_0}(x,\tilde y^s_0)&&\mbox{by}~\eqref{e.Fcomparable5}\\
	&\ge\frac{1}{16}c_3c_4c_6\tilde\lambda_\sigma^T\cdot d_{x}^F(y).&&\mbox{by}~\eqref{e.Fcomparable1}
\end{align*}

Since the coefficient does not depend on $r $ or $T$, we can fix $\lambda_\sigma\in (1,\tilde\lambda_\sigma)$ and decrease $\overline r $ (therefore increase $T$) such that $\frac{1}{16}c_3c_4c_5\tilde\lambda_\sigma^T\ge \lambda_\sigma^T$. With this we conclude the proof of Lemma~\ref{l.Flarge}.

\end{proof}

\section{Notations and parameters list}\label{s.parameter}

For the convenience of our readers, in this section we summarize some important notations and parameters that will appear in this paper. It will be useful to print out this section for future reference.

\noindent {\bf Parameters:}

\begin{enumerate}
\item $r_0>0$ is the size of the (larger) ball at each singularity $\sigma\in\Sing(X)$.
\item $r\in (0,r_0)$ is the size of the smaller balls at $\sigma$. $r$ and $r_0$ are given by~\ref{l.nearEc} and will be modified for a finite number of times in subsequent lemmas (for example, in Lemma~\ref{l.Elarge} and~\ref{l.Flarge}).

\bigskip
\item $\alpha$ is the size of the cone on the normal bundle.
\item $\beta_0$ is the parameter for the recurrence Pliss time (for the definition, see Section~\ref{s.pliss}) to a small neighborhood $W$ of $\Sing(X)$. Its value is given by the hyperbolicity on the $F^{cu}_N$ bundle and the $C^1$ norm of $\psi^*_{-1}$, see Theorem~\ref{t.Bowen}. 
\item $\beta>0$ is taken after $\beta_0$ (see Lemma~\ref{l.rec}) such that if an orbit segment spends less than a $\beta$ proportion of its time inside the neighborhood  $W$, then there exist $\beta_0$-recurrence Pliss times for $W$.

\bigskip

\item $\rho_0>0$ is small enough such that 
\begin{itemize}
	\item the fake foliations $\cF_{x,N}^i,i=s,cu$ are well-defined on $\cN_{\rho_0|X(x)|}(x)$ for every $x\in\Reg(X)$ (Section~\ref{ss.fake.normal}); 
	\item the projection $\cP_{x}: B_{\rho_0|X(x)|}(x)\to \cN_{\rho_0|X(x)|}(x)$ given by~\eqref{e.Px1} is well-defined (Proposition~\ref{p.Fx})
	\item for every regular point $x$, the holonomy map along the flow lines, $\cP_{1,x}$, is well-defined from $\cN_{\rho_0K_0^{-1}|X(x)|}(x)$ to $\cN_{{\rho_0|X(f_1(x))|}}(f_1(x))$ (see Proposition~\ref{p.tubular}).
\end{itemize} 
\item $a>0$ and $\rho_1<\rho_0$ are taken w.r.t. the uniform continuity of $D\cP_{1,x}$ (Proposition~\ref{p.tubular3} and Lemma~\ref{l.cuhyptime1}).
\item $\rho<\rho_1$ is the scale for the $\rho$-scaled shadowing in the Main Proposition, Proposition~\ref{p.key}. 
\item $\rho'$ is such that the image of $\cN_{\rho K_0^{-2}|X(x)|}(x)$ under $\cP_{1,x}$ contains a $\rho'|X(x_1)|$ ball at $x_1$. Its existence is given by Lemma~\ref{p.tubular4}, and is used in the proof of the Main Proposition.

\bigskip	
\item $\vep_0$ is taken (after $r_0,r$) such that $\cP_{x}(\cdot)$ is well-defined in $B_{\vep_0}(x)$ for every $x\notin W_{r}$ (for $W_r$, see the list of notations below).
\item $\vep_1$ is the size of the Bowen balls given by Lemma~\ref{l.Elarge} and~\ref{l.Flarge}), with $\vep_1<\vep_0$. 
\item $\vep_2\le \vep_1$ is the size of the Bowen balls given by the Main Proposition~\ref{p.key} and is the scale of the Bowen Property on $\cG$.

\bigskip
\item $\delta>0$ is the scale for the specification on $\cG_{spec}$ in Theorem~\ref{t.spec}; after finding $\vep_2$, we will take $\delta = \vep_2/(1000Lip)$.
\item $\xi>0,\zeta>0$ are the size of the stable/unstable manifolds in Section~\ref{s.spec}. Both depend on $\delta$.  

\medskip 
\item $\lambda_\sigma>1$ is the expansion rate on the normal bundle near singularities, given by Lemma~\ref{l.Elarge} and~\ref{l.Flarge}.

\bigskip

\item $L_0, L_1, L_2$: they are constants that appear only in Section~\ref{ss.cone} concerning the change of coordinates in a cone.
\item $L>0$ is the time cut-off for the tail specification on $\cG_{spec}$; can be found in Section~\ref{s.spec}.

\bigskip
\item $K_0$ is the constant given by~Proposition~\ref{p.tubular} concerning the domain of $\cP_{1,x}$.
\item $K_1$ is the upper-bound of $\|D\cP_{1,x}\|$, Proposition~\ref{p.tubular3}.
\item $K^s(x)$ is the size such that the stable manifold of a regular point $x$ has a  transverse intersection with $W^u(\gamma)$, where $\gamma$ is a hyperbolic periodic orbit given by Theorem~\ref{t.top}.

\bigskip
\item $\tau$ is the maximal gap size for the specification on $\cG_{spec}$. Not to be confused with $\tau_{x}(y)$ (the time for $\cP_{x}(\cdot)$, see~\eqref{e.Px2}) or $\tau_{x,y}(t)$ (the time change in the scaled shadowing property,~\eqref{e.tau}).
\end{enumerate}

\noindent {\bf Notations:}
\begin{enumerate}
\item For a given regular point $x\in\bM$ and $t\in\RR$, we write 
$$
x_t = f_t(x). 
$$
Meanwhile, we use superscripts such as $x^1, x^2$, etc to denote different points.  
\item  As mentioned before, we use the normal math italic font $E,N, P,$ etc. for subspaces, bundle, maps, etc. defined on the tangent bundle, and the calligraphic font $\cC, \cN,\cP$, etc. for orbit segments/ submanifolds/ Poincar\'e maps on the manifold.
\item $B_{t,\vep}(x)$ is the $\vep$-Bowen ball of the orbit segment $(x,t)$. One exception of this rule is $W^*$, $*=ss,s,cu$, etc which denotes invariant manifolds. 
\item $E^s_N\oplus F^{cu}_N$ is the dominated splitting on the normal bundle $N=\bigcup_{x\in \Sing(X)^c} N(x)$ under  the scaled linear Poincar\'e flow $\psi_{t,x}$. It is obtained by projecting the dominated splitting $E^{ss}\oplus F^{cu}$ to the normal bundle (recall Lemma~\ref{l.splitting}). We will assume that $E^{ss}$ and $F^{cu}$ are orthogonal at every point. This implies that $E^s_N$ and $F^{cu}_N$ are also orthogonal at regular points.

\item $\cP_{t,x}$ is the holonomy map from $\cN_{\rho_0K_0^{-t}|X(x)|}(x)$ to $\cN_{\rho_0|X(x_t)|}(x_t)$ for $t>0$. See~\eqref{e.Ptx1}.
\item $P_{t,x}$ is the lift of $\cP_{t,x}$ to the normal bundle via the exponential map at the corresponding points.
\item $\cP_x(\cdot)$ is the projection from $B_{\rho_0|X(x)|}(x)$ to $\cN_{\rho_0|X(x)|}(x)$. See~\eqref{e.Px1}  and~\eqref{e.Px2}.
\item For $r \in (0,{r_0})$, $W_{r }(\sigma)$ is the union of local orbit segments in $B_{r_0}(\sigma)$ that intersect with $B_{r }(\sigma)$. See~\ref{e.W.def}.
\item $W_\Sing$ is a small neighborhood of $\sigma$ that is contained in $B_{r_0}(\sigma)$. One can think of it as $W_r$ for some proper choice of $r$ and $r_0$.
\item $B^*_\rho(x,t)$ is the $\rho$-scaled tubular neighborhood of the orbit segment $(x,t)$, Definition~\ref{d.liao}.
\item $\CP_\sigma, \sigma\in\Sing(X)\cap\Lambda$ are some compact neighborhoods of $\sigma$ in $W^u(\sigma)$; in order for the specification to hold at a certain scale $\delta$, the initial point of an orbit segment $(x,t)$ must avoid $\cup_\sigma\CP_\sigma$ (whose choice depends on $\delta$). See Theorem~\ref{t.spec}.
\item $U_\Sing$ is a neighborhood of $\cup_\sigma \CP_\sigma$ that appears in Theorem~\ref{t.spec}; in order to achieve specification (more precisely, to achieve a bounded maximum gap size $\tau$), the initial point $x$ must be taken outside of $U_\Sing$. Not to be confused with $U(\sigma)$ which only appeared in Lemma~\ref{l.lgw} and~\ref{l.nearEc}.

\item $\cG_B$ is the orbit segments collection with the Bowen property, Theorem~\ref{t.Bowen}.
\item $\cG_{spec}(\delta,r,W_\Sing, U_\Sing)$  is the orbit segments collection with specification property, Theorem~\ref{t.spec}. 
\end{enumerate}

\section*{Acknowledgments} The authors are grateful to the anonymous referees for their careful reading and helpful comments, which significantly improved the presentation of the current paper.  We also thank Vaughn Climenhaga, Daniel Thompson,  Shaobo Gan, Huyi Hu, Ming Li, Yi Shi, Rusong Zheng and Xingzhong Liu for their useful comments and suggestions.

\end{document}